\documentclass[11pt,a4paper]{amsart}
\usepackage{amsmath,caption,booktabs,lipsum}
\numberwithin{equation}{section}
\usepackage{amsthm}
\usepackage{pdfpages}
\usepackage{amsfonts}
\usepackage{amssymb}
\usepackage{mathrsfs}
\usepackage{tikz}
\usepackage{caption}
\usepackage{environ}
\usepackage{elocalloc}
\usepackage{url}
\usepackage{caption}
\usepackage{graphicx}
\usepackage[backend = bibtex, style=numeric-comp,doi=false,isbn=false,url=false]{biblatex}
\usepackage{CJKutf8}
\usepackage[normalem]{ulem}
\usepackage{xcolor}
\usepackage{etoolbox}
\usepackage{caption}
\usepackage{mathtools}
\usepackage{dcpic}
\usepackage{tikz-cd}
\usepackage[bottom]{footmisc}
\usepackage{hyperref}
\usepackage{enumitem}
\usepackage{stmaryrd}
\usetikzlibrary{positioning}
\usetikzlibrary{arrows,chains,positioning,scopes,quotes}
\usetikzlibrary{decorations.markings}
\newcommand{\hgd}{h_{\operatorname{glued}}}
\newcommand{\add}{\operatorname{Ad}}
\newcommand{\sfc}{\si^{d-3}(C)}
\newcommand{\kk}{{\mathbf{K}}}
\newcommand{\supp}{\operatorname{Supp}\,}
\newcommand{\vrpt}{{\mathbb{VRP}^2}}
\newcommand{\reg}{{\operatorname{Reg}}\,}
\newcommand{\ai}{\alpha}
\newcommand{\dimh}{\dim_{\mathcal{H}}}

\newcommand{\sing}{{\operatorname{Sing}}\,}

\newcommand{\sut}{{\mathfrak{su}(3)}}

\newcommand{\ee}{{{\boldsymbol{\e}}_{\operatorname{Allard}}}}
\newcommand{\es}{\emptyset}
\newcommand{\be}{\beta}
\newcommand{\nn}{{\mathbf{n}}}
\newcommand{\Ga}{\Gamma}
\newcommand{\ga}{\gamma}

\newcommand{\de}{\delta}
\newcommand{\ww}{{\mathbf{w}}}

\newcommand{\e}{\epsilon}
\newcommand{\hm}{\mathcal{H}}
\newcommand{\lam}{\lambda}
\newcommand{\dhm}{{\textnormal{d}\hm}}

\newcommand{\Om}{\Omega}
\newcommand{\si}{\sigma}
\newcommand{\mms}{{\mathbf{MOD}}}
\newcommand{\Si}{\Sigma}

\newcommand{\rh}{\rho}
\newcommand{\ue}{U_{\e'}(N\#\ssc)}
\newcommand{\uee}{U_{\e''}(N\#\ssc)}
\newcommand{\ta}{\theta}

\newcommand{\vv}{{\mathbf{v}}}
\newcommand{\VV}{{\mathbf{V}}}
\newcommand{\soss}{\mathfrak{so}(3)}

\newcommand{\ms}{\mathbf{M}}

\newcommand{\T}{\mathbf{T}}

\newcommand{\ug}{U_{\operatorname{gluing}}}
\newcommand{\s}{\subset}

\newcommand{\cp}{^\complement}

\newcommand{\cpt}{\mathbb{CP}^2}

\newcommand{\jac}{{\operatorname{Jac}}}
\newcommand{\kn}{\ker^\perp \ed\pi}

\newcommand{\rr}{r}
\newcommand{\ssf}{{S^2_{\sqrt[4]{3}}(0)}}

\newcommand{\la}{\langle}
\newcommand{\ra}{\rangle}
\newcommand{\ov}[1]{\overline{#1}}
\newcommand{\no}[1]{\left\lVert#1\right\rVert}

\DeclarePairedDelimiter{\ri}{\la}{\ra}

\newcommand{\ed}{{\operatorname{d}}}
\newcommand{\du}{^\ast}

\newcommand{\ka}{\kappa}
\newcommand{\m}{^{-1}}
\newcommand{\ts}{\otimes}

\newcommand{\pd}{\partial}

\newcommand{\sot}{\mathfrak{so}(3)}

\newcommand{\rpt}{\mathbb{RP}^2}

\newcommand{\N}{\mathbb{N}}
\newcommand{\R}{\mathbb{R}}
\newcommand{\Z}{\mathbb{Z}}

\newcommand{\C}{\mathbb{C}}

\newcommand{\uu}{{\mathbf{u}}}

\newcommand{\injrad}{\operatorname{InjRad}}

\newcommand{\sos}{\operatorname{SO}(3)}

\newcommand{\pp}{{\mathbf{p}}}
\newcommand{\tr}{\operatorname{tr}}
\newcommand{\id}{\operatorname{id}}

\newcommand{\ssc}{\si^s(C)}

\makeatletter
\def\thm@space@setup{%
	\thm@preskip=0.2cm plus 0cm minus 0cm
	\thm@postskip=\thm@preskip % or whatever, if you don't want them to be equal
}
\makeatother
\theoremstyle{plain}
\newtheorem{thm}{Theorem}
\newtheorem{exam}{Example}[subsection]
\newtheorem{lem}[exam]{Lemma}

\newtheorem{fact}[exam]{Fact}
\newtheorem{claim}[exam]{Claim}
\newtheorem{consq}[exam]{Consequence}
\newtheorem{assump}[exam]{Assumption}

\newtheorem{defn}[exam]{Definition}
\newtheorem{rest}{Result}

\theoremstyle{definition}
\newtheorem{conj}{Conjecture}

\title[Persistent singular subsets II]{Area-minimizing submanifolds are not generically smooth, except for geodesics, minimal surfaces and minimal hypersurfaces}
\author{Zhenhua Liu}
\dedicatory{Dedicated to Xunjing Wei}
\begin{document}
	\setlength{\abovedisplayskip}{5pt}
	\setlength{\belowdisplayskip}{5pt}
	\setlength{\abovedisplayshortskip}{5pt}
	\setlength{\belowdisplayshortskip}{5pt}
	\maketitle\vspace{-3em}
	\begin{abstract}
We prove that area-minimizing submanifolds in mod $2$ homology are not generically smooth, except in the case of geodesics, minimal surfaces and minimal hypersurfaces. This settles a conjecture of White that asks the generic smoothness of area-minimizing submanifolds in mod $2$ homology. We furthermore establish a lower bound on the Hausdorff dimension of the singular sets of area-minimizing submanifolds with respect to  open sets of Riemannian metrics. The lower bound is ${(d-3)},$ where $d$ denotes the dimension of the submanifold. As a crucial step, we prove that the cone over the Veronese minimal embedding of $\rpt$ is mod $2$ area-minimizing, settling another long-standing open problem.
	\end{abstract}
	\section{Introduction}
	The general problem of finding area-minimizing representatives of homology classes is solved by Federer and Fleming \cite{FF,WFfc}:
	\begin{quote}
		\emph{Every integral or mod $2$ homology class on a compact Riemannian manifold admits an area-minimizing representative.}
	\end{quote}
	In other words we can always find a representative of a homology class that has least area among all representatives of the same homology class. The representative in the above result is found in the category of integral currents or mod $2$ currents, which roughly speaking is the closure of the set of algebraic topological polyhedron chains under Whitney flat topology \cite{FF,WFfc} in $\Z$ or $\Z/2\Z$ coefficient, respectively. 

The crucial difference between integral and mod $2$ homology is whether we allow nonorientable representatives or not. For instance, we can never represent an integral homology class by $\rpt,$ while we can do so for mod $2$ homology classes. To signify the distinction:
\begin{defn}\label{defnno}
We will reserve the terminology unoriented to denote both orientable and non-orientable objects while oriented will be reserved for orientable objects.
\end{defn}
For instance, the least possible area of a representative of a homology class equals to area infimum of simplicial representatives, which are oriented for integral homology and nonoriented for mod $2$ homology \cite{FF,WFfc}.
	
	Calling the area-minimizing representatives submanifolds was justified due to the following theorem:
	\begin{quote}
		\emph{An area-minimizing representative of an integral or mod $2$ homology class is a smooth minimal submanifold counted with integer multiplicities or $\Z/2\Z$ multiplicities outside of a singular set that is codimension 2
countably rectifiable with respect to the submanifolds.}
	\end{quote}
For integral homology, the above masterpiece theorem is proven by De Lellis and his collaborators \cite{DS1,DS2,DS3,DMS1,DMS2,DMS3} and independently by Krummel and Wickramasekera \cite{BK1,BK2,BK3}. For mod $2$ homology, the above masterpiece theorem is due to Leon Simon \cite{LScy}. As precursors, minus the conclusion of rectifiability, the above result was proven by  Almgren \cite{FA} for integral homology and by Federer \cite{HFts} for mod $2$ homology. Thus, we will use interchangebly the terms area-minimizing representatives of homology classes and area-minimizing submanifolds.
	
	The above results on the singular sets of area-minimizing submanifolds is sharp, as singular sets do appear in very natural settings. For integral homology, by \cite[Section 4]{MR0168727},
 	\begin{quote}
		\emph{Complex algebraic subvarieties, including singular subvarieties, are area-minimizing in the Fubini-Study metric on complex projective spaces.}
	\end{quote}
For mod $2$ homology, by \cite{FMcalv}, the chain sum of two flat subtori of a flat tori is area-minimizing mod $2$, provided they intersect orthogonally along a subtorus of codimension at least $2$ with respect to the chain.

The presence of singular sets severely restrict the geometric and analytic tools available to obtain meaningful geometric or analytic statements \cite{BZpm,BWpm}. This raises the natural question of whether one can obtain everywhere smooth area-minimizing submanifolds in generic metrics.
	
	Thom's classical result \cite{RT} shows that there are homology classes that topologically cannot be represented by smooth submanifolds. For these homology classes, finding a smooth area-minimizing representative is impossible due to topological obstructions alone. For homology classes with smooth topological representatives, White raised the following question about area-minimizing representatives in \cite[Problem 5.16]{GMT}:
	\begin{conj}\label{cw}
		\emph{"One can also ask whether singularities in
			a homologically area minimizing cycle in a Riemannian manifold disappear after generic perturbations of the metric."}
	\end{conj}
	Area-minimizing $1$-dimensional submanifolds are embedded geodesics \cite{HFts}, so White's conjecture is trivially true in this case. Besides the $1$-dimensional case, there are only two kinds of progress towards the above question. White solved the above conjecture in the case of $2$-dimensional submanifolds of arbitrary codimension \cite{BWgt} based on \cite{HFts,SC,DSS1,DSS2,DSS3}:
	\begin{quote}
		\emph{In generic metrics, $2$-dimensional integral or mod $2$ area-minimizing submanifolds of arbitrary codimensions are smooth.}
	\end{quote}
	On the other hand, the masterful efforts of Hardt-Simon, Smale, Li-Wang, Chodosh-Mantoulidis-Schulze-Wang solved the conjecture in the case of low dimensional hypersurfaces \cite{HS,NS,LW,CMS1,CMS2,ZW11}
	\begin{quote}
		\emph{In generic metrics, area-minimizing hypersurfaces of dimension at most $10$ are smooth.}
	\end{quote}
In fact its is a well-known conjecture that area-minimizing hypersurfaces are generically smooth in all dimensions. See \cite{ZWgfa} for progress.	The above tour de force results indicate that both the dimension and the codimension of the homology class with respect to the ambient manifold play an important role in determining the generic smoothness of area-minimizing representatives. To this end, we adopt the following notation convention.
	\begin{defn}
		Let integer $d$ denote the dimension of an integral or mod $2$ homology class, and integer $c$ denote the codimension of the class with respect to the ambient manifold.
	\end{defn}
	For instance, $d=2$ corresponds to $2$-dimensional homology classes, and $c=1$ corresponds to hypersurface homology classes. Following this convention, the dimension and codimension of an area-minimizing representative/submanifold will also be denoted by $d$ and $c$, respectively.
	
	We now state our main results. We have already dealt with integral homology in \cite{ZLns}. From now on we will focus on mod $2$ homology. Indeed, our result in mod $2$ case will be sharp while the integral case in \cite{ZLns} provides a much weaker result.
	
	In sharp contrast to the masterful generic smoothness results above, the opposite is true in general. Roughly speaking our results say that
	\begin{rest}\label{mts}Area-minimizing unoriented submanifolds are not generically smooth, provided $d\ge 3, c\ge 2$.
	\end{rest}
	In other words outside the case of geodesics, minimal surfaces and minimal hypersurfaces, unoriented area-minimizing submanifolds are not generically smooth. Here by generically smooth we mean in a baire category generic set of smooth metrics with respect to the Whitney topology, all area-minimizing mod $2$ currents are smooth. 
	
	Our results can be stated more precisely as follows. In this manuscript, we always assume the following.
	\begin{assump}
				Assume that $M^{d+c}$ is a compact smooth unoriented $(d+c)$-dimensional manifold and $[\Si]\in H_d(M,\Z/2\Z)$ is a $d$-dimensional mod $2$ homology class of codimension $c$. 		
	\end{assump}
	We adopt the following definition.
	\begin{defn}\label{defnm}
		For a smooth Riemannian metric $g$ on $M^{d+c}$, define $\inf \dimh\sing([\Si],g)$ to be the infimum of Hausdorff dimensions of singular sets of all area-minimizing submanifolds $T$ in $[\Si]$ with respect to the metric $g:$ 
		\begin{align*}
			\inf \dimh\sing([\Si],g)=\inf_{\{T|T\operatorname{ is area-minimizing in }[\Si]\operatorname{ in }g\}}\dim_{\mathcal{H}}\sing T,
		\end{align*} where $\sing T$ denotes the singular set of $T$, $\dimh$ is the Hausdorff dimension. Set
		\begin{align*}
		\inf	\dim_{\mathcal{H}}\sing([\Si],g)=-1,
		\end{align*}if one area-minimizing submanifold $T$ in $[\Si]$ is smooth.
	\end{defn}
	The number $\inf\dim_{\mathcal{H}}\sing([\Si],g)$ is non-negative if and only if every area-minimizing representative of $[\Si]$ is singular in the ambient metric $g$.
	%\begin{defn}	Define a subset $\sing_{[\Si]}$ in the space of smooth Riemannian metrics to be	\begin{align*}		\sing_{[\Si]}=\{\operatorname{smooth metric }g|\dim_{\mathcal{H}}\sing([\Si],g)\ge 0\}.	\end{align*}\end{defn}
	
	Our main theorem is as follows.
	\begin{thm}\label{thmm}
	For any $d$-dimensional non-zero mod $2$ class $[\Si]$ on $M^{d+c},$ there is a non-empty open subset $$\Om_{[\Si]}$$ of the space of smooth Riemannian metrics on $M^{d+c}$ such that for all $g\in\Om_{[\Si]}$,
		\begin{align}
			\inf\dimh\sing([\Si],g)\ge 0,
		\end{align}provided $d\ge3,c\ge 2.$
Furthermore if $[\Si]$ admits a smoothly embedded representative, then
\begin{align}
	\inf\dimh\sing([\Si],g)\ge d-3.
	\end{align}	\end{thm}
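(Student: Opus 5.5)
The plan is to implant a robust, calibrated or strictly minimizing singular model into a chosen metric, so that the singularity persists under all small perturbations. The key local model is the cone $C$ over the Veronese embedding of $\rpt$ in $S^4\subset\R^5$. The abstract says we will prove that $C$ is mod $2$ area-minimizing, and I will assume this together with a strict and stable form of it. Concretely, $C$ should be strictly minimizing with a quantitative excess inequality, and it should be isolated among minimizing cones: every mod $2$ minimizing current close to $C$ in a neighborhood of the origin must have a singular point near the origin. For higher $d$ I take products $C\times\R^{d-3}$, or more precisely $C\times N$ with $N$ a closed $(d-3)$-manifold. The persistence of $\sing$ along the $\R^{d-3}$ directions is what yields the bound $d-3$. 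Since $C\subset\R^5$ has dimension $3$ and codimension $2$, the hypotheses $d\ge3$ and $c\ge2$ are exactly what is needed to embed the model.

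\textbf{Step 1 (construction of a metric).} For a general nonzero class $[\Si]$, I take any area-minimizing representative $T_0$ in some metric. I then modify a tubular neighborhood of a small ball so that the homology class is unchanged, while the representative is forced to contain a piece looking like $C$ (for $d=3$) or $C\times B^{d-3}$. One route is a neck-surgery/connected sum: replace a small disk of a smooth representative by a copy of $\si^s(C)$, a smoothed cone glued to $N$. The resulting cycle $N\#\si^s(C)$ is homologous mod $2$, because the cone piece is a cycle modulo a nullhomologous change in a ball.

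I then build a metric $g_0$ in which this cycle is calibrated-like, i.e.\ uniquely and strictly minimizing. Near the smooth part I use a metric making it totally geodesic with strongly convex tubular distance. Near the cone I use a metric flat to high order, so that minimality of $C$ transfers. Mod $2$ there is no calibration, so I will instead argue by a foliation or retraction: take a $1$-Lipschitz-type area-nonincreasing projection of a neighborhood onto the cycle, together with a conformal factor that blows up the area away from the neighborhood. This forces every minimizer to lie in the neighborhood, and then the local minimizing property of $C$ does the rest.

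When $[\Si]$ has no smooth representative, I only get $\inf\dimh\sing\ge0$. Then I take $T_0$ already singular and implant one cone point. Alternatively, Thom-type obstructions already force singularity in any metric, which is trivially open.

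\textbf{Step 2 (openness).} Let $g_j\to g_0$ with $g_j\notin\Om$, and let $T_j$ be minimizers of $[\Si]$ in $g_j$ with small singular sets. By compactness of mod $2$ minimizing currents, a subsequence converges to a $g_0$-minimizer, which by uniqueness is the model cycle $T$. Allard-type and Simon-type regularity give smooth graphical convergence away from the cone points, and varifold convergence to $C\times\R^{d-3}$ near them. By the isolation/rigidity of $C$ (every nearby minimizer has a singular point in each slice $\{y\}\times B^3$), $\sing T_j$ projects onto an open set of $N$ of dimension $d-3$. Since projection is Lipschitz, this gives $\dimh\sing T_j\ge d-3$, a contradiction. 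The needed slice statement comes from the fact that the link $\rpt$ is nonorientable and topologically non-smoothable as a $3$-chain filling: a nearby smooth minimizing piece in a slice would give a smooth 3-manifold with boundary close to the Veronese $\rpt$, hence bounding $\rpt$ mod $2$ by an embedded surface-like object. This is impossible by a degree or linking argument, since $\rpt$ does not bound a compact 3-manifold.

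\textbf{Main obstacle.} The hardest step is the local rigidity of $C$: showing that $C$ is mod $2$ minimizing with a strict, quantitative margin, and that nearby minimizers in slightly perturbed metrics cannot be smooth in any slice. I would try to settle minimality first, via a retraction/foliation argument by minimal hypersurface-like leaves adapted to the $\sos$-symmetry. From this I would derive the slice-wise singularity by the topological argument above combined with Allard regularity.
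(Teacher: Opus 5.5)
Your global architecture is the paper's: the connected sum $N\#\sfc$, a metric in which it is the unique minimizer, a conformal factor confining minimizers to a regular neighborhood, and then compactness, Allard regularity and slicing. Two minor corrections to the slicing step: the slices are over the singular sphere $S^{d-3}$, not over $N$, and you only get a singular point in almost every slice, which suffices by Eilenberg's inequality. Also, the relevant fact is that $\rpt$ has odd Euler characteristic and so bounds no compact $3$-manifold, although it does bound mod $2$ as a chain.

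The real problem is that the two steps carrying the weight are missing, and for the first your suggested route is one known not to work. \textbf{Minimality of $C$.} You assume that $C$ is mod $2$ area-minimizing, even in a strict quantitative form that the paper neither proves nor needs. But this is Theorem \ref{thmc}, the central new ingredient and a long-standing open problem. Your suggested proof, an area-nonincreasing retraction or foliation adapted to the $\sos$-symmetry, is essentially Lawlor's vanishing-retraction method. Lawlor showed it cannot be applied to this cone because its curvature is too large, and the Lawlor--Murdoch results only cover competitors with a generalized orientability. The missing idea is the moderation criterion (Definition \ref{defnmod}, Theorem \ref{thmmod}). For every $d$-plane $W$, $\pi_W$ is homology invariant: if $S-T=\pd Q$, then $\pi_W(Q)$ is a $(d+1)$-dimensional current in a $d$-plane, hence zero. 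So $\ms(\pi_W(T))=\ms(\pi_W(S))\le\int_S|\langle W,\T_qS\rangle|$. If for a.e.\ tangent plane $E$ of $T$ one has $\ms(\pi_E(T))=\int_T|\langle E,\T_pT\rangle|=\sup_W\int_T|\langle W,\T_pT\rangle|>0$, then integrating over $p\in T$ with $E=\T_pT$ and applying Fubini gives $\ms(T)\le\ms(S)$. For $C_1$, the bound $\sup_W\int_{C_1}|\langle W,\T_pC_1\rangle|\le 3\pi/4$ comes from writing $|\langle W,\T_{F(\tau)}C\rangle|$ as $\bigl||\kappa_{\mathbb{C}}(\mathbf{n}_{\mathbb{C}}(-\tau),W_{\mathbb{C}})|^2-|\kappa_{\mathbb{C}}(\mathbf{n}_{\mathbb{C}}(\tau),W_{\mathbb{C}})|^2\bigr|$. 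Here $\mathbf{n}_{\mathbb{C}}(\pm\tau)$ are spin-$2$ coherent states, and one applies the sharp coherent-state inequality to the convex function $|s-\frac{1}{16}|$. Equality holds because $\pi_{\T_{\vv_1}C}(C_1)$ is an explicit ellipsoid of volume exactly $3\pi/4$.

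\textbf{Gluing without calibrations.} Your Step 1 does not explain how local minimality near the singular stratum combines with a projection on the regular part. An area-nonincreasing projection of a whole neighborhood onto the cycle is not available near $\sing\ssc$. A $C^1$ retraction onto $C$ cannot exist at the vertex, since its differential there would be the identity on the span of $C$. A Lipschitz area-nonincreasing one is exactly what Lawlor's method fails to produce here. Nor does local minimality of $C$ by itself control competitors that move area between the singular region and the regular part. The paper handles this in several pieces:
\begin{itemize}
\item It keeps the product metric $h_{\sing}$ near $\sing\ssc$, where $C_1\times S^s$ is minimizing by slicing (Fact \ref{calc}).
\item Near the regular part it uses $h_{\reg}=h_{\sing}|_{\ker\ed\pi}+\pi^{*}h_{\sing}$, so the nearest-point projection $\pi$ is area-nonincreasing.
\item It interpolates with the squeezing map $f(n,l)=(n,\ai(\rh(n))l)$ in Fermi coordinates, which is the identity near $\sing\ssc$ and equals $\pi$ farther out, together with the glued metric $\hgd=(1-\ai(\rh))h_{\sing}|_{\ker\ed\pi}+f^{*}h_{\sing}\ge f^{*}h_{\sing}$.
\item Lemma \ref{lemshort} then gives $\ms_{\hgd}(T)\ge\ms_{h_{\sing}}(f(T))$. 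Since $f(T)$ coincides with $N\#\ssc$ outside $U(\sing\ssc)$ and is homologous to it inside, minimality of $N\#\ssc$ follows.
\item Uniqueness comes afterwards from a conformal bump $(1+\be)$ vanishing on the support and near $\sing\ssc$, the constancy theorem, and unique continuation. No strictness of $C$ is needed.
\end{itemize}
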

 Here we use the classical Whitney smooth topology on the space of Riemannian metrics \cite{MH}. For a zero homology class, any area-minimizing representative is necessarily zero.  For homology classes $[\Si]$ that do not have smooth topological representatives, no smooth area-minimizing submanifolds exist, regardless of ambient metrics. In this case, $\dim_{\mathcal{H}}\sing([\Si],g)\ge 0$ for all metrics $g,$ i.e., $\Om_{[\Si]}$ equal to the space of Riemannian metrics. 

Thus, the actual content of the above theorem is for non-zero classes that do  have smooth representatives. In this case, the above theorem says that there exists an open set $\Om_{[\Si]}$ in the space of smooth Riemannian metrics, such that every area-minimizing mod $2$ current in $[\Si]$ with respect to an ambient metric $g\in \Om_{[\Si]}$  has a non-empty singular set of Hausdorff dimension at least $(d-3)$, provided $d\ge 3,c\ge 2,$ thus implying Theorem \ref{mts}.
	
	As we have discussed above, among the three families $c=1,$ $d=1,d=2,$ that are not dealt with in our Theorems \ref{mts} and \ref{thmm},  area-minimizing submanifolds with $d=1,2$ have been proven to be smooth in generic metrics (\cite{HFts,BWgt}). The case of $c=1,$ i.e., hypersurfaces, is widely believed to be generically smooth as well, with the best result obtained by Chodosh-Mantoulidis-Schulze-Wang \cite{ZW11}. Thus, Theorem \ref{thmm} is indeed sharp if we consider fixed dimensions or fixed codimensions.

Interestingly, by Thom's classical work \cite{RT}, persistent singular sets in topological representatives start appearing from $d=7,$ while in our Theorems \ref{mts} and \ref{thmm}, persistent singular sets in area-minimizing representatives  start appearing from $d=3.$ See also \cite{GCmod2,ABCD} for regularity of topological representatives and \cite{CLW} for progress on stationary varifolds.

It is an open problem since the 1980s \cite{GL} whether the cone over the  Veronese minimal embedding of $\rpt$ into the unit sphere $S^4_1(0)$ of $\R^5$ is area-minimizing or not. In the positive directions, Gary Lawlor and Timothy Murdoch  \cite{LMvc,TMtc} have proven that $C$ is area-minimizing among homologous competitors satisfying a generalized orientability assumption. In the negative direction, Gary Lawlor \cite{GL} showed that the maximum of the curvature of $C$ is too large to use Lawlor's vanishing retractions, which is asymptotically sharp in many cases \cite{ZLns}. 

As a crucial step of our proof we prove that
\begin{thm}
\label{thmc}
The cone over the Veronese minimal $\rpt$ is an area-minimizing mod $2$ current.
\end{thm}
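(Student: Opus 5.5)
Proving that the cone $C$ over the Veronese surface $V\cong\rpt\subset S^4_1(0)$ is mod $2$ area-minimizing calls for a calibration-type argument adapted to nonorientable objects. Calibrations need orientation, so we pass to the orientation double cover. The plan is to find a closed, hence calibrating, form (or a foliation by area-nonincreasing retractions) that works for $C$ lifted to a suitable double cover of a neighborhood. We then use the fact that a mod $2$ competitor with the same boundary $V$, restricted to a large ball, lifts to an integral current in the double cover with boundary the oriented lift $\tilde V\cong S^2$.

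The key structural input is the $\sos$-symmetry. The Veronese embedding is the orbit of $\sos$ acting on traceless symmetric $3\times 3$ matrices $\cong\R^5$, through a matrix with two equal eigenvalues. Generic orbits are $3$-dimensional flag-type manifolds $\sos/(\Z_2\times\Z_2)$, and the singular orbits are the two Veronese cones (the eigenvalue configurations $(2,-1,-1)$ and $(-2,1,1)$).

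\textbf{Step 1.} Reduce to the orbit space. This is a $2$-dimensional sector in the plane of diagonal traceless matrices, with coordinates given by the eigenvalues. An $\sos$-invariant competitor reduces to a curve in this sector, weighted by the orbit volume function $\operatorname{vol}(\text{orbit})$.

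\textbf{Step 2.} Construct a sub-calibration. Take the $3$-form $\phi=\ed\psi$, where $\psi$ is an invariant $2$-form built from the orbit structure and a radial function chosen by solving the ODE for the weighted geodesics in the orbit space (Hsiang–Lawson style). We need $\phi|_C=\operatorname{vol}_C$ and comass $\le1$ away from $C$. The form lives naturally on the complement of the opposite cone $C'$, where the fibration is orientable. Equivalently, we construct a Lawlor-type retraction $\R^5\setminus C'\to C$ that is area nonincreasing. Lawlor's curvature criterion fails for $C$, so the retraction must be chosen by the symmetry instead of normal-radial projection.

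\textbf{Step 3.} Handle nonorientability. The local monodromy of the orientation of $C$ around $C'$ is exactly what makes the form only defined up to sign. Here we use the mod $2$ slicing argument: any mod $2$ minimizer $T$ with $\partial T=V$ in $B_1$ can be compared with $C$ by pushing forward via the retraction $\Pi$. Since $\Pi_\#T$ is a mod $2$ current supported in $C$ with boundary $V$, constancy gives $\Pi_\#T=C$, and then $\mathbf M(C)\le\mathbf M(\Pi_\#T)\le\mathbf M(T)$. This requires only the Lipschitz, area-nonincreasing retraction, not orientations, which is why a retraction is preferable to a form. The set $C'$, where $\Pi$ is undefined, is handled by showing that $T$ can be perturbed off it, or that the $3$-dimensional mass of $\Pi$ near $C'$ is controlled since $\dim C'=3$ and $\Pi$ extends continuously in a capped way.

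\textbf{Main obstacle.} The main obstacle is Step 2: verifying that the Jacobian of the retraction on every $3$-plane is $\le1$ in the region where Lawlor's estimate fails, near directions of maximal curvature. I would first try the symmetric ansatz where $\Pi$ moves points along $\sos$-invariant curves in the orbit sector. This reduces the Jacobian bound to a one-variable inequality for the orbit-volume ratio, which can be checked by explicit computation or rigorous numerics.
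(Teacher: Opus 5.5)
Your proposal has a genuine gap: the step that carries the whole theorem is never carried out, and the ansatz you propose for it is the family already known to fail for this cone. No retraction or calibration is constructed, and the only inequality that matters (Jacobian $\le 1$ on every $3$-plane) is deferred to ``explicit computation or rigorous numerics''.

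\textbf{The symmetric ansatz is Lawlor's class.}

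\begin{itemize}
\item If $\Pi$ is $\mathrm{SO}(3)$-equivariant and $A$ has distinct eigenvalues, then $\Pi(A)$ is fixed by the stabilizer $\mathbb{Z}_2\times\mathbb{Z}_2$ of $A$. So $\Pi(A)$ lies on a ray $\mathbb{R}_{\ge 0}(-\ell\ell^T+\tfrac13 I)$ with $\ell$ an eigenline of $A$.
\item Continuity and $\Pi|_C=\mathrm{id}$ force $\ell$ to be the eigenline of the smallest eigenvalue, at least on the part of $\{\Pi\ne 0\}$ connected to $C$. That eigenline is the nearest-point projection onto the link.
\item Hence $\Pi$ maps each normal wedge of the Veronese link into its own ray. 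Since the isotropy $\mathrm{S}(\mathrm{O}(2)\times\mathrm{O}(1))$ acts transitively on unit normals, $\Pi$ depends only on radius and normal angle.
\end{itemize}

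That is Lawlor's class of normal-wedge (vanishing) retractions, and Lawlor showed the curvature of this link is too large for it. Homogeneity does not help: every unit normal has principal curvatures $\pm1/\sqrt{3}$, so every normal direction is already the worst case. Symmetry buys no slack.

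\textbf{The treatment of $C'$ in Step 3 also fails.} A $3$-dimensional competitor generically meets the $3$-dimensional $C'\subset\mathbb{R}^5$ transversally along a curve, so it cannot be perturbed off $C'$. Instead $\Pi$ must extend across $C'$ by collapsing to $0$: there the smallest eigenline is undefined, and its derivative blows up like $(\lambda_2-\lambda_3)^{-1}$ for $\lambda_1\ge\lambda_2\ge\lambda_3$. Controlling the Jacobian in that collapse is exactly where Lawlor's method breaks.

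\textbf{Step 1 does not reduce to a curve problem.} The principal orbits $\mathrm{SO}(3)/(\mathbb{Z}_2\times\mathbb{Z}_2)$ are $3$-dimensional, the same dimension as $C$. So an invariant $3$-dimensional competitor is not a curve in the $2$-dimensional orbit space; a Hsiang--Lawson reduction to curves needs principal orbits of dimension $\dim C-1$. There is no weighted-geodesic ODE to solve.

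\textbf{The lifting claim is false in general.} A mod $2$ competitor need not lift to an integral current on the double cover of $\mathbb{R}^5\setminus C'$. That restriction is exactly why the twisted-calibration results of Lawlor and Murdoch only cover competitors satisfying an orientability hypothesis.

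\textbf{How the paper proceeds instead.} It uses a projection-averaging argument (``moderation'') and never needs a map onto $C$.
\begin{itemize}
\item Take a competitor $S$ with $S-C_1=\partial Q$ and a tangent $3$-plane $E$ of $C$. Then $\pi_E(Q)$ is a $4$-dimensional flat chain in a $3$-plane, hence $0$. So $\pi_E(S)=\pi_E(C_1)$, and $\mathbf{M}(\pi_E(C_1))\le\int_S|\langle E,T_qS\rangle|\,d\mathcal{H}^3(q)$.
\item Integrate over $E=T_pC_1$, $p\in C_1$, and exchange the integrals. This gives $\mathbf{M}(\pi_E(C_1))\,\mathbf{M}(C_1)\le\mathbf{M}(S)\sup_W\int_{C_1}|\langle W,T_pC_1\rangle|\,d\mathcal{H}^3(p)$.
\item First input: an explicit computation shows $\pi_{T_{\mathbf{v}_1}C}(C_1)$ is a solid ellipsoid of volume $3\pi/4$, and the same holds at every point by homogeneity.
\item Second input: the bound $\sup_W\int_{C_1}|\langle W,T_pC_1\rangle|\le 3\pi/4$. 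It comes from writing $\langle W,T_pC\rangle$ as a difference of squared overlaps of spin-$2$ coherent states in $\mathbf{V}\otimes\mathbb{C}$. One then inserts $\pm\tfrac1{16}$, applies a triangle inequality, and uses the sharp $\mathrm{SU}(2)$ coherent-state inequality with the convex function $|s-\tfrac1{16}|$.
\end{itemize}
No orientation, calibration or retraction is ever used, which is what gets past both obstructions your plan runs into: nonorientability and Lawlor's curvature bound.
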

In the proof of Theorem \ref{thmc}, we devise a new general method to prove area-minimizing, inspired by Frank Morgan's work \cite{FMcalv}. We call this new method moderations, which might help solve several other longstanding problem of proving area-minimizing cones.

We also devise a new way of gluing area-minimizing currents and prove a powerful realization theorem inspired by Zhang's work \cite{YZa,YZj} on gluing calibrations.
\begin{defn}
	We say a $d$-dimensional area-minimizing mod $2$ cone $K^d$ in $\R^{d+c}$ is a regular cylindrical if $K$ is not a linear subspace and
	\begin{itemize}
		\item  either $K$ has smooth link,
		\item or $K=\R^{s}\times C^{d-s}\s \R^{s}\times \R^{d-s+c},
		$ where $1\le s\le d-2$ is an integer and 
		$C$ is a mod $2$ area-minimizing cone with smooth link in $\R^{d-s+c}.$ 
	\end{itemize}
In the first case we say that $K$ has spine dimension $0$  and in the second case we say that $K$ has spine dimension $s.$ \end{defn}
Here the link of a cone is its intersection with the unit sphere in its ambient space. Note that by regularity of mod $2$ area-minimizing currents \cite{HFts}, $s$ cannot be larger than $(d-2).$
\begin{thm}\label{thmzhang}
	If $K$ is a $d$-dimensional regular cylindrical mod $2$ area-minimizing cone in $\R^{d+c}$ with spine dimension $s$, then under the same assumption in Theorem \ref{thmm}, there exists a smooth metric $g$ on $M,$ so that 
	\begin{itemize}
		\item the unique mod $2$ area-minimizing current $T$ in $[\Si]$ has a singular set diffeomorphic to the standard $s$-dimensional sphere $S^s,$
		\item in a neighborhood of the singular set of $T$ equals $C$ truncated times $S^2,$ thus at each point of $\sing T$, the tangent cone to $T$ is $K.$
	\end{itemize}
\end{thm}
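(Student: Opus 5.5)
We construct the metric by planting a local model of $K$ (truncated and wrapped around an embedded sphere) inside a smooth representative of $[\Si]$, and then make that configuration the unique minimizer by a calibration-type argument with "moderations" built from the minimality of $C$. We work only with classes admitting a smoothly embedded representative $\Si$, since otherwise Theorem \ref{thmm} needs nothing. Write $K=\R^s\times C^{d-s}$, where $C$ has smooth link in $\R^{d-s+c}$; the case $s=0$ is the special case $K=C$.

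\textbf{Step 1 (local model).} Fix a small embedded sphere $S^s\s\Si$. A tubular neighborhood of $S^s$ in $M$ is diffeomorphic to $S^s\times B^{d-s+c}$, with $\Si$ corresponding to $S^s\times B^{d-s}$. Replace $S^s\times B^{d-s}$ by $S^s\times (C\cap B_1)$. Outside a compact region, $C\cap B_1$ must then be connected smoothly back to $\Si$. We cannot deform the plane $B^{d-s}$ directly into $C$. Instead we use the homology relation: $C\cap B_1$ is mod $2$ homologous rel boundary to a smooth surface $W$ with boundary equal to the link of $C$, and outside $B_{1/2}$ we glue $W$ to $\Si$ by a cobordism inside the tube. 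Because $c\ge 2$, the codimension leaves room for this, and mod $2$ removes orientation constraints. The result is a current $T_0\in[\Si]$ that is smooth away from $S^s\times\{0\}$ and is modeled on $S^s\times C$ near it.

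\textbf{Step 2 (metric and calibration-type certificate).} Next we choose $g$ so that $T_0$ is the unique mod $2$ minimizer. Near $S^s$ we take the product metric of a round $S^s$ with a large radius and the flat $\R^{d-s+c}$. There $S^s\times C$ is minimizing locally: a product of a minimizing cone with a closed manifold is minimizing whenever the cone is minimizing in the flat factor. Mod $2$ calibrations do not exist in general, so we use the paper's moderation method, the same one used for Theorem \ref{thmc}. This gives a mod $2$ comparison form (a "moderation") on a neighborhood $U$ of $S^s\times(C\cap B_1)$, equal to the pullback of the one for $C$. Away from the singular set, $T_0$ is smooth. Following Zhang's gluing of calibrations, we then conformally shrink the metric on a thin neighborhood $V$ of $T_0$, making $T_0$ strictly minimizing within $V$; in $V$ we have a local calibration given by the volume form of $T_0$ extended via the nearest-point projection. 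Finally we blow up the metric outside $V$, so any competitor leaving $V$ costs more mass. Together these give uniqueness.

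\textbf{Step 3 (conclusion and main obstacle).} Once $T_0$ is the unique minimizer, $\sing T=S^s\times\{0\}\cong S^s$, and the tangent cone at every point of it is $\R^s\times C=K$, since the metric is flat-product there. The hardest step is Step 2: patching the moderation on $U$ with the smooth calibration on $V\setminus U$ into a single global comparison that still forces mass inequalities for mod $2$ competitors. The moderation inequality is sharp only on $C$, and the interpolating region breaks the exact comma-free comparison. I would first absorb the error by giving the metric a weight that is strictly larger off $T_0$ in the annular gluing region. The strict minimality of $C$ at its smooth link, used by Hardt--Simon style foliation arguments, controls the required weight.
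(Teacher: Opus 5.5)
\textbf{Step 1 fails.} You claim that $C\cap B_1$ is homologous rel boundary to a smooth $W$ with $\pd W=L$, the link of $C$, and you then glue $W$ to $\Si$ by a cobordism from $L$ to the round sphere $S^{d-s-1}$. Both steps need $L$ to be null-cobordant. This is false for exactly the cone the theorem is applied to. For the Veronese cone, $L=\rpt$ has odd Euler characteristic and bounds no compact $3$-manifold. That obstruction is precisely what makes the singular set persistent in Lemma \ref{cpst}, so a smooth filling $W$ cannot exist there.

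The right observation is that you must cap off $S^s\times L$, not $L$, and $S^s\times L=\pd(D^{s+1}\times L)$ always. This is what the paper's spherical link $\ssc=(C\times\R^{s+1})\cap S^{d+c}$ does. It is $C_1\times S^s$ near its singular set and is closed off by a copy of $L\times D^{s+1}$. It sits in a ball disjoint from $N$ and is attached by the connected sum of Lemma \ref{lemcs}. That is where $c\ge 2$ enters, and it also makes the regular set connected.

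\textbf{Step 2 is the heart of the proof, and it is missing.} As you acknowledge, the patching is not carried out, and the tools you name do not apply.
\begin{itemize}
\item A moderation (Definition \ref{defnmod}) is a global property of a compactly supported current in Euclidean space, defined through projections onto linear planes. It is not a comparison form that can be pulled back to a neighborhood or patched. Moreover, the theorem assumes only that $C$ is mod $2$ minimizing, not that it is a moderation.
\item A ``volume form of $T_0$'' calibration does not exist when $\reg T_0$ is non-orientable, as it is here: it contains $L\times D^{s+1}$ with $L=\rpt$.
\item Hardt--Simon foliations are a codimension-one tool that needs strict minimality, which is unavailable here.
\end{itemize}

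The paper's missing idea is to replace calibrations by area-nonincreasing maps.
\begin{itemize}
\item Use the nearest-point projection $\pi$ onto the regular part.
\item Define a squeezing map $f$ that is the identity near $\sing\ssc$ and equals $\pi$ farther out, by rescaling normal coordinates by $\alpha(\rho)$.
\item Take the glued metric $\hgd=(1-\alpha(\rho))\,h_{\sing}|_{\ker\ed\pi}+f\du h_{\sing}$. It satisfies $\hgd\ge f\du h_{\sing}$, with equality on tangent spaces of $N\#\ssc$.
\end{itemize}
The metric-shorting Lemma \ref{lemshort} then gives $\ms_{\hgd}(T)\ge\ms_{h_{\sing}}(f(T))$. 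Outside $U(\sing\ssc)$, $f(T)$ coincides with $N\#\ssc$. Inside, it is homologous to $C_1\times S^s$, which is minimizing by the slicing argument of Fact \ref{calc}; that argument uses only the minimality of $C$.

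\textbf{Uniqueness also needs more.} Blowing up the metric outside $V$ only traps a minimizer in $V$. You also need $H_d(V;\Z/2\Z)=\Z/2\Z\,[T_0]$ so that the trapped minimizer is a legitimate competitor for the local minimality. The paper obtains this in Lemma \ref{tube} from a triangulation, regular neighborhoods, and the constancy theorem on the connected regular set. Finally, the bump $(1+\be)$ together with unique continuation is what yields uniqueness.
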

To the author's knowledge, Theorem \ref{thmzhang} gives the first realization theorem of area-minimizing tangent cones without imposing calibrations.
	\subsection{Plan of our proof}\label{planpf}
	We now outline the plan of our proof for Theorems \ref{thmm} and \ref{mts}. 
	
	Theorem \ref{mts} is a direct corollary of Theorem \ref{thmm}, by adding to the conclusion homology classes with no smooth representatives at all due to topological reasons. 
	
	From now on, we will focus on proving Theorem \ref{thmm}.
	
	Recall that our goal is to find open subsets of the space of smooth Riemannian metrics, in which the singular sets of area-minimizing submanifolds are of dimension at least  $(d-3)$ We adopt the following definition about persistent singular sets.
	\begin{defn}\label{psing}
		We say a subset $\kk$ of the singular sets of an area-minimizing submanifold $T$ in a Riemannian metric $g$ is a persistent singular set, if there exists an open subset $\Om_T$ containing $g$ of the space of smooth Riemannian metrics, so that for all $h\in\Om_T$
		\begin{align*}
			\inf\dimh\sing([T],h)\ge \dimh \kk.
		\end{align*}
	\end{defn}
	Then our goal is to find an area-minimizing submanifold with persistent singular sets of dimension $(d-3)$ in a smooth Riemannian metric.
	
	Using connected sums of submanifolds, we can assume that $[\Si]$ can be represented by a connected smooth submanifold $N.$ 
	
	The topological representative $N$ have no singular sets to start with. Thus, to achieve our goal of having persistent singular sets, we must {add} {singular} {sets} {manually} to alter the topological representative $N$ while achieving the following four features:
	\begin{enumerate}
		\item  The added singular sets give an altered topological representative that stays in the class $[\Si]$.\label{ft0}
		\item The added singular sets should have dimension $(d-3).$\label{ft1}
		\item We can find a smooth Riemannian metric $h$, in which the altered topological representative is {area-minimizing}.\label{ft2}	
		\item The added singular sets are {persistent singular sets} for the altered representative area-minimizing in $h$.\label{ft3}
	\end{enumerate}
	Features (\ref{ft0}) and (\ref{ft1}) are only about the altered topological representative, while in Features (\ref{ft2}) and (\ref{ft3}) the altered topological representative is area-minimizing in our newly found metric $g.$
	
Features (\ref{ft0}), (\ref{ft1}) and (\ref{ft2}) together gives Theorem \ref{thmzhang}. Feature (\ref{ft0}) uses a classical construction due to Smale \cite{NS}, from which Feature (\ref{ft1}) follows naturally. Feature (\ref{ft2}) uses a metric gluing argument inspired by Zhang's work of gluing calibrations \cite{YZa,YZj,YZt}. 

Feature (\ref{ft3}) is achieved as follows.
With Theorem \ref{thmc} in hand, let $C$ denote the cone over the Veronese minimal embedding of $\rpt$, i.e., the union
of all rays starting from the origin and passing through a point in the Veronese embedding of $\rpt$.  the product $C\times S^{d-3}$ is area-minimizing in the product $\R^5\times S^{d-3}$ with product metric, where $S^{d-3}$ is the standard $(d-3)$-dimensional sphere. Then we have the following.
	\begin{fact}\label{fctcb}
		The singular set $\{0\}\times S^{d-3}$ of the $d$-dimensional area-minimizing submanifold $C\times S^{d-3}$ is a persistent singular set.
	\end{fact}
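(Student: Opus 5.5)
We argue by compactness and contradiction, using the rigidity of the Veronese cone $C$ from Theorem \ref{thmc} together with the tangent-cone structure of mod $2$ minimizers. Work in a compact region of $\R^5\times S^{d-3}$ (or in $M$ after gluing, as in the plan) containing $T=C\times S^{d-3}$ truncated. Suppose the fact fails. Then there are metrics $h_j\to g$ in the smooth topology and area-minimizing mod $2$ currents $T_j$ in $[T]$ with respect to $h_j$ such that $\dimh\sing T_j<d-3$. We want to show that, for $j$ large, $\sing T_j$ contains a set projecting onto $S^{d-3}$, which forces $\dimh\sing T_j\ge d-3$.

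First, we need $T$ to be the unique minimizer in its class for $g$; the metric construction of Feature (\ref{ft2}) should be arranged with a strict-minimizing (calibration-like or moderation) inequality to guarantee this. Compactness of mod $2$ area-minimizing currents then gives $T_j\to T$ in flat norm, with convergence of mass and Hausdorff convergence of supports via the monotonicity formula. By Allard's theorem, $T_j$ is a smooth small graph over $T$ away from any fixed neighborhood of $\{0\}\times S^{d-3}$.

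Next, fix $p\in S^{d-3}$ and let $D_p$ be the $5$-dimensional slice $\R^5\times\{p\}$ (or a small transverse disk to the spine through $(0,p)$). We claim that $\sing T_j$ meets every such slice near $(0,p)$. If some neighborhood $B_r(0,p)$ contained no singular points of $T_j$ along a sequence, then $T_j\cap B_r$ would be smooth minimal submanifolds converging to $C\times\R^{d-3}$ with multiplicity one. The obstruction is topological. The link of $C$ is $\rpt$, which does not bound a smooth manifold in a way compatible with the graphical convergence near the boundary sphere. More concretely, $T_j\cap(\pd B^5_r\times\{q\})$ is isotopic to the Veronese $\rpt$, and $\rpt$ is not null-cobordant (its Stiefel–Whitney number $w_2^2\neq0$). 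A smooth $T_j$ slicing transversally through a generic $q$ would therefore give a compact $3$-manifold bounded by $\rpt$, which is a contradiction. One must handle the fact that slices of $T_j$ are only generically smooth; we would choose $q$ by Sard and the coarea formula. This shows that the projection of $\sing T_j\cap U$ to $S^{d-3}$ contains an open dense, and in fact full, set near each $p$.

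Finally, let $\pi$ be the projection to the $S^{d-3}$ factor. It is Lipschitz, and the previous step gives $\pi(\sing T_j\cap U)\supset$ an open subset of $S^{d-3}$ (indeed all of $S^{d-3}$ by covering with finitely many $p$). Lipschitz maps do not increase Hausdorff dimension, so $\dimh\sing T_j\ge d-3$, contradicting the assumption. Since the argument applies to every minimizer in $[T]$ for all $h$ in a neighborhood, we obtain the open set $\Om_T$.

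The main obstacle is the uniqueness and strictness step: without uniqueness, some $T_j$ could converge to a different minimizer. The topological slicing argument also needs care, because singularities of $T_j$ might sit exactly on the chosen slice. Genericity of slices plus the upper semicontinuity of density should handle the latter.
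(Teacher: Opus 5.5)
Your proposal is correct and follows essentially the paper's argument (Lemmas \ref{nsmin}, \ref{localt} and \ref{cpst}). Uniqueness of the minimizer, compactness and Allard's theorem give graphical control away from the spine; slicing by $\pi_{S^{d-3}}$ with Sard's theorem then forces every fiber to meet $\sing T$, because $\rpt$ bounds no compact $3$-manifold. Your Lipschitz-projection bound replaces the paper's Eilenberg inequality, which gives the stronger conclusion $\hm^{d-3}(\sing T)>0$. One small correction: the relevant invariant is $w_2[\rpt]=w_1^2[\rpt]=1$ (equivalently the odd Euler characteristic the paper uses), not $w_2^2$, which lies in $H^4(\rpt;\Z/2\Z)=0$.
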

	To see the ideas behind the above fact, let us think of the simplest case where $d=3,$ i.e., we have two disjoint copies of $\R^5,$ each copy with an area-minimizing $C$ inside. Note that topologically $\rpt$ as a manifold cannot be the boundary of any $3$-dimensional manifold. Thus any reasonable perturbation of the Veronese embedding of $\rpt$ will not bound a smooth area-minimizing submanifold. The above Fact \ref{fctcb} is heuristically due to this non-bounding property of $\rpt$. The analogues of the above Fact \ref{fctcb} has also been used in similar ways in \cite{HP}.
	
	Note that the singular set in Fact \ref{fctcb} has dimension $(d-3)$, which explains our lower bound on the singular sets of area-minimizing submanifolds in Theorem \ref{thmm}.

	With Fact \ref{fctcb} in hand, our plan is to manually introduce $C\times S^{d-3}$ into our topological representative of $[\Si]$, while achieving Features \ref{ft0}, \ref{ft1} and \ref{ft2}. Then Feature \ref{ft3} is a direct consequence of Fact \ref{fctcb}, which finishes the proof.
	\subsection{Overview of the paper}
	Our paper will be structured as follows. 
	
	In Section \ref{bsdefn}, we will give some basic definitions.
	
	In Section \ref{secmod}, we will propose a new method of proving area-minimization, which we call moderations.
	
	In Section \ref{secthmc}, we prove Theorem \ref{thmc} using the notion of moderations.
	
	In Section \ref{preprep}, we will prepare a topological representative of $[\Si]$. In Section \ref{modsing}, we will alter the topological representative of $[\Si]$ to add our desired singular sets, while establishing Features \ref{ft0} and \ref{ft1} of our plan. In Section \ref{tbssc}, we establish some technical topological lemmas that are needed for Section \ref{zhang}.
	
	In Section \ref{zhang}, we will find a smooth Riemannian metric in which the altered representative produced in Section \ref{modsing} is area-minimizing, thus achieving Feature \ref{ft2} and finish the proof of Theorem \ref{thmzhang}.
		
	In Section \ref{seccpt}, we will prove Fact \ref{fctcb} in the case of our altered topological representative and warp up the proof.
	
	In Section \ref{secdis}, we will give some discussions about our results.
	\section*{Acknowledgements}
	The author would like to thank his Ph.D. advisor, Professor Camillo De Lellis, for telling the author about White's conjecture. On the occasion of Professor De Lellis's 50th birthday, the author extends warm wishes to him. Sincere thanks go to Professor Robert Bryant, who told the author about the Veronese cone and its persistent singularity when the author was an undergraduate student at Duke, which ultimately leads to this manuscript. Sincere tribute is paid to thank Professor Frank Morgan, whose works \cite{FMcalv,FMeu} directly inspired the author's proposed notion of moderations in Definition \ref{defnmod}. The same thank goes to Professor Yongsheng Zhang, whose pioneering work on constructing calibrations inspired the author to find the ultimate gluing argument. The author would like to thank Last but not least, the author would also like to Professors Yongsheng Zhang and Zhihan Wang for helpful discussions. express immense gratitude for Professor Hubert Bray's unwavering support and his hospitality at Duke.
	\section{Basic definitions}\label{bsdefn}
	We will give some basic definitions in this section. The contents are more or less standard and the experienced reader can skip this section.
	\subsection{Manifolds}\label{bdmn}
	As we deal with mod $2$ homology in our manuscript, unless otherwise stated, every manifold and submanifold we mention in this manuscript is assumed to be unoriented (Definition \ref{defnno}).  
	
	In general,  the symbol $M$ will be reserved for a $(d+c)$-dimensional closed ambient manifold and the symbol $[\Si]$ will be reserved for a $d$-dimensional homology class. When we do not mention the ambient manifold explicitly, it is understood that the ambient manifold is $M.$ 
	
	We will often speak of smooth neighborhoods or smooth open sets, by which we mean an open set with smooth boundary.
	
	We will also make frequent use of transversality \cite{MH}. 
	\subsection{Riemannian geometry}
	When a submanifold manifold $N$ is adopted with the induced Riemannian metric, we will use $\operatorname{dist}_{(M,g)}(p,q)$ to denote the Riemannian distance between points $(p,q)$ in Riemannian manifold $(M,g)$. And $\operatorname{dist}_{(M,g)}(p,N)$ will be used to denote the Riemannian distance of $p$ to a closed subset $N$ on $(M,g).$ We will drop the ambient manifold $M$ when it is clear from context.
	
	We will reserve the symbol $\no{\cdot}$ to denote the Riemannian length of vectors, forms, etc. Whenever we use the symbol $|\cdot|,$ it is understood that we are either taking absolute value or taking the square root of the sum of squares of components in coordinates, thus different from $\no{\cdot}$ on general manifolds.
	
	The symbol $\T$ will be reserved to denote tangent bundles and the symbol $\T^\perp$ will be reserved to denote normal bundles.%	\subsection{Unique continuation of minimal submanifolds}	In this manuscript, we will frequently use the following classical result:	\begin{lem}\cite[Lemma 2.4.1]{ZLns}\label{suct}(Strong unique continuation) If two properly embedded $d$-dimensional minimal submanifolds $F,G$ in a $(d+c)$-dimensional Riemannian manifold are tangent to each other of infinite order at a point $p\in F\cap G$, then in a neighborhood of $p$ they coincide.	\end{lem}
	\subsection{Mod $2$ currents}
	The right category to discuss area-minimizing representatives of mod $2$ homology classes is the category of mod $2$ currents. We will use Federer's definitive monograph \cite{HF}, Simon's classical lecture notes \cite{LS} and White's foundational works \cite{BWdt,BWrc} as basic references. 
	
	For our purposes, the reader can just regard mod $2$ currents as integer coefficient chains of simplicial complexes in algebraic topology. Note that mod $2$ currents are allowed to have finite area boundaries.
	
	Techincally speaking, non-compact cones are not mod $2$ currents as they have infinite area. However, we follow the literature convention and call them mod $2$ currents, as we will only deal with compact truncations of cones.

For a mod $2$ current $T$, we will abuse notations and use $T$ to denote both the current and its underlying rectifiable set. Restrictions of $T$ onto smooth open sets $U$ will be denoted by $T\cap U$ or $T|_U$ depending on which notation is simpler in the context. Pushforward of $T$ under smooth maps $f$ will be denoted by $f(T).$
		\subsection{Three facts about mod $2$ currents}
We will use the following facts several times
\begin{fact}\label{const}\cite[6.1 Theorem]{MSfc}
	Let $N$ be a $d$-dimensional open submanifold in a complete $(d+c)$-dimensional manifold $M$. Then an $d$-dimensional mod $2$ current $T$ with $\pd T\cap N=\es$ restricted to $N$ equals $aN$ for some $a\in \Z/2\Z.$
\end{fact}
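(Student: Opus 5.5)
The plan is to run the constancy theorem for flat chains, working locally and then propagating by connectedness. Note that the statement as written needs $N$ connected; otherwise one gets a constant $a$ on each connected component. I treat $N$ as connected, as in the cited source, and otherwise apply the argument componentwise.

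\textbf{Local step.} Around each point $p\in N$, choose a coordinate chart $\Phi:U\to B^d\times B^c$ of $M$ in which $N\cap U$ is the flat slice $B^d\times\{0\}$. Restricted to $U$, the current $T$ is a mod $2$ flat chain of dimension $d$ with $\pd T=0$ in $U$. The goal is to show that the part of $T$ supported on $N\cap U$ equals $a_p\,[B^d\times\{0\}]$ for a single $a_p\in\Z/2\Z$.

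The natural route is to slice or project: take $\pi:B^d\times B^c\to B^d$ and consider $\pi(T\llcorner(N\cap U))$. Since $T$ restricted to $N$ is a $d$-dimensional current lying in a $d$-dimensional submanifold, it is represented by a multiplicity function $\theta\in L^1_{loc}(N\cap U;\Z/2\Z)$. I would read the hypothesis $\pd T\cap N=\es$ as $\pd(T\llcorner N)=0$ on $N$, which is the intended meaning. Then $\pd[\theta\,B^d]=0$, so for every compactly supported $(d-1)$-form $\omega$ we get $\int\theta\,\ed\omega=0$ mod $2$.

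\textbf{Constant multiplicity.} Next, show that $\theta$ is a.e.\ constant. Test against the boundaries of small cubes: $\pd(\theta[B^d])=0$ forces the slices of $\theta$ by hyperplanes $\{x_i=t\}$ to agree for a.e.\ $t$. This is the mod $2$ version of "distributional derivative zero implies constant".

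The main obstacle is that mod $2$ flat chains are not currents paired with forms, so a direct integration-by-parts argument is not available. I would instead use White's characterization, or the standard Federer argument via deformation: push $\theta[B^d]$ radially and use the homotopy formula to show that it equals the cone over its slice, hence is constant. Alternatively, one can appeal to the structure theorem that a flat mod $2$ $d$-chain supported in a $d$-plane with no boundary is a constant multiple of the plane, as in Federer 4.2.3 / 4.1.31 adapted to $\Z/2\Z$.

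\textbf{Globalization.} The constants $a_p$ are locally constant on $N$, because overlapping charts must give the same multiplicity. Since $N$ is connected, they all equal a single $a$. Hence $T\llcorner N=aN$.
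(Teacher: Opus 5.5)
The paper gives no argument here; it only cites the constancy theorem \cite[6.1 Theorem]{MSfc}. Your skeleton (slice charts, constancy in a flat chart, propagation by connectedness) is the standard proof of that theorem, and at the decisive step you also fall back on citing it. So in substance you agree with the paper, and your remark that $N$ must be connected is correct.

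However, replacing $\supp\pd T\cap N=\es$ by $\pd(T\llcorner N)=0$ near $N$ is not a reading of the hypothesis. It is a strengthening, and a necessary one, because the Fact as printed is false. Let $N$ be the open unit disc in $\{x_3=0\}\s\R^3$ and $T=\pd\Omega$ with $\Omega=\{(x,x_3):|x|<2,\ \phi(x)<x_3<1\}$, where $\phi<1$ is smooth and vanishes exactly on $\{x_1\ge 0\}$. Then $\pd T=0$, yet $T\llcorner N$ is a half-disc.

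The standard form of the constancy theorem adds the hypothesis that $T$ is supported in $N$ near $N$, i.e.\ $\supp T\cap U\s N$ for an open $U\supset N$ in which $N$ is relatively closed. Your version is that theorem applied to $T\llcorner N$. It also needs $T$ to have locally finite mass, so that $T\llcorner N$ makes sense. Say explicitly that you are adding a hypothesis. Either form covers every use of the Fact in the paper, namely Lemma \ref{lemproj}, the computation of $H_d(U_\e(N\#\ssc))$ in Section \ref{tbssc}, and the argument preceding Fact \ref{uni}. In each of these the current is locally supported in the submanifold.

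The second issue is the local step, which carries all the content and is not proved as written. The identity $\int\theta\,\ed\omega=0$ ``mod $2$'' has no meaning, as you concede yourself. The agreement of hyperplane slices is asserted rather than derived. Your last alternative is the flat case of the theorem itself, which is acceptable as a citation (all the paper does) but is not a proof.

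If you want more than a citation, the mechanism is Fact \ref{fctconc}, used twice; this is what ``testing against small cubes'' has to mean.
\begin{itemize}
\item Write $S=T\llcorner N=\theta[Q]$ on a cube $Q$ in the chart.
\item For a.e.\ $s<t$ and a.e.\ $(d-1)$-cube $Q''$ with $[s,t]\times\ov{Q''}\s Q$, the chain $\pd\big(S\llcorner((s,t)\times Q'')\big)$ has finite mass and lives on the boundary of the box.
\item Its image under $p(x_1,x')=x'$ equals $\pd\,p\big(S\llcorner((s,t)\times Q'')\big)$. This vanishes, because $p\big(S\llcorner((s,t)\times Q'')\big)$ is a $d$-chain lying in $\R^{d-1}$.
\item The image of the lateral part is a $(d-1)$-chain supported on $\pd Q''$, hence $0$.
\item What remains is $\big(\theta(t,\cdot)+\theta(s,\cdot)\big)[Q'']=0$.
\end{itemize}
So $\theta$ is a.e.\ independent of $x_1$, and likewise of every coordinate, hence a.e.\ constant. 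Your globalization then finishes the proof.
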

\begin{fact}\label{fctconc}\cite[Theorem 3.1]{BWdt}
	A $d$-dimensional mod $2$ flat chain supported on a $d$-dimensional Hausdorff measure $0$ set is the $0$ current.
\end{fact}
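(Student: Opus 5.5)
The statement is Fact \ref{fctconc}, which is White's theorem \cite[Theorem 3.1]{BWdt}. In this paper we simply quote it. To sketch an independent argument, I would argue by induction on $d$ using slicing. The first step is to localize. Flat chains and their supports behave well under restriction to small coordinate balls and under pushforward by smooth embeddings. So, after a partition of unity argument (restricting to balls $B_r(p)$ for a.e. $r$, where restriction of a flat chain is defined), I may assume $T$ is a compactly supported mod $2$ flat chain in $\R^{n}$ with $\supp T\s K$, where $K$ is compact and $\hm^d(K)=0$.

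The base case is $d=0$. Then $\hm^0(K)=0$ forces $K=\es$, so $T=0$.

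For the inductive step, let $f:\R^n\to\R$ be any linear function. For a.e. $t$ the slice $\la T,f,t\ra$ is a $(d-1)$-dimensional mod $2$ flat chain supported in $K\cap f^{-1}(t)$. By the Eilenberg (coarea) inequality,
\begin{align*}
\int^\ast \hm^{d-1}\big(K\cap f^{-1}(t)\big)\,\ed t\le C\,\hm^d(K)=0,
\end{align*}
so $\hm^{d-1}(K\cap f^{-1}(t))=0$ for a.e. $t$. The induction hypothesis then gives $\la T,f,t\ra=0$ for a.e. $t$. Iterating this, for every linear projection $P:\R^n\to\R^d$ the $0$-dimensional slices $\la T,P,y\ra$ vanish for a.e. $y\in\R^d$.

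A parallel, simpler route is to look at the pushforward $P(T)$ directly. It is a top-dimensional mod $2$ flat chain in $\R^d$, hence of the form $u\,[\R^d]$ with $u\in L^1(\R^d;\Z/2\Z)$, and it is supported on $P(K)$. Since $\hm^d(K)=0$, we have $\mathcal{L}^d(P(K))=0$, so $u=0$ a.e. and $P(T)=0$. The same applies to $P(T\cap U)$ for a.e. ball $U$.

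The remaining step is to show that a flat chain all of whose $d$-dimensional projections and slices vanish (also after localization) is zero. For finite mass chains this is standard, since mass measure is controlled by slices. For general flat chains, which may have infinite mass, I would approximate by polyhedral chains via the deformation theorem, or appeal to White's slicing criterion: a flat chain is determined by its slices by projections onto $d$-planes.

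I expect this injectivity step for infinite-mass flat chains to be the main obstacle, and it is exactly the content of White's work \cite{BWdt,BWrc}. For that reason the paper cites the result rather than reproving it.
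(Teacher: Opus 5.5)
Your proposal ends up on the same footing as the paper, which gives no argument and simply quotes \cite[Theorem 3.1]{BWdt}; as a justification of a quoted fact that is acceptable. The sketch around the citation, however, is not an independent proof, and the gap sits exactly at the step you flag. Both the slicing induction and the projection route only show that the localized $d$-dimensional projections and the $0$-dimensional slices of $T$ vanish. The deferred claim, that a $\Z/2\Z$ flat chain with this property is zero, is not a technical loose end: it carries all of the difficulty. Without it, the induction on $d$ says nothing about $T$ itself.

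It helps to see why this step is trivial for real coefficients and hard here. For a real flat chain your projection route closes in one line. Every $d$-form is a sum of terms $f\,\ed x^{\lambda}$, and $T(f\,\ed x^{\lambda})=\big(p_{\lambda\#}(T\llcorner f)\big)(\ed y^1\wedge\cdots\wedge\ed y^d)$, with $p_\lambda$ the coordinate projection. Here $p_{\lambda\#}(T\llcorner f)$ is an $L^1$ function on $\R^d$ supported on a Lebesgue-null set, hence zero. A mod $2$ flat chain admits neither a pairing with forms nor multiplication by smooth functions. So the vanishing of projections or slices does not formally force $T=0$. This is where White's deformation theorem for flat chains with coefficients in a normed abelian group, the subject of \cite{BWdt}, does the work.

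Your remark that the finite-mass case is standard because the mass is controlled by slices is also overstated. Bounding $\|T\|$ from above by slice masses is the integral-geometric formula for \emph{rectifiable} chains. Rectifiability of finite-mass $\Z/2\Z$ flat chains is itself a deep theorem (cf.\ \cite{WFfc,BWrc}). Once it is available, slicing is unnecessary: $\|T\|=\hm^d\llcorner E$ with $E$ rectifiable gives $\ms(T)=\|T\|(K)\le\hm^d(K)=0$.

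Finally, your localization step needs the usual almost-every definitions via flat approximation. This covers restricting an infinite-mass flat chain to balls and locating the supports of its slices and pushforwards. That part is routine, but it is not where the content of the theorem lies.
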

\begin{fact}\label{fctmap}\cite[Lemma 3.4]{LFZg}
	Assume that $f:M\to M$ is a $C^1$ map. Then for any mod $2$ current $T$, the mass of $f(T)$ is less than or equal to the mapping area of $T$ under $f,$
	\begin{align*}
		\ms(f(T))\le\int_T \jac(f|_{\T_pT})\dhm^d(p).
	\end{align*}
\end{fact}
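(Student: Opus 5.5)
The plan is to reduce to the structure theory of rectifiable mod $2$ currents and then apply the area formula. If the right-hand side is infinite there is nothing to prove, so assume $T$ has finite mass. By White's rectifiability theorem for mod $2$ flat chains of finite mass \cite{BWrc}, $T$ is rectifiable. Concretely, there is a countably $d$-rectifiable set $R\s M$ with $\hm^d(R)<\infty$ such that $T$ is the mod $2$ current carried by $R$ with multiplicity $1$. Then $\ms(T)=\hm^d(R)$, and the right-hand side of the inequality equals $\int_R \jac(f|_{\T_pR})\,\dhm^d(p)$, where $\T_pR$ is the approximate tangent plane, defined $\hm^d$-a.e. on $R$.

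The first key step is to identify $f(T)$ concretely. Since $M$ is compact and $f$ is $C^1$, $f$ is Lipschitz. I will cover $R$, up to an $\hm^d$-null set, by countably many disjoint pieces $R_i$, each contained in a $C^1$ embedded $d$-dimensional submanifold. On each piece, split further according to whether $\jac(f|_{\T_pR})$ vanishes.
\begin{itemize}
\item On the part where the Jacobian vanishes, the area formula gives that the image has $\hm^d$-measure zero. By Fact \ref{fctconc}, the pushforward of that part is the zero current.
\item On the part where the Jacobian is positive, $f$ is locally a $C^1$ immersion of the piece. Its pushforward is the mod $2$ current carried by $f(R_i)$ with multiplicity $N_i(y)=\#(f^{-1}(y)\cap R_i)$ reduced mod $2$.
\end{itemize}
Summing over the pieces, $f(T)$ is carried by the set of $y$ where $N(y)=\#(f^{-1}(y)\cap R)$ is odd, with multiplicity $1$ there. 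This uses that the pushforward is additive and continuous in the flat topology under mass-convergent decompositions.

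The second step is the estimate itself:
\begin{align*}
\ms(f(T))=\hm^d(\{y: N(y)\text{ odd}\})\le\int_M N(y)\,\dhm^d(y)=\int_R \jac(f|_{\T_pR})\,\dhm^d(p).
\end{align*}
The inequality holds because an odd number is at least $1$. The final equality is the area formula for Lipschitz maps on rectifiable sets.

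I expect the main obstacle to be the first step: justifying that the abstract pushforward of a mod $2$ flat chain agrees with this area-formula description, including the cancellation mod $2$. The route I would try first is to prove it for polyhedral or $C^1$ chains, where it is classical, and then pass to general rectifiable $T$ by approximation in mass. This works because $f$ is Lipschitz, so $f$ commutes with flat limits and the mapped mass is continuous under mass approximation. Alternatively, one can quote Federer's 4.1.30 and its mod $\nu$ analogue in \cite{HF} directly.
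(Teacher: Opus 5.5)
The paper does not prove this fact; it imports it from \cite[Lemma 3.4]{LFZg}. Your argument is the standard proof of such statements, essentially \cite[4.1.30]{HF} reduced mod $2$: rectifiability of $T$, decomposition of the carrier into pieces of $C^1$ submanifolds, the area formula, and ``an odd number is at least $1$''. It is correct in substance, but two steps need tightening.

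First, the step ``the image of the zero-Jacobian part is $\hm^d$-null, so by Fact \ref{fctconc} its pushforward vanishes'' does not follow as written. Fact \ref{fctconc} concerns the closed support. The support of $T$ restricted to a merely measurable set $R^0$ lies in $\ov{R^0}$ but can be much larger than $R^0$. So $\supp f(T|_{R^0})\s f(\ov{R^0})$ need not be $\hm^d$-null.

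Use inner regularity instead. Write $R^0$, up to an $\hm^d$-null set, as a disjoint union of compact sets $K_j$. Then $\supp f(T|_{K_j})\s f(K_j)$ is $\hm^d$-null, so $f(T|_{K_j})=0$. Hence $f(T|_{R^0})=\sum_j f(T|_{K_j})=0$, because $\sum_j T|_{K_j}$ converges in mass and pushforward is flat-continuous.

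Build the same compactness into the positive-Jacobian part. There you identify $f(T|_K)$ with the mod $2$ current carried by $f(K)$, on compact pieces $K$ on which $f$ is injective. To justify this, orient locally, lift to an integer rectifiable current, apply \cite[4.1.30]{HF}, and reduce mod $2$.

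Second, your opening reduction is a non sequitur: a finite right-hand side does not force $\ms(T)<\infty$ (take $f$ constant). Since the integrand uses $\T_pT$, the statement tacitly assumes $T$ is rectifiable of finite mass, as everywhere in the paper. State this as a hypothesis rather than deriving it.

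Finally, you can sidestep what you called the main obstacle, since you never need the exact multiplicity $N(y)\bmod 2$ of $f(T)$. Write $T=\sum_j T|_{K_j}$ with mass convergence, where each $K_j$ is compact and on each one either $\jac(f|_{\T_pT})=0$ or $f$ is an injective immersion. Then flat continuity of the pushforward and lower semicontinuity of mass give
\begin{align*}
\ms(f(T))\le\sum_j\ms(f(T|_{K_j}))\le\sum_j\int_{K_j}\jac(f|_{\T_pT})\,\dhm^d(p)=\int_T\jac(f|_{\T_pT})\,\dhm^d(p).
\end{align*}
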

\subsection{Definition of mod $2$ area-minimizing}
	We say an mod $2$ current $T$ is area-minimizing, if $T$ has the least area among all mod $2$ currents homologous to $T$:
\begin{defn}\label{defnam}
	A $d$-dimensional mod $2$ current $T$ is area-minimizing if 
	\begin{align*}
		\ms(T)\le \ms(T+\pd V),
	\end{align*}for all  $(d+1)$-dimensional mod $2$ currents $V$. 
\end{defn}
Here $\ms$ is the mass of the mod $2$ current, which in our context, is just the area of the underlying set. For instance, $\ms( S^1)=2\pi$ for the unit circle $S^1$ in $\R^2.$

It is also customary in geometric measure theory to speak of non-compact area-minimizing mod $2$ currents $T$. By this, we mean that the restriction of $T$ to compact subsets are mod $2$ area-minimizing.	\subsection{Definition of singular sets}
	\begin{defn}\label{defnsm}
		We say a mod $2$ current $T$ is smooth at a point $p$ in the support of $T$ if there exists an open set $U$ containing $p$ on $M$, such that  $T$ restricted to $U$ equals a smooth submanifold $N.$ The definitions of regular sets and singular sets of $T$ are as follows:
		\begin{itemize}
			\item The singular set of $T,$ $\sing T,$ is defined as set of points in the support of $T$ where $T$ is not smooth.		
			\item 
			The regular set of $T,$ $\operatorname{Reg}T$, is defined as the set of the points in the support of $T$ where $T$ is smooth. 
		\end{itemize}
	\end{defn} 
	Here support means the underlying set of a mod $2$ current. From now on, we will also use the symbol $\supp T$ to mean the support of $T.$%, and unfortunately is the standard term in geometric measure theory. The reader should not confuse it with the notion of supports in simplicial complexes.
	%In other words a current $T$ is smooth at a point $p$ in the underlying set of $T$, if the current $T$ restricted to a neighborhood of $p$ equals $kN$ for an integer $k\in N$ and a smooth submanifold $N$. The singular set of $T$ is defined as the complement of the smooth points of $T$.In our manuscript, all mod $2$ currents we construct will be chains of simplicial complexes with explicit descriptions. It will always be clear from the explicit descriptions alone what are the singular set and the regular set of the currents we  construct, so we will not spend time on formally proving what subsets are the singular sets.
%	\subsection{Multiplicity $1$ mod $2$ currents}Many powerful theorems from geometric measure theory deal with a special class of mod $2$ currents called    currents. We give a formal definition as follows.
%	\begin{defn}\label{defnmo}
%		We say an mod $2$ current $T$ is of multiplicity $1,$ if $T$ restricted to a neighborhood of each smooth point $p$ equals a smooth submanifold $N$.
%	\end{defn}
%	For example, a figure $8$ in $\R^2$ is a    mod $2$ current.
	\subsection{Connected sum of    mod $2$ currents}
	Many times in this manuscript, we need to use  a connected sum to connect a representative of $[\Si]$ with multiple components. We give a formal construction as follows.
\begin{assump}\label{assumpst}
	We assume that
\begin{itemize}
\item $T$ and $V$ are two $d$-dimensional mod $2$ currents of dimension $d$ and codimension ${c}$ {both at least} ${2}$.
\item	$T$ and $V$ have disjoint support. \end{itemize}	
\end{assump}
	Our goal is to construct a connected sum $T\# V$ under the above assumption \begin{lem}\label{lemcs}
	Under Assumption \ref{assumpst}, there is a mod $2$ current $T\# V$, called the connected sum of $T$ and $V,$ so that 
		\begin{itemize}
			\item $\sing (T\# V)=\sing (T+V)$.
			\item Restricted to a neighborhood of $\sing (T\# V),$ the mod $2$ current $T\#V$ equals $T+V$.
			\item If $\operatorname{Reg}T$ and $\operatorname{Reg}V$ are both connected, then $\operatorname{Reg}T\# V$ is also connected.
			\item $[T\# V]=[T]+[V]$ as homology classes.
		\end{itemize}
	\end{lem}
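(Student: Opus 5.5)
We construct $T\#V$ by the classical tube-attachment construction, performed away from the singular sets. If $\operatorname{Reg}T$ or $\operatorname{Reg}V$ is empty (in particular if $T=0$ or $V=0$), we simply set $T\#V=T+V$. Otherwise we choose regular points $p\in\operatorname{Reg}T$ and $q\in\operatorname{Reg}V$. Since the supports are disjoint compact sets and singular sets are closed, we can choose small radii $r>0$ so that the following hold.
\begin{itemize}
\item The geodesic balls $B_r(p)$ and $B_r(q)$ are disjoint.
\item Each ball meets only the regular part of its own current, as a single embedded $d$-disk: $T\cap B_r(p)=D_p$ and $V\cap B_r(q)=D_q$, where we use Fact \ref{const} on the regular part to see that the restriction is exactly the disk with multiplicity $1$ mod $2$.
\item Both balls avoid $\sing(T+V)$ and the support of the other current.
\end{itemize}
Next we pick an embedded arc $\gamma$ from $\partial D_p$ to $\partial D_q$ whose interior is disjoint from $\supp(T+V)$. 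Such an arc exists because $\supp(T+V)$ has dimension $d$ and the ambient dimension is $d+c\ge d+2$. By transversality \cite{MH}, a generic arc (dimension $1$) misses a $d$-dimensional rectifiable set in a $(d+c)$-manifold when $1+d<d+c$, i.e.\ $c\ge2$. Here we also use that $M$ is connected, or, if it is not, that $p$ and $q$ lie in the same component. We then thicken $\gamma$ to a tube $\tau\cong S^{d-1}\times[0,1]$ lying in a thin tubular neighborhood $U_\gamma$ of $\gamma$. This requires a $d$-dimensional tube around a $1$-dimensional arc inside a $(d+c)$-manifold with $c\ge 1$, and we choose $U_\gamma$ disjoint from $\supp(T+V)$ except near the endpoints. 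Next we remove small open disks $D_p'\subset D_p$ and $D_q'\subset D_q$ and glue in $\tau$ so that $\partial\tau=\partial D_p'\cup\partial D_q'$, smoothing the corners so that the result is a smooth embedded submanifold near the gluing region. Finally we define
\[T\#V=T+V-D_p'-D_q'+\tau\]
as a mod $2$ current.

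The listed properties then follow. Every modification takes place inside $B_r(p)\cup B_r(q)\cup U_\gamma$, which is a neighborhood disjoint from $\sing(T+V)$, and $T\#V$ is smooth there by construction. Hence $\sing(T\#V)=\sing(T+V)$, and $T\#V=T+V$ on a neighborhood of the singular set. For connectedness, the tube $\tau$ joins the connected set $\operatorname{Reg}T\setminus \overline{D_p'}$, which stays connected because removing a small disk from a connected manifold of dimension $d\ge2$ does not disconnect it, to $\operatorname{Reg}V\setminus\overline{D_q'}$. For the homology statement, note that $D_p'\cup\tau\cup D_q'$ is a $d$-sphere contained in a small neighborhood of the arc together with the two disks. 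That neighborhood is contractible, a ball, so the sphere bounds a $(d+1)$-chain $W$ there. This gives $T\#V=T+V+\partial W$ mod $2$, so $[T\#V]=[T]+[V]$.

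The step that needs the most care is the choice of $\gamma$ and the contractible neighborhood that provides $W$. The arc must avoid the rectifiable, possibly nonsmooth, supports. I would handle this by a transversality argument: approximate the support by its closed $d$-dimensional Hausdorff-measure set and use that generic smooth arcs miss a closed set of Hausdorff dimension $\le d<d+c-1$. This is where $c\ge2$ is essential, since with $c=1$ the arc could be forced to cross the hypersurface.
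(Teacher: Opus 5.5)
Your proposal is correct and follows essentially the same route as the paper. You pick regular points, join them by an arc that (using $c\ge 2$) meets the supports only at its endpoints, remove two small $d$-disks and glue in $S^{d-1}\times[0,1]$ along the arc. The homology identity is the same as the paper's: the tube plus the two disks is the boundary of a solid cylinder $B^d\times[0,1]$, which is what your contractible-neighborhood argument produces.

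Two small points to tidy. The arc should start at an interior point of $D_p$ (not on $\partial D_p$) and leave $T$ and $V$ non-tangentially. The paper makes the attachment precise by taking Fermi coordinates along the arc in which $T$ and $V$ become the parallel coordinate planes $\{y_1=0\}$ and $\{y_1=1\}$. Your Hausdorff-dimension argument for the arc avoiding the supports is, if anything, more careful than the paper's bare appeal to transversality.
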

	For instance, the connected sum of two disjoint embedded tori in $\R^3$ is a genus $2$ surface. Here the symbol $+$ in $T+V$ means the sum as mod $2$ currents. The reader can just assume that $+$ means the sum of chains of simplicial complexes.
	\begin{proof}
\cite[Lemma 2.8.2]{ZLns} states the corresponding result for multiplicity $1$ integral currents and the same proof applies. For the reader's convenience, we reproduce the entire argument.		In case $T$ and $V$ are disjoint submanifolds, the construction is classical and can be found in \cite{CL}.

In our case, $T$ and $V$ are general mod $2$ currents that may have large singular sets. However, the idea is similar. We need to remove a $d$-dimensional ball from $T$ and $V,$ respectively, then use a tube diffeomorphic to $S^{d-1}\times[0,1]$ to connect $T$ to $V.$ 	The precise construction is as follows.

Take a smooth point $p$ of $T$, and a smooth point $q$ of $V,$ with $p\not=q.$ First, let us show that there is an open set $U$ containing both $p$ and $q$, such that we have a coordinate $$(x_1,\dots,x_d,y_1,\dots,y_c),$$
on $U$, with
\begin{align*}
	p=(\overbrace{0,\dots,0}^{d},\overbrace{0,0,\dots,0}^{c}),q=(\overbrace{0,\dots,0}^{d},1,\overbrace{0,\dots,0}^{c-1})
\end{align*}Moreover,  the mod $2$ current $T$ restricted to $U$ is parameterized by
\begin{align*}
	(x_1,\dots,x_d,0,0,\dots,0),	\end{align*}with each $x_j\in\R,$ and the mod $2$ current $V$ restricted to $U$ is parameterized by
\begin{align}\label{xdminus}
	(x_1,x_2,\dots,x_d,1,0,\dots,0),
\end{align}with each $x_j\in\R.$

To construct the desired coordinate chart $U$ as above, we need to use transversality. Let $\ga$ be a smooth curve from $p$ to $q$. By transversality \cite{MH} and $c\ge 2$, i.e., $\dim T+\dim\ga=\dim V+\dim\ga=d+1<d+c=\dim M,$ a generic smooth perturbation of $\ga$ from $p$ to $q$ satisfies
\begin{itemize}
	\item $\ga$ intersect $T$ and $V$ only at end points $p$ and $q$, 
	\item $\ga$ is not tangent to $\reg T$ and $\reg V.$ 
\end{itemize}
These two implies that in a tubular neighborhood $B(\ga)$ of $\ga$ we have $B(\ga)\cap T$ and $B(\ga)\cap V$ are two embedded standard $d$-dimensional balls. As normal bundles over curve segments are trivial, e.g., via parallel transport, we can adopt a Fermi coordinate chart $U(\ga):(x_1,\cdots,x_d,y_1,\cdots,y_c)$ around $\ga,$ in which $\ga$ is the line segment from $	p=(\overbrace{0,\dots,0}^{d},\overbrace{0,0,\dots,0}^{c})$ to $q=(\overbrace{0,\dots,0}^{d},1,\overbrace{0,\dots,0}^{c-1})$ and $T,V$ restricted to $U(\ga)$ is are two smoothly embedded $d$-dimensional balls passing through $p,q$ and intersecting $\ga$ transversely. Then the desired coordinate follows by adapting the coordinate $U(\ga)$ to $T,V$ at $p,q$ respectively.

Now remove the $d$-dimensional ball centered at $p$ $$B_1^d(0)\times\{0\}\times\{0\}^{c-1},$$ from $T$ and remove the $d$-dimensional ball centered at $q$ $$B_1^d(0)\times\{1\}\times\{0\}^{c-1},$$ from $V$, and  glue the tube
\begin{align*}
	S_1^{d-1}(0)\times [0,1]\times \{0\}^{c-1},
\end{align*}smoothly onto $T+V$ with the two $d$-dimensional balls removed,
we obtain $T\# V.$ Here by gluing smoothly we mean smoothing the corner introduced by the boundary of the tube. Since our constructions only alter $\reg T$ and $\reg V,$ we deduce that the first two bullets in Lemma \ref{lemcs} hold. Since $T$ and $V$ are of dimension at least $2$, we deduce the third bullet in Lemma \ref{lemcs}. For the last bullet in Lemma \ref{lemcs}, since
\begin{align*}
	&	S_1^{d-1}(0)\times [0,1]\times \{0\}^{c-1}\\=&\pd\bigg(B_1^d(0)\times [0,1]\times\{0\}^{c-1}\bigg)	+B_1^d(0)\times\{0\}\times\{0\}^{c-1}+B_1^d(0)\times\{1\}\times\{0\}^{c-1},
\end{align*}as mod $2$ currents, we deduce that $[T\# V]=[T]+[V]$.
	\end{proof}
\section{Moderation}\label{secmod}
In this section, we propose a criterion for proving area-minimization inspired by \cite{FMcalv,FMeu}, which to the author's knowledge has not been known before. Inspired by calibrations, we will call this criterion moderation.
\begin{defn}\label{defnmod}
Let $T$ be a $d$-dimensional mod $2$ current compactly supported in $\R^{d+c}$. 

 We say $T$ is a moderation if $T$ satisfies the following conditions:
\begin{enumerate}
	\item For Hausdorff $d$-dimensional almost every tangent space $E$ to $T$, we have 
	\begin{align}\label{eqmoddef}
		\int_{T}|\ri{E,\T_p T}|\dhm^d(p)=\sup_{W^d\textnormal{ a linear subspace of }\R^{d+c}}		\int_{T}|\ri{W,\T_p T}|\dhm^d(p)
	\end{align}
%	\item For Hausdorff $d$-dimensional almost every tangent space $E$ to $T$, the pushforward current	\begin{align*}		\pi_E(T)	\end{align*} is a mod $2$ area-minimizing current in $E.$
	\item  For Hausdorff $d$-dimensional almost every tangent space $E$ to $T$, we have\begin{align}\label{eqmodms}
		\ms(\pi_E(T))=	\int_{T}|\ri{E,\T_p T}|\dhm^d(p)>0.
	\end{align}
	\end{enumerate}
By (\ref{eqmoddef}), the left hand side of (\ref{eqmodms}) is independent of $E$, which we will call the moderated mass of $T,$ denoted by
\begin{align*}
	\mms(T)=\ms(\pi_E(T)).
\end{align*}
\end{defn}
Here $W^d$ denotes a $d$-dimensional linear subspace of $\R^{d+C}$ and $\pi_W$ means the orthogonal projection to the subspace $W$. The inner product $|\ri{W,\T_pT}|$ means the absolute value of the inner product of simple $d$-vectors representing $W$ and $\T_pT$ induced by the inner product $\ri{\cdot,\cdot}$ on $\R^{d+c}$. 

The definition of moderation immediately gives the following theorem
\begin{thm}\label{thmmod}
	A moderation $T$ is an area-minimizing mod $2$ current.
\end{thm}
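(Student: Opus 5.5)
The plan is an averaging argument in the spirit of Federer's proof that $\ms(\pi_W(S))\le\int_S|\ri{W,\T_pS}|$, combined with the two defining conditions of a moderation. Fix a competitor $T'=T+\pd V$ with $V$ a compactly supported $(d+1)$-dimensional mod $2$ current. I would show $\ms(T)\le\ms(T')$ via the chain of inequalities
\begin{align*}
\ms(T)\cdot\kappa=\int_T\int_{G}|\ri{W,\T_pT}|\,\ed\mu(W)\dhm^d(p)\le\ldots\le\ms(T')\cdot\kappa,
\end{align*}
where $G$ is the Grassmannian of $d$-planes in $\R^{d+c}$, $\mu$ is a suitable measure on it, and $\kappa$ is a constant.

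The cleaner route uses the tangent planes of $T$ themselves, as the definition suggests. First, since $\pd V$ is a boundary and $\pi_E$ is linear, $\pi_E(T')=\pi_E(T)+\pd\pi_E(V)$. Here $\pi_E(V)$ is a $(d+1)$-dimensional mod $2$ current in the $d$-dimensional space $E$, hence zero by Fact \ref{fctconc}. So $\pi_E(T')=\pi_E(T)$ for every $E$. Next, by Fact \ref{fctmap} applied to the map $\pi_E$, whose Jacobian restricted to $\T_pT'$ is $|\ri{E,\T_pT'}|$,
\begin{align*}
\mms(T)=\ms(\pi_E(T))=\ms(\pi_E(T'))\le\int_{T'}|\ri{E,\T_pT'}|\dhm^d(p).
\end{align*}
This holds for $\hm^d$-a.e.\ tangent plane $E=\T_qT$ of $T$.

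Now average over $q\in T$. Multiply by $\dhm^d(q)$, integrate over $T$, and apply Fubini:
\begin{align*}
\ms(T)\,\mms(T)\le\int_{T'}\int_T|\ri{\T_qT,\T_pT'}|\dhm^d(q)\dhm^d(p).
\end{align*}
For each fixed $p\in T'$, the inner integral is $\int_T|\ri{W,\T_qT}|\dhm^d(q)$ with $W=\T_pT'$, using the symmetry of the inner product on simple $d$-vectors. By (\ref{eqmoddef}) this is at most its supremum over $W$, which equals $\mms(T)$ via (\ref{eqmodms}). The right-hand side is therefore at most $\ms(T')\,\mms(T)$. Since $\mms(T)>0$, dividing gives $\ms(T)\le\ms(T')$.

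The main obstacle is measure-theoretic bookkeeping rather than geometry. First, Fubini requires joint measurability of $(p,q)\mapsto|\ri{\T_qT,\T_pT'}|$; this follows from the measurability of approximate tangent planes of rectifiable sets. Second, the identity $\pi_E(\pd V)=\pd\pi_E(V)$ and the vanishing of $\pi_E(V)$ must hold for mod $2$ flat chains, which is where Fact \ref{fctconc} is used. Third, the "almost every $E$" qualifiers must be matched: the full-measure set of $q$ where (\ref{eqmoddef}) and (\ref{eqmodms}) hold is exactly what is integrated over, so the null set is harmless. If $V$ is not compactly supported, I would first restrict to the compactly supported case, as in Definition \ref{defnam}.
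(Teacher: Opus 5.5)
Your proposal is correct and is essentially the paper's proof: you show $\pi_E(T')=\pi_E(T)$ using Fact \ref{fctconc}, and bound $\mms(T)=\ms(\pi_E(T'))$ by $\int_{T'}|\ri{E,\T_pT'}|\dhm^d(p)$ via Fact \ref{fctmap}. You then integrate over the tangent planes $E=\T_qT$ with Fubini, bound the inner integral by the supremum in (\ref{eqmoddef}), and divide by $\mms(T)>0$. The Grassmannian-averaging variant sketched in your first paragraph is never needed, and your remarks on measurability and compact support only spell out bookkeeping that the paper leaves implicit.
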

\begin{proof}
Use $W$ to denote an arbitrary $d$-dimensional linear subspace of $\R^{d+c}.$ Abusing notations, the differential of the map $\pi_W$ equals $\pi_W.$ By definition of inner product on the exterior algebra, we have
	\begin{align*}
		\jac(\pi_W|_V)=|\ri{V,W}|.
	\end{align*} 
	Let $S$ be a mod $2$ current with
	\begin{align*}
		S-T=\pd Q,
	\end{align*}with $Q$ a $(d+1)$-dimensional mod $2$ current. 
	
We have
\begin{align*}
	\pi_W(S)-\pi_W(T)=\pd(\pi_W(Q)).
\end{align*}	However, $\pi_W$ is a $(d+1)$-dimensional mod $2$ current supported on a $d$-dimensional subspace. By Fact \ref{fctconc}, we deduce that $\pi_W(Q)=0.$ Thus, we have
\begin{align*}
	\pi_W(S)=\pi_W(T).
\end{align*}
Consequently, for Hausdorff $d$-dimensional every tangent plane $E=\T_pT$ to $T$, we have
\begin{align}\label{ineqq1}
		\mms(T)=\ms(\pi_E(T))=\ms(\pi_E(S))
\end{align}
	On the other hand, by Fact \ref{fctmap}, we have
	\begin{align}\label{ineqq2}
	\ms(\pi_E(S))\le\int_{S}\jac(\pi_E|_{\T_q S})\dhm^d(q)=\int_S|\ri{E,\T_qS}|.
	\end{align}	Combining (\ref{ineqq1}) and (\ref{ineqq2}), for Hausdorff $d$-dimensional every tangent plane $\T_pT$ to $T$, we have
	\begin{align}\label{eqmod1}
	\mms(T)\le\int_{S}|\ri{\T_pT,\T_qS}|\dhm^d(q).
	\end{align}Note that the right-hand side of (\ref{eqmod1}) is less than or equal to the mass of $S$, so is finite and thus $|\ri{\T_pS,\T_qS}|$ is integrable on $S$. Integrate both sides by mass measure of $T$ and use Fubini's theorem  we have
\begin{align}\label{eqint}
	&\mms(T)\ms(T)=\int_{T}\mms(T)\dhm^d(p)\\\le&\int_{T}\int_{S}|\ri{\T_pT,\T_qS}|\dhm^d(q)\dhm^d(p)\\=&\int_{S}\int_{T}|\ri{\T_qS,\T_pT}|\dhm^d(p)\dhm^d(q).
	\end{align}
By Definition \ref{defnmod}, we have for Hausdorff $d$-dimensional almost every $q\in S,$
\begin{align}\label{eqint2}
	\int_{T}|\ri{\T_qS,\T_pT}|\dhm^d(p)\le\mms(T).
\end{align}
Thus, from (\ref{eqint}) to (\ref{eqint2}), we have
\begin{align*}
	\mms(T)\ms(T)\le\mms(T)\ms(S).
\end{align*}	
By the second bullet in Definition \ref{defnmod} we can divide both sides by $\mms(T)$ and we are done.
\end{proof}
\section{Proof of Theorem \ref{thmc}}\label{secthmc}
Our goal in this section is to prove Theorem \ref{thmc}. The proof relies on showing that the Veronese cone over $\rpt$ is a moderation in the sense of Definition \ref{defnmod} and then Theorem \ref{thmc} follows from Theorem \ref{thmmod}. 

Roughly speaking, we relate the integrals in Definition \ref{defnmod} to the sharp inequalities in representation theory \cite{LPcs,RFsi,KNOT} and verify that 	 the Veronese cone over $\rpt$ is a moderation.

We will first define the Veronese embedding of $\rpt$ via $\sos$-actions on $\R^5$ and collect several well-known facts. These facts are standard and usually stated without proof in the literature, e.g., \cite{TMtc,CDKms}. Unfortunately different authors use different conventions of normalizations, so for the reader's convenience, we provide self-contained proof of these facts.

Then we delve deeper and gather several lemmas required for applying the representation theory results  \cite{LPcs,RFsi,KNOT}.

Finally, we prove that the Veronese cone over $\rpt$ is a moderation and finish by Theorem \ref{thmmod}.
\subsection{Definition of the Veronese cone and standard facts}Let us first define the ambient space $\R^5.$
\begin{defn}
	\label{defnvv}
	Let $\VV$ be the vector space of symmetric traceless $3\times 3$ real matrices, i.e.,
	\begin{align*}
		\VV=\{\vv|\vv\textnormal{ is a }3\times 3\textnormal{ real matrix},\vv^T=\vv,\tr\vv=0\}.
	\end{align*}Endow $\VV$ with the bilinear form $\ka$:
	\begin{align*}
		\ka(\vv,\mathbf{w})=\frac{1}{2}\tr(\mathbf{v}\mathbf{w})
	\end{align*}
\end{defn}
As all elements of $\VV$ are symmetric, $\ka$ equals half of the standard Frobenius inner product of matrices. We have
\begin{fact}
	$\ka$ is a positive definite inner product on $\VV$.
\end{fact}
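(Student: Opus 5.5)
We must show that $\ka(\vv,\ww)=\frac12\tr(\vv\ww)$ is a positive definite inner product on $\VV$. We will verify the three defining properties directly, working entrywise with the matrices.

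\emph{Bilinearity.} This is immediate from the linearity of the trace and of matrix multiplication in each argument.

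\emph{Symmetry.} We use the cyclic property of the trace, $\tr(\vv\ww)=\tr(\ww\vv)$, which gives $\ka(\vv,\ww)=\ka(\ww,\vv)$.

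\emph{Positive definiteness.} This is the only property with any content. For $\vv\in\VV$, the symmetry $\vv^T=\vv$ lets us write
\begin{align*}
	\tr(\vv\vv)=\tr(\vv^T\vv)=\sum_{i,j=1}^3 v_{ij}^2 .
\end{align*}
Hence
\begin{align*}
	\ka(\vv,\vv)=\frac12\sum_{i,j}v_{ij}^2\ge 0,
\end{align*}
with equality exactly when every entry $v_{ij}$ vanishes, that is, when $\vv=0$. Thus $\ka$ is half the restriction of the Frobenius inner product $\la A,B\ra=\tr(A^TB)$ on $3\times3$ real matrices to the subspace $\VV$. The restriction of a positive definite form to any subspace remains positive definite, so $\ka$ is positive definite on $\VV$.

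Note that the tracelessness condition plays no role here; it only cuts $\VV$ down to a $5$-dimensional space. There is no real obstacle. The single point needing care is the identity $\tr(\vv\vv)=\tr(\vv^T\vv)$: this relies on symmetry, and it fails for general matrices, for which $\tr(A^2)$ can be negative.
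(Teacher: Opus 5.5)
Your proof is correct and follows the paper's own justification, which is the one-line remark that, since every element of $\VV$ is symmetric, $\ka$ is half the Frobenius inner product restricted to $\VV$. Your entrywise computation $\tr(\vv\vv)=\tr(\vv^T\vv)=\sum_{i,j}v_{ij}^2$ simply spells that remark out.
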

The normalizing factor $\frac{1}{2}$ here are introduced to simplify the calculations. Actually $i\VV$ is the orthogonal complement of $\sot$ in $\sut$, but we will not use that fact here. From now on, we will reserve boldface letters to denote vectors in $\VV.$

	Let us first provide an orthonormal basis of $\VV$. 
\begin{defn}Define $\vv_1,\dots,\vv_5$ as follows
	\begin{align*}
		&\vv_1=\frac{1}{\sqrt{3}}\begin{bmatrix}
			1&0&0\\
			0&1&0\\
			0&0&-2\\
		\end{bmatrix},
		\vv_2=\begin{bmatrix}
			0&0&1\\
			0&0&0\\
			1&0&0\\
		\end{bmatrix},
		\vv_3=\begin{bmatrix}
			0&0&0\\
			0&0&1\\
			0&1&0\\
		\end{bmatrix},	\vv_4=\begin{bmatrix}
		0&1&0\\
		1&0&0\\
		0&0&0\\
		\end{bmatrix}
		,	\vv_5=\begin{bmatrix}
			1&0&0\\
			0&-1&0\\
			0&0&0\\\end{bmatrix}.
	\end{align*}
\end{defn}
Direct calculations can show that $\vv_1,\dots,\vv_5$ form an orthonormal basis of $\VV$ with respect ot $\ka$. We will also provide Mathematica verifications at the end of the paper. This gives
\begin{fact}
	$\VV$ is a $5$-dimensional real vector space. 
\end{fact}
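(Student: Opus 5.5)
The statement to prove is the last Fact: $\VV$ is a $5$-dimensional real vector space. I will exhibit an explicit basis.

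First, $\VV$ is a real vector space. It is a subset of the $9$-dimensional space of real $3\times 3$ matrices. It is cut out by two linear conditions: $\vv^T=\vv$ and $\tr\vv=0$. Both conditions are preserved under addition and real scalar multiplication, so $\VV$ is a linear subspace.

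Next, I compute the dimension by counting parameters. A symmetric $3\times 3$ matrix is determined by its three diagonal entries $a_{11},a_{22},a_{33}$ and its three upper off-diagonal entries $a_{12},a_{13},a_{23}$. So the symmetric matrices form a $6$-dimensional space. The trace map to $\R$ is a nonzero linear functional on this space, since for example $\tr(\operatorname{diag}(1,0,0))=1$. Its kernel is therefore $5$-dimensional, and that kernel is exactly $\VV$.

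To match the paper's framework, I also check that $\vv_1,\dots,\vv_5$ is an orthonormal basis with respect to $\ka$.
\begin{itemize}
\item \emph{Membership in $\VV$.} Each $\vv_i$ is visibly symmetric with zero trace. For $\vv_1$ the trace is $\frac{1}{\sqrt3}(1+1-2)=0$, and $\vv_5$ has trace $1-1=0$.
\item \emph{Orthogonality.} The matrices $\vv_2,\vv_3,\vv_4$ are supported on the disjoint off-diagonal index pairs $\{1,3\},\{2,3\},\{1,2\}$. Hence their Frobenius products with each other vanish. Their products with the diagonal matrices $\vv_1,\vv_5$ also vanish. Finally, $\tr(\vv_1\vv_5)=\frac{1}{\sqrt3}(1-1+0)=0$.
\item \emph{Normalization.} For the off-diagonal ones, $\frac12\tr(\vv_2^2)=\frac12\cdot 2=1$, and the same holds for $\vv_3$ and $\vv_4$. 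For the diagonal ones, $\frac12\tr(\vv_5^2)=\frac12\cdot 2=1$ and $\frac12\tr(\vv_1^2)=\frac12\cdot\frac13(1+1+4)=1$.
\end{itemize}

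An orthonormal family is linearly independent, so these five vectors span a $5$-dimensional subspace of the $5$-dimensional space $\VV$. They therefore form a basis. This gives the Fact, with the explicit basis the paper will use later.

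There is no real obstacle here; the only care needed is the normalization constant $\frac{1}{\sqrt3}$ in $\vv_1$.
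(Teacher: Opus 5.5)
Your proposal is correct and follows essentially the same route as the paper, which simply asserts (by direct calculation, with a Mathematica check) that $\vv_1,\dots,\vv_5$ form a $\ka$-orthonormal basis of $\VV$ and reads off the dimension from that. The one difference is how you get spanning: you deduce it from the count $6-1=5$ (rank--nullity for the trace on symmetric matrices) together with linear independence, instead of checking directly that every traceless symmetric matrix is a combination of the $\vv_i$; this is a clean way to close that step, and your orthonormality computations are all correct.
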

Now we are ready to introduce the Veronese embedding of $\rpt$.
\begin{fact}\label{fctorbv}
Let $\sos$ acts on the space of $3\times 3$ real matrices via conjugation, 
	\begin{enumerate}
		\item The linear subspace $\VV$ is invariant under the conjugation action of $\sos.$
		\item $\sos$ conjugations preserve the inner product $\ka.$
		\item The stablizers of $\vv_1$ under the conjugation action of $\sos$ is the group of block diagonal matrices $\operatorname{S}(\operatorname{O}(2)\times \operatorname{O}(1))$.
		\item The orbit of $\vv_1$ under the conjugation action of $\sos$ is an embedding of $\rpt$ into the unit sphere of $\VV$ in metric $\ka.$
		\item The action of $\sos$ on $\VV$ is real irreducible.
	\end{enumerate}	
\end{fact}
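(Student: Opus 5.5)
We verify the five items of Fact \ref{fctorbv} in order, using only elementary linear algebra.

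For (1) and (2), take $g\in\sos$ and $\vv\in\VV$. Then
\begin{align*}
(g\vv g^{-1})^T=g\vv^Tg^T=g\vv g^{-1},\qquad \tr(g\vv g^{-1})=\tr\vv=0,
\end{align*}
using $g^T=g^{-1}$. Hence $\VV$ is invariant. The identity $\tr(g\vv g^{-1}g\ww g^{-1})=\tr(\vv\ww)$ shows that $\ka$ is preserved.

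For (3), note that $\vv_1$ is diagonal with eigenvalues $\frac1{\sqrt3},\frac1{\sqrt3},-\frac2{\sqrt3}$. The condition $g\vv_1 g^{-1}=\vv_1$ means $g$ commutes with $\vv_1$. This holds exactly when $g$ preserves both eigenspaces of $\vv_1$, namely $\R^2\times\{0\}$ and $\{0\}\times\R$. So $g$ is block diagonal with blocks $A\in\operatorname{O}(2)$ and $a=\pm1$. The requirement $\det g=1$ forces $a=\det A$, which gives $\operatorname{S}(\operatorname{O}(2)\times\operatorname{O}(1))$.

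For (4), the orbit map induces a smooth injective immersion
\begin{align*}
\sos/\operatorname{S}(\operatorname{O}(2)\times\operatorname{O}(1))\to\VV.
\end{align*}
It is an injective immersion because orbit maps of Lie group actions have constant rank and factor through the stabilizer quotient. Since the quotient is compact, this map is an embedding.

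To identify the quotient with $\rpt$, send $g$ to the line spanned by its third column, $g e_3$. This map is $\sos$-equivariant, and $\sos$ acts transitively on $\rpt$. The stabilizer of the line $[e_3]$ is exactly the block group above, so the quotient is $\rpt$. Concretely, the orbit is
\begin{align*}
\Bigl\{\tfrac1{\sqrt3}(I-3uu^T)\;:\;u\in S^2\Bigr\},
\end{align*}
which visibly depends only on $\pm u$. It lies in the unit sphere of $\VV$ because $\ka$ is invariant and $\ka(\vv_1,\vv_1)=1$.

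For (5), the plan is to show that every nonzero invariant subspace $W\subset\VV$ is all of $\VV$. Any $\vv\in\VV$ is symmetric, so it can be conjugated to a diagonal matrix $D=\operatorname{diag}(a,b,-a-b)$. Applying permutation matrices, with signs adjusted to lie in $\sos$, we may assume $W$ contains several such diagonal matrices.

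We then split into two cases.
\begin{itemize}
\item If $D$ is a nonzero multiple of $\vv_1$ up to permutation, then $W$ contains the whole orbit of (4). Its linear span is all of $\VV$: the matrices $uu^T$ with $u\in S^2$ span the symmetric matrices, and subtracting traces gives all of $\VV$.
\item Otherwise, the permutations of $D$ span the traceless diagonal matrices. In particular $W$ contains $\vv_1$, and we are back in the first case.
\end{itemize}

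Alternatively, one can complexify and compute weights. The diagonal $\operatorname{SO}(2)$ acts on $\VV$ with weights $0,\pm1,\pm2$, each of multiplicity one, so $\VV\otimes\C$ is the irreducible spin-$2$ representation, and hence $\VV$ is real irreducible.

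The main obstacle is (5), specifically the spanning claim in the first case. I would verify it by exhibiting explicit $u$ whose projections $\tfrac1{\sqrt3}(I-3uu^T)$ yield $\vv_2,\dots,\vv_5$. For instance, $u=(e_1+e_3)/\sqrt2$ produces $\vv_2$ once combined with diagonal elements, and similar choices handle $\vv_3,\vv_4,\vv_5$.
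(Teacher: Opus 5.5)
Your proof is correct. Items (1)--(4) follow the paper's argument essentially step for step. The paper checks invariance of $\VV$ and of $\ka$ directly. It computes the stabilizer from the block commutator $[\zeta,\vv_1]=0$, which is equivalent to your eigenspace argument. It then identifies the orbit with $\sos/\operatorname{S}(\operatorname{O}(2)\times\operatorname{O}(1))\cong\rpt$ through the orbit--stabilizer theorem and the stabilizer of $[0:0:1]$, exactly as you do with $g\mapsto[ge_3]$. Your explicit description $\tfrac{1}{\sqrt3}(I-3uu^T)$ of the orbit is what the paper later recovers through the map $F$ in Fact \ref{fctmet}.

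The real difference is item (5). The paper simply cites Murdoch \cite[Proposition 3.1.1]{TMtct}. You give a self-contained argument instead:
\begin{enumerate}
\item diagonalize a nonzero element of $W$ by $\sos$;
\item conjugate by signed permutations, whose signs cancel on diagonal matrices, to obtain all permutations of the diagonal;
\item deduce $\vv_1\in W$, hence the whole orbit lies in $W$;
\item conclude that the orbit spans $\VV$ because the matrices $uu^T$ span the symmetric matrices.
\end{enumerate}
This costs a few lines but removes the dependence on the reference. Your case split is unnecessary. The permutations of any nonzero traceless diagonal matrix, including $\operatorname{diag}(1,1,-2)$, already span the $2$-dimensional space of traceless diagonal matrices, since the standard representation of $S_3$ is irreducible. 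The explicit check at the end is likewise redundant once the $uu^T$ spanning argument is given.

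Your alternative weight argument is worth keeping. Under rotations about $e_3$ the weights are $0,\pm1,\pm2$, each with multiplicity one. This proves complex irreducibility of $\VV\otimes\C$ directly, which is exactly what Fact \ref{fctirred} and the later appeal to the coherent-state inequality need. The paper instead obtains that fact from real irreducibility via a separate dimension-parity argument.
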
\begin{proof}
Conjugation by $\sos$ elements preserves trace and transpose symmetry, which gives the first bullet.

For the second bullet, note that for $\vv\in\VV$, $g\in\sos,$ we have
\begin{align*}
	\ka(g\vv g\m,g\vv g\m)=\frac{1}{2}\tr(g\vv g g\m\vv g\m)=\frac{1}{2}\tr(g\m g\vv\vv)=\ka(\vv,\vv).
\end{align*}
For the third and the fourth bullet, recall the manifold orbit-stabilizer theorem \cite[Proposition 4.2]{AAl} that the orbit of a compact Lie group $G$ action at a point $p$ with stabilizer subgroup $H,$  equals the coset manifold $G/H.$ 	To determine the orbit of $\vv_1$ under $\add$-action, let us first calculate the stabilizer subgroup of $\vv_1$ under the conjugation action of $\sos$.
	
	Let $\zeta$ be an arbitrary $3\times 3$ matrix. We can write $\zeta$ as a block matrix with $2\times 2,2\times 1,1\times 2,1\times 1$ blocks $E,F,G,H,$  i.e.,
	\begin{align*}
		\zeta=\begin{bmatrix}
			E&F\\G&H
		\end{bmatrix}.
	\end{align*}
	If $\zeta \vv_1\zeta\m=\vv_1,$ then we have
	\begin{align*}
		0=&[\zeta,\vv_1]=\begin{bmatrix}
			E&F\\G&H
		\end{bmatrix}\begin{bmatrix}
			I_2&0\\0&-2
		\end{bmatrix}-\begin{bmatrix}
			I_2&0\\0&-2
		\end{bmatrix}\begin{bmatrix}
			E&F\\G&H
		\end{bmatrix}\\
		=&\begin{bmatrix}
			0&-3F\\3G&0
		\end{bmatrix}.
	\end{align*}
	This implies that $F=0,G=0,$ i.e., $\zeta$ is block diagonal with $2\times 2,1\times 1$ blocks. 
	
	Thus, any stabilizer $\zeta$ of $\vv_1$ under the conjugation action of $\sos$ consists of special orthogonal matrices that are $2\times 2,1\times 1$ block diagonal of the form
	\begin{align}\label{eqcv1}
		\zeta=	\begin{bmatrix}
			E&0\\0&H
		\end{bmatrix}.
	\end{align}
	Since $\zeta\zeta^\dagger=\id,$ we deduce that $E\in \operatorname{O}(2)$ and $H\in \operatorname{O}(1).$ Since $\det \zeta=1,$ we deduce that the stabilizer of $\vv_1$ is a subgroup $$H=\operatorname{S}(\operatorname{O}(2)\times \operatorname{O}(1)).$$
	
	However, it is well known that $\rpt$ as a homogenous space is isometric to
	\begin{align*}
		\sos/H.
	\end{align*}For instance, consider the standard representation of $\sos$ on $\R^3,$ which induces a transitive action of $\sos$ on $\rpt.$ The stabilizer of point $[0:0:1]$ in homogeneous coordinate is precisely $H.$
	
	Thus the orbit of $\vv_1$ under the conjugation action of $\sos$ is indeed an embedded $\rpt$ in the unit sphere of $V$ by the manifold orbit-stabilizer theorem and we are done.
	
The last bullet is \cite[Proposition 3.1.1]{TMtct}
\end{proof}
The orbit of $\vv_1$ under the conjugation action of $\sos$ is the so-called Veronese minimal embedding of $\rpt$ into the unit sphere of $\R^5.$
%We will also call $\vrpt$ the Veronese $\rpt$.
\begin{defn}\label{defnvc}
Use the symbol $\vrpt$ to denote the orbit of $\vv_1$ under the adjoint action of $\sos$. 
Define the cone $C(\vrpt)$ over $\vrpt$ to be the union of orbits $t\vv_1$ under the conjugation action of $\sos$ with $t\in[0,\infty)$, i.e., 
\begin{align*}
C(\vrpt)=\cup_{t\in[0,\infty)}t\vrpt.
\end{align*}
(IN THIS SECTION ONLY) To simplify notations, we will use $C$ to denote $C(\vrpt).$

Define the truncated cone $C_1$ to be the restriction of $C$ to the unit ball of $\VV,$ i.e.,
\begin{align*}
	C_1=\cup_{t\in[0,1]}t\vrpt.
\end{align*}
\end{defn}
In other words, $C$ is that $C$ is formed by rays starting from the origin of $\VV$ passing through a point in $\vrpt$ and $C_1$ is the truncation of the union of rays at length $1.$ 	

By definition, we have
\begin{fact}
	$C$ and $C_1$ are mod $2$ currents with
	\begin{align*}
		\pd C=&0,\\\pd C_1=&\vrpt.
	\end{align*}
\end{fact}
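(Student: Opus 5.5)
The plan is to realize $C_1$ as a Lipschitz pushforward of a product current, so that every claim follows from the fact that pushforward commutes with boundary. By Fact \ref{fctorbv}, $\vrpt$ is a smooth closed embedded surface in the unit sphere of $\VV$. Any closed smooth manifold, orientable or not, carries a mod $2$ fundamental class. So $\vrpt$ is a mod $2$ current of finite mass with $\pd\vrpt=0$, and $[0,1]\times\vrpt$ is a $3$-dimensional mod $2$ current in $\R\times\VV$. Because we work mod $2$, its boundary is
\begin{align*}
\pd\big([0,1]\times\vrpt\big)=\{1\}\times\vrpt+\{0\}\times\vrpt,
\end{align*}
with no sign issues, since $\pd\vrpt=0$.

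Next I would consider the map $F:\R\times\VV\to\VV$, $F(t,\vv)=t\vv$. It is smooth, hence Lipschitz on the compact support of $[0,1]\times\vrpt$, and it is an embedding on $(0,1]\times\vrpt$. By definition, $C_1=F_\#([0,1]\times\vrpt)$. This pushforward is a mod $2$ current: it is rectifiable as a Lipschitz image, and its mass is finite by Fact \ref{fctmap}. Pushforward commutes with $\pd$, so
\begin{align*}
\pd C_1=F_\#(\{1\}\times\vrpt)+F_\#(\{0\}\times\vrpt)=\vrpt+F_\#(\{0\}\times\vrpt).
\end{align*}
The last term is a $2$-dimensional mod $2$ flat chain supported at the single point $0$. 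By Fact \ref{fctconc} it is zero, which gives $\pd C_1=\vrpt$.

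For the infinite cone $C$, I would interpret ``mod $2$ current'' in the local sense adopted earlier in the paper, that is, through its restrictions to compact sets. For each $R>0$, the same construction with $[0,R]$ in place of $[0,1]$ shows that $C\cap \ov{B_R(0)}$ is a mod $2$ current with boundary $R\,\vrpt$, which lies in the sphere $\pd B_R(0)$. Hence the restriction of $\pd C$ to the open ball $B_R(0)$ vanishes for every $R$. Since these balls exhaust $\VV$, we get $\pd C=0$.

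The only delicate point is the cone vertex, where $F$ fails to be an immersion. Two observations handle it. First, the pushforward formalism requires only that $F$ be Lipschitz, not that it be an immersion. Second, the degenerate boundary piece at $t=0$ is killed by the dimension count in Fact \ref{fctconc}. Everything else is routine.
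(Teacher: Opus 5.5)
Your argument is correct, and it is essentially the paper's approach made explicit: the paper justifies this fact only with ``by definition,'' and your construction supplies the standard justification for that claim. You realize $C_1$ as the pushforward of $[0,1]\times\vrpt$ under the homothety $(t,\vv)\mapsto t\vv$, which is injective on $(0,1]\times\vrpt$ and so gives multiplicity one, and you kill the vertex term because it is a $2$-chain supported at the origin. The only change needed is cosmetic: the paper already uses $F$ for the Veronese parametrization $F(\tau)=-\tau\tau^T+\frac{|\tau|^2}{3}I$, so give your homothety map a different name.
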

We want to remark here again that we are using the geometric measure theory literature convention of regarding cones as currents while only their restrictions to compact sets are.

We need another parametrization of $\vrpt.$
\begin{defn}
For any point $\tau\in\R^3$, define a map $F$ from $\R^3$ to $3\times 3$ matrices by 
\begin{align*}
	F(\tau)=-\tau\tau^T+\frac{|\tau|^2}{3}I.
\end{align*}
Here $I$ is the $3\times 3$ identity matrix and we regard $\tau$ as a column vector.
\end{defn}
\begin{fact}\label{fctmet}
	$F$ is a double covering from $S^2_{\sqrt[4]{3}}(0)\s \R^3$ to $\rpt.$ And we have
	\begin{align*}
		F(0,0,	\sqrt[4]{3})=&\vv_1,\\
		F(\tau)=&F(-\tau),\\
		F\du\kappa=&\sqrt{3}\de,
	\end{align*}
\end{fact}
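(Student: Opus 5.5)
The plan is to verify the claims of Fact \ref{fctmet} directly from the formula for $F$, using the $\sos$-equivariance of $F$ to reduce the global statements to the single point $(0,0,\sqrt[4]{3})$, where Fact \ref{fctorbv} applies.

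\emph{Step 1: values and symmetry.} Take $\tau=(0,0,a)$ with $a^2=\sqrt3$. Then $\tau\tau^T=\operatorname{diag}(0,0,\sqrt3)$, so
\begin{align*}
F(\tau)=\operatorname{diag}\Big(\tfrac{1}{\sqrt3},\tfrac{1}{\sqrt3},\tfrac{1}{\sqrt3}-\sqrt3\Big)=\tfrac{1}{\sqrt3}\operatorname{diag}(1,1,-2)=\vv_1 .
\end{align*}
The identity $F(\tau)=F(-\tau)$ is immediate, since $\tau\tau^T$ is quadratic in $\tau$. Moreover $F(\tau)$ is symmetric and $\tr F(\tau)=-|\tau|^2+|\tau|^2=0$, so $F$ takes values in $\VV$.

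\emph{Step 2: equivariance and image.} For $g\in\sos$ we have
\begin{align*}
F(g\tau)=-g\tau\tau^Tg^T+\tfrac{|\tau|^2}{3}I=gF(\tau)g\m .
\end{align*}
Since $\sos$ acts transitively on $S^2_{\sqrt[4]{3}}(0)$, the image of the sphere is exactly the conjugation orbit of $F(0,0,\sqrt[4]{3})=\vv_1$, which is $\vrpt$ by Definition \ref{defnvc} and Fact \ref{fctorbv}.

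\emph{Step 3: fibers.} Suppose $F(\tau)=F(\si)$ with $|\tau|=|\si|$. Then $\tau\tau^T=\si\si^T$. Comparing images of these rank-one matrices gives $\si=\lambda\tau$, and comparing traces gives $\lambda^2=1$. Hence every fiber of $F$ on the sphere is exactly $\{\pm\tau\}$.

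\emph{Step 4: the metric identity.} This is the only computational step; it also shows that $F$ is an immersion. For $\xi\in\T_\tau S^2$, i.e. $\xi\perp\tau$,
\begin{align*}
\ed F(\xi)=-(\xi\tau^T+\tau\xi^T)+\tfrac{2\,\tau\cdot\xi}{3}I=-(\xi\tau^T+\tau\xi^T).
\end{align*}
Expanding the trace and using $\tau\cdot\xi=0$,
\begin{align*}
\ka(\ed F\xi,\ed F\xi)=\tfrac12\tr\big((\xi\tau^T+\tau\xi^T)^2\big)=|\xi|^2|\tau|^2+(\tau\cdot\xi)^2=\sqrt3\,|\xi|^2 .
\end{align*}
Polarizing gives $F\du\ka=\sqrt3\,\de$.

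\emph{Step 5: double cover.} By Step 4, $F$ is an immersion of the compact sphere, hence a local diffeomorphism onto the $2$-manifold $\vrpt$. A proper local diffeomorphism is a covering map, and by Step 3 every fiber has exactly two points. So $F$ is a double covering $S^2_{\sqrt[4]{3}}(0)\to\vrpt\cong\rpt$.

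I do not expect a genuine obstacle. The only place requiring care is the bookkeeping of the normalizations in Step 4: the factor $\tfrac12$ in $\ka$ and the radius $\sqrt[4]{3}$ must combine to give exactly $\sqrt3\,\de$.
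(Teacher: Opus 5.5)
Your proof is correct, and it shares the paper's skeleton: equivariance identifies the image with the orbit of $\vv_1$, and the derivative $\ed F_\tau(\xi)=-(\xi\tau^T+\tau\xi^T)$ gives both $F\du\ka=\sqrt3\,\de$ and the immersion property.

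The real difference is in counting the fibers. The paper writes $\tau'=g\tau$ and $\tau=h(0,0,\sqrt[4]{3})^T$, and uses equivariance to show that $h\m gh$ stabilizes $\vv_1$. It then invokes the stabilizer $\operatorname{S}(\operatorname{O}(2)\times\operatorname{O}(1))$ from Fact \ref{fctorbv} to get $\tau'=\pm\tau$. You instead note that equal norms force $\tau\tau^T=\si\si^T$, and read off $\si=\pm\tau$ from the image and trace of a rank-one matrix. Your route is more elementary and avoids the group theory entirely. Combined with your Step 4, it shows that $F$ descends to an injective immersion, hence an embedding, of the compact surface $S^2_{\sqrt[4]{3}}(0)/\{\pm1\}\cong\rpt$ into $\VV$. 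That recovers the fourth bullet of Fact \ref{fctorbv} without the stabilizer computation.

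You also supply a step the paper leaves implicit. The paper passes directly from a ``$2$ to $1$ immersion'' to ``double cover''. You observe that $F$ is a proper (by compactness) local diffeomorphism onto the connected surface $\vrpt$, hence a covering map, which is what actually justifies that conclusion.

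Finally, the paper computes $\ka(F(\tau),F(\tau))=\tfrac13|\tau|^4$ to see that the sphere lands in the unit sphere of $\VV$. You do not need this, since the image is the orbit of the $\ka$-unit vector $\vv_1$ and $\ka$ is $\sos$-invariant.
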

Here $S^2_{\sqrt[4]{3}}(0)$ is the sphere of radius ${\sqrt[4]{3}}$ centered at $0$ in $\R^3,$  $\ka$ is the canonical inner product on $\VV$ (Definition \ref{defnvv}) and $\de$ is the standard flat metric on $\R^3$.
\begin{proof}
	Let us first calculate the length of $F$. We have
\begin{align*}
	\ka(F(\tau),F(\tau))=&\frac{1}{2}\tr\left(\tau\tau^T\tau\tau^T-\frac{2|\tau|^2}{3}\tau\tau^T+\frac{|\tau|^4}{9}I\right)\\
	=&\frac{1}{2}\tr\left(\tau^T\tau\tau^T\tau\right)-\frac{1}{2}\tr\left(\frac{2|\tau|^2}{3}\tau^T\tau\right)+\frac{1}{2}\tr\left(\frac{|\tau|^4}{9}I\right)\\
	=&\frac{1}{2}|\tau|^4-\frac{1}{3}|\tau|^4+\frac{1}{6}|\tau|^4\\
	=&\frac{1}{3}|\tau|^4.
	\end{align*}
Thus, $F$ maps 	$S^2_{\sqrt[4]{3}}(0)$ to the unit sphere in $V.$

Now let $\sos$ acts in the standard way on $\R^3$ via matrix multiplications. Then for $g\in \sos,$ we have
\begin{align*}
	F(g\tau)=-g\tau\tau^Tg\m+\frac{|\tau|^2}{3}gg\m=g F(\tau)g\m.
\end{align*}
In other words, $F$ is an $\sos$-equivariant map. Since $S^2_{\sqrt[4]{3}}(0)$ is an orbit of $\sos$ in $\R^3$, $F(S^2_{\sqrt[4]{3}}(0))$ must the orbit of $F(\tau)$ under the conjugation action of $\sos$ for any $\tau\in S^2_{\sqrt[4]{3}}(0)$. Take $\tau=(0,0,\sqrt[4]{3}).$
\begin{align*}
	F\left(\begin{bmatrix}
		0 \\
		0 \\
		\sqrt[4]{3}
	\end{bmatrix}\right)=&-\begin{bmatrix}
	0 \\
	0 \\
	\sqrt[4]{3}
	\end{bmatrix}\begin{bmatrix}
	0 \\
	0
\\	\sqrt[4]{3} 
	\end{bmatrix}^T+\frac{1}{\sqrt{3}}I\\
	=&-\begin{bmatrix}
		0 & 0 & 0 \\
		0 & 0 & 0 \\
		0 & 0 & \sqrt{3}
	\end{bmatrix}+\frac{1}{\sqrt{3}}I\\
	=&\frac{1}{\sqrt{3}}\begin{bmatrix}
		1 & 0 & 0 \\
		0 & 1& 0 \\
		0 & 0 &-2	\end{bmatrix}\\
		=&\vv_1.
\end{align*}
Thus, we have \begin{align*}
	F(S^2_{\sqrt[4]{3}}(0))=\vrpt.
\end{align*}

Next let us show that $F$ is a $2$ to $1$ map from $S^2_{\sqrt[4]{3}}(0)$ to $\vrpt$.

Suppose that $F(\tau)=F(\tau')$ for $\tau,\tau'\in S^2_{\sqrt[4]{3}}(0)$, then there exists $g,h\in \sos$ so that $$\tau'=g\tau,\tau=h\begin{bmatrix}
	0 \\
	0 \\
	\sqrt[4]{3}
\end{bmatrix}$$ Thus, we have
\begin{align*}
h\vv_1h\m=F\left(h\begin{bmatrix}
	0 \\
	0 \\
	\sqrt[4]{3}
\end{bmatrix}\right)=F(\tau)=F(\tau')=F(g\tau)=gF(\tau)g\m=gh\vv_1h\m g\m,
\end{align*}which gives
\begin{align*}
	\vv_1=h\m g h\m\vv_1 h\m g\m h=(h\m g h)\vv_1 (h\m g h)\m.
\end{align*}
Thus, $h\m gh$ is a stabilizer of $\vv_1,$ under the conjugation action of $\sos.$ By the third bullet of Fact \ref{fctorbv}, we have
\begin{align*}
	g=h \begin{bmatrix}
		E&0\\0&H
	\end{bmatrix}h\m,
\end{align*}for $E\in O(2),H=\pm 1$ and $H\det E=1$
This gives
\begin{align*}
	\tau'=g\tau=gh\begin{bmatrix}
		0 \\
		0 \\
		\sqrt[4]{3}
	\end{bmatrix}=h \begin{bmatrix}
	E&0\\0&H
	\end{bmatrix}h\m h\begin{bmatrix}
	0 \\
	0 \\
	\sqrt[4]{3}
	\end{bmatrix}=Hh\begin{bmatrix}
	0 \\
	0 \\
	\sqrt[4]{3}
	\end{bmatrix}=H\tau=\pm \tau.
\end{align*}
As $F(\tau)=F(-\tau),$ we deduce that $F$ is a $2$ to $1$ map.

	Now let us calculate the differential of $F$. We have
	\begin{align*}
		dF_\tau(\xi)=-\xi\tau^T-\tau\xi^T+\frac{2\de(\tau,\xi)}{3}I.
	\end{align*}
For $\tau\in S^2_{\sqrt[4]{3}}(0)$	and $\xi$ tangent to  $S^2_{\sqrt[4]{3}}(0)$ at $\tau$, we have $\tau\perp\xi$ and thus
\begin{align*}
	dF_\tau(\xi)=-\xi\tau^T-\tau\xi^T,
\end{align*}and
\begin{align*}
	F\du \ka (\xi,\xi)=&\frac{1}{2}\tr\left(dF_\tau(\xi)dF_\tau(\xi)\right)\\
	=&\frac{1}{2}\tr(\xi\tau^T\xi\tau^T+\xi\tau^T\tau\xi^T+\tau\xi^T\xi\tau^T+\tau\xi^T\tau\xi^T)\\
	=&\frac{1}{2}\tr\left(\xi\de(\tau,\xi)\tau^T+\xi\de(\tau,\tau)\xi^T+\tau\de(\xi,\xi)\tau^T+\tau\de(\xi,\tau)\xi^T\right)\\
	=&0+\frac{1}{2}\tr\left(\xi^T\xi\de(\tau,\tau)\right)+\frac{1}{2}\left(\tau^T\tau\de(\xi,\xi)\right)+0\\
	=&\de(\tau,\tau)\de(\xi,\xi)\\
	=&\sqrt{3}\de(\xi,\xi).
\end{align*}
This implies that $dF$ is bijection from the tangent space to $S^2_{\sqrt[4]{3}}(0)$ at $\tau$ to the tangent space to $\rpt$ at $F(\tau).$ Thus, $F$ is a $2$ to $1$ immersion, i.e., a double cover.
\end{proof}
The key connection between the parametrization $F$ and the linear space $\VV$ is as follows.
\begin{fact}\label{fctcon}
A vector $\vv\in\VV$ is perpendicular to $\T_{F(\tau)}C$ for some point $\tau\in S^2_{\sqrt[4]{3}}(0)$ if and only if $\vv\tau=0$.
\end{fact}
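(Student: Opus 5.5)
The plan is to describe the tangent space $\T_{F(\tau)}C$ as the image of the differential $dF_\tau$ on all of $\R^3$, and then to compute the $\ka$-pairing of $\vv$ with $dF_\tau(\xi)$ directly. Since $C$ is a cone, $\T_{F(\tau)}C$ is spanned by two pieces:
\begin{itemize}
\item the radial direction $F(\tau)$;
\item the tangent space $\T_{F(\tau)}\vrpt$, which by the proof of Fact \ref{fctmet} is $\{dF_\tau(\xi)\,|\,\xi\perp\tau\}$.
\end{itemize}

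First I handle the radial direction. The formula $dF_\tau(\xi)=-\xi\tau^T-\tau\xi^T+\frac{2\de(\tau,\xi)}{3}I$ from the proof of Fact \ref{fctmet} holds for every $\xi\in\R^3$. Taking $\xi=\tau$ gives
\begin{align*}
dF_\tau(\tau)=-2\tau\tau^T+\frac{2|\tau|^2}{3}I=2F(\tau),
\end{align*}
which also follows from $F$ being homogeneous of degree $2$. Hence
\begin{align*}
\T_{F(\tau)}C=\operatorname{span}\{F(\tau)\}+\T_{F(\tau)}\vrpt=dF_\tau(\R^3).
\end{align*}
This is indeed $3$-dimensional. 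On $\tau^\perp$, $dF_\tau$ is injective by the isometry computation $F\du\ka=\sqrt3\de$. Moreover $F(\tau)$ is $\ka$-orthogonal to $\T_{F(\tau)}\vrpt$, because $\vrpt$ lies in the unit sphere of $\VV$. So the radial direction adds one new dimension, and $\dim \T_{F(\tau)}C=3$ as it should be.

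Next, for $\vv\in\VV$ and any $\xi\in\R^3$, I compute
\begin{align*}
\ka(\vv,dF_\tau(\xi))=-\tfrac12\tr(\vv\xi\tau^T)-\tfrac12\tr(\vv\tau\xi^T)+\tfrac{\de(\tau,\xi)}{3}\tr\vv .
\end{align*}
The last term vanishes because $\tr\vv=0$. By cyclicity of the trace and the symmetry of $\vv$, the first two terms are $-\frac12\tau^T\vv\xi-\frac12\xi^T\vv\tau=-(\vv\tau)^T\xi$. Thus
\begin{align*}
\ka(\vv,dF_\tau(\xi))=-\de(\vv\tau,\xi).
\end{align*}

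It follows that $\vv\perp \T_{F(\tau)}C$ if and only if $\de(\vv\tau,\xi)=0$ for all $\xi\in\R^3$, that is, if and only if $\vv\tau=0$. The only point needing care is the identification $\T_{F(\tau)}C=dF_\tau(\R^3)$ in the first step, specifically that the radial direction is included. The identity $dF_\tau(\tau)=2F(\tau)$ settles this, and everything else is a routine trace computation.
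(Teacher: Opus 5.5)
Your proof is correct and is essentially the paper's argument. The paper computes $\ka(F(\tau),\vv)=-\frac{1}{2}\de(\tau,\vv\tau)$ and $\ka(dF_\tau(\xi),\vv)=-\de(\xi,\vv\tau)$ for $\xi\perp\tau$ separately; you instead absorb the radial direction through $dF_\tau(\tau)=2F(\tau)$ and do a single trace computation for all $\xi\in\R^3$, which is the same calculation, slightly streamlined.
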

\begin{proof}
		For any $\vv\in \VV,$ $\tau\in S^2_{\sqrt[4]{3}}(0)$ and $\xi\in\T_\tau S^2_{\sqrt[4]{3}}(0)$, we have
	\begin{align*}
		\ka(F(\tau),\vv)=&\frac{1}{2}\tr\left(\vv\left(-\tau\tau^T-\frac{1}{3}I\right)\right)=-\frac{1}{2}\tr\left(\tau^T\vv\tau\right)+0=-\frac{1}{2}\de(\tau,\vv\tau),\\
		\ka(dF_\tau(\xi),\vv)=&\frac{1}{2}\tr\left(\vv\left(-\xi\tau^T-\tau\xi^T\right)\right)=-\frac{1}{2}\tr\left(\tau^T \vv\xi+\xi^T\vv\tau \right)=-\de(\xi,\vv\tau),
	\end{align*}
	where we have used $\vv^T=\vv$ at the last step. We are done by Fact \ref{fctmet}.
\end{proof}
Next let us calculate the normal space to $F(\tau)$ of $C.$ 
\begin{fact}
		For any point $\tau\in S^2_{\sqrt[4]{3}}(0)\setminus\{(0,0,\sqrt[4]{3}),(0,0,-\sqrt[4]{3})\}$, the following vectors $\{\xi_0(\tau),\xi_1(\tau),\xi_2(\tau)\}$ is a positively oriented orthonormal basis of $\T_\tau\R^3$:	\begin{align*}
			\xi_0(\tau)=&\frac{1}{\sqrt[4]{3}}\tau=(x,y,z),\\
			\xi_1(\tau)=&\left(\frac{-y}{\sqrt{1-z^2}},\frac{x}{\sqrt{1-z^2}},0\right),\\
			\xi_2(\tau)=&\left(\frac{-xz}{\sqrt{1-z^2}},\frac{-yz}{\sqrt{1-z^2}},\sqrt{1-z^2}\right).
		\end{align*}		
\end{fact}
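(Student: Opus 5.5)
The plan is a direct verification in coordinates. Write $\xi_0(\tau)=\tau/\sqrt[4]{3}=(x,y,z)$. Since $\tau\in S^2_{\sqrt[4]{3}}(0)$, we have $x^2+y^2+z^2=1$. Excluding the two poles $(0,0,\pm\sqrt[4]{3})$ means exactly that $z\neq\pm1$. Hence $s:=\sqrt{1-z^2}=\sqrt{x^2+y^2}>0$, so $\xi_1$ and $\xi_2$ are well defined and smooth on the punctured sphere. The only identity used throughout is $x^2+y^2=s^2$.

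First I check orthonormality.
\begin{itemize}
\item $|\xi_0|=1$ by construction.
\item $|\xi_1|^2=(x^2+y^2)/s^2=1$.
\item $|\xi_2|^2=z^2(x^2+y^2)/s^2+s^2=z^2+1-z^2=1$.
\item $\de(\xi_0,\xi_1)=(-xy+xy)/s=0$.
\item $\de(\xi_1,\xi_2)=(xyz-xyz)/s^2+0=0$.
\item $\de(\xi_0,\xi_2)=-z(x^2+y^2)/s+zs=-zs+zs=0$.
\end{itemize}
So $\{\xi_0,\xi_1,\xi_2\}$ is an orthonormal basis of $\T_\tau\R^3$.

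Next I check positive orientation, i.e.\ $\det[\xi_0\,\xi_1\,\xi_2]=1$. Since the frame is orthonormal, it suffices to show $\xi_1\times\xi_2=\xi_0$. Computing the cross product of $(-y,x,0)/s$ and $(-xz,-yz,s^2)/s$ gives
\begin{align*}
\xi_1\times\xi_2=\frac{1}{s^2}\bigl(xs^2,\;ys^2,\;y^2z+x^2z\bigr)=(x,y,z)=\xi_0.
\end{align*}
Therefore $\det[\xi_0\,\xi_1\,\xi_2]=\de(\xi_0,\xi_1\times\xi_2)=1$.

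There is no real obstacle; the argument is bookkeeping with $x^2+y^2=1-z^2$. The only point needing care is the role of the excluded poles: they are precisely where $s=0$, so the frame is defined exactly on the stated set. Geometrically, $\xi_1$ and $\xi_2$ are the unit longitude and latitude directions, so $\{\xi_1,\xi_2\}$ is an orthonormal frame of $\T_\tau S^2_{\sqrt[4]{3}}(0)$, which is how it will be fed into Fact~\ref{fctcon}.
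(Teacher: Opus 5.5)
Your proof is correct. The orthonormality checks are the same as the paper's; the difference is in the orientation step. The paper evaluates the frame only at $(x,y,z)=(1,0,0)$, where it is the standard basis. It then argues that $\det(\xi_0,\xi_1,\xi_2)$ is a continuous function with values in $\{\pm1\}$ on the connected set $S^2_1(0)\setminus\{(0,0,\pm1)\}$, hence identically $1$. You instead compute $\xi_1\times\xi_2=\xi_0$ at every point and use $\det[\xi_0\,\xi_1\,\xi_2]=\de(\xi_0,\xi_1\times\xi_2)=1$.

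Your route is purely algebraic and needs no connectedness of the twice-punctured sphere. It also gives the slightly sharper identity $\xi_1\times\xi_2=\xi_0$, which is the form of the orientation used with Rodrigues' formula in Fact~\ref{fcteigen}. The paper's route replaces that short cross-product computation with a soft topological argument. As a minor remark, the reduction to $\xi_1\times\xi_2=\xi_0$ uses only $|\xi_0|=1$, not full orthonormality.
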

		Here we use $(x,y,z)$ to denote the coordinate components of $\frac{1}{\sqrt[4]{3}}\tau$. In the following, we will drop the variable $\tau$  in order to simplify notations and we will reserve the symbol $(x,y,z)$ to denote points in the unit sphere of $\R^3$. Positively oriented means $\det(\xi_0,\xi_1,\xi_2)=1.$
\begin{proof}
We have
\begin{align*}
	&\de(\xi_1,\xi_1)=\frac{y^2+x^2}{1-z^2}=1,\de(\xi_2,\xi_2)=\frac{z^2(x^2+y^2)}{1-z^2}+1-z^2=1,\\
	&\de(\xi_1,\xi_2)=\frac{yxz-xyz}{\sqrt{1-z^2}}=0,\de(\xi_1,\xi_0)=\frac{-yx+xy}{\sqrt{1-z^2}}=0,\de(\xi_2,\xi_0)=\frac{-z(x^2+y^2)}{\sqrt{1-z^2}}+z\sqrt{1-z^2}=0.
\end{align*}
When $(x,y,z)=(1,0,0)$ we have  $\det(\xi_0,\xi_1,\xi_2)=1$. As $\xi_0,\xi_1,\xi_2$ are continuous orthonormal frames on the connected open set $S^2_1(0)\setminus\{(0,0,1),(0,0,-1)\}$ of the sphere $S^2_1(0)$, we deduce that $\det(\xi_0,\xi_1,\xi_2)=1$ everywhere on $S^2_1(0)\setminus\{(0,0,1),(0,0,-1)\}$ .   
\end{proof}
\begin{fact}\label{fctnmc}
	For any point $\tau=\sqrt[4]{3}\xi_0,$  the vectors following $\mathbf{n}_1,\mathbf{n}_2$ form an orthonormal basis normal space $\T^\perp_{F(\tau)}C,$
	\begin{align*}
		\mathbf{n}_1(\tau)=\xi_1(\tau)(\xi_1(\tau))^T-\xi_2(\tau)(\xi_2(\tau))^T,\\
		\mathbf{n}_2(\tau)=\xi_1(\tau)(\xi_2(\tau)^T)+\xi_2(\tau)(\xi_1(\tau)^T).
	\end{align*}
\end{fact}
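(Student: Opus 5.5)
The plan is to check four things directly: $\nn_1,\nn_2$ lie in $\VV$; they are $\ka$-orthonormal; they are normal to $C$ at $F(\tau)$; and the normal space $\T^\perp_{F(\tau)}C$ is exactly $2$-dimensional. Throughout we use that $\{\xi_0,\xi_1,\xi_2\}$ is an orthonormal basis of $\R^3$ (previous Fact), that $\tau=\sqrt[4]{3}\,\xi_0$, and Fact \ref{fctcon}.

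\emph{Membership in $\VV$.} Both matrices are visibly symmetric. For the traces,
\begin{align*}
\tr\nn_1=|\xi_1|^2-|\xi_2|^2=0,\qquad \tr\nn_2=2\de(\xi_1,\xi_2)=0.
\end{align*}

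\emph{Orthonormality.} We use $\xi_i^T\xi_j=\de_{ij}$ to multiply out the matrices. Since $\xi_1^T\xi_2=0$, the cross terms in $\nn_1^2$ vanish and $\nn_1^2=\xi_1\xi_1^T+\xi_2\xi_2^T$. Likewise $\nn_2^2=\xi_1\xi_1^T+\xi_2\xi_2^T$. Both have trace $2$, so $\ka(\nn_i,\nn_i)=\frac12\cdot 2=1$. For the mixed product, $\nn_1\nn_2=\xi_1\xi_2^T-\xi_2\xi_1^T$, which has trace $0$, so $\ka(\nn_1,\nn_2)=0$.

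\emph{Normality.} Since $\xi_1,\xi_2\perp\tau$, we get $\xi_j^T\tau=0$, hence $\nn_1\tau=0$ and $\nn_2\tau=0$. By Fact \ref{fctcon}, both $\nn_1$ and $\nn_2$ are perpendicular to $\T_{F(\tau)}C$.

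\emph{Dimension count.} The only step needing a word of justification is that the normal space has dimension exactly $2$. By Fact \ref{fctmet}, $F$ restricted to $S^2_{\sqrt[4]{3}}(0)$ is an immersion onto $\vrpt$, which lies in the unit sphere of $\VV$. The radial direction $F(\tau)$ is therefore $\ka$-orthogonal to $\T_{F(\tau)}\vrpt$. Together these show $\T_{F(\tau)}C=\R F(\tau)\oplus dF_\tau(\T_\tau S^2_{\sqrt[4]{3}}(0))$ is $3$-dimensional, so its orthogonal complement in the $5$-dimensional space $\VV$ is $2$-dimensional.

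The normal space is thus spanned by the two orthonormal vectors $\nn_1,\nn_2$, which proves the Fact.

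Alternatively, Fact \ref{fctcon} identifies $\T^\perp_{F(\tau)}C$ with $\{\vv\in\VV:\vv\tau=0\}$. This is the space of symmetric traceless maps vanishing on $\tau$, i.e.\ traceless symmetric forms on the plane $\tau^\perp$, which is $2$-dimensional and gives the dimension count directly.
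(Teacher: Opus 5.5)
Your proof is correct and takes essentially the same route as the paper. Normality of $\nn_1,\nn_2$ follows from $\nn_1\tau=\nn_2\tau=0$ via Fact \ref{fctcon}, and orthonormality follows by expanding the traces with $\xi_i^T\xi_j=\de_{ij}$; your extra checks that $\nn_1,\nn_2\in\VV$ and that $\T^\perp_{F(\tau)}C$ is exactly $2$-dimensional are left implicit in the paper, and they are what upgrades two orthonormal normal vectors to an orthonormal basis.
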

\begin{proof}
Again we will omit the variable $\tau$ to simplify notations. 

We have
\begin{align*}
	\mathbf{n}_1\tau=&\xi_1\xi_1^T\tau-\xi_2\xi_2^T\tau=0,\\
	\mathbf{n}_2\tau=&\xi_1\xi_2^T\tau+\xi_2\xi_1^T\tau=0.
\end{align*}
Thus, by Fact \ref{fctcon}, $\mathbf{n}_1,\mathbf{n}_2\in \T_{F(\tau)}^\perp C$. Next let us show that $\mathbf{n}_1\perp \mathbf{n}_2,$
\begin{align*}
	\ka(\mathbf{n}_1,\mathbf{n}_2)=&\frac{1}{2}\tr\left(\left(\xi_1\xi_1^T-\xi_2\xi_2^T\right)\left(\xi_1\xi_2^T+\xi_2\xi_1^T\right)\right)\\
	=&\frac{1}{2}\tr(\xi_1\xi_1^T\xi_1\xi_2^T+\xi_1\xi_1^T\xi_2\xi_1^T-\xi_2\xi_2^T\xi_1\xi_2^T-\xi_2\xi_2^T\xi_2\xi_1^T)\\
	=&\frac{1}{2}\tr(\xi_2^T\xi_1\xi_1^T\xi_1)+\frac{1}{2}\tr(\xi_1^T\xi_1\xi_1^T\xi_2)-\frac{1}{2}\tr(\xi_2^T\xi_2\xi_2^T\xi_1)-\frac{1}{2}\tr(\xi_1^T\xi_2\xi_2^T\xi_2)\\
	=&\frac{1}{2}\de(\xi_2,\xi_1)\de(\xi_1,\xi_1)+\frac{1}{2}\de(\xi_1,\xi_1)\de(\xi_1,\xi_2)-\frac{1}{2}\de(\xi_2,\xi_2)\de(\xi_2,\xi_1)-\frac{1}{2}\de(\xi_1,\xi_2)\de(\xi_2,\xi_2)\\
	=&0.
\end{align*}
Moreover, we have
\begin{align*}
	\ka(\mathbf{n}_1,\mathbf{n}_1)=&\frac{1}{2}\tr\left(\left(\xi_1\xi_1^T-\xi_2\xi_2^T\right)\left(\xi_1\xi_1^T+\xi_2\xi_2^T\right)\right)\\
	=&\frac{1}{2}\tr(\xi_1\xi_1^T\xi_1\xi_1^T-\xi_1\xi_1^T\xi_2\xi_2^T-\xi_2\xi_2^T\xi_1\xi_1^T+\xi_2\xi_2^T\xi_2\xi_2^T)\\
	=&\frac{1}{2}\tr(\xi_1^T\xi_1\xi_1^T\xi_1)-\frac{1}{2}\tr(\xi_2^T\xi_1\xi_1^T\xi_2)-\frac{1}{2}\tr(\xi_1^T\xi_2\xi_2^T\xi_1)+\frac{1}{2}\tr(\xi_2^T\xi_2\xi_2^T\xi_2)\\
	=&\frac{1}{2}\de(\xi_1,\xi_1)\de(\xi_1,\xi_1)-\frac{1}{2}\de(\xi_2,\xi_1)\de(\xi_1,\xi_2)-\frac{1}{2}\de(\xi_1,\xi_2)\de(\xi_2,\xi_1)+\frac{1}{2}\de(\xi_2,\xi_2)\de(\xi_2,\xi_2)\\
	=&1.
\end{align*}
and
\begin{align*}
	\ka(\mathbf{n}_2,\mathbf{n}_2)=&\frac{1}{2}\tr\left(\left(\xi_1\xi_2^T+\xi_2\xi_1^T\right)\left(\xi_1\xi_2^T+\xi_2\xi_1^T\right)\right)\\
	=&\frac{1}{2}\tr(\xi_1\xi_2^T\xi_1\xi_2^T+\xi_1\xi_2^T\xi_2\xi_1^T+\xi_2\xi_1^T\xi_1\xi_2^T+\xi_2\xi_1^T\xi_2\xi_1^T)\\
	=&\frac{1}{2}\tr(\xi_2^T\xi_1\xi_2^T\xi_1)+\frac{1}{2}\tr(\xi_1^T\xi_1\xi_2^T\xi_2)+\frac{1}{2}\tr(\xi_2^T\xi_2\xi_1^T\xi_1)+\frac{1}{2}\tr(\xi_1^T\xi_2\xi_1^T\xi_2)\\
	=&\frac{1}{2}\de(\xi_2,\xi_1)\de(\xi_2,\xi_1)+\frac{1}{2}\de(\xi_1,\xi_1)\de(\xi_2,\xi_2)+\frac{1}{2}\de(\xi_2,\xi_2)\de(\xi_1,\xi_1)+\frac{1}{2}\de(\xi_1,\xi_2)\de(\xi_1,\xi_2)\\
	=&1.
\end{align*}
\end{proof}
\subsection{Complexification of $\VV$}
We need to use complexify $\VV$ in order to use deeper results from representation theory \cite{RFsi,LPcs,KNOT}. To this end, we need to complexify $\VV.$
\begin{defn}
	Define $$\VV_\C=\VV\ts \C=\VV\oplus i\VV.$$ Extend the conjugation action of $\sos$ on $\VV$ complex linearly to $\VV_\C$, i.e., acting as conjugation on $3\times 3$ complex matrices. Extend $\ka_\C$ to a Hermitian inner product $\ka_\C$ on $\VV_\C$ by setting
	\begin{align*}
		\ka(\vv_\C,\uu_\C)=\frac{1}{2}\tr(\vv_\C\ov{\uu_\C}),
		\end{align*}with $\vv_\C,\uu_\C\in \VV_\C$ and the symbol bar meaning the complex conjugation of matrices.\end{defn}
Here symbol bar means complex conjugation of matrices.	Note $\ka_\C$ is still invariant under conjugation actions of $\sos.$
\begin{fact}\label{fctirred}
	The conjugation action of $\sos$ on $\VV_\C$ is a complex irreducible representation of $\sos.$ 
\end{fact}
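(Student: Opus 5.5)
We need to show that the conjugation action of $\sos$ on $\VV_\C$ is complex irreducible. The plan is to identify $\VV_\C$ with the space of traceless symmetric complex $3\times 3$ matrices. Equivalently, this is the space $\mathcal{H}_2$ of harmonic homogeneous quadratic polynomials on $\C^3$, via $\vv\mapsto \tau^T\vv\tau$. The goal is then to check that this is the spin-$2$ (weight-$2$) representation.

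The cleanest route goes through the Lie algebra. Since $\sos$ is connected, a subspace is $\sos$-invariant if and only if it is invariant under $\soss$, and hence under its complexification $\mathfrak{so}(3,\C)\cong\mathfrak{sl}(2,\C)$. We will do the following in order:
\begin{enumerate}
\item Take the generator $L_3$ of rotations about the $z$-axis, acting by $\vv\mapsto[L_3,\vv]$. We will diagonalize $-i\,\mathrm{ad}L_3$ on $\VV_\C$ and check that its eigenvalues are $-2,-1,0,1,2$, each with multiplicity one.
\item Concretely, the eigenvectors are built from $e_\pm=(1,\pm i,0)^T$. They are
\[
e_\pm e_\pm^T,\qquad e_\pm e_3^T+e_3e_\pm^T,\qquad \vv_1 .
\]
\item Form the raising and lowering operators $L_\pm=L_1\pm iL_2$. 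We will check that $L_-$ (up to nonzero constants) sends the weight-$k$ eigenvector to the weight-$(k-1)$ eigenvector, and $L_+$ sends it to the weight-$(k+1)$ eigenvector. The point to verify is that none of these constants vanish.
\end{enumerate}

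Irreducibility then follows by the standard weight argument. Let $W\neq0$ be an invariant complex subspace. It is $\mathrm{ad}L_3$-invariant, so it is spanned by weight vectors it contains, since all weights are distinct. Applying $L_+$ and $L_-$ repeatedly to any weight vector in $W$ produces every weight vector, because all the constants are nonzero. Hence $W=\VV_\C$.

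Alternatively, we could compute the character. The rotation by angle $\ta$ acts on $\VV_\C$ with eigenvalues $e^{ik\ta}$ for $k=-2,\dots,2$, so
\[
\int_{\sos}|\chi|^2=1,
\]
which means the representation is irreducible. This requires invoking the Weyl integration formula for $\sos$, so the ladder computation is more self-contained.

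The main obstacle is bookkeeping: checking that each ladder step is nonzero. These are explicit $3\times3$ commutators, for example $[L_-, e_+e_+^T]=c\,(e_+e_3^T+e_3e_+^T)$, which are routine to verify, and we can check them in the Mathematica appendix.

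An alternative shortcut combines the real irreducibility already in Fact \ref{fctorbv} with a Schur-type argument. Real irreducibility gives that $\operatorname{End}_{\sos}(\VV)$ is $\R$, $\C$, or $\mathbb{H}$. The complexification stays irreducible exactly when this commutant is $\R$. Since the $\mathrm{ad}L_3$ weights $0,\pm1,\pm2$ are all distinct, any commuting endomorphism preserves each real weight space, including the one-dimensional real line spanned by $\vv_1$. Such an endomorphism therefore acts on $\vv_1$ by a real scalar, and hence is real. So $\operatorname{End}_{\sos}(\VV)=\R$, which again gives Fact \ref{fctirred}.
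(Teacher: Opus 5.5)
Your proposal is correct, but your main route differs from the paper's. The paper's proof is three lines. It takes the real irreducibility of $\VV$ from the last bullet of Fact \ref{fctorbv}, which is itself only cited. It then invokes the general fact from Br\"ocker--tom Dieck that the complexification of a real irreducible representation is either irreducible or splits into two complex subrepresentations of equal dimension. The splitting is ruled out because $\dim_\C\VV_\C=5$ is odd.

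Your ladder argument proves complex irreducibility directly from $\mathfrak{sl}(2,\C)$ weight theory. The five matrices you list do have the distinct weights $-2,\dots,2$, and the ladder constants are nonzero; for instance $[\si_1-i\si_2,\cdot\,]$ sends $e_+e_+^T\mapsto 2i(e_+e_3^T+e_3e_+^T)\mapsto -2\sqrt{3}\,i\,\vv_1$. This costs some explicit bookkeeping, but it buys three things. It is self-contained. It makes the citation for real irreducibility unnecessary, since complex irreducibility of $\VV_\C$ implies real irreducibility of $\VV$. It also identifies $\VV_\C$ outright as the spin-$2$ representation, which is exactly what is needed later to apply the coherent state inequality with $J=2$ in Fact \ref{fctcs}.

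There is one small convention slip. With $L_j=\si_j$ and weights read off from $-i\,\mathrm{ad}L_3$, the operator $L_1-iL_2$ raises the weight; your own example sends weight $-2$ to weight $-1$. So your labels $L_\pm$ are swapped, which is immaterial since you apply both.

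Your Schur-type shortcut is the variant closest to the paper. It uses the same framework of real irreducibility plus the real/complex/quaternionic trichotomy. You certify the real type via the one-dimensional zero-weight line $\R\vv_1$, whereas the paper's parity count needs no computation at all. Its last step, from $A\vv_1=\lambda\vv_1$ with $\lambda\in\R$ to $A=\lambda\,\id$, should be justified explicitly: $A-\lambda\,\id$ is equivariant with nonzero kernel, hence zero by real irreducibility.
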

\begin{proof}
	Recall the last bullet of Fact \ref{fctorbv}, i.e., that the conjugation action of $\sos$ is irreducible on the $5$-real-dimensional vectors space $\VV.$ By \cite[Chapter II, (6.3) Theorem and (6.6) Proposition (i) to (iii)]{BTrcp}, the action of $\sos$ on $\VV_\C$ is either irreducible or splits into two complex subrepresentations of the same dimension. In the latter case, we deduce that $\VV_\C$ has even complex dimension, which is impossible. We are done.
\end{proof}
The complexification $\VV_\C$ provides alternative ways to describe $2$-vectors.
\begin{defn}
	For $S^2_{\sqrt[4]{3}}(0)\setminus\{(0,0,\sqrt[4]{3}),(0,0,-\sqrt[4]{3})\}$, define
	\begin{align*}
		\nn(\tau)&=\nn_1\wedge\nn_2,\\
		\nn_\C(\tau)&=\frac{1}{\sqrt{2}}\nn_1+\frac{1}{\sqrt{2}}i\nn_2=\frac{1}{\sqrt{2}}(\xi_1+\xi_2 i)(\xi_1+\xi_2 i)^T.
	\end{align*}
\end{defn}
\begin{lem}\label{leminp}
		For any simple $2$-vector $\eta=\ww_1\wedge\ww_2$ with $\ww_1,\ww_2$ an orthonormal basis of $\eta,$ define
	\begin{align*}
		\eta_\C=\frac{1}{\sqrt{2}}\ww_1+\frac{1}{\sqrt{2}}\ww_2 i.
	\end{align*}
	We have
	\begin{align}\label{eqconj}\nn_\C(\tau)=&\ov{\nn_\C(-\tau)},\\
|\ka_\C(\nn_\C(\tau),\eta_\C)|^2-		|\ka_\C(\nn_\C(-\tau),\eta_\C)|^2=&\ka(\nn_1(\tau)\wedge \nn_2(\tau),\ww_1\wedge\ww_2).
	\end{align}
\end{lem}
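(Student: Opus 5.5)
The plan is a direct computation. It has two parts: the first identity in (\ref{eqconj}) comes from how the frame $\xi_1,\xi_2$ transforms under $\tau\mapsto-\tau$, and the second is a $2\times 2$ determinant identity obtained by expanding the Hermitian inner product $\ka_\C$ into real and imaginary parts. Throughout I will use that $\nn_1,\nn_2$ and $\ww_1,\ww_2$ are real matrices in $\VV$. Hence $\ka_\C$ restricted to them is just $\ka$, and $\ka$ is symmetric and real-valued there.

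\emph{Step 1 (conjugation identity).} Replacing $\tau$ by $-\tau$ replaces $(x,y,z)$ by $(-x,-y,-z)$. From the explicit formulas in the Fact preceding Fact \ref{fctnmc}, I will check that $\xi_1(-\tau)=-\xi_1(\tau)$. In $\xi_2$ every entry is either a product of two coordinates or depends only on $z^2$, so $\xi_2(-\tau)=\xi_2(\tau)$. Then
\begin{align*}
\nn_\C(-\tau)=\frac{1}{\sqrt2}(-\xi_1+i\xi_2)(-\xi_1+i\xi_2)^T=\frac{1}{\sqrt2}(\xi_1-i\xi_2)(\xi_1-i\xi_2)^T=\ov{\nn_\C(\tau)},
\end{align*}
because $\xi_1,\xi_2$ are real. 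In terms of the real basis this says $\nn_\C(-\tau)=\frac{1}{\sqrt2}(\nn_1(\tau)-i\nn_2(\tau))$.

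\emph{Step 2 (expansion).} Set $a=\ka(\nn_1,\ww_1)$, $b=\ka(\nn_1,\ww_2)$, $c=\ka(\nn_2,\ww_1)$ and $d=\ka(\nn_2,\ww_2)$, all real. Since $\ka_\C(\vv,\uu)=\frac12\tr(\vv\ov{\uu})$ is linear in the first slot and conjugate-linear in the second, I get
\begin{align*}
\ka_\C(\nn_\C(\tau),\eta_\C)&=\tfrac12\big[(a+d)+i(c-b)\big],\\
\ka_\C(\nn_\C(-\tau),\eta_\C)&=\tfrac12\big[(a-d)-i(b+c)\big].
\end{align*}
Taking squared moduli and subtracting gives
\begin{align*}
\tfrac14\big[(a+d)^2+(c-b)^2-(a-d)^2-(b+c)^2\big]=ad-bc.
\end{align*}

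\emph{Step 3.} By definition of the induced inner product on $2$-vectors, $ad-bc=\det\big(\ka(\nn_j,\ww_k)\big)_{j,k}=\ka(\nn_1\wedge\nn_2,\ww_1\wedge\ww_2)$, which completes the identity.

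The only delicate point is bookkeeping: getting the sign behaviour of $\xi_1,\xi_2$ under the antipodal map right, and keeping track of which slot of $\ka_\C$ is conjugated. Everything else is routine algebra.

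A side remark: a different orthonormal basis of $\eta$ with the same orientation changes $\eta_\C$ only by a unit complex phase. Such a phase leaves both moduli unchanged, so the left side is well defined in terms of $\eta$, consistent with the right side. No further work is needed for this.
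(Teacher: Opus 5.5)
Your proposal is correct and follows essentially the same route as the paper. The parity $\xi_1(-\tau)=-\xi_1(\tau)$, $\xi_2(-\tau)=\xi_2(\tau)$ gives the conjugation identity, and expanding $\ka_\C$ with $\nn_\C(\pm\tau)=\frac{1}{\sqrt2}(\nn_1\pm i\nn_2)$ reduces the difference of squared moduli to the $2\times 2$ determinant $ad-bc=\ka(\nn_1\wedge\nn_2,\ww_1\wedge\ww_2)$. Your bookkeeping of real and imaginary parts is cleaner than the paper's intermediate displays, which misplace some factors of $i$ but reach the same result.
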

\begin{proof}
	First observe that
	\begin{align*}
		\xi_1(\tau)=-\xi_1(-\tau),\xi_2(\tau)=\xi_2(-\tau),
	\end{align*}where again symbol bar means complex conjugation.
	
	We have
	\begin{align*}
		\ka_\C(\nn_\C(\tau),\eta_\C)=&\frac{1}{4}\tr((\nn_1(\tau)+\nn_2(\tau)i)(\ww_1-\ww_2 i))\\
		=&\frac{1}{4}\left(\tr(\nn_1(\tau) \ww_1)+\tr(\nn_2(\tau)\ww_2)-\tr(\nn_1(\tau)\ww_2)i+\tr(\nn_2(\tau)\ww_1)\right)i.
	\end{align*}
	And similarly
	\begin{align*}
	\ka_\C(\nn_\C(-\tau),\eta_\C)=&\frac{1}{4}\tr((\nn_1(\tau)-\nn_2(\tau)i)(\ww_1-\ww_2 i))\\
	=&\frac{1}{4}\left(\tr(\nn_1(\tau) \ww_1)-\tr(\nn_2(\tau)\ww_2)-\tr(\nn_1(\tau)\ww_2)i-\tr(\nn_2(\tau)\ww_1)\right)i.
\end{align*}
Thus, we have
\begin{align*}
&|\ka_\C(\nn_\C(\tau),\eta_\C)|^2-|\ka_\C(\nn_\C(-\tau),\eta_\C)|^2\\
=&\frac{1}{16}\bigg((\tr(\nn_1(\tau) \ww_1)+\tr(\nn_2(\tau)\ww_2))^2+(-\tr(\nn_1(\tau)\ww_2)+\tr(\nn_2(\tau)\ww_1))^2\\&
-(\tr(\nn_1(\tau) \ww_1)-\tr(\nn_2(\tau)\ww_2))^2-(-\tr(\nn_1(\tau)\ww_2)-\tr(\nn_2(\tau)\ww_1))^2\bigg)\\
=&\frac{1}{16}\bigg(4\tr(\nn_1(\tau) \ww_1)\tr(\nn_2(\tau)\ww_2))-4\tr(\nn_1(\tau)\ww_2)\tr(\nn_2(\tau)\ww_1)\bigg)\\=&\ka(\nn_1(\tau),\ww_1)\ka(\nn_2(\tau),\ww_2)-\ka(\nn_1(\tau),\ww_2)\ka(\nn_2(\tau),\ww_1)\\=&\ka(\nn_1(\tau)\wedge\nn_2(\tau),\ww_1\wedge \ww_2).
\end{align*}
\end{proof}
\subsection{Lie algbera $\soss$ and eigenvectors}
	Let us first provide an orthonormal basis of $\soss$. 
\begin{defn}Define $\si_1,\si_2,\si_3$ as follows
	\begin{align*}\si_1=\begin{bmatrix}
			0&0&0\\
			0&0&-1\\
			0&1&0\\
		\end{bmatrix},\si_2=\begin{bmatrix}
		0&0&1\\
		0&0&0\\
		-1&0&0\\
		\end{bmatrix},
		\si_3=\begin{bmatrix}
		0&-1&0\\
		1&0&0\\
		0&0&0\\
		\end{bmatrix}.
		\end{align*}
\end{defn}
\begin{fact}\label{fctgen}\cite[Example 3.10]{AKil} We have 
		\begin{align*}
		[\si_1,\si_2]=\si_3,[\si_2,\si_3]=\si_1,[\si_3,\si_1]=\si_2.
	\end{align*}
\end{fact}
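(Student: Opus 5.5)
The plan is a direct computation of the three brackets. I will use the matrix commutator $[A,B]=AB-BA$ and write each $\si_j$ in terms of elementary matrices. Let $E_{ab}$ denote the $3\times 3$ matrix with a $1$ in position $(a,b)$ and zeros elsewhere, so that $E_{ab}E_{cd}=\de_{bc}E_{ad}$. Then
\begin{align*}
\si_1=E_{32}-E_{23},\quad \si_2=E_{13}-E_{31},\quad \si_3=E_{21}-E_{12}.
\end{align*}
Equivalently, $\si_k$ is the matrix of the cross product $v\mapsto e_k\times v$ on $\R^3$. One checks this on basis vectors: for instance, $\si_1e_2=e_3=e_1\times e_2$.

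With the cross-product description, the cleanest route is the Jacobi identity. For all $v$,
\begin{align*}
[\si_a,\si_b]v=e_a\times(e_b\times v)-e_b\times(e_a\times v)=(e_a\times e_b)\times v.
\end{align*}
Taking $(a,b)=(1,2),(2,3),(3,1)$ gives $e_a\times e_b=e_3,e_1,e_2$ respectively, which yields the three stated relations.

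As an independent check I will expand one bracket using the $E_{ab}$ rule. We have
\begin{align*}
\si_1\si_2=(E_{32}-E_{23})(E_{13}-E_{31})=E_{21},\qquad \si_2\si_1=E_{12},
\end{align*}
so $[\si_1,\si_2]=E_{21}-E_{12}=\si_3$. The remaining two relations follow from the cyclic relabeling $1\to2\to3\to1$ of the indices. This relabeling is conjugation by the permutation matrix of the $3$-cycle, which lies in $\sos$, so the pattern is preserved.

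There is no real obstacle. The only point requiring care is the sign conventions, namely verifying that each $\si_k$ corresponds to $e_k\times\cdot$ and not to $-e_k\times\cdot$. A sign error there would flip every bracket. One could also verify the three relations by the Mathematica check the paper mentions.
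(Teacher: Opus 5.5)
Your proof is correct. The identification $\si_k v=e_k\times v$ checks out entrywise, the Jacobi identity gives $[\si_a,\si_b]=(e_a\times e_b)\times\cdot$, and your $E_{ab}$ expansion and cyclic-conjugation argument are also right. The paper gives no argument of its own beyond the citation to Kirillov's Example 3.10 and the Mathematica check, both of which amount to the same direct verification you carry out.
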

\begin{fact}\label{fcteigen}
	We have\begin{align*}
		[x\si_1+y\si_2+z\si_3,\nn_\C(\tau)]
		&=-2i\nn_\C(\tau),\\
		[x\si_1+y\si_2+z\si_3,\nn_\C(-\tau)]
		&=2i\nn_\C(-\tau).
	\end{align*}
\end{fact}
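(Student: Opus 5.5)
The plan is to recognize the matrix $x\si_1+y\si_2+z\si_3$ as the cross-product operator with the unit vector $\xi_0=(x,y,z)$. Then $\xi_1\pm i\xi_2$ are its eigenvectors, and the commutator with a rank-one symmetric matrix $\ww\ww^T$ reduces to eigenvalue bookkeeping.

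\textbf{Step 1: identify $A$ as a cross product.} Set $A=x\si_1+y\si_2+z\si_3$. From the explicit matrices one checks $\si_j v=e_j\times v$; for instance $\si_3e_1=e_2=e_3\times e_1$. By linearity,
\begin{align*}
Av=\xi_0\times v\qquad\text{for all }v\in\R^3,
\end{align*}
and $A^T=-A$.

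\textbf{Step 2: eigenvectors of $A$.} Since $\{\xi_0,\xi_1,\xi_2\}$ is a positively oriented orthonormal frame (the Fact preceding Fact \ref{fctnmc}), we have
\begin{align*}
A\xi_1=\xi_0\times\xi_1=\xi_2,\qquad A\xi_2=\xi_0\times\xi_2=-\xi_1.
\end{align*}
Put $\ww=\xi_1+i\xi_2$. Then $A\ww=\xi_2-i\xi_1=-i\ww$. Using $A^T=-A$, we also get $\ww^TA=-(A\ww)^T=i\ww^T$.

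\textbf{Step 3: the commutator.} Since $\nn_\C(\tau)=\frac{1}{\sqrt2}\ww\ww^T$, it follows that
\begin{align*}
[A,\nn_\C(\tau)]=\tfrac{1}{\sqrt2}\left(A\ww\,\ww^T-\ww\,\ww^TA\right)=\tfrac{1}{\sqrt2}\left(-i\ww\ww^T-i\ww\ww^T\right)=-2i\,\nn_\C(\tau).
\end{align*}

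\textbf{Step 4: the antipodal point.} Here $A$ is still built from the coordinates $(x,y,z)$ of $\tau$, so $A$ is unchanged. The formulas give $\xi_1(-\tau)=-\xi_1(\tau)$ and $\xi_2(-\tau)=\xi_2(\tau)$. Hence $\xi_1(-\tau)+i\xi_2(-\tau)=-\ov{\ww}$, so $\nn_\C(-\tau)=\frac{1}{\sqrt2}\ov{\ww}\,\ov{\ww}^T=\ov{\nn_\C(\tau)}$, which is (\ref{eqconj}). Because $A$ is real, complex conjugation of the identity in Step 3 gives $[A,\nn_\C(-\tau)]=2i\,\nn_\C(-\tau)$.

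The only real obstacle is sign and orientation bookkeeping. The signs in Step 2 depend on the frame being positively oriented and on the convention $\si_jv=e_j\times v$, so I would verify that convention entry by entry on all three $\si_j$. Getting either sign wrong would swap $\pm2i$.
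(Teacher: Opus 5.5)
Your proof is correct and is essentially the paper's argument. The paper uses Rodrigues' formula to get $\exp(\theta(x\si_1+y\si_2+z\si_3))(\xi_1+i\xi_2)=e^{-i\theta}(\xi_1+i\xi_2)$, conjugates the rank-one matrix $\nn_\C(\tau)$ to obtain $e^{-2i\theta}\nn_\C(\tau)$, differentiates at $\theta=0$, and then complex-conjugates using (\ref{eqconj}) exactly as in your Step 4. You do the same computation infinitesimally, identifying $x\si_1+y\si_2+z\si_3$ with $v\mapsto\xi_0\times v$ and checking $(x\si_1+y\si_2+z\si_3)(\xi_1+i\xi_2)=-i(\xi_1+i\xi_2)$ directly, which has the small advantage of verifying the sign and orientation convention explicitly rather than citing Rodrigues' formula.
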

\begin{proof}
First recall the well-known Rodrigues’ formula for rotations \cite[discussions around (1.2.6) (1.2.7) and (1.4.18)]{DKlg} the matrix exponential $\exp(\theta(x\si_1+y\si_2+z\si_3))$ is the rotation around the axis $(x,y,z)$ by angle $\theta$ so that $(x,y,z),v,\exp(\theta(x\si_1+y\si_2+z\si_3))v$ is positively oriented for $v$ normal to $(x,y,z).$ As $\xi_0,\xi_1,\xi_2$ is positively oriented and $\xi_1,\xi_2$ form an orthonormal basis of the orthogonal subspace to $(x,y,z)$, we deduce that 
\begin{align*}
	&\exp(\theta(x\si_1+y\si_2+z\si_3))(\xi_1+\xi_2i)\\=&(\cos\ta \xi_1+\sin\ta\xi_2)+(-\sin\ta\xi_1+\cos\ta \xi_2)i\\=&\cos\ta(\xi_1+\xi_2i)-\sin\ta(\xi_1+\xi_2i)i\\=&e^{-\theta i}(\xi_1+i\xi_2).
\end{align*} We have
\begin{align*}
	&\exp(\theta(x\si_1+y\si_2+z\si_3))\nn_\C(\tau)\exp(\theta(x\si_1+y\si_2+z\si_3))\m\\
	=&\exp(\theta(x\si_1+y\si_2+z\si_3))\frac{1}{\sqrt{2}}(\xi_1+\xi_2 i)(\xi_1+\xi_2 i)^T\exp(\theta(x\si_1+y\si_2+z\si_3))^T\\
	=&\frac{1}{\sqrt{2}}(\exp(\theta(x\si_1+y\si_2+z\si_3))(\xi_1+\xi_2 i))(\exp(\theta(x\si_1+y\si_2+z\si_3))(\xi_1+\xi_2 i))^T\\
	=&\frac{1}{\sqrt{2}}e^{-2\theta i}(\xi_1+\xi_2i)(\xi_1+\xi_2i)^T\\
	=&e^{-2\theta i}\nn_\C(\tau).
\end{align*}
Now differentiate at $\theta=0,$ we have
\begin{align*}
	[x\si_1+y\si_2+z\si_3,\nn_\C(\tau)]	= -2i \nn_\C(\tau).
\end{align*}
Apply complex conjugation and use (\ref{eqconj}). We are done.
\end{proof}
\subsection{Inequality from representation theory}
\begin{fact}\label{fctcs}
	For any convex function $\Phi:[0,1]\to \R$, we have
	\begin{align}\label{eqcs1}
		&\sup_{\substack{\psi\in\VV_\C\\\ka_\C(\psi,\psi)=1}}\int_{(x,y,z)\in S^2}\Phi\left(\left|\ka\left(\nn_\C(\sqrt[4]{3}(x,y,z)),\psi\right)\right|^2\right)\dhm^2(x,y,z)\\\label{eqcs2}=&\pi\int_0^1\Phi(s)s^{-\frac{3}{4}}ds\\
		\label{eqcs3}=&	\sup_{\substack{\psi\in\VV_\C\\\ka_\C(\psi,\psi)=1}}\int_{(x,y,z)\in S^2}\Phi\left(\left|\ka\left(\nn_\C(-\sqrt[4]{3}(x,y,z)),\psi\right)\right|^2\right)\dhm^2(x,y,z).
	\end{align}and the supremum is attained for $\psi=\nn_\C(\tau)$ for any $\tau\in S^2_{\sqrt[4]{3}}(0)\setminus\{(0,0,\sqrt[4]{3}),(0,0,-\sqrt[4]{3})\}.$
\end{fact}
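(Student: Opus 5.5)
The plan is to recognise the integrand in (\ref{eqcs1}) as a Husimi-type function of $\psi$ built from spin coherent states of the $5$-dimensional irreducible representation of $\sos$. The Lieb--Solovej theorem (and its refinement by Kulikov--Nicola--Ortega-Cerd\`a--Tilli \cite{LPcs,KNOT}) then identifies the maximisers, and the constant $\pi\int_0^1\Phi(s)s^{-3/4}ds$ comes from computing the integral for a coherent state.

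\emph{Step 1: identify the representation and the coherent states.}
By Fact \ref{fctirred}, $\VV_\C$ is a complex irreducible representation of $\sos$ of complex dimension $5$, hence the spin-$2$ representation. For the Lie algebra element $X_\tau=x\si_1+y\si_2+z\si_3$, the eigenvalues on $\VV_\C$ are $\{-2i,-i,0,i,2i\}$, and the extremal eigenvalues are simple. Fact \ref{fcteigen} says that $\nn_\C(\tau)$ is an eigenvector of $\operatorname{ad}X_\tau$ with eigenvalue $-2i$. Therefore $\nn_\C(\tau)$ is, up to a phase, the extremal weight vector for the axis $\tau/\sqrt[4]{3}$; that is, it is the spin-$2$ coherent state at that point of $S^2$.

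Two consequences follow. First, $|\ka_\C(\nn_\C(\tau),\psi)|^2$ is a well-defined, phase-independent function on all of $S^2$; the poles are a null set, and the modulus extends continuously across them. Second, equivariance gives $g\,\nn_\C(\tau)g^{-1}=e^{i\theta}\nn_\C(g\tau)$, so the integral in (\ref{eqcs1}) is invariant under $\psi\mapsto g\psi g^{-1}$. Hence the value at $\psi=\nn_\C(\tau)$ does not depend on $\tau$.

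\emph{Step 2: compute the value at a coherent state.}
Write $a=\xi_1(\tau)+i\xi_2(\tau)$ and $b=\xi_1(\tau')+i\xi_2(\tau')$. Then
\begin{align*}
\ka_\C(\nn_\C(\tau),\nn_\C(\tau'))=\tfrac14\tr(aa^T\bar b\bar b^T)=\tfrac14(a^T\bar b)^2 .
\end{align*}
For unit axes at angle $\theta$, a direct check (for instance with $\tau'$ at the north pole) gives $|a^T\bar b|=1+\cos\theta$. Hence
\begin{align*}
|\ka_\C(\nn_\C(\tau),\nn_\C(\tau'))|^2=\Big(\tfrac{1+\cos\theta}{2}\Big)^4 .
\end{align*}
Now integrate over $S^2$ using $\dhm^2=2\pi\,d(\cos\theta)$. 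Substitute $u=(1+\cos\theta)/2$, so that $\dhm^2$ becomes $4\pi\,du$, and then $s=u^4$, so that $4\pi\,du=\pi s^{-3/4}ds$. This gives exactly $\pi\int_0^1\Phi(s)s^{-3/4}ds$, which is (\ref{eqcs2}), attained at $\psi=\nn_\C(\tau)$.

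\emph{Step 3: the upper bound.}
For arbitrary unit $\psi$, the function $\tau\mapsto|\ka_\C(\nn_\C(\tau),\psi)|^2$ is the Husimi function of $\psi$ in the spin-$2$ representation. The Lieb--Solovej theorem for $SU(2)$ \cite{LPcs}, or equivalently its sharp form \cite{KNOT}, states that $\int_{S^2}\Phi(\text{Husimi})$ is maximised by coherent states whenever $\Phi$ is convex on $[0,1]$. Our $\sos$-representation lifts to $SU(2)$, so the theorem applies directly.

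The step I expect to be the main obstacle is matching conventions with the cited theorem: the normalisation of the measure on $S^2$, the normalisation of the coherent states, and the fact that our states are lowest rather than highest weight vectors. The last point is harmless: the integral is invariant under $\tau\mapsto-\tau$, and by Fact \ref{fcteigen} the vector $\nn_\C(-\tau)$ is the opposite extremal weight vector. Normalisations can be checked through the identity $\int|\ka_\C(\nn_\C(\tau),\psi)|^2\dhm^2=4\pi/5$, which follows from Schur's lemma.

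\emph{Step 4: the equality (\ref{eqcs3}).}
By (\ref{eqconj}), $\ka_\C(\nn_\C(-\tau),\psi)=\overline{\ka_\C(\nn_\C(\tau),\bar\psi)}$. The map $\psi\mapsto\bar\psi$ is a bijection of the unit sphere of $\VV_\C$ that preserves $\ka_\C$. Therefore the supremum in (\ref{eqcs3}) equals the one in (\ref{eqcs1}). Alternatively, substitute $(x,y,z)\mapsto-(x,y,z)$, which preserves $\dhm^2$.
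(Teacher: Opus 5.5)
Your proposal is correct and follows the paper's route. You identify $\VV_\C$ as the $5$-dimensional (spin-$2$) irreducible representation via Fact \ref{fctirred}. You recognise the $\nn_\C(\pm\tau)$ as the extremal-weight coherent vectors via Fact \ref{fcteigen}, and you match the two suprema by the antipodal substitution, all exactly as the paper does. The one difference is that the paper reads both the bound and the extremal value off \cite[Theorem 4]{RFsi}, whereas you use the cited theorem only for the upper bound and compute the value directly from $|\ka_\C(\nn_\C(\tau),\nn_\C(\tau'))|^2=\big(\frac{1+\cos\theta}{2}\big)^4$; this is a correct independent check of the constant $\pi\int_0^1\Phi(s)s^{-3/4}ds$.
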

Note that the above integrals are well-defined as the integrands are not defined only on two poles of $S^2_{\sqrt[4]{3}}(0),$ a Hausdorff $2$-dimensional measure $0$ set.
\begin{proof}
	This is the coherent state inequality obtained first by \cite{LPcs} with equality case determined by \cite{RFsi,KNOT}. Let us verify the required conditions. We will use the version stated as \cite[Theorem 4]{RFsi}.
	
	The paragraph before \cite[Theorem 4]{RFsi} explains the terminologies, which uses irreducible representations of $\operatorname{SU}(2)$ and its Lie algebra $\mathfrak{su}(2).$ We have actually verified all required conditions on $\sos$. Thus, conditions mentioned in the paragraph just before \cite[Theorem 4]{RFsi} follows by passing to the double cover $\operatorname{SU}(2)$ of $\sos$. Let us get into the details.
	
	First, by Fact \ref{fctirred}, the conjugation action of $\sos$ on $\VV_\C$ is complex irreducible. As lifting to coverings does not change invariant subspaces, the lifting of the conjugation action of $\sos$ on $\VV_\C$ to $\operatorname{SU}(2)$ is an irreducible representaion. As $\dim_\C\VV_\C=5,$ we have $J=2$ in the notations of \cite{RFsi}.
	
	Note that \cite{RFsi} is using physicists' convention of multiplying skew-hermitian Lie algebras by $i$ \cite[Sectioon (2.21)]{BTrcp}. Thus the operators satisfying $[S_1,S_2]=iS_3$ in the notation of \cite{RFsi} and cyclically, translates to operators satisfying $[iS_1,iS_2]=i(iS_3)$  and cyclically, i.e., $[S_1,S_2]=S_3$ and cyclically.
	
	The conjugation action of $\sos$ on $\VV_\C$ naturally induces a representation of the Lie algebra $\soss$ by differentiation and acts via commutators. In other words, for $\si\in\soss$ and $\vv_\C\in\VV_\C$, the action of $\si$ on $\vv_\C$ is
	\begin{align*}
		[\si,\vv_\C].
	\end{align*} Recall Fact \ref{fctgen}. By definition of representation of Lie algebra, we deduce that 
	\begin{align*}
		S_1=[\si_1,\cdot],
		S_2=[\si_2,\cdot],
		S_3=[\si_3,\cdot],
	\end{align*}satisfy $[S_1,S_2]=S_3$ and cyclically.
	
By Fact \ref{fcteigen}, $\nn_\C(-\tau)$ is an eigenfunction of $\omega.S$ in notations of \cite{RFsi} with eigenvalue $2i,$ which by physicists convention of multiplying $i,$ is an eigenfunction with eigenvalue $-2=-J.$ Thus, in notations of \cite{RFsi},  $\psi_\omega=\nn_\C(-\tau)$. Now apply \cite[Theorem 4]{RFsi} and we get the equality from (\ref{eqcs2}) to (\ref{eqcs3}). The  equality (\ref{eqcs1}) to (\ref{eqcs2}) is just a change of variables.
\end{proof}
\begin{lem}\label{lemcsc}
	For any $3$-dimensional subspace $W\s\VV,$ we have
	\begin{align*}
	\int_{C_1}|\ka({W,\T_pC_1})|\dhm^3(p)\le\frac{3\pi}{4}
	\end{align*}
\end{lem}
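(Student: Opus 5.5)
The plan is to reduce the integral over $C_1$ to an integral over the unit sphere $S^2$, rewrite the integrand with Lemma \ref{leminp}, and then bound the resulting sphere integral using the coherent state inequality, Fact \ref{fctcs}.

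\textbf{Step 1 (pass to normal planes).} Since $\VV$ is $5$-dimensional and $\T_pC_1$ is $3$-dimensional, Hodge duality gives
\begin{align*}
|\ka(W,\T_pC_1)|=|\ka(W^\perp,\T^\perp_pC_1)|.
\end{align*}
Here $W^\perp=\eta=\ww_1\wedge\ww_2$ is a unit simple $2$-vector. At $p=tF(\tau)$ the normal plane is $\nn_1(\tau)\wedge\nn_2(\tau)$ by Fact \ref{fctnmc}, and it does not depend on $t$. Lemma \ref{leminp} then gives
\begin{align*}
|\ka(W,\T_pC_1)|=|a(\tau)-a(-\tau)|,\qquad a(\tau):=|\ka_\C(\nn_\C(\tau),\eta_\C)|^2,
\end{align*}
where $\ka_\C(\eta_\C,\eta_\C)=1$.

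\textbf{Step 2 (coarea on the cone).} Parametrize $C_1$ by $(t,\tau)\mapsto tF(\tau)$, with $t\in[0,1]$ and $\tau\in\ssf$. The area element is $t^2\,dt$ times the area element of $\vrpt$. By Fact \ref{fctmet}, $F\du\ka=\sqrt3\,\de$ and $F$ is $2$ to $1$. Writing $\tau=\sqrt[4]{3}(x,y,z)$ with $(x,y,z)$ in the unit sphere, the integral becomes a constant multiple of
\begin{align*}
\int_{S^2}|a(\sqrt[4]{3}u)-a(-\sqrt[4]{3}u)|\,\dhm^2(u).
\end{align*}
The constant comes from $\int_0^1t^2dt=\frac13$, the area factor $\sqrt3\cdot\sqrt3$, and the factor $\frac12$ from the double cover. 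I would recheck these normalizations carefully against the target constant $\frac{3\pi}{4}$.

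\textbf{Step 3 (orthogonality).} Next I check that $\nn_\C(\tau)\perp\nn_\C(-\tau)$ in $\ka_\C$. By (\ref{eqconj}) this pairing is a multiple of $\tr(\nn_\C(\tau)\nn_\C(\tau))$. That trace contains the factor $(\xi_1+i\xi_2)^T(\xi_1+i\xi_2)=1-1=0$. Hence $a(\tau)+a(-\tau)\le 1$ by Bessel's inequality.

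\textbf{Step 4 (convex majorant and Fact \ref{fctcs}).} I will look for a convex $\Phi:[0,1]\to\R$ satisfying
\begin{align*}
|\alpha-\beta|\le\Phi(\alpha)+\Phi(\beta)\quad\text{whenever }\alpha,\beta\ge0,\ \alpha+\beta\le1.
\end{align*}
Applying (\ref{eqcs1})--(\ref{eqcs3}) with $\psi=\eta_\C$ to each of the two terms then gives
\begin{align*}
\int_{S^2}|a(u)-a(-u)|\,\dhm^2(u)\le 2\pi\int_0^1\Phi(s)s^{-3/4}\,ds.
\end{align*}

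\textbf{Main obstacle.} The hard part is choosing $\Phi$ so that the bound is sharp. The naive choice $\Phi(s)=s$ is valid but too weak: it gives $2\pi\cdot\frac45$, which exceeds the value attained in the equality case. I would calibrate $\Phi$ on the extremal configuration $\eta_\C=\nn_\C(\tau_0)$. There the spin-$2$ coherent state overlaps are $\sqrt{a}=\frac{1+\cos\theta}{2}$ and $\sqrt{b}=\frac{1-\cos\theta}{2}$, so $\sqrt a+\sqrt b=1$ and $|a-b|=|\sqrt a-\sqrt b|$. This suggests taking $\Phi$ to be the convex envelope of $|a-b|$ along the curve $\sqrt a+\sqrt b=1$. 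The first candidate I would try is piecewise linear, $\Phi(s)=\max(\lambda s-\mu,0)+\nu s$, with the parameters tuned so that the majorant is tight on that curve.

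If no single $\Phi$ gives the exact constant, the fallback is to split $S^2$ into the region $\{a\ge b\}$ and its complement. On each region I would apply the rearrangement and majorization form of the Lieb--Frank inequality, which says that the distribution of $a$ is majorized by that of the coherent state.
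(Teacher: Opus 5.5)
Your Steps 1--3 are correct and agree with the paper's reduction. The constant in Step 2 is indeed $\frac13\cdot\sqrt3\cdot\sqrt3\cdot\frac12=\frac12$. So with $a(u)=|\kappa_{\C}(\nn_{\C}(\sqrt[4]{3}u),\eta_{\C})|^2$, the lemma is equivalent to $\int_{S^2}|a(u)-a(-u)|\,d\mathcal{H}^2(u)\le\frac{3\pi}{2}$.

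\textbf{The gap is Step 4.} You never produce $\Phi$, and that choice is the entire content of the lemma. The bound is sharp: for $W=\T_{\vv_1}C$, Lemma \ref{lemproj} and Fact \ref{fctmap} give the reverse inequality. Since Fact \ref{fctcs} is an equality for $\eta_{\C}=\nn_{\C}(\tau_0)$, any usable $\Phi$ must satisfy $\Phi(a)+\Phi(b)=|a-b|$ exactly along the coherent-state configuration.

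Your description of that configuration is off. One computes $|\kappa_{\C}(\nn_{\C}(\tau),\nn_{\C}(\tau_0))|=\big(\frac{1+\cos\theta}{2}\big)^2$, so $a^{1/4}=\frac{1+\cos\theta}{2}$; this is what produces the weight $s^{-3/4}$ in Fact \ref{fctcs}. Your formula $\sqrt a=\frac{1+\cos\theta}{2}$ is the spin-$1$ formula. The relevant curve is therefore $a^{1/4}+b^{1/4}=1$, not $\sqrt a+\sqrt b=1$, and $|a-b|\neq|\sqrt a-\sqrt b|$.

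On the correct curve, the equator gives $a=b=\frac1{16}$, hence $\Phi(\frac1{16})=0$. Since $a-(1-a^{1/4})^4$ is not affine in $a$, a piecewise-linear $\Phi$ that is tight there must have slope $-1$ on $(0,\frac1{16})$ and $+1$ on $(\frac1{16},1)$. That is, $\Phi(s)=|s-\frac1{16}|$. This function is not of your form $\max(\lambda s-\mu,0)+\nu s$, because it needs the additive constant $\frac1{16}$. So your first candidate cannot reach $\frac{3\pi}{4}$, and the majorization fallback is too unspecified to be checked.

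\textbf{The missing idea is exactly the paper's: take $\Phi(s)=|s-\frac1{16}|$.}
\begin{itemize}
\item The needed inequality $|a-b|\le|a-\frac1{16}|+|b-\frac1{16}|$ is just the triangle inequality, so your Step 3 is not needed.
\item $\Phi$ is convex, and $\pi\int_0^1|s-\frac1{16}|\,s^{-3/4}\,ds=\frac{3\pi}{4}$.
\item Applying Fact \ref{fctcs} to each of the two terms gives $\int_{S^2}|a(u)-a(-u)|\,d\mathcal{H}^2\le\frac{3\pi}{2}$, which after the factor $\frac12$ is the lemma.
\end{itemize}
The bound is tight because on the coherent-state configuration $a$ and $b$ always lie on opposite sides of $\frac1{16}$.
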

\begin{proof}
By coarea formula \cite[equation 10.3]{LS} and homogeneity of $C_1$, we have
\begin{align}
	&\int_{C_1}|\ka({W,\T_pC_1})|\\=&\int_{0}^1\int_{t\vrpt}|\ka({W,\T_pC_1})|\dhm^2(p)dt\\
	=&\int_0^1t^2\int_{\vrpt}|\ka({W,\T_pC})|\dhm^2(p)dt\\
	=&\frac{1}{3}\int_{\vrpt}|\ka({W,\T_pC})|\dhm^2(p)\\
	=&\frac{1}{3}\times\frac{1}{2}\int_{S^2_{\sqrt[4]{3}}(0)}|\ka(W,\T_{F(\tau)C})|\sqrt{3}{\dhm^2(\tau)},
\end{align}where we have used Fact \ref{fctmet} at the last step.

Let $\ww_1,\ww_2$ be an orthonormal frame of the orthogonal complement of $W$, i.e., $$\VV=W\oplus \R \ww_1\oplus \R \ww_2.$$ Set
\begin{align*}
	W_\C=\frac{1}{\sqrt{2}}\ww_1+\frac{1}{\sqrt{2}}\ww_2i.
\end{align*}
Recall Fact \ref{fctnmc} and Lemma \ref{leminp}. By using Hodge dual on the exterior algebra,
\begin{align}
\label{eqint1}	&\int_{S^2_{\sqrt[4]{3}}(0)}|\ka(W,\T_{F(\tau)C})|\sqrt{3}\dhm^2(\tau)\\
	=&\int_{S^2_{\sqrt[4]{3}}(0)}|\ka(\ww_1\wedge \ww_2,\nn_1(\tau)\wedge\nn_2(\tau))|\sqrt{3}\dhm^2(\tau)\\
	=&\int_{S^2_{\sqrt[4]{3}}(0)}\left||\ka_\C(\nn_\C(-\tau),W_\C)|^2-|\ka_\C(\nn_\C(\tau),W_\C)|^2\right|\sqrt{3}\dhm^2(\tau)\\
	=&3\int_{S^2_{1}(0)}\left||\ka_\C(\nn_\C(-\sqrt[4]{3}(x,y,z)),W_\C)|^2-|\ka_\C(\nn_\C(\sqrt[4]{3}(x,y,z)),W_\C)|^2\right|{\dhm^2(x,y,z)}\\
	\le&3\label{eqis2}\int_{S^2_{1}(0)}\left||\ka_\C(\nn_\C(\sqrt[4]{3}(x,y,z)),W_\C)|^2-\frac{1}{16}\right|{\dhm^2(x,y,z)}\\&+3\int_{S^2_{1}(0)}\left||\ka_\C(\nn_\C(-\sqrt[4]{3}(x,y,z)),W_\C)|^2-\frac{1}{16}\right|{\dhm^2(x,y,z)}\\
	\label{eqint3}\le&6\pi\int_0^1\left|s-\frac{1}{16}\right|s^{-\frac{3}{4}}ds,
\end{align} 
where we have used triangle inequality at (\ref{eqint2}) and used Fact \ref{fctcs} at (\ref{eqint3}). On the other hand we have
\begin{align}
&\int_0^1\left|s-\frac{1}{16}\right|s^{-\frac{3}{4}}ds=\int_0^{\frac{1}{16}}\left(\frac{1}{16}-s\right)s^{-\frac{3}{4}}ds+\int_{\frac{1}{16}}^1\left(s-\frac{1}{16}\right)s^{-\frac{3}{4}}ds\\=&\frac{1}{16}4s^{\frac{1}{4}}\bigg|^{\frac{1}{16}}_0-\frac{4}{5}s^{\frac{5}{4}}\bigg|_0^{\frac{1}{16}}+\frac{4}{5}s^{\frac{5}{4}}\bigg|^1_{\frac{1}{16}}-\frac{1}{16}4s^{\frac{1}{4}}\bigg|_{\frac{1}{16}}^1=\frac{1}{8}-\frac{1}{40}+\frac{4}{5}-\frac{1}{40}-\frac{1}{4}+\frac{1}{8}\\=&\frac{3}{4}\label{eqint4}
\end{align}Combining (\ref{eqint1}) to (\ref{eqint4}) yields
\begin{align*}
	\int_{C_1}|\ka({W,\T_pC_1})|\dhm^3(p)\le \frac{3\pi}{4}
\end{align*}
\end{proof}
\subsection{Determining the pushforward $\pi_{\T_{\vv_1}C}(C_1)$ and its area}
With Lemma \ref{lemcsc} in mind, to prove use moderation Theorem \ref{thmmod}, we need to determine $\ms(\pi_{\T_{\mathbf{p}}C}(C_1))$ with $\pp$ a point in $C_1$ and $\pi_{\T_{\mathbf{p}}C}$ the orthogonal projection to $\T_{\mathbf{p}}C_1$. 

Let us first reduce by symmetry
\begin{fact}\label{fcthom}
For $\pp\in C_1$, we have
$$\ms(\pi_{\T_{\mathbf{p}}C}(C_1))=\ms(\pi_{\T_{\vv_1}C}(C_1)).$$
\end{fact}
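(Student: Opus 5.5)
The plan is to use the $\sos$-symmetry of $C_1$ together with the cone structure. First I would reduce to points of $\vrpt$. For $\pp\in C_1$ with $\pp\neq 0$, write $\pp=t\mathbf{q}$ with $t\in(0,1]$ and $\mathbf{q}\in\vrpt$. Since $C$ is a cone, its tangent space is constant along rays, so $\T_{\pp}C=\T_{\mathbf{q}}C$. The origin and any other singular points form a Hausdorff-null set, so they can be ignored; the statement only matters at points where the tangent space exists.

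Second, by the fourth bullet of Fact \ref{fctorbv}, $\vrpt$ is the orbit of $\vv_1$. So there is $g\in\sos$ with $\mathbf{q}=g\vv_1g\m$. Let $A_g:\VV\to\VV$ denote $\vv\mapsto g\vv g\m$. By the first two bullets of Fact \ref{fctorbv}, $A_g$ is a linear isometry of $(\VV,\ka)$. Because $C_1=\cup_{t\in[0,1]}t\vrpt$ and $\vrpt$ is an $\sos$-orbit, $A_g$ maps $C_1$ onto itself. As a mod $2$ current this gives $A_g(C_1)=C_1$, with no orientation issue. Differentiating, $A_g(\T_{\vv_1}C)=\T_{\mathbf{q}}C$.

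Third, I would use the conjugation identity for orthogonal projections: for a linear isometry $A$ and a subspace $E$,
\begin{align*}
\pi_{A(E)}=A\circ\pi_E\circ A\m.
\end{align*}
Applying this with $A=A_g$ and $E=\T_{\vv_1}C$ gives
\begin{align*}
\pi_{\T_{\pp}C}(C_1)=A_g\big(\pi_{\T_{\vv_1}C}(A_g\m(C_1))\big)=A_g\big(\pi_{\T_{\vv_1}C}(C_1)\big).
\end{align*}
Pushforward by an isometry preserves mass. For example, Fact \ref{fctmap} applied to $A_g$ and to $A_g\m$ (both have Jacobian $1$) gives inequalities in both directions. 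This yields $\ms(\pi_{\T_{\pp}C}(C_1))=\ms(\pi_{\T_{\vv_1}C}(C_1))$.

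I expect no step to be a real obstacle. The only point needing care is the current-level identity $A_g(C_1)=C_1$. It follows because $A_g$ is a diffeomorphism of $\VV$ mapping the underlying rectifiable set $C_1$ bijectively onto itself, and a mod $2$ current carried with multiplicity one is determined by its support.
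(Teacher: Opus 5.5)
Your proposal is correct and follows the paper's proof. You reduce to a point of $\vrpt$ using homogeneity of the cone, choose $g\in\sos$ with $\mathbf{q}=g\vv_1g\m$, and use that conjugation by $g$ is a linear isometry that preserves $C_1$ and carries $\T_{\vv_1}C$ to $\T_{\mathbf{q}}C$. Your only addition is to make explicit the identity $\pi_{A(E)}=A\circ\pi_E\circ A\m$ and the invariance of mass under isometries, which the paper's chain of equalities uses implicitly.
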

\begin{proof}
First, $C$ is a homogeneous cone, so we deduce that $$\T_{\pp}C=\T_{\frac{\pp}{\sqrt{\ka(\pp,\pp)}}}C.$$ Since $\vrpt$ is an $\sos$ orbit, there exists $g\in\sos$ so that $$\vrpt\ni\frac{\pp}{\sqrt{\ka(\pp,\pp)}}=g\vv_1g\m.$$ 

Next, as $C$ is $\sos$-invariant, we deduce that $\T_{g\vv_1g\m}C$ is the image of $T_{\vv_1}C$ under $g$ conjugation. Thus, we have
\begin{align*}
\ms(\pi_{\T_{\mathbf{p}}C}(C_1))=\ms(\pi_{\T_{g\vv_1g\m}C}(C_1))=\ms(\pi_{g\T_{\vv_1}Cg\m}(C_1))=\ms(\pi_{\T_{\vv_1}C}(g\m C_1g))=\ms(\pi_{\T_{\vv_1}C}(C_1)).
\end{align*}
\end{proof}
Next, let us determine $\T_{\vv_1}C.$
\begin{fact}We have
		\begin{align}\label{cuvw}
		\T_{\vv_1}C=\R\vv_1\oplus\R\vv_2\oplus\R\vv_3=\{u\vv_1+v\vv_2+w\vv_3|u,v,w\in\R\}.
	\end{align} 
\end{fact}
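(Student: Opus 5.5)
The plan is to use Fact \ref{fctcon}, which identifies the normal space of $C$ at $F(\tau)$ with the $\vv\in\VV$ satisfying $\vv\tau=0$. The tangent space at $\vv_1$ is then the orthogonal complement of this normal space. Since $\vv_1=F(0,0,\sqrt[4]{3})$, take $\tau=(0,0,\sqrt[4]{3})$, so that $\tau$ is a multiple of $e_3$. For $\vv\in\VV$ we have
\begin{align*}
\vv\tau=\sqrt[4]{3}\,\vv e_3,
\end{align*}
which is $\sqrt[4]{3}$ times the third column of $\vv$. So $\T^\perp_{\vv_1}C$ consists of the symmetric traceless matrices whose third column vanishes. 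By symmetry their third row also vanishes, so they are supported in the upper-left $2\times 2$ block with zero trace.

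Next I read off this normal space in the basis $\vv_1,\dots,\vv_5$. Among the basis vectors, $\vv_4$ and $\vv_5$ have vanishing third column. The vectors $\vv_1,\vv_2,\vv_3$ do not, since their third columns are $\frac{1}{\sqrt{3}}(0,0,-2)^T$, $(1,0,0)^T$ and $(0,1,0)^T$.

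To characterize the normal space exactly, I write a general $\vv=\sum_j a_j\vv_j$. Its third column is
\begin{align*}
\left(a_2,\ a_3,\ -\tfrac{2}{\sqrt{3}}a_1\right)^T .
\end{align*}
This vanishes if and only if $a_1=a_2=a_3=0$. Hence $\T^\perp_{\vv_1}C=\R\vv_4\oplus\R\vv_5$, which is consistent with $C$ being $3$-dimensional in the $5$-dimensional $\VV$.

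Since $\vv_1,\dots,\vv_5$ is $\ka$-orthonormal, the orthogonal complement of $\R\vv_4\oplus\R\vv_5$ is $\R\vv_1\oplus\R\vv_2\oplus\R\vv_3$, which gives (\ref{cuvw}). As a sanity check, $\vv_1$ lies in $\T_{\vv_1}C$ because $C$ is a cone, and $\vv_2,\vv_3$ arise as $[\si_2,\vv_1]$ and $[\si_1,\vv_1]$ up to factors of $\pm3$, i.e. as tangent vectors to the $\sos$-orbit.

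There is no real obstacle here. The only care needed is that Fact \ref{fctcon} concerns the full cone tangent space, including the radial direction, and that the computation of the third column is done correctly.
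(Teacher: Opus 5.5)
Your argument is correct, but it works on the normal side, whereas the paper works on the tangent side. The paper evaluates $dF_{(0,0,\sqrt[4]{3})}$ on $(1,0,0)$ and $(0,1,0)$, obtaining $-\sqrt[4]{3}\,\vv_2$ and $-\sqrt[4]{3}\,\vv_3$. It then uses the immersion property from Fact~\ref{fctmet}, together with the radial direction $\vv_1$ of the cone, to conclude that $\vv_1,\vv_2,\vv_3$ span the $3$-dimensional space $\T_{\vv_1}C$.

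You instead apply Fact~\ref{fctcon} at $\tau=\sqrt[4]{3}\,e_3$. This identifies $\T^\perp_{\vv_1}C$ with the traceless symmetric matrices whose third column vanishes, i.e.\ $\R\vv_4\oplus\R\vv_5$, and you then take the $\ka$-orthogonal complement.

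Fact~\ref{fctcon} was itself proved from the same formulas for $\ka(F(\tau),\vv)$ and $\ka(dF_\tau(\xi),\vv)$, so the two arguments are dual rather than independent. Your version has two advantages. The radial direction and the dimension count are already packaged into Fact~\ref{fctcon}. It also produces the normal space $\R\vv_4\oplus\R\vv_5$ explicitly, which is exactly the part discarded when $F(\tau)$ is projected in Lemma~\ref{lemproj}. You also rightly avoid Fact~\ref{fctnmc}, whose frame $\xi_1,\xi_2$ is undefined at this pole, while Fact~\ref{fctcon} holds for every $\tau$ on the sphere. The paper's version is more direct, since it exhibits the tangent vectors themselves.

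One small slip in your sanity check: with the normalization $\vv_1=\frac{1}{\sqrt{3}}\operatorname{diag}(1,1,-2)$ one gets $[\si_2,\vv_1]=-\sqrt{3}\,\vv_2$ and $[\si_1,\vv_1]=\sqrt{3}\,\vv_3$. The factors are therefore $\pm\sqrt{3}$, not $\pm 3$; this does not affect the proof.
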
\begin{proof}
We have	\begin{align*}
	dF_{(0,0,\sqrt[4]{3})}(1,0,0)=-\begin{bmatrix}
		0 \\
		0 \\
		\sqrt[4]{3}
	\end{bmatrix}\begin{bmatrix}
		1 \\
		0
		\\	0 
	\end{bmatrix}^T-\begin{bmatrix}
		1 \\
		0 \\
		0\end{bmatrix}\begin{bmatrix}
		0 \\
		0
		\\	\sqrt[4]{3} 
	\end{bmatrix}^T=-\sqrt[4]{3}\vv_2,\\
	dF_{(0,0,\sqrt[4]{3})}(0,1,0)=-\begin{bmatrix}
		0 \\
		0 \\
		\sqrt[4]{3}
	\end{bmatrix}\begin{bmatrix}
		0 \\
		1
		\\	0 
	\end{bmatrix}^T-\begin{bmatrix}
		0 \\
		1 \\
		0\end{bmatrix}\begin{bmatrix}
		0 \\
		0
		\\	\sqrt[4]{3} 
	\end{bmatrix}^T=-\sqrt[4]{3}\vv_3.
\end{align*}By Fact \ref{fctmet}, We deduce that $\T_{\vv_1}C$ is spanned by 
\begin{align*}
	\T_{\vv_1}C=\R\vv_1\oplus\R\vv_2\oplus\R\vv_3.
\end{align*} 
\end{proof}
From now on, we will use the coordinate $(u,v,w)$ on $\T_{\vv_1}C.$
\begin{lem}\label{lemproj}The mod $2$ current $\pi_{\T_{\vv_1}C_1}$ equals the ellipsoid
\begin{align*}
	\left\{(u,v,w)\bigg|\frac{u^2}{\frac{3}{4}}+\frac{v^2}{\frac{3}{4}}+\frac{\left(u-\frac{1}{4}\right)^2}{\frac{9}{16}}\le1\right\},
\end{align*}and thus we have
\begin{align*}
	\ms(\pi_{\T_{\vv_1}C}(C_1))=\frac{3}{4}\pi.
\end{align*}\end{lem}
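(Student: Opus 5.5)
The plan is to compute the projection explicitly. I will use the parametrization $F$ of Fact \ref{fctmet} and the inner product formula from the proof of Fact \ref{fctcon}. Write $\tau=\sqrt[4]{3}(x,y,z)$ with $x^2+y^2+z^2=1$. Since $\vv_1,\vv_2,\vv_3$ are $\ka$-orthonormal, the coordinates of $\pi_{\T_{\vv_1}C}(F(\tau))$ are $\ka(F(\tau),\vv_j)=-\frac12\tau^T\vv_j\tau$. A direct substitution should give
\begin{align*}
u=\frac{3z^2-1}{2},\qquad v=-\sqrt3\,xz,\qquad w=-\sqrt3\,yz .
\end{align*}
Eliminating the variables via $z^2=\frac{2u+1}{3}$ and $v^2+w^2=3z^2(1-z^2)$, and completing the square in $u$, I expect
\begin{align*}
\frac{v^2}{\frac34}+\frac{w^2}{\frac34}+\frac{(u-\frac14)^2}{\frac{9}{16}}=1 .
\end{align*}
So the image of $\vrpt$ under the projection lies on this ellipsoid of revolution $\mathcal E$. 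As $z^2$ ranges over $[0,1]$, $u$ ranges over $[-\frac12,1]$, which is the full axis of $\mathcal E$, so the image is all of $\mathcal E$. (The displayed ellipsoid in the statement presumably has a $w$ in place of one $u$.)

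Next I will determine the degree of $\pi|_{\vrpt}:\vrpt\to\mathcal E$. Fix a point of $\mathcal E$ with $u\in(-\frac12,1)$. Then $z^2$ is determined, and once a sign of $z$ is chosen, $(x,y)$ is determined by $(v,w)$. The two choices of sign give antipodal points of $S^2$, hence one point of $\rpt$. So the map is injective away from the equator $z=0$. The equator is the one-dimensional set $\rpt^1$, and it collapses to the pole $u=-\frac12$. Hence the map is a diffeomorphism on an open dense set of full measure, and $\pi(\vrpt)=\mathcal E$ as mod $2$ currents. This step can be made rigorous using Fact \ref{fctconc} to discard the collapsed equator, together with the area formula.

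Then I will pass to the cone. Since $\pi$ is linear, $\pi(C_1)$ is the cone from the origin over $\pi(\vrpt)$, and $\pd\pi(C_1)=\pi(\pd C_1)=\mathcal E$. The current $\pi(C_1)$ is a $3$-dimensional mod $2$ current in the $3$-space $\T_{\vv_1}C$. By Fact \ref{const}, applied on the two components of the complement of $\mathcal E$, it is a constant multiple of each region. It vanishes on the unbounded region because its support is compact. On the interior the constant must then be $1$, since the boundary is $\mathcal E$ and not $0$.

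Hence $\pi(C_1)$ is the solid ellipsoid with semi-axes $\frac{\sqrt3}{2},\frac{\sqrt3}{2},\frac34$. Its mass is
\begin{align*}
\frac{4\pi}{3}\cdot\frac34\cdot\frac34=\frac{3\pi}{4}.
\end{align*}
The main obstacle is the degree and multiplicity bookkeeping in the middle step: one must justify that the collapse of $\vrpt$ onto $\mathcal E$ really produces $\mathcal E$ with multiplicity one mod $2$. The coordinate computations themselves are routine.
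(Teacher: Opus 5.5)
Your proposal is correct and follows essentially the paper's proof. It uses the same coordinates $u=\frac{3z^2-1}{2}$, $v=-\sqrt3xz$, $w=-\sqrt3yz$ and arrives at the same ellipsoid; you are right that the $u^2$ in the displayed statement should be $w^2$, and the paper's own proof derives exactly that form. It then uses the same injectivity away from the equator, which collapses to $u=-\frac12$, to get $\pi(\vrpt)=\mathcal E$ with multiplicity one, and obtains the same volume $\frac{3\pi}{4}$. The only minor difference is the passage to $C_1$: the paper uses that the linear projection of the cone is the cone over $\pi(\vrpt)$ from the enclosed origin, whereas you use $\pd\pi(C_1)=\pi(\vrpt)$ together with constancy on the two components of the complement (plus Fact \ref{fctconc} to discard anything supported on the surface), which is equally valid and does not need the origin to lie inside.
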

\begin{proof}
	For $\tau=\sqrt[4]{3}(x,y,z)\in S^2_{\sqrt[4]{3}}(0)$, we have\begin{align*}
		F(\tau)=&-\sqrt{3}\begin{bmatrix}
			x \\
			y \\
			z
		\end{bmatrix}\begin{bmatrix}
			x \\
			y
			\\	z 
		\end{bmatrix}^T+\frac{1}{\sqrt{3}}I\\
		=&-\sqrt{3}\begin{bmatrix}
			x^2 & xy & xz \\
			xy & y^2 & yz \\
			xz & yz & z^2
		\end{bmatrix}+\frac{1}{\sqrt{3}}I\\
		=&\begin{bmatrix}
			\frac{1-3x^2}{\sqrt{3}} & -\sqrt{3}xy &- \sqrt{3}xz \\
			-\sqrt{3}xy & \frac{1-3y^2}{\sqrt{3}} &- \sqrt{3}yz \\
			-\sqrt{3}xz & -\sqrt{3}yz &\frac{1-3z^2}{\sqrt{3}} 	\end{bmatrix}\\
		=&\frac{1-\frac{3}{2}(x^2+y^2)}{\sqrt{3}}\vv_1-\sqrt{3}xz\vv_2-\sqrt{3}yz\vv_3-\sqrt{3}xy\vv_4+\frac{-\frac{3}{2}(x^2-y^2)}{\sqrt{3}}\vv_5\\
		=&\frac{3z^2-1}{2}\vv_1-\sqrt{3}xz\vv_2-\sqrt{3}yz\vv_3-\sqrt{3}xy\vv_4+\frac{\sqrt{3}(y^2-x^2)}{2}\vv_5.
	\end{align*}
		Thus, we have
	\begin{align}\label{eqpt}
		\pi_{\T_{\vv_1}\C}(F(\tau))=\frac{3z^2-1}{2}\vv_1-\sqrt{3}xz\vv_2-\sqrt{3}yz\vv_3.
	\end{align}The image (\ref{eqpt}) satisfy the quadratic equation in coordinate $(u,v,w)$ (\ref{cuvw}),
\begin{align}\label{eqep}
	&3v^2+3w^2=9(x^2+y^2)z^2=9(1-z^2)z^2\\
	=&9\left(1-\frac{2u+1}{3}\right)\frac{2u+1}{3}=(2-2u)(2u+1)\\
	=&-4u^2+2u+2\\
	=&-\left(2u-\frac{1}{2}\right)^2+\frac{9}{4}.
\end{align}
Rearranging (\ref{eqep}) gives
\begin{align}\label{eqesd}
\frac{v^2}{\frac{3}{4}}+\frac{w^2}{\frac{3}{4}}+\frac{\left(u-\frac{1}{4}\right)^2}{\frac{9}{16}}=1.
\end{align}
Thus, we have as sets\begin{align*}
	\pi_{\T_{\vv_1}C}(\vrpt)\s\left\{(u,v,w)\bigg|\frac{v^2}{\frac{3}{4}}+\frac{w^2}{\frac{3}{4}}+\frac{\left(u-\frac{1}{4}\right)^2}{\frac{9}{16}}=1\right\}.
\end{align*}
In other words, the mod $2$ current cycle $\pi_{\T_{\vv_1}C}(\vrpt)$ is supported on a subset of the closed submanifold $\left\{(u,v,w)\bigg|\frac{v^2}{\frac{3}{4}}+\frac{w^2}{\frac{3}{4}}+\frac{\left(u-\frac{1}{4}\right)^2}{\frac{9}{16}}=1\right\}$. By Fact \ref{const}, either we have as mod $2$ currents, \begin{align}\label{pivrpt}
	\pi_{\T_{\vv_1}C}(\vrpt)=\left\{(u,v,w)\bigg|\frac{v^2}{\frac{3}{4}}+\frac{w^2}{\frac{3}{4}}+\frac{\left(u-\frac{1}{4}\right)^2}{\frac{9}{16}}=1\right\},
\end{align}
or as mod $2$ currents,
\begin{align}\label{eqnull}
	\pi_{\T_{\vv_1}C}(\vrpt)=0.
\end{align} 
The latter cannot happen. To see this, 	let us calculate the differential of $\pi_{\T_{\vv_1}C}\circ F.$ We have
\begin{align*}
	d(\pi_{\T_{\vv_1}C}(F))_{\sqrt[4]{3}(x,y,z)}=\begin{pmatrix}
		0&0&{3}z\\
		-\sqrt{3}z&0&-\sqrt{3}x\\
		0&-\sqrt{3}z&-\sqrt{3}y.
	\end{pmatrix}
\end{align*}
This gives
\begin{align*}
	\det\left(d(\pi_{\T_{\vv_1}C}(F))_{\sqrt[4]{3}(x,y,z)}\right)=9z^3.	
\end{align*}
As $F$ is a double covering to $\vrpt$ Fact \ref{fctmet}, we deduce that $\pi_{\T_{\vv_1}C}$ restricted to $\vrpt$ is an immersion except for on the circle $F(\ssf\cap \{z=0\}).$ When $z=0,$ the circle $\ssf\cap\{z=0\}$ is mapped to one point $-\frac{1}{2}\vv_1.$ 

Next let us upgrade that immersion to an embedding. On the other hand, suppose for $zz'\not=0,$ and $(x,y,z)\not=(x',y',z'),$ we have
\begin{align*}
\frac{3z^2-1}{2}\vv_1-\sqrt{3}xz\vv_2-\sqrt{3}yz\vv_3=\frac{3(z')^2-1}{2}\vv_1-\sqrt{3}x'z'\vv_2-\sqrt{3}y'z'\vv_3,
\end{align*}
then we deduce that
\begin{align*}
	z'=\pm z,y'=\pm y,x'=\pm y.
\end{align*}
As $F$ is a double covering, we deduce that  $\pi_{\T_{\vv_1}C}$ restricted to $\vrpt$ is an embedding except for on the circle $F(\ssf\cap \{z=0\})$, which is $1$-dimensional and mapped to a point. Thus, Hausdorff $2$-dimensional almost every point in $\pi_{\T_{\vv_1}C}(\vrpt)$ has only one preimage and (\ref{eqnull}) cannot happen. In other words, (\ref{pivrpt}) holds.

Note that $(u,v,w)=(0,0,0)$ is enclosed by the ellipsoid surface (\ref{eqesd}). As $C_1$ is union of line segements from $0$ to points on $\vrpt$ and line segments maps to line segements under linear maps, we deduce that $\pi_{\T_{\vv_1}C}(C_1)$, projection of truncated cone, equals the truncated cone over $\pi_{\T_{\vv_1}C}(\vrpt)$ in $\T_{\vv_1}C$, i.e., as mod $2$ currents,
\begin{align*}
	\pi_{\T_{\vv_1}C}(C_1)=\left\{(u,v,w)\bigg|\frac{v^2}{\frac{3}{4}}+\frac{w^2}{\frac{3}{4}}+\frac{\left(u-\frac{1}{4}\right)^2}{\frac{9}{16}}\le1\right\},
\end{align*}which is a ellipsoid of volume 
\begin{align*}
\ms(\pi_{\T_{\vv_1}}C_1)=	\frac{4}{3}\pi\sqrt{\frac{3}{4}}\sqrt{\frac{3}{4}}\sqrt{\frac{9}{16}}=\frac{3}{4}\pi.
\end{align*}
\end{proof}	
\subsection{Wrapping up the proof of Theorem \ref{thmc}}
By Fact \ref{fcthom} and Lemma \ref{lemproj}, we deduce that
, we need to determine for any point $\pp\in C_1$, we have
\begin{align*}
	\ms(\pi_{\T_{\mathbf{p}}C}(C_1))=\frac{3}{4}\pi.
\end{align*} 
By  Lemma \ref{lemcsc} and Fact \ref{fctmap}, we deduce that
\begin{align*}
		\int_{C_1}|\ka({\T_{\pp}C_1,\T_{\ww}C_1})|\dhm^3(\ww)\le\frac{3\pi}{4}=\ms(\pi_{\T_{\mathbf{p}}C}(C_1))\le\int_{C_1}|\ka({\T_{\pp}C_1,\T_{\ww}C_1})|\dhm^3(\ww),
\end{align*}
i.e.,
\begin{align}\label{jac}
	\int_{C_1}|\ka({\T_{\pp}C_1,\T_{\ww}C_1})|\dhm^3(\ww)=\frac{3}{4}\pi.
\end{align}
By  (\ref{jac}), Lemma \ref{lemproj}, Lemma \ref{lemcsc}, we deduce that $C_1$ is a moderation in the sense of Definition \ref{defnmod}. Apply Theorem \ref{thmmod} and we deduce that $C_1$ is mod $2$ area-minimizing. Apply  homotheties $\vv\to t\vv$ to $C_1$, we deduce that $C$ restricted to balls of radius $t$ centered at $0$ in $\VV$ is mod $2$ area-minimizing for every $t\in(0,\infty)$. Thus $C$ is mod $2$ area-minimizing. We are done.\section{Preparing a representative of $[\Si]$}\label{preprep}
	By our assumptions on the homology class $[\Si]$ in Theorem \ref{thmm}, the class $[\Si]$ has a smooth representative finitely many connected components. In other words $[\Si]$ has an mod $2$ current representative as follows:
	\begin{align}\label{sigs}
		\Si=N_1+N_2+\dots+N_n,
	\end{align}where $\{N_1,\dots, N_n\}$ is a collection of pairwise disjoint $d$-dimensional smooth submanifolds.
	
	To make $\Si$ in (\ref{sigs}) connected, 
	we need to apply  the connected sum of    currents mod $2$ defined in Lemma \ref{lemcs} to $\Si$. In other words we obtain a representative $N$ of $\Si$, where
	\begin{align*}
		N=N_1\# \dots\#N_n.
	\end{align*}
	By Lemma \ref{lemcs}, $N$ is an embedded connected submanifold  representing the homology class $[\Si]$.
	
	To sum it up, the assumption on $[\Si]$ in the statement of Theorem \ref{thmm} can be reduced to the following assumption.
	\begin{assump}\label{simpa}
		The mod $2$ class $[\Si]$ is represented by a smoothly embedded connected submanifold $N$.
	\end{assump}\raggedbottom
	\section{Construction of singular sets f}\label{modsing}
	As outlined in the plan of our proof (Section \ref{planpf}),  we need to add singular sets manually to the topological representative $N$ of $[\Si]$. The way to do this is to construct a homologically trivial cycle $\si_s(C)$ on $M$  in Section \ref{secssc} with our desired singular set and using Lemma \ref{lemcs} to do a connected sum $N\#\si_s(C).$ This section will be devoted to the construction of the homologically trivial cycle $\si_s(C).$	
		\subsection{The $s$-th spherical link of a cone $C$}\label{secssc}
	Let $V$ be a $d-s-1$ dimensional not necessarily connected closed smooth submanifold of the unit sphere $S^{d-s+c-1}$of $\R^{d-s+c}$. Here $s$ is an integer at most $(d-2)$ that we will set to be the dimension of the singular set we want to add.
	\begin{defn}\label{defncv}
		The cone $C(V)$  over $V$ in $\R^{d-s+c}$ is defined to be the union of all rays from the origin passing through $V.$
	\end{defn}
	Here by a ray passing through a point $p$ we mean the half line parameterized by $tp$, with $t\in \R_{\ge 0}.$
	
	It is straightforward to verify that $C(V)$ is a boundaryless mod $2$ current. When $V$ is clear from the context, we will drop the symbol $V$ and write $C$ only. In case $C$ is not a flat subspace of $\R^{d-s+c}$, it is straightforward to see that
	\begin{align*}
		\sing C=\{0\}^{d-s+c}.
	\end{align*}
	From now on, we assume that $C$ is not a flat subspace of $\R^{d-s+c}$.	
	\begin{defn}
		Define the $s$-th spherical link of $C$ to be				\begin{align}\label{prodl}
			\si^s(C)=(C\times \R^{s+1})\cap{ S^{d+c}}.
		\end{align}
		The symbol $S^{d+c}$ denotes the $(d+c)$-dimensional unit sphere in $\R^{d+c+1}$ and $C\times \R^{s+1}$ is embedded naturally in the product $\R^{d-s+c}\times \R^{s+1}\cong \R^{d+c+1}.$ 		
	\end{defn}
	It is straightforward to verify that $\si^s(C)$ is a $d$-dimensional mod $2$ current on the $(d+c)$-dimensional sphere $S^{d+c}$. \begin{figure}
		\centering
		\def\svgwidth{0.9\paperwidth}
		%% Creator: Inkscape 1.3.2 (091e20e, 2023-11-25, custom), www.inkscape.org
%% PDF/EPS/PS + LaTeX output extension by Johan Engelen, 2010
%% Accompanies image file 'ssc.pdf' (pdf, eps, ps)
%%
%% To include the image in your LaTeX document, write
%%   \input{<filename>.pdf_tex}
%%  instead of
%%   \includegraphics{<filename>.pdf}
%% To scale the image, write
%%   \def\svgwidth{<desired width>}
%%   \input{<filename>.pdf_tex}
%%  instead of
%%   \includegraphics[width=<desired width>]{<filename>.pdf}
%%
%% Images with a different path to the parent latex file can
%% be accessed with the `import' package (which may need to be
%% installed) using
%%   \usepackage{import}
%% in the preamble, and then including the image with
%%   \import{<path to file>}{<filename>.pdf_tex}
%% Alternatively, one can specify
%%   \graphicspath{{<path to file>/}}
%% 
%% For more information, please see info/svg-inkscape on CTAN:
%%   http://tug.ctan.org/tex-archive/info/svg-inkscape
%%
\begingroup%
  \makeatletter%
  \providecommand\color[2][]{%
    \errmessage{(Inkscape) Color is used for the text in Inkscape, but the package 'color.sty' is not loaded}%
    \renewcommand\color[2][]{}%
  }%
  \providecommand\transparent[1]{%
    \errmessage{(Inkscape) Transparency is used (non-zero) for the text in Inkscape, but the package 'transparent.sty' is not loaded}%
    \renewcommand\transparent[1]{}%
  }%
  \providecommand\rotatebox[2]{#2}%
  \newcommand*\fsize{\dimexpr\f@size pt\relax}%
  \newcommand*\lineheight[1]{\fontsize{\fsize}{#1\fsize}\selectfont}%
  \ifx\svgwidth\undefined%
    \setlength{\unitlength}{2160bp}%
    \ifx\svgscale\undefined%
      \relax%
    \else%
      \setlength{\unitlength}{\unitlength * \real{\svgscale}}%
    \fi%
  \else%
    \setlength{\unitlength}{\svgwidth}%
  \fi%
  \global\let\svgwidth\undefined%
  \global\let\svgscale\undefined%
  \makeatother%
  \begin{picture}(1,0.5)%
    \lineheight{1}%
    \setlength\tabcolsep{0pt}%
    \put(0,0){\includegraphics[width=\unitlength,page=1]{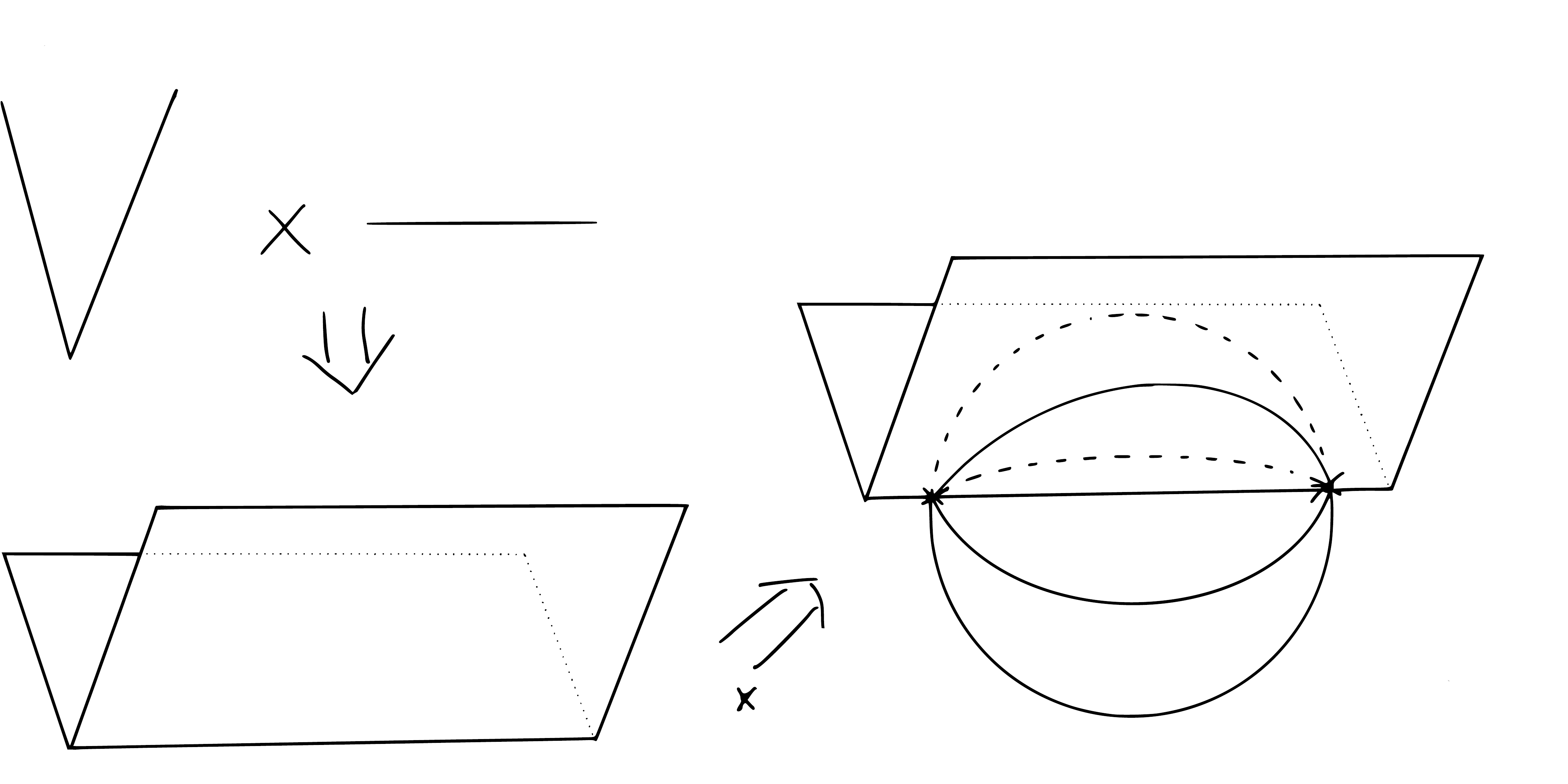}}%
    \put(0.00496863,0.45693687){\color[rgb]{0,0,0}\makebox(0,0)[lt]{\lineheight{1.25}\smash{\begin{tabular}[t]{l}$C^{d-s}\s \R^{d-s+c}$\end{tabular}}}}%
    \put(0.27824023,0.38284622){\color[rgb]{0,0,0}\makebox(0,0)[lt]{\lineheight{1.25}\smash{\begin{tabular}[t]{l}$\R^{s+1}$\end{tabular}}}}%
    \put(0.10669392,0.20236793){\color[rgb]{0,0,0}\makebox(0,0)[lt]{\lineheight{1.25}\smash{\begin{tabular}[t]{l}$C^{d-s}\times \R^{s+1}\s \R^{d+c+1}$\end{tabular}}}}%
    \put(0.42363669,0.02495004){\color[rgb]{0,0,0}\makebox(0,0)[lt]{\lineheight{1.25}\smash{\begin{tabular}[t]{l}$\sing \ssc =\{0\}^{d-s+c}\times S^s$\end{tabular}}}}%
    \put(0.56380286,0.3628129){\color[rgb]{0,0,0}\makebox(0,0)[lt]{\lineheight{1.25}\smash{\begin{tabular}[t]{l}$\ssc=(C^{d-s}\times \R^{s+1})\cap S^{d+c}$\end{tabular}}}}%
    \put(0.67572663,0.08771092){\color[rgb]{0,0,0}\makebox(0,0)[lt]{\lineheight{1.25}\smash{\begin{tabular}[t]{l}$S^{d+c}\s \R^{d+c+1}$\end{tabular}}}}%
  \end{picture}%
\endgroup%

		\caption{Construction of $\ssc$}
		\label{fig:ssc}
	\end{figure}
	\begin{lem}\label{singssc}
		There exists a neighborhood $U(\{0\}^{d-s+c}\times S^s)$ of $\{0\}^{d-s+c}\times S^s$ in $S^{d+c}$ diffeomorphic to the product of the unit ball $B_1^{d-s+c}$ in $\R^{d-s+c}$ with  $S^s$ by a diffeomorphism $\Ga$, i.e.,
		\begin{align*}
			U(\{0\}^{d-s+c}\times S^s)\overset{\Ga}{\cong}B_1^{d-s+c}\times S^s,
		\end{align*}such that $\ssc$ restricted to $U(\{0\}^{d-s+c}\times S^s)$ equals to
		\begin{align*}
			\big(C|_{B_1^{d-s+c}}\big)\times S^s,
		\end{align*}via $\Ga.$
		Moreover, the singular set of $\ssc$ is $\{0\}^{d-s+c}\times S^s$.
	\end{lem}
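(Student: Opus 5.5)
The plan is to write down $\Ga$ explicitly, using the product structure of $\R^{d+c+1}=\R^{d-s+c}\times\R^{s+1}$ and the fact that $C\times\R^{s+1}$ is invariant under positive dilations. Write points of $\R^{d+c+1}$ as $(x,y)$ with $x\in\R^{d-s+c}$ and $y\in\R^{s+1}$. On the open set
\[
U=\{(x,y)\in S^{d+c}:|x|<1/\sqrt{2}\},
\]
we have $|y|^2=1-|x|^2>1/2$, so $y\neq 0$. Define
\[
\Ga(x,y)=\bigl(\sqrt{2}\,x/|y|,\;y/|y|\bigr)\in B_1^{d-s+c}\times S^s .
\]
(The radial factor is only chosen so that the image is exactly the unit ball.) The inverse is
\[
\Ga\m(a,\omega)=\frac{(a,\sqrt{2}\,\omega)}{\sqrt{|a|^2+2}} ,
\]
and one checks directly that it lands in $U$. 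Both maps are smooth because $|y|$ stays bounded away from $0$ on $U$. Therefore $\Ga$ is a diffeomorphism from $U$ onto $B_1^{d-s+c}\times S^s$, and it maps $\{0\}^{d-s+c}\times S^s$ onto $\{0\}\times S^s$.

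Next I will identify the image of $\ssc\cap U$. A point $(x,y)\in U$ lies in $C\times\R^{s+1}$ exactly when $x\in C$. Since $C$ is a cone, this holds exactly when $\sqrt{2}\,x/|y|\in C$. Hence
\[
\Ga(\ssc\cap U)=(C\cap B_1^{d-s+c})\times S^s
\]
as sets. I then upgrade this to an equality of mod $2$ currents. Both sides are multiplicity-one currents carried by the same rectifiable set: on the regular part they are the same embedded submanifolds, and $\Ga$ is a diffeomorphism. Mod $2$ there is no orientation to track. If needed, Fact \ref{fctconc} disposes of the measure-zero singular locus.

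For the singular set I use that $\Ga$ is a diffeomorphism.
\begin{itemize}
\item Inside $U$, $\sing\ssc$ corresponds to $\sing\bigl((C\cap B_1)\times S^s\bigr)=\sing(C)\times S^s=\{0\}\times S^s$. This uses $\sing C=\{0\}$, which holds because $C$ is not flat and the link $V$ is smooth.
\item Outside $U$, i.e.\ where $|x|\ge 1/\sqrt{2}>0$, the set $C\times\R^{s+1}$ is a smooth submanifold near every point, since $C\setminus\{0\}$ is smooth. The claim is that it meets the sphere $S^{d+c}$ transversally. The position vector $(x,y)$ is tangent to the cone, because $C\times\R^{s+1}$ is dilation invariant. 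It is also normal to the sphere. So the tangent space of the cone is not contained in the tangent space of the sphere, which gives transversality. Therefore $\ssc$ is smooth there.
\end{itemize}
Together these give $\sing\ssc=\{0\}^{d-s+c}\times S^s$.

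The only mildly delicate point is the smoothness part of the singular-set claim: showing that $\{0\}\times S^s$ really consists of singular points, rather than just that the points away from it are smooth. That reduces to $C$ not being smooth at $0$, which is exactly the standing assumption that $C$ is not a linear subspace: a cone that is smooth at its vertex is its tangent plane there. Everything else is direct bookkeeping.
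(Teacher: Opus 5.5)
This is essentially the paper's proof: the same explicit map $(x,y)\mapsto(x/|y|,\,y/|y|)$, dilation invariance of $C$ to identify $\sigma^s(C)$ with $C\times S^s$ near $\{0\}\times S^s$, and transversality of the dilation-invariant regular part of $C\times\R^{s+1}$ with the sphere to pin down the singular set. There is one arithmetic slip in your normalization: on $S^{d+c}$ the condition $|x|<1/\sqrt{2}$ is already equivalent to $|x|/|y|<1$, so with your extra factor $\sqrt{2}$ the map $\Ga$ sends $U$ onto $B^{d-s+c}_{\sqrt{2}}\times S^s$, not onto $B_1^{d-s+c}\times S^s$, and your inverse only reaches $\{|x|<1/\sqrt{3}\}$; either drop the $\sqrt{2}$, as the paper does, or shrink $U$ to $\{|x|<1/\sqrt{3}\}$, and the rest of your argument goes through unchanged.
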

	Here $S^s$ denotes the unit sphere in $\R^{s+1} $ and we regard $S^0$ as two disjoint points. We use the symbol $T|_K$ to denote the restriction of an mod $2$ current to a set $K.$ 
	\begin{proof}
\cite[Lemma 4.1.3]{ZLns} states the result corresponding result with $C$ a multiplicity $1$ integral current cone. The same proof works. For the reader's convenience, we reproduce the entire proof.		Let us construct explicitly the diffeomorphism $\Ga$ and its inverse $\Ga\m.$

Adopt a coordinate system
\begin{align*}
	x=	(x_1,\dots,x_{d-s+c}),
\end{align*}on $\R^{d-s+c}$, and a coordinate system
\begin{align*}
	y=(y_1,\dots,y_{s+1}),
\end{align*}on $\R^{s+1}.$ The product structure $\R^{d+c+1}\cong \R^{d-s+c}\times \R^{s+1}$ gives a coordinate system
$		(x,y)
$		on $\R^{d+c+1}.$

Define a map $\Ga:\R^{d+c+1}\setminus\{|y|=0\}\to \R^{d+c+1}$ by 
\begin{align*}
	\Ga(x,y)=\bigg(\frac{x}{|y|},\frac{y}{|y|}\bigg).
\end{align*}
\begin{claim}\label{gabi}
	We claim that $\Ga$ is a smooth diffeomorphism from $S^{d+c}\setminus\{|y|=0\}$ onto $\R^{d-s+c}\times S^s.$
\end{claim}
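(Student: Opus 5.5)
The plan is to write down an explicit inverse and check that both compositions are the identity and that both maps are smooth. Let $\Ga$ be defined on the open set $S^{d+c}\setminus\{|y|=0\}$, and write $|y|\m$ for its reciprocal. For $(x,y)$ in this set, $|x|^2+|y|^2=1$ and $|y|>0$. Hence $y/|y|\in S^s$, and $\Ga(x,y)$ lies in $\R^{d-s+c}\times S^s$. Smoothness of $\Ga$ on the domain is immediate, since $|y|$ is smooth and nonvanishing there.

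The candidate inverse is
\begin{align*}
	\Psi(\xi,\eta)=\bigg(\frac{\xi}{\sqrt{1+|\xi|^2}},\frac{\eta}{\sqrt{1+|\xi|^2}}\bigg),\qquad (\xi,\eta)\in\R^{d-s+c}\times S^s .
\end{align*}
I will verify the following properties of $\Psi$.
\begin{enumerate}
	\item $\Psi$ lands in $S^{d+c}\setminus\{|y|=0\}$. Indeed $|\eta|=1$, so the squared norm is $(|\xi|^2+1)/(1+|\xi|^2)=1$, and the $y$-component has norm $(1+|\xi|^2)^{-1/2}>0$.
	\item $\Ga\circ\Psi=\id$. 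Since $|y|=(1+|\xi|^2)^{-1/2}$, dividing gives back $(\xi,\eta)$.
	\item $\Psi\circ\Ga=\id$. For $(x,y)$ on the sphere, $1+|x|^2/|y|^2=(|y|^2+|x|^2)/|y|^2=|y|^{-2}$. Thus $\sqrt{1+|\Ga_1|^2}=|y|\m$, and multiplying back yields $(x,y)$.
	\item $\Psi$ is smooth, as it is a restriction of a smooth map on $\R^{d-s+c}\times\R^{s+1}$.
\end{enumerate}

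Smoothness should be read in the sense of maps between submanifolds. $\Ga$ is the restriction of a smooth ambient map on $\{|y|\ne0\}$, and $\Psi$ is the restriction of a smooth ambient map. Each lands in the appropriate embedded submanifold, so each is smooth as a map between manifolds. Being mutually inverse, they give a diffeomorphism.

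There is no real obstacle here. The only point needing care is the identity $1+|x|^2/|y|^2=|y|^{-2}$, which uses the constraint $|x|^2+|y|^2=1$. This same identity is what makes the composition work. It also shows that $\Ga$ maps $(C\times\R^{s+1})\cap S^{d+c}$ onto $C\times S^s$, because $C$ is invariant under positive dilations. That consequence will be used next for the lemma.
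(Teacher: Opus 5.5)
Your proposal is correct and follows essentially the same route as the paper. The paper defines the same inverse $\Gamma'(x,y)=\big(x/\sqrt{1+|x|^2},\,y/\sqrt{1+|x|^2}\big)$, checks that its restriction to $\R^{d-s+c}\times S^s$ is both a right and a left inverse of $\Gamma$ on $S^{d+c}\setminus\{|y|=0\}$, and obtains smoothness by restricting smooth ambient maps.
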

To prove the above claim, define a smooth map $\Ga':\R^{d+c+1}\to\R^{d+c+1},$
\begin{align*}
	\Ga'(x,y)=\bigg(\frac{x}{\sqrt{1+|x|^2}},\frac{y}{\sqrt{1+|x|^2}}\bigg).
\end{align*}
The map	$\Ga'$ is smooth everywhere and we wish to show that
\begin{align*}
	\left(\Ga|_{S^{d+c}\setminus\{|y|=0\}}\right)\m=\Ga'|_{\R^{d-s+c}\times S^s}.
\end{align*}Direct calculation shows that the map $\Ga'$ restricted to $\R^{d-s+c}\times S^s,$ i.e., where $|y|=1,$ satisfies
\begin{align*}
	y\big(\Ga'|_{\R^{d-s+c}\times S^s}(x,y)\big)\not=&\,0,\\\big|\Ga'|_{\R^{d-s+c}\times S^s}(x,y)\big|=&\,1,\\\Ga|_{S^{d+c}\setminus\{|y|=0\}}\circ\Ga'|_{\R^{d-s+c}\times S^s}(x,y)=&\,(x,y).
\end{align*}
Here $y\big(\Ga'|_{\R^{d-s+c}\times S^s}(x,y)\big)$ means the $y$ coordinate of $\Ga'|_{\R^{d-s+c}\times S^s}(x,y).$ In other words $\Ga'|_{\R^{d-s+c}\times S^s}$ maps $\R^{d-s+c}\times S^s$ to $S^{d+c}\setminus\{|y|=0\}$
and $\Ga'|_{\R^{d-s+c}\times S^s}$ is a right inverse of $\Ga|_{S^{d+c}\setminus\{|y|=0\}}.$ Thus $\Ga'|_{\R^{d-s+c}\times S^s}$ must  be injective and $\Ga|_{S^{d+c}\setminus\{|y|=0\}}$ must be surjective onto $\R^{d-s+c}\times S^s$. 

On the other hand, direct calculations also show that $\Ga$ restricted to $S^{d+c}\setminus\{|y|=0\},$ i.e., where
$|x|^2+|y|^2=1,|y|\not=0,$ is a smooth map that satisfies
\begin{align*}
	\Big|y\big(\Ga|_{S^{d+c}\setminus\{|y|=0\}}(x,y)\big)\Big|=&\,1,\\\Ga'|_{\R^{d-s+c}\times S^s}\circ\Ga|_{S^{d+c}\setminus\{|y|=0\}}(x,y)=&\,(x,y).
\end{align*}
In other words $\Ga$ maps $S^{d+c}\setminus\{|y|=0\}$ to $\R^{d-s+c}\times S^s$ and $\Ga'|_{\R^{d-s+c}\times S^s}$ is also a left inverse of $\Ga|_{S^{d+c}\setminus\{|y|=0\}}.$ Thus $\Ga'|_{\R^{d-s+c}\times S^s}$ must be surjective onto $S^{d+c}\setminus\{|y|=0\}$ and $\Ga|_{S^{d+c}\setminus\{|y|=0\}}$ must be injective.

To sum it up, both $\Ga|_{S^{d+c}\setminus\{|y|=0\}}$ and $\Ga'|_{\R^{d-s+c}\times S^s}$ are smooth bijective maps and they are inverse maps of each other. This finishes the proof of Claim \ref{gabi}.

Now we are ready to find the neighborhood $U(\{0\}^{d-s+c}\times S^s)$ in the statement of Lemma \ref{singssc}. Since $\{0\}^{d-s+c}\times S^s$ is an orbit of the standard Lie group action $$\operatorname{O}(d-s+c)\times\operatorname{O}(s+1),$$ on $\R^{d-s+c}\times \R^{s+1},$ we deduce that the tubular neighborhoods $U_r$ of radius $r$ around $\{0\}^{d-s+c}\times S^s$ in $S^{d+c}$ are also invariant under the action of $O(d-s+c)\times O(s+1)$ as well. Using this, we can calculate that
\begin{align*}
	U_r=\{(x,y)\,|\,|x|^2+|y|^2=1,|x|\le\sin r,(x,y)\in \R^{d-s+c}\times \R^{s+1}\}.
\end{align*}
The same calculations as in the proof of Claim \ref{gabi} show that $U_{\frac{\pi}{4}}$ is mapped by $\Ga|_{S^{d+c}\setminus\{|y|=0\}}$ bijectively to
$		B_{1}^{d-s+c}\times S^s.$ 
\begin{defn}
	Define $$U(\{0\}^{d-s+c}\times S^s)=U_{\frac{\pi}{4}}.$$
\end{defn}
To simplify our notations, in this subsubsection we will still use $U_{\frac{\pi}{4}}$ instead of $U(\{0\}^{d-s+c}\times S^s).$ 
\begin{claim}\label{mapc}
	\begin{itemize}
		\item The map $\Ga|_{S^{d+c}\setminus\{|y|=0\}}$ sends $\si^s(C)|_{ U_{\frac{\pi}{4}}}$ to $C|_{B_1^{d-s+c}}\times S^s.$
		\item The map $\Ga'|_{\R^{d-s+c}\times S^s}$ sends $C|_{B_1^{d-s+c}}\times S^s$ to $\ssc|_{ U_{\frac{\pi}{4}}}.$
	\end{itemize}
\end{claim}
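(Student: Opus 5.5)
The plan is to exploit the fact that $\Ga$ is homogeneous of degree zero in the $(x,y)$ variables, while $C\times\R^{s+1}$ is a cone invariant under positive dilations of $\R^{d+c+1}$. We will verify both bullets first at the level of supports, and then upgrade to mod $2$ currents using the diffeomorphism property from Claim \ref{gabi}.

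For the first bullet, take $(x,y)\in\ssc$ with $(x,y)\in U_{\frac{\pi}{4}}$. Then $x\in C$, $|x|^2+|y|^2=1$ and $|x|\le\sin\frac{\pi}{4}$, so $|y|\ge\cos\frac{\pi}{4}>0$. Since $C$ is a cone, $x/|y|\in C$. Moreover, $|x|/|y|\le\tan\frac{\pi}{4}=1$, so $\Ga(x,y)=(x/|y|,y/|y|)$ lies in $(C\cap B_1^{d-s+c})\times S^s$.

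Conversely, for the second bullet, take $(x,y)$ with $x\in C$, $|x|\le1$ and $|y|=1$. Set $\la=(1+|x|^2)^{-1/2}>0$, so that $\Ga'(x,y)=(\la x,\la y)$. This point has norm $1$, and $\la x\in C$ again by the cone property. Its $x$-component satisfies $|\la x|=|x|/\sqrt{1+|x|^2}\le 1/\sqrt2=\sin\frac{\pi}{4}$, so $\Ga'(x,y)\in\ssc\cap U_{\frac{\pi}{4}}$.

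Together with Claim \ref{gabi}, which says $\Ga|_{S^{d+c}\setminus\{|y|=0\}}$ and $\Ga'|_{\R^{d-s+c}\times S^s}$ are mutually inverse diffeomorphisms, this shows that the two restricted maps are mutually inverse bijections between the two supports. The same computation also matches the boundary pieces $\{|x|=\sin\frac{\pi}{4}\}$ and $\{|x|=1\}$.

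To pass from sets to mod $2$ currents, we use that the pushforward of a mod $2$ current under a diffeomorphism is the current carried by the image rectifiable set with the same (here unit) multiplicity. Both $\ssc|_{U_{\frac{\pi}{4}}}$ and $C|_{B_1^{d-s+c}}\times S^s$ are multiplicity-one currents carried by their supports. Away from the singular strata (the sets $\{x=0\}$), both are smooth embedded submanifolds, and $\Ga$ maps the first diffeomorphically onto the second. Hence $\Ga_\#\big(\ssc|_{U_{\frac{\pi}{4}}}\big)$ and $C|_{B_1^{d-s+c}}\times S^s$ agree off $\{0\}\times S^s$. This exceptional set has dimension $s\le d-2$, so it has $\hm^d$-measure zero, and by Fact \ref{fctconc} the difference of the two currents vanishes. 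The statement for $\Ga'$ follows in the same way, or simply from $\Ga'=\Ga\m$ on these domains.

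The main obstacle is not the set-theoretic calculation, which is routine, but justifying the multiplicity bookkeeping and the restriction to the closed set $U_{\frac{\pi}{4}}$. The key point is to note that restriction commutes with pushforward by a diffeomorphism, which maps $U_{\frac{\pi}{4}}$ onto $B_1^{d-s+c}\times S^s$, as computed just before the definition of $U(\{0\}^{d-s+c}\times S^s)$.
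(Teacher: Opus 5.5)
Your proposal is correct and follows the paper's argument. You use the same degree-zero homogeneity of $\Ga$ and the positive-dilation invariance of $C$ in both directions. You also add two things the paper leaves implicit: the radius checks $|x|/|y|\le 1$ and $|x|/\sqrt{1+|x|^2}\le 1/\sqrt2$, and the upgrade from supports to mod $2$ currents via Claim \ref{gabi} and Fact \ref{fctconc}.
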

To verify the first bullet of Claim \ref{mapc},  we can parameterize $\ssc|_{ U_{\frac{\pi}{4}}}$ by $(v,y)$ with	\begin{align*}
	&v\in C,\\&|v|\le \sin\frac{\pi}{4},\\&|v|^2+|y|^2= 1.
\end{align*} Thus, on $\ssc|_{ U_{\frac{\pi}{4}}}$, we have
\begin{align*}
	\Ga\big|_{\ssc|_{ U_{\frac{\pi}{4}}}}(v,y)=\bigg(\frac{v}{|y|},\frac{y}{|y|}\bigg)\in C|_{ B_1^{d-s+c}}\times S^s,
\end{align*}where we have used the fact that $\lam v\in C$ for any $\lam\in\R_{\ge 0}.$ This finishes the first bullet of Claim \ref{mapc}

For the second bullet of Claim \ref{mapc}, we can parameterize $C|_{ B_1^{d-s+c}}\times S^s$ by $(v,y)$ with
\begin{align*}
	&v\in C,\\&|v|\le 1,\\&|y|=1.
\end{align*}
Thus, on $C|_{ B_1^{d-s+c}}\times S^s$, we have
\begin{align*}
	&\Ga'\big|_{C|_{ B_1^{d-s+c}\times S^s}}(v,y)=\bigg(\frac{v}{\sqrt{1+|v|^2}},\frac{y}{\sqrt{1+|v|^2}}\bigg)\\\in &(C\times \R^{s+1})\cap  U_{\frac{\pi}{4}}=\ssc|_{ U_{\frac{\pi}{4}}}.
\end{align*}
Again we have used the fact that $\lam v\in C$ for any $\lam \in \R_{\ge0}$. This finishes the second bullet of Claim \ref{mapc}.

Since $\Ga$ and $\Ga'$ are inverses of each other, by Claim \ref{mapc} we deduce that  $\ssc$ restricted to $U(\{0\}^{d-s+c}\times S^s)$ is equal to
\begin{align*}
	C|_{B_1^{d-s+c}}\times S^s,
\end{align*}via $\Ga.$

Now to finish the proof, we need to determine $\sing\ssc.$ Note that $C\times \R^{s+1}$ is invariant under uniform scalings of $\R^{d+c+1},$ i.e., scalar multiples of the identity matrix.  Thus, we deduce that the regular set $\reg (C\times \R^{s+1})$ intersects the unit sphere $S^{d+c}$ in $\R^{d+c+1}$ orthogonally. Orthogonality here gives transversality of $\reg (C\times \R^{s+1})$ with $S^{d+c}$, which implies that 
\begin{align*}
	\big(		\reg (C\times \R^{{s+1}})\big)\cap S^{d+c}\subset\reg\ssc.
\end{align*}
By Definition \ref{defnsm}, this implies that
\begin{align*}
	\big(		\sing (C\times \R^{{s+1}})\big)\cap S^{d+c}\supset\sing\ssc.	
\end{align*}The singular set of $C\times\R^{s+1}$ is $\{0\}^{d-s+c}\times\R^{s+1}.$ Consequently, we deduce that
\begin{align*}
	\sing \ssc\s\big(\{0\}^{d-s+c}\times\R^{s+1}\big)\cap S^{d+c}=\{0\}^{d-s+c}\times S^s.
\end{align*}
Now recall that $\ssc$ restricted to $U_{\frac{\pi}{4}}$  equals  $(C|_{B_1^{d-s+c}})\times S^s$ via $\Ga$.  The singular set of $C|_{B_1^{d-s+c}}\times S^s$ is  $\{0\}^{d-s+c}\times S^s.$ The inverse image of $\{0\}^{d-s+c}\times S^s$, i.e., $\Ga'(\{0\}^{d-s+c}\times S^s)$ is precisely $\{0\}^{d-s+c}\times S^s.$ We are done.
(We emphasize that the two $\{0\}^{d-s+c}\times S^s$ are in different coordinates but are the same set via $\Ga.$)	\end{proof}
	\subsection{Adding the singular sets to make an altered representative}\label{altered}
	Now we are ready to make the altered representative of $[\Si]$ with singular sets. 
	
	First recall that $[\Si]$ can be represented by an embedded connected submanifold $N$ (Assumption \ref{simpa}). 
	\begin{figure}[h]
	\centering
	\def\svgwidth{0.9\paperwidth}
	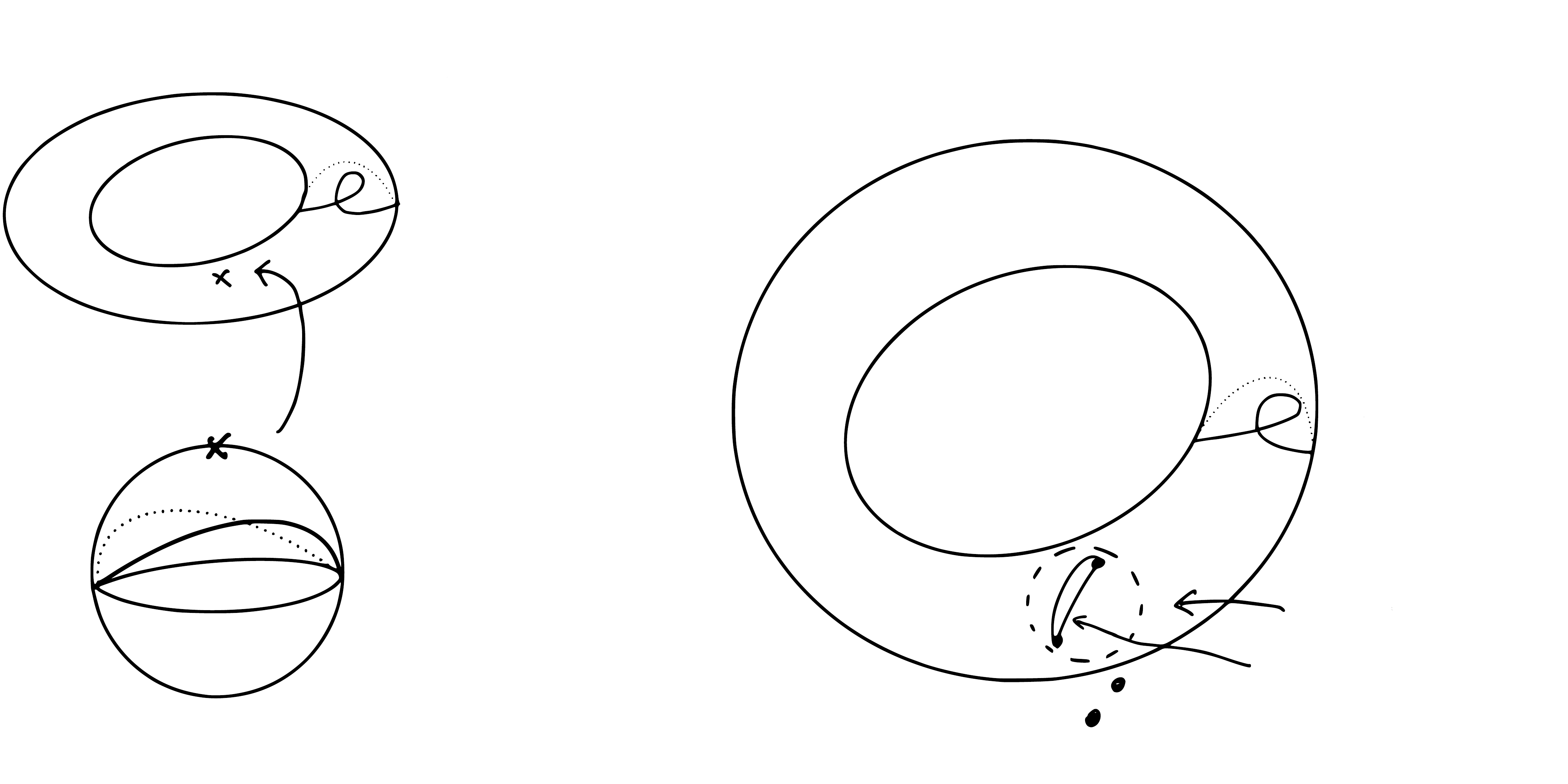
	\caption{Realizing $\ssc$ on $M$}
	\label{fig:ssccycle}
\end{figure}
	
	Now, let us realize $\ssc$ as  a homologically trivial cycle on $M.$ 
	Pick any point $p$ on $S^{d+c}$ with $p\not\in \supp\ssc.$ Then $S^{d+c}\setminus p$ is diffeomorphic to an open $(d+c)$-dimensional ball. Pick any point $q$ on $M,$ with $q\not\in\supp N$ , then we can embed $S^{d+c}\setminus p$ onto an open ball centered at $q$ on $M$, not intersecting $\supp N.$ This way we can regard $\ssc$ as a homologically trivial cycle on $M,$ in the embedding of $S^{d+c}\setminus p.$ 
	
	Now we can use the connected sum defined in Lemma \ref{lemcs} to construct our desired representative $$N\#\ssc$$ of $[\Si]$ with the following properties:
	\begin{assump}\label{assumpns}
		$N\#\ssc$ is a mod $2$ current with {connected} regular set, and 
		\begin{align*}
			N\#\ssc|_{  U(\sing\ssc)}&=\ssc|_{ U(\sing\ssc)},\\
			\sing N\#\ssc&=\sing \ssc.
		\end{align*}
	\end{assump}
	Here $U(\sing\ssc)$ is just the set $U(\{0\}^{d-s+c}\times S^s)$ in Lemma \ref{singssc}. The change of notation is
	for coordinate independent expressions.
%	\section{Deformation map and shorting the metric}	In this section we will construct a deformation map adapted to the cone $C$ that will allow us to compare more general competitors in an altered metric, thus strengtherning the area-minimization property of $C$. This will serve as the replacement of calibrations that is used in Zhang's arguments \cite{YZa,YZj}.	
	\section{Regular neighborhoods of $N\#\si^s(C)$}\label{tbssc}
In the next section, we will need the existence of regular neighborhoods of $N\#\ssc$ of arbitrarily small radii. A regular neighborhood is a classical concept in piecewise linear topology, and basically is the simplicial version of tubular neighborhoods around simplicial subcomplexes. Heuristically speaking, we need such regular neighborhoods to have a well-controlled local geometry around $N\#\ssc,$ which will enable us to make $N\#\ssc$ area-minimizing in a smooth metric.

Finding such arbitrarily small radii regular neighborhoods around smooth submanifolds is a classical fact, i.e., just taking tubular neighborhoods of arbitrarily small radii. Unfortunately, $N\#\ssc$ has fairly complicated singular set, so we need to go through a more complicated route, by realizing $N\#\ssc$ as a subcomplex of a triangulation and then using the classical regular neighborhood theorem from piecewise linear topology. 

\cite[Lemma 5.0.1]{ZLns} states a corresponding result when $N,\ssc$ are integral cycles. The proof relies on simplicial triangulations thus has no dependence on orientability so it also works here. However, for reader's convenience, we reproduce the entire proof.

We plan to achieve our goal of constructing regular neighborhoods in four steps.
\begin{enumerate}
	\item Prove that $\supp N\#\ssc$ is a Whitney stratified set. \label{stepw1}
	\item Whitney stratified sets are triangulable, so $\supp N\# \ssc$ can be realized as a subcomplex of a triangulation of $M.$\label{stepw2}
	\item By the classical simplicial regular neighborhood theorem, the $n$-th barycentric subdivision of the triangulation of $M$  with large enough $n$ will yield regular neighborhoods $U_{\e}$ that deformation retract onto $\supp N\#\ssc$ with $U_\e$ arbitrarily close in distance to $\supp N\#\ssc.$\label{stepw3}
	\item Use Hirsch's smooth regular neighborhood theorem to upgrade the simplicial regular neighborhoods in the previous step to smooth regular neighborhoods.\label{stepw4}
\end{enumerate}
Now equip $M$ with an arbitrary ambient metric $h$. We are ready to state the main lemma in this section.
\begin{lem}\label{tube}
	For any $\e>0,$ there exist  smooth open sets $U_\e(N\#\ssc)$ containing $\supp N\#\ssc$, such that
	\begin{enumerate}
		\item  	 $U_\e(N\#\ssc)$ deformation retracts onto $\supp N\#\ssc$ by a map $\pi_\e$,\label{homue0}
		\item $U_\e(N\#\ssc)$ is arbitrarily close to $\supp N\#\ssc,$ i.e.,\label{homue1}
		\begin{align}\label{distue}
			U_\e(N\#\ssc)\s \{p\,|\,\operatorname{dist}_h(p,\supp(N\# \ssc))<\e\}.
		\end{align}
		\item The $d$-th homology group of $U_\e(N\#\ssc)$ is generated by $N\#\ssc$, i.e.,\label{homue}
		\begin{align}\label{homns}
			H_d(U_\e(N\#\ssc),\Z/2\Z)=\Z/2\Z[N\#\ssc].
		\end{align}
	\end{enumerate}
	Further more for $0<\e''<\e'<\e<\injrad M$, we can choose $\uee,\ue,U_\e(N\#\ssc),$ so that
	\begin{align*}
				\uee\s \ue\s U_\e(N\#\ssc),
	\end{align*}
\end{lem}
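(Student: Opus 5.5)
We follow the four-step plan announced before the statement. First, we show that $\supp N\#\ssc$ is a Whitney stratified subset of $M$. By Assumption \ref{assumpns} and Lemma \ref{singssc}, away from $\sing \ssc$ the support is a smooth embedded $d$-manifold. Near $\sing\ssc$, via the diffeomorphism $\Ga$, it is $C|_{B_1^{d-s+c}}\times S^s$. Since $C$ is a cone over a smooth closed link $V$, the set $C$ has the Whitney stratification $\{0\}\cup (C\setminus\{0\})$. Whitney condition (b) at the vertex follows from conicality: secant lines from $0$ to points of $C$ lie in $C$ and hence in the tangent spaces. Taking the product with $S^s$ preserves Whitney regularity, and diffeomorphisms preserve it as well. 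This gives a two-stratum Whitney stratification: the stratum $\sing\ssc\cong S^s$ and the stratum $\reg(N\#\ssc)$.

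Second, we invoke the triangulation theorem for Whitney stratified sets (Goresky, or Johnson's theorem on triangulating stratified sets). It yields a smooth triangulation $K$ of $M$ in which $\supp N\#\ssc$ is the underlying space of a subcomplex $L$. Third, we apply the classical simplicial regular neighborhood theorem. In the $n$-th barycentric subdivision $K^{(n)}$, the simplicial neighborhood $N(L^{(n)},K^{(n)})$ (the union of closed simplices meeting $L$, or the derived neighborhood) collapses, hence deformation retracts, onto $|L|$. Its diameter in the direction away from $L$ is bounded by the mesh of $K^{(n)}$ measured in $h$, and this mesh tends to $0$ as $n\to\infty$. Choosing $n$ large therefore gives condition (\ref{distue}). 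For nested choices $\e''<\e'<\e$, we take increasing subdivision indices $n''>n'>n$; derived neighborhoods in finer subdivisions lie inside coarser ones, which gives the inclusions.

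Fourth, the simplicial neighborhood is only a PL manifold with corners. We therefore use Hirsch's smooth regular neighborhood theorem, or simply smooth the boundary inside a slightly larger open set while staying in the $\e$-ball. This produces a smooth open set $U_\e$, still contained in the $\e$-neighborhood, that deformation retracts onto the PL neighborhood and hence onto $|L|$. Composing the two retractions gives $\pi_\e$, which is condition (\ref{homue0}). Smoothing can be done within the gap between consecutive nested neighborhoods, so the inclusions persist. Condition (\ref{homue}) then follows from homotopy invariance: $H_d(U_\e,\Z/2\Z)\cong H_d(\supp N\#\ssc,\Z/2\Z)$. Since $\supp N\#\ssc$ is a $d$-dimensional complex, $H_d$ is the space of mod $2$ $d$-cycles on $L$. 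The regular set is connected (Assumption \ref{assumpns}), and every $d$-simplex lies in its closure. A mod $2$ cycle must have constant coefficient across each codimension-one face interior to the regular part, so the cycle space is generated by the sum of all $d$-simplices, which is $[N\#\ssc]$. This class is nonzero: a nonzero cycle on a $d$-dimensional complex is never a boundary there.

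The main obstacle is expected to be the first step: rigorously checking Whitney (b) along $\sing\ssc$, together with the transition region where the connected sum is glued. The gluing is harmless because it occurs away from $U(\sing\ssc)$. For (b), the plan is to reduce via $\Ga$ to the product model and then use homogeneity of the cone.
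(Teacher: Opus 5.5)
Your proposal is correct and follows the paper's proof essentially step for step: Whitney (b) along $\sing\ssc$ from the homogeneity of the cone in the product chart, Goresky's triangulation, barycentric derived neighborhoods whose mesh tends to $0$, and Hirsch's smoothing. The only deviations are minor: you compute $H_d$ combinatorially (mod $2$ cycle coefficients are constant across $(d-1)$-faces in the connected regular part, and the absence of $(d+1)$-simplices makes the fundamental cycle nonzero), where the paper uses the constancy theorem (Fact \ref{const}) with Fact \ref{fctconc} and the nontriviality of $[\Si]$; and you get the nesting from finer subdivisions, where the paper uses the positive distance of $\pd U_\e(N\#\ssc)$ to the support.
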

This section will be devoted to proving Lemma \ref{tube}.
The proof will be carried out in the order of Step (\ref{stepw1}) and Step (\ref{stepw2}) together, Step (\ref{stepw3}) and Step (\ref{stepw4}) together. Lemma \ref{tube} will be obtained as a direct consequence of the above four steps.	
\subsection{$\supp N\#\ssc$ is a Whitney stratified set and triangulable}
Our plan in this subsection is to stratify $\supp N\#\ssc$ into disjoint unions of smooth submanifolds of different dimensions, and then prove that the stratification is a Whitney stratification. The Whitney stratification allows us to give a triangulation of $M$ with $N\#\ssc$ as a subcomplex. 

An excellent introduction to Whitney's stratification is available in \cite{DTs}, and the classical reference for triangulation of Whitney stratified sets is \cite{MG}. However, since we only use Whitney's stratification to get a triangulation, the reader can just regard Whitney's stratification as a black box. Intuitively, it is clear from the constructions alone that $\supp N\#\ssc$ can be triangulated.

A Whitney stratification of a closed set $K$ is a finite filtration of closed sets $K_0\s K_1\s K_2\s\dots\s K_J=K,$ such that (the possibly empty) $K_j\setminus K_{j-1}$ is a locally finite disjoint union of $j$-dimensional submanifolds, called $j$-th strata of $K,$ and each pair of strata satisfies Whitney's condition (b). Whitney's condition (b) on an ambient manifold $M$	(\cite[Section 1]{DTg}) is defined as follows:
\begin{defn}\label{whitb}
	For disjoint submanifolds $X,Y$ on $M$, let a point $y$ be a point in $Y\cap\ov{X}.$ We say $X$ is (b) regular over $Y$ at $y$, if given sequences $\{x_i\}$ on $X$, $\{y_i\}$ on $Y$, both converging to $y,$ such that the tangent space $T_{x_i}X$ converges to a plane $P$ in a coordinate chart, and the unit vector in the direction of $x_i$ to $y_i$ converges to $v,$ then $v\in P.$
\end{defn}  
The above definition of Whitney's stratification is taken from \cite[Section 2]{MG}. In some literature, authors also require strata in a Whitney stratification to satisfy Whitney's condition (a), which is long known to be implied by Whitney's condition (b), e.g., \cite[p. 454 last paragraph]{DTg}. 
\subsubsection{Strata of $\supp N\#\ssc$}
First we want to define a filtration $$K_0\subset K_1\subset K_2\subset\dots\subset K_d$$ for $\supp N\#\ssc$ with $K_j\setminus K_{j-1}$ a $j$-dimensional submanifold.

To do this, let us discuss the singularity structure of $\supp N\#\ssc.$ By Assumption \ref{assumpns}, $\sing\left( N\#\ssc\right)=\sing\ssc$.

Recall Lemma \ref{singssc}, $\sing\ssc$ is an embedded standard $s$-dimensional sphere $S^s$ with $s\ge 0.$ Restricted to a neighborhood $U(\sing\ssc)$ around $\sing\ssc$, $\ssc$ equals $C|_{B_1^{d-s+c}}\times S^s$ via a diffeomorphism $\Ga.$  Thus,  in our construction, we want to make sure $\sing\ssc$ is contained in  $$K_s\s K_{s+1}\s\dots\s K_d$$  and contributes to only the strata $K_s\setminus K_{s-1},$ where we regard $K_{-1}$ as an empty set.

Now we are ready to describe the filtration $K_0\s\dots\s K_d$ of $\supp N\#\ssc.$ By the above discussion, we have a decomposition of $\supp N\#\ssc$ into disjoint submanifolds of different dimensions as follows:
\begin{align}\label{sscdec}
	\supp N\#\ssc=\reg \left(N\#\ssc\right)\cup \sing\ssc.
\end{align}
If $s=0$, set $K_0=\sing\ssc.$ Otherwise, set $K_0=K_{-1}=\es.$ Suppose we have constructed a $K_k.$ If a factor in the right hand side of equation (\ref{sscdec}) has dimension $(k+1)$, then define $K_{k+1}$ as the union of $K_k$ with all the factors in the right hand side of (\ref{sscdec}) that has dimension $(k+1).$ Otherwise, set $K_{k+1}=K_k.$ Thus, inductively, we can construct a filtration 
\begin{align}\label{filterk}
	K_0\s\dots\s K_d
\end{align} of $\supp N\#\ssc,$ so that $K_j\setminus K_{j-1}$ is a (possibly empty)  smooth submanifold of dimension $j.$

In other words, when $s=0,$ we have
\begin{align*}
	\sing\ssc=K_0=\cdots=K_{d-1}\s N\#\ssc=K_d,
\end{align*}when $s\ge 1,$
we have
\begin{align*}
	\es=K_0=\cdots=K_{s-1}\s \sing\ssc=K_s=K_{s+1}\cdots\s =K_{d-1}\s N\#\ssc=K_d,
\end{align*}
\subsubsection{Verifying Whitney's condition (b)}
Now we have to verify that each pair of strata satisfy Whitney's condition (b) (Definition \ref{whitb}). 

First let us decide which strata $K_j\setminus K_{j-1}$ can limit to another strata $K_l\setminus K_{l-1}$.

Recall that all strata are the factors of the same dimension in the right hand side of (\ref{sscdec}). Let us consider factor by factor.

Since the regular set $\reg \left(N\#\ssc\right)$ is dense, $\reg \left(N\#\ssc\right)$ can limit to any other strata. Let $\{x_j\}$ be a sequence of points in $\reg \left(N\#\ssc\right)$ that converges to a point $x.$ 

If $x\in \sing\ssc,$ then let $\{y_i\}$ be a sequence of points in $\sing\ssc$ that also converges to $x.$ By Lemma \ref{singssc} and Assumption \ref{assumpns}, restricted to a neighborhood of $\sing\left( N\#\ssc\right)$, $\ssc$ equals $C|_{B_1^{d-s+c}}\times S^s\s B_1^{d-s+c}\times S^s.$ Thus, by choosing a coordinate system on $S^s,$ we can assume that there is a coordinate chart on $M$ containing $x,$ such that $N\#\ssc$ restricted to this chart equals\begin{align*}
	C\times \R^s\s \R^{d-s+c}\times \R^s
\end{align*}
Then the $\{y_j\}$ lies in $\{0\}^{d-s+c}\times \R^s$, and $\{x_j\}$ lies in $C\times \R^s\setminus(\{0\}^{d-s+c}\times \R^s).$

We claim that line segment $\overrightarrow{x_iy_i}$ lies entirely on $C\times \R^s,$ so by definition of tangent space as the set of tangent vectors to curves, $\overrightarrow{x_iy_i}\in T_{x_i}\reg \left(N\#\ssc\right)$. Thus, Whitney's condition (b) is satisfied for the strata $\reg \left(N\#\ssc\right)$ over $\sing\ssc$. To see this, note that any point $q$ on $\overrightarrow{x_iy_i}$ equals\begin{align*}
	q=t\overrightarrow{0x_i}+(1-t)\overrightarrow{0y_i},
\end{align*}for $t\in[0,1]$. Since $C\times \R^s$ is invariant under uniform scalings, $tx_i$ also lies in $C\times \R^s$. Since $C\times \R^s$ is invariant under translations generated by vectors in $\{0\}^{d-s+c}\times\R^s$, and since $(1-t)y_i\in \{0\}^{d-s+c}\times\R^s$, we see that $q\in C\times \R^s$ as claimed.

The strata $\sing\ssc$ is closed and thus not limiting to any other strata. 

To sum it up, $\supp N\#\ssc$ is a Whitney's stratified set.
\subsubsection{Realizing $N\#\ssc$ as a subcomplex of a triangulation of $M$}Since we want to use the fact that Whitney's stratified set can be triangulated, and since we want to realize $N\#\ssc$ as a subcomplex of $M,$ we need to include $M$ into the Whitney stratification as well. To do this, set $$ N\#\ssc=K_d=K_{d+1}=\dots=K_{d+c-1},K_{d+c}=M.$$ Since we only added the ambient manifold $M$ to the filtration (\ref{filterk}), it is straightforward to verify that
\begin{align*}
	K_0\s K_1\s\cdots\s K_{d+c},
\end{align*} gives a Whitney stratification of $M.$

It is a classical fact that Whitney stratified sets admit triangulations. By \cite[Theorem p. 196]{MG}, $M$ admits a finite triangulation into a simplicial complex, such that $N\#\ssc$ is a subcomplex. \cite[Theorem p. 196]{MG} is stated for Thom-Mather stratified sets, of which Whitney's stratification is a special case as mentioned in \cite[Section 1, first paragraph]{MG}. To sum it up, we have proven that
\begin{consq}\label{subcomplex}
	There is a finite triangulation of $M$ into a simplicial complex, such	$N\#\ssc$ is a subcomplex, and $\sing\left( N\#\ssc\right)$ is a subcomplex of $N\#\ssc$.
\end{consq}
We have finished Step (\ref{stepw1}) and (\ref{stepw2}) of our plan of this section. Now we are ready to deal with the next two steps.
\subsection{Smooth regular neighborhoods of $N\#\ssc$ of arbitrarily small radii}
In this section we will prove the first two bullets of Lemma \ref{tube}, i.e., constructing regular neighborhoods of arbitrarily small radii.
\subsubsection{Simplicial regular neighborhoods of arbitrarily small radii}
We need the following classical regular neighborhood theorem from piecewise linear topology:
\begin{fact}\label{regnthm}
	(\cite[Theorem 2.11]{JH})	The derived neighborhoods $O_n$ of a subcomplex $K$ in the $n$-th barycentric subdivision are regular neighborhoods of $K$, provided $n\ge 2,$
	and each $O_n$ deformation retracts onto $K.$ \end{fact}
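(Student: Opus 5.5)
This is a classical result quoted from Hudson, so the plan is to recall how the standard PL argument goes rather than build anything new. Let $L$ be a finite simplicial complex with subcomplex $K$, and let $L^{(n)}$ denote the $n$-th barycentric subdivision. Define $O_n$ as the simplicial neighborhood of $K^{(n)}$ in $L^{(n)}$: the union of all closed simplices of $L^{(n)}$ meeting $K$. The argument has three steps, in this order.

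\emph{Step 1: reduce to the full case.} I would first show that after one subdivision, $K^{(1)}$ is a \emph{full} subcomplex of $L^{(1)}$. Fullness means every simplex of $L^{(1)}$ whose vertices all lie in $K^{(1)}$ is itself in $K^{(1)}$. This holds because a simplex of $L^{(1)}$ is a chain of barycenters $\hat\sigma_0<\dots<\hat\sigma_k$. If every $\hat\sigma_i$ lies in $K$, then every $\sigma_i\in K$, and so the chain is a simplex of $K^{(1)}$.

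\emph{Step 2: build the deformation retraction.} For a full subcomplex $K$ of a complex $L'$, define a simplicial "distance" function $f:L'\to[0,1]$. It is linear on each simplex, equals $0$ on the vertices of $K$, and equals $1$ on all other vertices. Fullness ensures $f^{-1}(0)=K$ exactly. Each simplex meeting $K$ splits as a join $\tau*\rho$ with $\tau\subset K$ and $\rho$ disjoint from $K$. Inside such a join, every point $p\notin\rho$ lies on a unique segment from a point of $\tau$ to a point of $\rho$. Straight-line retraction along these segments to the $\tau$-end is compatible across faces, so it gives a deformation retraction of $f^{-1}([0,1))$ onto $K$. The same construction retracts $f^{-1}([0,\tfrac12])$ onto $K$ as well.

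\emph{Step 3: identify $O_n$ with a sublevel set.} With $L'=L^{(n-1)}$, full for $n\ge 2$ by Step 1, I would check that the second derived neighborhood $O_n$ coincides with $f^{-1}([0,\tfrac12])$ up to a PL isotopy. The reason is that barycentric subdivision of the join structure cuts each segment $\tau\to\rho$ at a level that can be straightened to $f=\tfrac12$. This gives both conclusions at once: $O_n$ is a regular neighborhood (in Hudson's sense it is a manifold collapsing to $K$, being a derived neighborhood), and the straight-line retraction of Step 2 restricts to a deformation retraction of $O_n$ onto $K$.

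I expect the main obstacle to be the bookkeeping in Step 3, matching the combinatorial derived neighborhood with the level set $f^{-1}([0,\tfrac12])$. The cleanest route is to choose the derived subdivision's new vertices exactly at $f=\tfrac12$ along each edge from $K$ to its complement. Hudson shows that all derived subdivisions are PL equivalent near $K$, so this choice loses nothing.
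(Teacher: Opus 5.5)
Your proposal is correct and is essentially the standard argument behind the cited theorem. The paper gives no proof beyond the citation to Hudson and the remark that the retraction comes from collapsing each simplex meeting $K$ onto its face in $K$, which is exactly your join-line retraction in Step 2; your Step 1 (fullness of $K^{(n-1)}$ in $L^{(n-1)}$) supplies the reason $n\ge 2$ is needed for that collapse to be well defined. Two points should be stated explicitly: the retraction is carried to the actual barycentric $O_n$ by conjugating with the simplicial isomorphism from the barycentric derived to your level-$\tfrac12$ derived, which fixes $|K|$ pointwise; and the manifold/regular-neighbourhood property is taken from Hudson rather than proved, as in the paper.
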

Here the derived neighborhood means the union of all simplices that intersect our subcomplex in the $n$-th barycentric subdivision, or in simplicial jargon, union of all stars of $K$. The deformation retract of $O_n$ onto $K$ can be constructed by collapsing each simplex intersecting our subcomplex. 

We also need another classical fact about diameters of simplices in barycentric subdivisions:
\begin{fact}\label{radbd}
	The diameter of $j$-dimensional simplices resulting from the $n$-th barycentric division is at most $\big(\frac{j}{j+1}\big)^n$ times the diameter of the original $j$-dimensional simplex.
\end{fact}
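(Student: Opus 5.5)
The plan is to prove the bound by induction on $n$, using the fact that one barycentric subdivision of a simplex contracts diameters by a factor $\frac{j}{j+1}$. So the whole argument reduces to the single-step estimate: if $\si$ is a $j$-simplex in Euclidean space (or inside a linear chart of the triangulation) with diameter $D$, then every simplex of its first barycentric subdivision has diameter at most $\frac{j}{j+1}D$. Once this holds, applying it $n$ times gives the factor $\big(\frac{j}{j+1}\big)^n$. Two small points make the iteration legitimate. Every simplex produced by subdivision lies inside a face of the previous simplex. A face of dimension $k\le j$ contracts by $\frac{k}{k+1}\le\frac{j}{j+1}$, so using the worse factor $\frac{j}{j+1}$ throughout is harmless.

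For the single-step estimate I would use the convexity of the distance function. The diameter of a simplex equals the maximum distance between two of its vertices. The reason is that $x\mapsto|x-y|$ is convex, so its maximum over a simplex is attained at a vertex; applying this once in each variable shows the diameter is attained at a pair of vertices. A simplex of the barycentric subdivision has vertices $b_0,\dots,b_k$, where $b_i$ is the barycenter of a face $\si_i$ and $\si_0\s\si_1\s\cdots\s\si_k\s\si$. It therefore suffices to bound $|b_i-b_l|$ for $i<l$.

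Write $\si_l$ with vertices $v_0,\dots,v_m$, where $m\le j$, and let $b_l=\frac{1}{m+1}\sum v_r$. The point $b_i$ lies in $\si_l$, so by convexity again
\begin{align*}
|b_i-b_l|\le\max_r|v_r-b_l|.
\end{align*}
For each vertex,
\begin{align*}
|v_r-b_l|=\frac{1}{m+1}\Big|\sum_{t\neq r}(v_r-v_t)\Big|\le\frac{m}{m+1}D\le\frac{j}{j+1}D.
\end{align*}
This proves the single-step bound.

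The main obstacle is that the triangulation of $M$ from Consequence \ref{subcomplex} is only piecewise smooth in the metric $h$, not affine. To handle this, I would interpret the fact in the affine structure of each top simplex, where barycentric subdivision is defined. The simplex charts are bi-Lipschitz to $h$ on the finitely many simplices, with a uniform constant $L$. The $h$-diameters of the $n$-th subdivision are therefore at most $L\big(\frac{j}{j+1}\big)^n$ times the original affine diameter. This still tends to $0$, which is all that Step (\ref{stepw3}) needs to make the derived neighborhoods $O_n$ lie within distance $\e$ of $\supp N\#\ssc$.
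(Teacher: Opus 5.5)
Your proposal is correct and follows the same route as the paper, which simply cites Hatcher \cite[p.~120]{AH}. That proof likewise reduces diameters to vertex distances by convexity, bounds barycenter-to-vertex distances by $\frac{m}{m+1}$ times the diameter, and iterates; your use of $b_i\in\si_l$ to avoid Hatcher's induction on dimension is only a cosmetic variation, and your closing bi-Lipschitz transfer to $(M,h)$ matches the remark the paper makes right after stating the fact.
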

This is proved in \cite[p. 120]{AH}.

By Consequence \ref{subcomplex}, $N\#\ssc$ can be realized as a subcomplex of a finite triangulation of $M.$ By Fact \ref{regnthm}, the $n$-th derived neighborhood $O_n$ of the simplicial realization of $N\#\ssc$ in $M$ deformation retracts onto $N\#\ssc.$ By  Fact \ref{radbd}, for any $\e>0,$ there exists $n\in\N$ large enough, such that
\begin{align}\label{simur}
	O_n\s	\{p\,|\,\operatorname{dist}(p,\supp N\#\ssc)<\e\}.
\end{align}
Though Fact \ref{radbd} is stated only for simplices in Euclidean space, we only have finitely many simplices in $M,$ and each simplex in $M$ is bi-Lipschitz equivalent to a simplex in $\R^{d+c}$. Thus, we can still conclude that the diameter on $M$ of simplices in the $n$-th barycentric subdivision will converge to $0$ as $n\to\infty.$
\subsubsection{Upgrade to smooth regular neighborhoods}
We have obtained simplicial regular neighborhoods of $N\#\ssc$ in (\ref{simur}). Now we want to upgrade it into a smooth one. This is precisely the conclusion in \cite[conlusion (1a') p. 525]{MHr}:
\begin{fact}
	For a simplicial regular neighborhood
	$L$ of a subcomplex $K,$ and any open set $U$ containing $L,$ there exists a smooth regular neighborhood of $K$ in $U$ that deformation retracts onto $K$.\end{fact}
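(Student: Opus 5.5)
The plan is to produce the smooth neighborhood as a sublevel set $\{\tilde f<t\}$ of a smoothing $\tilde f$ of a piecewise linear collar function. The sublevel set will be smooth by Sard's theorem. It will deformation retract onto $K$ because a smooth vector field is kept transverse to all the level sets.

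\textbf{Setup.} By Whitehead's theorem the triangulation of $M$ may be assumed to be a smooth ($C^1$) triangulation, so that each closed simplex is a smoothly embedded linear simplex. The simplicial regular neighborhood $L$ of $K$ is a compact codimension $0$ PL submanifold with $K$ in its interior. Its frontier $\pd L$ has a PL collar $c:\pd L\times[0,1]\to L$, which we realize simplicially in a further subdivision. Set $L'=L\setminus c(\pd L\times[0,1))$; this is again a regular neighborhood of $K$. By Fact \ref{regnthm}, or by uniqueness of regular neighborhoods, $L'$ deformation retracts onto $K$.

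\textbf{The PL function.} Fix a small open set $U'$ with $L\s U'\s\ov{U'}\s U$. Define a PL function $f:M\to[0,1]$ with $f=0$ on $L'$ and $f(c(x,r))=r$ on the collar, and extend it PL to be $1$ outside a thin outer collar lying in $U'$. On every top-dimensional simplex of the collar region $f$ is affine, and its level sets are the PL slices $c(\pd L\times\{r\})$.

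\textbf{The transverse vector field (main obstacle).} We need a smooth vector field $X$ on a neighborhood of $f^{-1}([\tfrac18,\tfrac78])$ such that, on every simplex meeting that region, the derivative of the affine function $f|_\Delta$ in the direction $X$ is at least some $\de>0$. Near a point $p$ the admissible directions form an open convex cone: it is the intersection of the finitely many open half-spaces $\{v: \ed(f|_\Delta)(v)>0\}$, one for each simplex $\Delta\ni p$. This cone is nonempty, since the collar coordinate is increasing along the PL collar lines through $p$. Hence a locally constant admissible field exists on a small chart around each point. Because the admissibility condition is convex, gluing these local fields by a partition of unity produces a global smooth $X$ that is still admissible. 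Checking nonemptiness of the cone at the corners of the collar is the delicate step. If it fails directly, I would first refine the collar so that it is a simplicial mapping cylinder, where the nonemptiness is evident.

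\textbf{Smoothing $f$.} Mollify $f$ in finitely many charts and patch the results with a partition of unity to obtain a smooth $\tilde f$ with $|\tilde f-f|<\tfrac18$. Mollification averages the directional derivatives $Xf\ge\de$; since $X$ is almost constant at the mollifier scale, we get $X\tilde f\ge\de/2$ on $\tilde f^{-1}([\tfrac14,\tfrac34])$ once the mollification scale is small enough. Moreover the cutoff errors coming from the partition of unity are $O(\sup|\tilde f-f|)$ times bounded derivatives, so they can be made small as well.

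\textbf{The neighborhood and the retraction.} Choose $t\in(\tfrac14,\tfrac34)$ (every such $t$ is in fact a regular value, since $X\tilde f>0$ there) and set $V=\{\tilde f<t\}$. Then $V$ is a smooth open set with $L'\s V\s U'\s U$. Flowing along $-X/(X\tilde f)$, stopped at the level where $f$ reaches $0$, deformation retracts $\ov V$ onto a set $W$ that sits between $L'$ and a slightly larger collar slab. This works because every flow line meets each level set of $\tilde f$ at most once. The same collar flow retracts $W$ onto $L'$, and $L'$ deformation retracts onto $K$. Composing these three homotopies gives the required deformation retraction of the smooth regular neighborhood $V$ onto $K$.
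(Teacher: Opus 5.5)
The paper does not prove this Fact; it just quotes Hirsch's smooth regular neighbourhood theorem. Your self-contained smoothing argument is a different route and could work in principle, but it breaks at exactly the step you single out. An admissible vector at $p$ exists iff $0\notin\operatorname{conv}\{\mathrm{d}(f|_\Delta)_p : \Delta\ni p \text{ top-dimensional}\}$. Monotonicity of $f$ along the PL collar arc through $p$ does not give this. It only says $f$ increases along the arc's incoming and outgoing directions, which are in general different vectors and constrain only the simplices containing them.

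Once you realize an arbitrary PL collar simplicially, there are vertices (indeed whole simplices) at levels in $(0,1)$. At such a vertex, PL regularity of $f$ does not keep $0$ out of that convex hull. Already in $\R^2$ one can write $g=y\circ\Psi^{-1}$ with $\Psi$ a conewise-linear homeomorphism fixing $0$, so that $g$ has the $x$-axis as zero set but its sector gradients include $(0,1)$, $(1,-\tfrac{1}{10})$ and $(-1,-\tfrac{1}{10})$. Such a $g$ has no admissible direction. \emph{Refining} the collar only adds more intermediate-level vertices, so it cannot produce the mapping-cylinder structure you want.

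The missing idea is to avoid intermediate vertex values altogether. Take a barycentric subdivision, fine enough, in which $K$ is full. Let $\phi$ be the simplicial function equal to $0$ on vertices of $K$ and $1$ on all other vertices. Every simplex meeting $\phi^{-1}(0,1)$ is a join $\alpha*\beta$ with $\alpha\in K$. So if $p$ lies in the open simplex $\tau$, then $\phi|_\tau$ is nonconstant, and any $v\in\T_p\tau$ with $\mathrm{d}(\phi|_\tau)(v)>0$ satisfies $\mathrm{d}(\phi|_\sigma)(v)=\mathrm{d}(\phi|_\tau)(v)>0$ for every $\sigma\supseteq\tau$, because $v$ is tangent to $\tau$. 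With $\phi$ in place of your collar function, your partition-of-unity, mollification and regular-value steps go through as written; fineness gives $\phi^{-1}[0,1)\subseteq U$, so $L$ plays no further role. The straight-line retraction along join segments deformation retracts $\phi^{-1}[0,1)$ onto $K$ and preserves every sublevel set $\{\phi\le a\}$.

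The assembly of the retraction also needs repair. Your $X$ exists only near $f^{-1}[\tfrac{1}{8},\tfrac{7}{8}]$, so you cannot flow until $f$ reaches $0$. Moreover, a collar or join retraction decreases $f$, not $\tilde f$, so it need not map $W=\{\tilde f\le\tfrac{1}{4}\}$ into itself. Instead, use the sectorwise bound $\mathrm{d}(f|_\Delta)(X)\ge\delta$ together with $f$ being Lipschitz and piecewise $C^1$: this gives $f(\gamma(s+h))-f(\gamma(s))\ge\delta h-o(h)$ along any $X$-trajectory $\gamma$, so $f$ is strictly increasing along trajectories. Stop the $-X$ flow at the time, continuous in the initial point, at which $f$ reaches $\tfrac{1}{8}$. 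This keeps $\tilde f\le t$ because $X\tilde f>0$ along the way, and it retracts $\ov V$ onto exactly $\{f\le\tfrac{1}{8}\}$, a sublevel set the join retraction does preserve.

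Finally, that the triangulation containing $K$ and $L$ is $C^1$ must be a hypothesis, as it is in Hirsch's theorem. Whitehead's theorem gives \emph{some} $C^1$ triangulation of $M$, not that the given one is $C^1$, and without it $\mathrm{d}(f|_\Delta)$ is meaningless. With these repairs your argument delivers what the paper actually uses in Lemma \ref{tube}: a smooth open set inside $U$ that deformation retracts onto $K$. It does not by itself show that this set is a regular neighbourhood in Hirsch's PL sense.
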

Now applying the above fact to \begin{align*}
	&K=N\#\ssc,\\& L=O_n,\\&U=\{p|\operatorname{dist}(p,\supp N\#\ssc)<\e\},
\end{align*} we arrive at a smooth open set $U_\e(N\#\ssc)$ containing $\supp N\#\ssc$ that deformation retracts onto $\supp N\#\ssc$ by a map $\pi_\e$ and 
\begin{align*}
	U_\e(N\#\ssc)\s \{p|\operatorname{dist}(p,\supp N\#\ssc)<\e\}.
\end{align*}This finishes the proof of the first two bullets in Lemma \ref{tube}.
\subsection{Homology of $U_\e(N\#\ssc)$}
Now we are left to prove the last bullet in Lemma \ref{tube}. 
Since $U_\e(N\#\ssc)$ deformation retracts onto $\supp N\#\ssc,$ to show that $$H_d(U_\e(N\#\ssc),\Z/2\Z)=\Z/2\Z[N\#\ssc],$$ it suffices to prove that
\begin{align*}
	H_d(\supp N\#\ssc,\Z/2\Z)=\Z/2\Z[N\#\ssc].
\end{align*}
Recall the constancy theorem Fact \ref{const}. Let $W$ be a simplicial $\Z/2\Z$ cycle supported on $\supp N\#\ssc$. Then $W$ induces a mod $2$ current supported on $\supp N\#\ssc$. On the connected open manifold $\reg \left(N\#\ssc\right)$, we can use the Fact \ref{const} to deduce that $W$ restricted to $\reg \left(N\#\ssc\right)$ is a $\Z/2\Z$ multiple of $N\#\ssc$ restricted to $\reg \left(N\#\ssc\right)$ say $k\in\Z/2\Z.$ Thus, $W-kN\#\ssc$ is supported on a $s$-dimensional set $N\#\ssc$, by Fact \ref{fctconc}, we deduce that $W=k(N\#\ssc)$. This shows that $H_d( N\#\ssc,\Z/2\Z)$ is generated by $[N\#\ssc].$ 

Since $[N\#\ssc]$ is not $0$ as a homology class, we are done.

For choice of $\uee,\ue,U_\e(N\#\ssc),$ first make an arbitrary choice of $U_{\e}(N\#\ssc).$ Note that $\pd U_{\e}(N\#\ssc)$ is of positive distance $\ov{\e}$ away from $N\#\ssc$, i.e., points on $\pd U_{\e}(N\#\ssc)$ and $N\#\ssc$ are of distance at least $\ov{\e}>0.$ Then we choose one $U_{\ov{\e}/2}(N\#\ssc)$ to be $\ue$. Similarly $\pd \ue$ is of positive distance $\ov{\e'}$ away from $N\#\ssc$. Then we choose one $U_{\ov{\e'}/2}(N\#\ssc)$ to be $\uee$. 
	\section{Making the altered representative area-minimizing}\label{zhang}
	This section will be devoted to the proof of the following lemma, 
		\begin{lem}\label{nsmin}
			There is a smooth metric $h$ on $M,$ such that $ N\#\ssc$ is the unique area-minimizing representative of $[\Si]$ with respect to $h$.   Furthermore, we have
		\begin{itemize}
			\item 
			In a neighborhood $U_\rr(\sing\ssc)$ of $\sing \ssc$ diffeomorphic to $B_\rr^{d-s+c}\times S^s,$ via the diffeomorphism $\Ga$ defined in Lemma \ref{singssc},  the metric $h$ is equal to the pullback under $\Ga$ of the standard metric on $B_{\rr}^{d-s+c}\times S^s,$ and $N\#\ssc$ restricted to $U_\rr(\sing \ssc)$ equals to $C|_{B_\rr^{d-s+c}}\times S^s$ via $\Ga.$
		\end{itemize}
	\end{lem}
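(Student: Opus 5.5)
We will build $h$ in three regions and glue, following Zhang's gluing strategy but replacing calibrations by a retraction argument. Near the singular set we put the product metric: on $U_\rr(\sing\ssc)\cong B_\rr^{d-s+c}\times S^s$ (via $\Ga$ from Lemma \ref{singssc}) take $h$ to be the pullback of the flat metric on $B_\rr^{d-s+c}$ times the round metric on $S^s$. There $N\#\ssc$ is $C|_{B_\rr}\times S^s$ by Assumption \ref{assumpns}. Since $C$ is mod $2$ area-minimizing (Theorem \ref{thmc}, or the given regular cylindrical cone), the product $C\times S^s$ is area-minimizing relative to its boundary in the product metric: we will show this by projecting a competitor onto the $B_\rr$-factor slice by slice and using the coarea formula with the minimality of $C$. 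Away from $U_{\rr/2}(\sing\ssc)$, $N\#\ssc$ is a smooth embedded submanifold. There we choose $h$ so that $N\#\ssc$ is totally geodesic with a strictly area-decreasing nearest-point projection. Concretely, in a tubular neighborhood we take a Fermi-type metric $h=g_N+ \phi(\rho)\,g_{\text{fiber}}$, and near $\partial U_\rr$ we interpolate so that both prescriptions agree.

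Next we use Lemma \ref{tube} to get nested regular neighborhoods $U_{\e''}\s U_{\e'}\s U_\e$ with retraction $\pi_\e$ and $H_d(U_\e)=\Z/2\Z[N\#\ssc]$. We then scale the metric up conformally by a large factor $\lambda\gg1$ outside $U_{\e''}$, interpolating in $U_{\e'}\setminus U_{\e''}$. Any competitor $S$ homologous to $N\#\ssc$ of mass at most $\ms_h(N\#\ssc)$ then has only tiny mass outside $U_{\e'}$. By a slicing/deformation argument (Federer--Fleming deformation onto the triangulation from Consequence \ref{subcomplex}), $S$ can be pushed into $U_\e$ at a cost that is small when $\lambda$ is large. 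By the homology statement of Lemma \ref{tube}, the pushed current is homologous to $N\#\ssc$ inside $U_\e$.

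The main obstacle is the inside comparison: showing that $N\#\ssc$ minimizes mass among cycles in $U_\e$ homologous to it in $U_\e$, with uniqueness. Our plan is to construct a Lipschitz retraction $R:U_\e\to\supp N\#\ssc$ that is $1$-Lipschitz on $d$-planes (Jacobian $\le1$, with strict inequality off $N\#\ssc$) for a suitable choice of $h$. Fact \ref{fctmap} then gives
\begin{align*}
\ms(R(S))\le\ms(S).
\end{align*}
Since $R(S)$ is a $d$-cycle supported on $\supp N\#\ssc$, the constancy theorem (Fact \ref{const}) together with Fact \ref{fctconc} gives $R(S)=N\#\ssc$, which yields minimality. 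On the smooth part, $R$ is the normal projection. Its Jacobian is $\le1$ once $h$ is chosen so that the fiber directions are orthogonal and the leaves of the tubular neighborhood are shrunk along $N$ away from $N$; we achieve this by a warping factor decreasing in distance to $N$. Near the singular stratum, the product structure lets $R$ act fiberwise on $B_\rr$, where we need a retraction of a neighborhood of $C$ onto $C$ that is area nonincreasing. This is the delicate point, since $C$ has no calibration. We will try to avoid building it by instead handling the region near $\sing$ with the minimality of $C\times S^s$ plus a cut-and-paste argument, combining the two estimates via the coarea formula on the boundary $\partial U_\rr$.

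Uniqueness follows from the strict inequalities: equality forces $S\s\supp N\#\ssc$ and hence $S=N\#\ssc$. The last bullet holds by construction.
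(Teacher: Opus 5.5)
Your product-metric argument near $\sing\ssc$ (slicing $C\times S^s$ and using the minimality of $C$) and your use of Lemma \ref{tube} match the paper. The step you yourself call the main obstacle, however, is left open, and it is the heart of the lemma.

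A retraction $R$ onto $\supp N\#\ssc$ with Jacobian $\le 1$ near the singular stratum is essentially a Lawlor-type area-nonincreasing retraction onto $C$. That is exactly what is unavailable here: for the Veronese cone, Lawlor's vanishing-retraction method is known to fail.

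Your fallback, cut-and-paste along $\pd U_\rr$ combined via the coarea formula, does not give an exact inequality. The slice of a competitor $S$ on $\pd U_\rr$ generally differs from that of $N\#\ssc$. So $S\llcorner U_\rr$ is not an admissible competitor for the relative minimality of $C\times S^s$. Likewise $R(S\llcorner U_\rr\cp)$ need not equal $N\#\ssc\llcorner U_\rr\cp$. Filling the mismatch costs mass of no definite sign, whereas you need $\ms(S)\ge\ms(N\#\ssc)$ exactly. The same problem is hidden in ``interpolate near $\pd U_\rr$ so that both prescriptions agree'': nothing says why an area comparison survives the interpolation.

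The idea you are missing is not to retract near $\sing\ssc$ at all, and to change the metric instead. The paper uses a squeezing map $f$ that is the identity on $U_{\frac{1}{10}}(\sing\ssc)$, equals the normal projection $\pi$ farther out, and is $f(n,l)=(n,\ai(\rh(n))l)$ in Fermi coordinates in between. It pairs this with the glued metric $\hgd=(1-\ai(\rh))\,h_{\sing}|_{\ker\ed\pi}+f\du h_{\sing}$ on $\ug$. This metric satisfies $f\du h_{\sing}\le\hgd$, with equality on $\T(N\#\ssc)$.

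Lemma \ref{lemshort} then gives $\ms_{\hgd}(T)\ge\ms_{h_{\sing}}(f(T))$. If $T-N\#\ssc=\pd Q$ inside $U_\e(N\#\ssc)$, Fact \ref{fctconc} forces $f(Q)$ to be compactly supported in $U(\sing\ssc)$ (Fact \ref{fctproj}). Hence $f(T)$ equals $N\#\ssc$ outside $U(\sing\ssc)$, and inside it differs from $N\#\ssc$ by the boundary of a compactly supported chain. So the minimality of $C_1\times S^s$ in the product metric applies directly, with no boundary matching.

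Two further steps also need repair.

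\emph{Exterior step.} Pushing an arbitrary competitor into $U_\e(N\#\ssc)$ ``at a cost that is small'' only yields $\ms(S)\ge\ms(N\#\ssc)-o(1)$, which is not minimality. The paper instead works with an actual minimizer $T$ in $[\Si]$. Where the metric is a constant multiple of $h_\be$, $T$ is stationary for $h_\be$. Applying the monotonicity formula on a ball of radius $\e''$ around a point of $\supp T\setminus U_\e(N\#\ssc)$ gives $\ms_h(T)\ge 2\ms_h(N\#\ssc)$, a contradiction. Then $H_d(U_\e(N\#\ssc),\Z/2\Z)=\Z/2\Z[N\#\ssc]$ identifies the class, and the interior comparison applies.

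\emph{Uniqueness.} You appeal to strict inequalities, but you have none near $\sing\ssc$. The paper gets uniqueness by passing to $(1+\be)\hgd$ with $\be\m(0)=\supp N\#\ssc\cup\ov{U(\sing\ssc)}$, which confines any equality case to that set. It then uses the constancy theorem on the regular part and unique continuation.

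\emph{Smooth part.} The relevant condition is $h\ge\pi\du h_{\sing}$, as in $h_{\reg}=h_{\sing}|_{\ker\ed\pi}+\pi\du h_{\sing}$. A warping factor on the fiber metric does not affect the Jacobian of $\pi$. Getting strict inequality off $N$ would require enlarging, not shrinking, the leaves away from $N$.
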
 
	Here $B_\rr^{d-s+c}\times S^s$ is the product of a radius $\rr$ ball $B_\rr^{d-s+c}$ inside $\R^{d-s+c}$ and the standard $s$-dimensional sphere $S^s$ with $\rr>0$, and
	\begin{align*}
		U_\rr(\sing\ssc)\overset{\Ga}{\cong}B_\rr^{d-s+c}\times S^s.
	\end{align*}
	
	The proof relies on a new gluing scheme inspired by Zhang's gluing of calibrations in  \cite{YZa,YZj,YZt}. 
	To the author's knowledge this is the gluing method for area-minimizing currents that does not utilize calibrations.
	
	Roughly speaking, the construction of the metric $h$ involves five steps:
	\begin{enumerate}
		\item The way we construct $N\#\ssc$ provides a Riemannian metric $h_\sing$ so that $N\#\ssc$ is mod $2$ area-minimizing when restricted to $ U(\sing\ssc).$ Then we handcraft a Riemannian metric $h_\reg$ that makes the regular set of $N\#\ssc$ area-minimizing\label{z1}
		\item Gluing  the metrics $h_\reg$ to $h_\sing$ to construct a smooth metric $\hgd$, in which $N\#\ssc$ is area-minimizing mod $2$ in a smooth regular neighborhood $U_\e(N\#\ssc)$ constructed in Lemma \ref{tube}.\label{z2}% This implies that $N\#\ssc$ is area-minimizing in $U_\e(N\#\ssc)$ in the class $[N\#\ssc]\in H_d(U_\e(N\#\ssc),\Z)$.
		\item Multiplying the metric by a bump function to make $N\#\ssc$ the unique area-minimizing representative of $[N\#\ssc]$ in $U_\e(N\#\ssc)$. \label{z3}
		\item Making the metric in the complement of $U_\e(N\#\ssc)$ very large, so that any area-minimizing mod $2$ current $T$  in $[\Si]$ will stay in $U_\e(N\#\ssc).$\label{z4}
		\item Though homologous objects restricted to subsets of ambient space are not necessarily homologous, the equation (\ref{homns})  implies that $[T]= [N\#\ssc]$. Then Step (\ref{z3}) forces $T= N\#\ssc$ provided $T$ is area-minimizing in $[\Si].$
		\label{z5}
	\end{enumerate}
	After doing some basic preparational works, our proof will be carried in the order of the above five steps.
	
	First, let us make an assumption that we will use throughout this section.
	\begin{assump}\label{assumpc}(IN THIS SECTION ONLY)
		Assume $C$ is a mod $2$ area-minimizing  cone in $\R^{d-s+c}$ and $C_1$ is $C$ restricted to the unit ball of $\R^{d-s+c}.$
	\end{assump}
	\subsection{A metric shorting lemma}
	\begin{lem}\label{lemshort}
		Let $f:L\to L'$ a smooth map between two  Riemannian manifolds $(L,g)$ and $(L',g').$ Let $T$ be a $d$-dimensional mod $2$ current on $L.$ If 
		\begin{align*}
			f\du g'\le g,
		\end{align*}then we have
		\begin{align*}
			\ms_g(T)\ge \ms_{g'}(f(T)).
		\end{align*}
	\end{lem}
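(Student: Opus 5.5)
The plan is to reduce the statement to Fact \ref{fctmap} together with a pointwise comparison of Jacobians. Write $\dhm^d_g$ for the $d$-dimensional Hausdorff measure induced by $g$. Then the mass of $T$ in $g$ is $\ms_g(T)=\int_T \dhm^d_g(p)$, where the integral runs over the rectifiable set underlying $T$. Fact \ref{fctmap} is stated for a self-map of $M$, but its proof, via the area formula for rectifiable sets, applies verbatim to a $C^1$ map between two Riemannian manifolds. It gives
\begin{align*}
	\ms_{g'}(f(T))\le\int_T \jac_{g,g'}(\ed f|_{\T_pT})\dhm^d_g(p).
\end{align*}
Here $\jac_{g,g'}(\ed f|_{\T_pT})$ is the $d$-dimensional Jacobian of $\ed f_p$ restricted to the approximate tangent plane $\T_pT$. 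It is computed with $g$ on the source and $g'$ on the target. This tangent plane exists for $\hm^d$-almost every $p$ in $T$. Consequently, it suffices to prove that $\jac_{g,g'}(\ed f|_{\T_pT})\le 1$ at every such $p$.

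For the pointwise bound, fix $p$ and let $e_1,\dots,e_d$ be a $g$-orthonormal basis of $P=\T_pT$. By definition,
\begin{align*}
	\jac_{g,g'}(\ed f|_P)=\sqrt{\det\big(g'(\ed f(e_i),\ed f(e_j))\big)_{i,j}}=\sqrt{\det\big((f\du g')(e_i,e_j)\big)_{i,j}}.
\end{align*}
The matrix $A=\big((f\du g')(e_i,e_j)\big)$ is symmetric and positive semidefinite. The hypothesis $f\du g'\le g$ says that $A\le I$ in the sense of quadratic forms, since $g(e_i,e_j)=\delta_{ij}$. Every eigenvalue of $A$ therefore lies in $[0,1]$, so $\det A\le 1$, and the Jacobian is at most $1$.

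Integrating this bound gives $\ms_{g'}(f(T))\le\int_T\dhm^d_g=\ms_g(T)$, which is the claim. No step here is a serious obstacle. The only point needing care is that Fact \ref{fctmap} must be applied with different metrics on the source and the target. This is routine, because the area formula holds on Riemannian manifolds with the Jacobian defined relative to the respective metrics. If one prefers to avoid this, one can embed isometrically and work in local coordinates on a countable cover, applying the Euclidean version chart by chart.
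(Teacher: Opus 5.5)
Your proposal is correct and follows the paper's proof. Both arguments apply Fact \ref{fctmap} to bound $\ms_{g'}(f(T))$ by the mapping area of $T$, and then show from $f\du g'\le g$ that the Jacobian on each approximate tangent plane is at most $1$. Your Gram-matrix eigenvalue argument spells out the step the paper writes only as $\sqrt{f\du g'(\T_pT,\T_pT)}\le\sqrt{g(\T_pT,\T_pT)}=1$. Your remark that Fact \ref{fctmap} must be applied to a map between two different Riemannian manifolds addresses a point the paper leaves unstated.
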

	\begin{proof}
		First, recall Fact \ref{fctmap},
		\begin{align*}
			\ms_{g'}(f(T))\le \int_T\jac(f)|_{\T_pT} \dhm^d_{g}(p)
		\end{align*}
		On the other hand, by assumption we have
		\begin{align*}
		\jac(f)|_{\T_pT}=\sqrt{g'(df(\T_pT),df(\T_pT))}=\sqrt{f\du g'(\T_pT,\T_pT)}\le\sqrt{g(\T_pT,\T_pT)}=1.
		\end{align*} We are done.
	\end{proof}
	\subsection{Preparing the neighborhoods}
	First of all, we need to prepare the neighborhoods $U_\e(N\#\ssc)$ and $U(\sing\ssc)$ for our constructions.
	
	In order to make space for gluing  in later subsections, we need to consider more refined neighborhoods $U_r(\sing\ssc).$
	\begin{defn}
		Define $$U_r(\sing\ssc)\overset{\Ga}{\cong}B_r^{d-s+c}\times S^s.$$
	\end{defn}
	\begin{figure}[h]
	\centering
	\def\svgwidth{0.9\paperwidth}
	%% Creator: Inkscape 1.3.2 (091e20e, 2023-11-25, custom), www.inkscape.org
%% PDF/EPS/PS + LaTeX output extension by Johan Engelen, 2010
%% Accompanies image file 'neighborhoods.pdf' (pdf, eps, ps)
%%
%% To include the image in your LaTeX document, write
%%   \input{<filename>.pdf_tex}
%%  instead of
%%   \includegraphics{<filename>.pdf}
%% To scale the image, write
%%   \def\svgwidth{<desired width>}
%%   \input{<filename>.pdf_tex}
%%  instead of
%%   \includegraphics[width=<desired width>]{<filename>.pdf}
%%
%% Images with a different path to the parent latex file can
%% be accessed with the `import' package (which may need to be
%% installed) using
%%   \usepackage{import}
%% in the preamble, and then including the image with
%%   \import{<path to file>}{<filename>.pdf_tex}
%% Alternatively, one can specify
%%   \graphicspath{{<path to file>/}}
%% 
%% For more information, please see info/svg-inkscape on CTAN:
%%   http://tug.ctan.org/tex-archive/info/svg-inkscape
%%
\begingroup%
  \makeatletter%
  \providecommand\color[2][]{%
    \errmessage{(Inkscape) Color is used for the text in Inkscape, but the package 'color.sty' is not loaded}%
    \renewcommand\color[2][]{}%
  }%
  \providecommand\transparent[1]{%
    \errmessage{(Inkscape) Transparency is used (non-zero) for the text in Inkscape, but the package 'transparent.sty' is not loaded}%
    \renewcommand\transparent[1]{}%
  }%
  \providecommand\rotatebox[2]{#2}%
  \newcommand*\fsize{\dimexpr\f@size pt\relax}%
  \newcommand*\lineheight[1]{\fontsize{\fsize}{#1\fsize}\selectfont}%
  \ifx\svgwidth\undefined%
    \setlength{\unitlength}{2160bp}%
    \ifx\svgscale\undefined%
      \relax%
    \else%
      \setlength{\unitlength}{\unitlength * \real{\svgscale}}%
    \fi%
  \else%
    \setlength{\unitlength}{\svgwidth}%
  \fi%
  \global\let\svgwidth\undefined%
  \global\let\svgscale\undefined%
  \makeatother%
  \begin{picture}(1,0.5)%
    \lineheight{1}%
    \setlength\tabcolsep{0pt}%
    \put(0,0){\includegraphics[width=\unitlength,page=1]{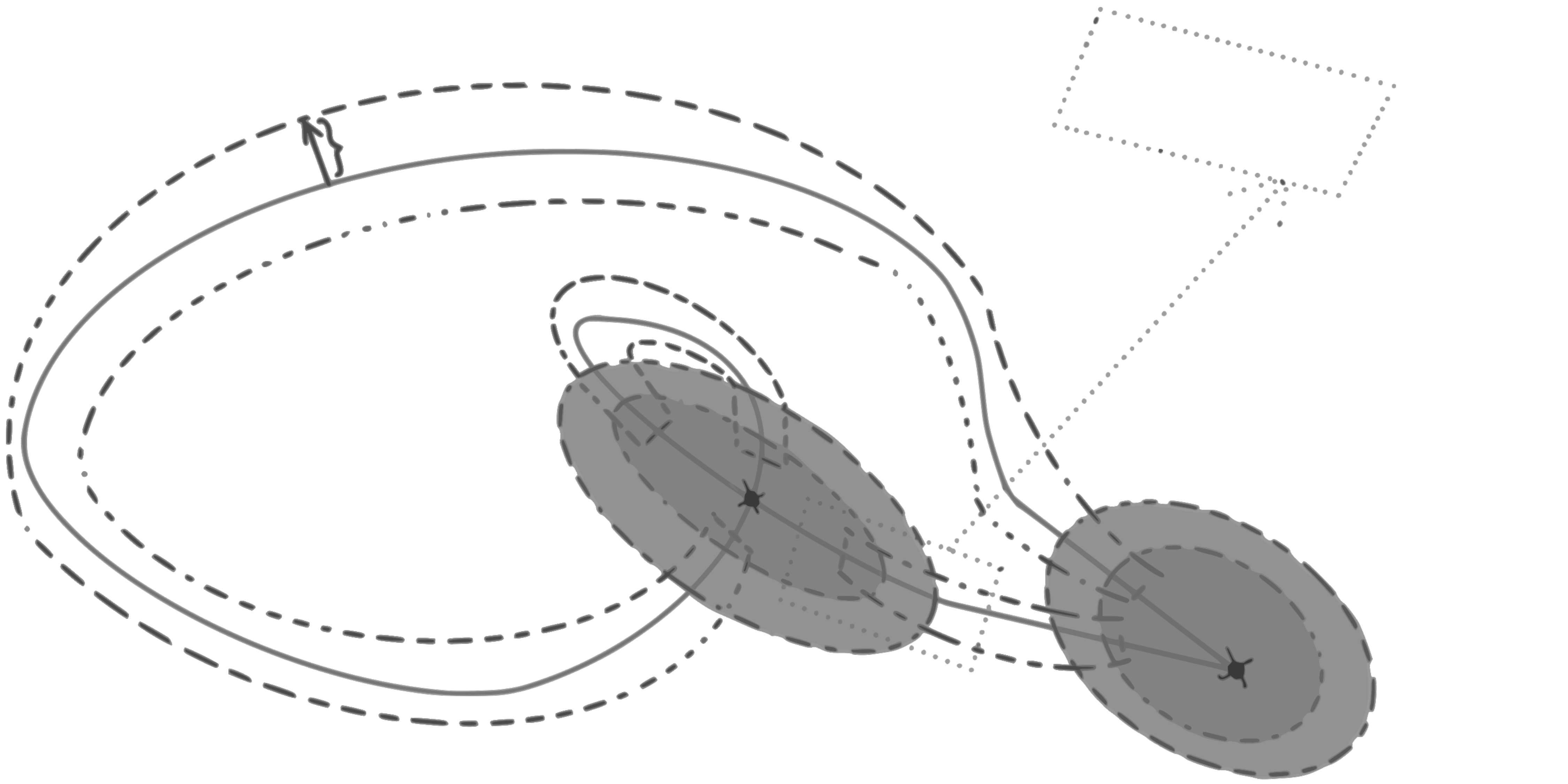}}%
    \put(0.22281534,0.40549558){\color[rgb]{0,0,0}\makebox(0,0)[lt]{\lineheight{1.25}\smash{\begin{tabular}[t]{l}$\nu$\end{tabular}}}}%
    \put(0.19838288,0.46729538){\color[rgb]{0,0,0}\makebox(0,0)[lt]{\lineheight{1.25}\smash{\begin{tabular}[t]{l}$B_\nu\left(\reg^0 N\#\ssc\right)$\end{tabular}}}}%
    \put(0.26449458,0.07062615){\color[rgb]{0,0,0}\makebox(0,0)[lt]{\lineheight{1.25}\smash{\begin{tabular}[t]{l}$N\#\ssc$\end{tabular}}}}%
    \put(0.71577794,0.18560241){\color[rgb]{0,0,0}\makebox(0,0)[lt]{\lineheight{1.25}\smash{\begin{tabular}[t]{l}$\s U\left(\sing N\#\ssc\right)$\end{tabular}}}}%
    \put(0.715778,0.43280237){\color[rgb]{0,0,0}\makebox(0,0)[lt]{\lineheight{1.25}\smash{\begin{tabular}[t]{l}\end{tabular}}}}%
    \put(0.69560911,0.21974426){\color[rgb]{0,0,0}\makebox(0,0)[lt]{\lineheight{1.25}\smash{\begin{tabular}[t]{l} \end{tabular}}}}%
    \put(0.73441374,0.21255822){\color[rgb]{0,0,0}\makebox(0,0)[lt]{\lineheight{1.25}\smash{\begin{tabular}[t]{l}$ {U}_{\frac{1}{10}}\left(\sing N\#\ssc\right)$\end{tabular}}}}%
  \end{picture}%
\endgroup%

	\caption{Schematic illustration of the neighborhoods\\We emphasize that $\reg \left(N\#\ssc\right)$ should be connected! Impossible to draw on a $2$-d plane.}
	\label{fignbhd}
\end{figure}
	The reader should now consult Figure \ref{fignbhd} for intuition.
	\subsection{Making $N\#\ssc$ mod $2$ area-minimizing when restricted to $  U(\sing \ssc)$}
	In this subsection, we will show how from our construction of $N\#\ssc$, we naturally get a smooth metric $h_\sing$ defined throughout $M$ such that $N\#\ssc$ is mod $2$ area-minimizing when restricted to $  U(\sing \ssc).$
		
	We need the following classical fact.
	\begin{fact}\label{calc}
		The mod $2$ current $C_1\times S^s$ is area-minimizing mod $2$ in $\R^{d-s+c}\times S^s$ with the product Riemannian metric of standard metrics on the two factors.
	\end{fact}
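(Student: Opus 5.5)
The plan is to prove Fact \ref{calc} by slicing along the $S^s$ factor, using the product structure of the metric together with the minimality of $C$ (Assumption \ref{assumpc}). No calibration is needed. Let $\pi:\R^{d-s+c}\times S^s\to S^s$ be the projection onto the second factor. In the product metric $\pi$ is a Riemannian submersion, so its $s$-dimensional Jacobian restricted to any $d$-plane is at most $1$. Let $S$ be a competitor, so that
\begin{align*}
S=C_1\times S^s+\pd Q,
\end{align*}
with $Q$ a compactly supported $(d+1)$-dimensional mod $2$ current. By the usual restriction convention, $\pd S=\pd C_1\times S^s$. The goal is $\ms(S)\ge\ms(C_1)\,\hm^s(S^s)=\ms(C_1\times S^s)$.

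The key steps, in order, are as follows.
\begin{enumerate}
\item Invoke the slicing theory for mod $2$ flat chains (Federer's \S4.3, in the mod $\nu$ version of \cite{HF}, \cite{BWdt}) for the Lipschitz map $\pi$ with $s$-dimensional target. For $\hm^s$-a.e.\ $y\in S^s$ this gives slices $\langle S,\pi,y\rangle$ and $\langle Q,\pi,y\rangle$, supported in $\R^{d-s+c}\times\{y\}$, of dimensions $d-s$ and $d-s+1$ respectively.
\item Use the coarea inequality $\ms(S)\ge\int_{S^s}\ms(\langle S,\pi,y\rangle)\,\dhm^s(y)$. This holds because the Jacobian of $\pi$ on tangent planes of $S$ is at most $1$, which is the product-metric input.
\item Compute the slices. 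Since $\ms(\pd Q)<\infty$, the commutation formula $\langle\pd Q,\pi,y\rangle=\pd\langle Q,\pi,y\rangle$ holds for a.e.\ $y$; there is no sign issue mod $2$. The slice of the product is $\langle C_1\times S^s,\pi,y\rangle=C_1\times\{y\}$. Hence, for a.e.\ $y$,
\begin{align*}
\langle S,\pi,y\rangle=C_1\times\{y\}+\pd\langle Q,\pi,y\rangle .
\end{align*}
So each slice is a competitor for $C_1$ in $\R^{d-s+c}\cong\R^{d-s+c}\times\{y\}$, with the fiber carrying the flat metric.
\item Apply the mod $2$ minimality of $C$ (Definition \ref{defnam} applied to $C_1$) to get $\ms(\langle S,\pi,y\rangle)\ge\ms(C_1)$ for a.e.\ $y$. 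Integrating over $S^s$ then gives the desired inequality.
\end{enumerate}

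The main obstacle is the technical justification of the slicing statements for mod $2$ currents. These are the existence of slices for a.e.\ $y$ when the target is an $s$-dimensional manifold rather than $\R$, the boundary commutation formula, and the coarea mass inequality.

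I would first try to reduce to slicing by maps into $\R^s$. The idea is to cover $S^s$ by finitely many coordinate patches on which $\pi$ composed with a chart is Lipschitz, with Jacobian bounded by a factor close to $1$ after shrinking the patches. On each patch I would apply Federer 4.3.1--4.3.2, which transfer to flat chains mod $2$ via White's work \cite{BWdt}. Letting the patch sizes tend to zero would then recover the sharp inequality. A second concern is ensuring that $Q$ can be taken with finite mass so that slices of $Q$ exist. If that fails, I would instead slice $S-C_1\times S^s$, which is a flat boundary, and use the flat-norm version of slicing, where $\langle\pd Q,\pi,y\rangle$ is still a boundary in the fiber for a.e.\ $y$.
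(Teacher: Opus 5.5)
Your proposal is correct and takes essentially the same route as the paper: slice the competitor by the projection onto $S^s$, and use the product metric to bound the Jacobian of that projection by $1$, so that the coarea formula gives $\mathbf{M}(S)\ge\int_{S^s}\mathbf{M}(\langle S,\pi,y\rangle)\,d\mathcal{H}^s(y)$; then note that each slice differs from $C_1$ by a boundary, apply the minimality of $C_1$, and integrate. The paper leaves implicit the existence of a.e.\ slices and the identity $\langle\partial Q,\pi,y\rangle=\partial\langle Q,\pi,y\rangle$ (it only writes $(\partial Q)\cap\pi^{-1}(q)$), and your treatment of these slicing facts fills in exactly those details.
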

	\begin{proof}
Let $T$ be a mod $2$ current so that $T-C_1\times S^s=\pd Q$ for a compactly supported $(d+1)$-dimensional mod $2$ current $Q$. Use $\pi_{S^s}$ to denote the projection of $\R^{d-s+c}\times S^s$ and for $q\in S^s,$ use $T\cap \pi_{S^s}\m(q)$ to denote the slicing of $T$ by $\pi_S^s$ at $q.$ Note that the Riemannian product metric gives $\jac(\pi_{S^s}|_W)\le 1$ for any $d$-dimensional linear subspaces $W$ in the tangent spaces to $\R^{d-s+c}\times S^s.$ Thus, by the coarea formula coarea formula \cite[equation 10.3]{LS}, we have
\begin{align}\label{prod1}
	\ms(T)=\int_{S^s}\int_{T\cap\pi_{S^s}\m(q)}\frac{1}{\jac\left(\pi_{S^s}|_{\T_pT\cap\pi_{S^s}\m(q)}\right)}\dhm^{d-s}(p)\dhm^s(q)\ge \int_{S^s}\ms(T\cap\pi_{S^s}\m(q))\dhm^s(q).
\end{align}
On the other hand
\begin{align}
	(\pd Q)\cap\pi_{S^s}\m(q)=T\cap\pi_{S^s}\m(q)-(C_1\times S^s)\cap\pi_{S^s}\m(q)=T\cap\pi_{S^s}\m(q)-C_1.
\end{align}
As $C_1$ is mod $2$ area-minimizing, we deduce that
\begin{align}
	\ms(T\cap\pi_{S^s}\m(q))\ge\ms(C_1).
\end{align}
Riemannian product structure gives
\begin{align}\label{prod3}
	\int_{S^s}\ms(C_1)\dhm^s(q)=\ms(C_1\times S^1).
\end{align}
Combining (\ref{prod1}) through (\ref{prod3}), we are done.
\end{proof}	
	Now, recall Assumption \ref{assumpc} and Lemma \ref{singssc}. In $U(\sing\ssc)$ adopt the pullback of the standard product metric on $\R^{d-s+c}\times S^s$ under $\Ga$ and then extend this product metric on $U(\sing \ssc)$ to a smooth Riemannian metric $h_\sing$ defined throughout $M.$ 
	\begin{fact}\label{calsing}
		We have a smooth Riemannian metric $h_\sing$ on $M$ so that
		\begin{itemize}
			\item $h_\sing$ restricted to $U(\sing\ssc)$ is the pullback of the standard product metric on $\R^{d-s+c}\times S^s$ under $\Ga$,
			\item $N\#\ssc$ is mod $2$ area-minimizing in $h_\sing$ when restricted to $U(\sing\ssc)$
		\end{itemize}  
	\end{fact}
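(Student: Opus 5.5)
The plan is to build $h_\sing$ directly from the product structure supplied by Lemma \ref{singssc}, and then read off area-minimization from Fact \ref{calc}. Let $\Ga:U(\sing\ssc)\to B_1^{d-s+c}\times S^s$ be the diffeomorphism of Lemma \ref{singssc}. On $U(\sing\ssc)$ we set $h_\sing=\Ga\du(\de\oplus g_{S^s})$, the pullback of the product of the flat metric and the round metric. This is a smooth metric on the open set $U(\sing\ssc)=U_{\frac{\pi}{4}}$.

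To extend it to all of $M$, fix a slightly smaller closed neighborhood $\ov{U_{r_0}(\sing\ssc)}$ with $r_0<1$ and a cutoff $\chi\in C^\infty(M)$ with $\chi\equiv 1$ on $U_{r_0}(\sing\ssc)$ and $\operatorname{supp}\chi\s U(\sing\ssc)$. With an arbitrary background metric $h_0$ on $M$, define $h_\sing=\chi\,\Ga\du(\de\oplus g_{S^s})+(1-\chi)h_0$. A convex combination of positive definite forms is positive definite, so $h_\sing$ is a smooth Riemannian metric. It equals the pullback metric on $U_{r_0}(\sing\ssc)$.

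If the statement is meant to hold on all of $U(\sing\ssc)$ rather than on $U_{r_0}$, I will handle this by relabeling. Replace $U(\sing\ssc)$ by $U_{r_0}(\sing\ssc)$, rescaling $\Ga$ so that its image is $B_1\times S^s$. This loses nothing, since only a neighborhood of the singular set matters later, and a dilation of $C$ is again $C$. This is the first bullet.

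For the second bullet, by Assumption \ref{assumpns} and Lemma \ref{singssc}, $N\#\ssc$ restricted to $U(\sing\ssc)$ is carried by $\Ga$ to $C_1\times S^s$, and $\Ga$ is an isometry from $(U(\sing\ssc),h_\sing)$ onto $B_1^{d-s+c}\times S^s$ with the product metric. Area-minimization "when restricted to $U(\sing\ssc)$" means minimizing against competitors $T$ with $T-N\#\ssc=\pd Q$ and $Q$ compactly supported in $U(\sing\ssc)$. Pushing forward by $\Ga$ preserves mass and boundaries. So it suffices that $C_1\times S^s$ minimizes among such competitors in $B_1^{d-s+c}\times S^s\s\R^{d-s+c}\times S^s$, and this is exactly Fact \ref{calc}, applied with $C$ area-minimizing by Assumption \ref{assumpc}.

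The only delicate point is the compatibility between the restricted domain and Fact \ref{calc}. Fact \ref{calc} is proved via slicing by $\pi_{S^s}$, and each slice competitor differs from $C_1$ by a boundary supported in $B_1$. Since $C_1$ minimizes mod $2$ among such competitors, the slice comparison still holds, and the coarea argument goes through verbatim.
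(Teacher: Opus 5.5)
Your route is the paper's: pull back the product metric by $\Gamma$ on $U(\sing\ssc)$, extend it to $M$, and then get the second bullet from Fact \ref{calc}, since $\Gamma$ is an isometry carrying $N\#\ssc$ to $C_1\times S^s$. One step fails as written and should be replaced.

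\textbf{The extension and relabeling.} Your cutoff is supported inside $U(\sing\ssc)$, so it keeps the product metric only on a smaller $U_{r_0}$. The proposed relabeling does not recover the first bullet. If you replace $\Gamma$ by $D\circ\Gamma$ with $D(x,y)=(x/r_0,y)$, then $(D\circ\Gamma)^*(\delta\oplus g_{S^s})=\Gamma^*(r_0^{-2}\delta\oplus g_{S^s})$, which is not your $h_\sing$. Moreover, $\Gamma$ and the radii $U_{1/10}$, $U_\rr$ keep fixed meanings later in Section \ref{zhang}.

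No relabeling is needed. By Claim \ref{gabi}, $\Gamma$ is a diffeomorphism from all of $S^{d+c}\setminus\{|y|=0\}$ onto $\R^{d-s+c}\times S^s$. So the pullback metric is defined on an open set containing the compact set $\ov{U(\sing\ssc)}$, where $|y|\ge 1/\sqrt{2}$. Take $\chi\equiv 1$ on a neighborhood of $\ov{U(\sing\ssc)}$ with support in, say, $\Gamma^{-1}(B_2^{d-s+c}\times S^s)$. Choose the deleted point $p$ of Section \ref{altered} outside this set, e.g.\ with $y=0$ and $x\notin C$. Your convex-combination formula then gives the first bullet on all of $U(\sing\ssc)$.

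\textbf{Your last paragraph.} The ``delicate point'' is not delicate. A $Q$ compactly supported in $B_1^{d-s+c}\times S^s$ is already a compactly supported $Q$ in $\R^{d-s+c}\times S^s$, so Fact \ref{calc} applies as stated. Since the competitor agrees with $N\#\ssc$ off $\supp Q$, the global mass comparison coincides with the comparison restricted to $U(\sing\ssc)$, and you do not need to re-examine the slicing argument.
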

	\subsection{Metric near regular set of $N\#\ssc$}
	In this subsection, we want to make the regular set of $N\#\ssc$ in a smooth metric $h_\reg$. A natural idea is to take advantage of the exponential map of the normal bundle of $\reg N\#\ssc,$ since a classical fact is that $\reg N\#\ssc$ is area-minimizing in its normal bundle.  Unfortunately, $\reg N\#\ssc$ is an open manifold with limit points in $  \sing \ssc.$ Thus, we have to go through a more complicated route by restricting to a compact subset of $\reg N\#\ssc$ and taking into account that we have to make space for gluing of calibrations in the next subsection.
	
	Let us first discuss what compact subsets of $\reg N\#\ssc$ we want to deal with. Write
	\begin{align*}
		\reg^0(N\#\ssc)=(\reg N\#\ssc )\cap   U_{\frac{1}{10}}  (\sing \ssc)\cp.
	\end{align*}
	By construction, $\reg^0(N\#\ssc)$ is a smooth submanifold with boundary.
	
	Let $\exp^\perp$ be the exponential map from the normal bundle $\T^\perp\reg N\#\ssc$ of $\reg N\#\ssc$ to $M$ in the base metric $h_\sing.$ Here $\T^\perp$ means the normal bundle of a submanifold with respect to the ambient manifold $M.$ Without loss of generality, we can assume that 
	\begin{assump}\label{assumpexp}
		$\exp^\perp$ is a diffeomorphism when restricted to the following ball bundle \begin{align*}
			B^\perp_\nu \reg^0(N\#\ssc)=	\{(p,v)|p\in \reg^0(N\#\ssc),\operatorname{ }v\in T_p^\perp \reg N\#\ssc,\operatorname{ }h_\sing(v,v)\le \nu^2\},
		\end{align*} over the base $\reg^0(N\#\ssc)$, with $0<\nu<\frac{1}{10}.$
	\end{assump}
	The image of $B^\perp_\nu \reg^0(N\#\ssc)$ under $\exp^\perp$ is a set of points with distance at most $B_\nu^\perp$ to $\reg^0(N\#\ssc)$. Write
	\begin{align*}
		B_\nu(\reg^0(N\#\ssc))=\exp^\perp B^\perp_\nu \reg^0(N\#\ssc).
	\end{align*}
	Note that $B_\nu(\reg^0(N\#\ssc))$ is a manifold with boundary and corner as $\reg^0(N\#\ssc)$ has non-empty boundary.
	By (\ref{distue}), choosing $\e$ small, we can assume that
	\begin{align}\label{inc}
		  U_{\frac{1}{10}}  (\sing \ssc)\cp\cap U_\e(N\#\ssc)\s 	B_\nu(\reg^0(N\#\ssc)).
	\end{align}
	See Figure \ref{fignbhd} to get an intuition. This seemingly odd (\ref{inc}) is to prepare for the gluing of metrics in the next subsection.
	
	Let ${\pi}^0$ denote the projection onto base $\reg^0 (N\#\ssc)$ in the ball bundle $B_\nu^\perp\reg^0(N\#\ssc).$ Then on $B_\nu(\reg^0(N\#\ssc))$, define
	\begin{align*}
		{\pi}=\pi^0\circ(\exp^\perp)\m.
	\end{align*}
	In other words ${\pi}$ is the nearest distance projection of $B_\nu(\reg^0(N\#\ssc))$ onto $\reg^0(N\#\ssc).$ The projection ${\pi}$ provides a smooth orthogonal splitting of the tangent spaces to $M$ on the ball bundle $B_\nu(\reg^0(N\#\ssc))$ into
	\begin{align}\label{split}
		\ker \ed\pi\oplus \ker^\perp \ed\pi.
	\end{align}
	Here $\oplus$ is the Riemannian direct sum of vector spaces, and $\ker^\perp \ed\pi$ is the orthogonal complement of $\ker \ed\pi.$ By definition of $\pi,$ $\ker\ed\pi$ are tangent spaces to the level sets of $\pi,$ i.e., the submanifolds formed by geodesics orthogonal to $\reg N\#\ssc.$ Thus, $\kn$ restricted to $\reg N\#\ssc$
	is the tangent space to $\reg N\#\ssc$. Heuristically, one should think of $\kn$ as the natural extension of the tangent space of $\reg N\#\ssc$ into the ball bundle.

	This also provides a natural smooth splitting of the Riemannian metric $h_\sing$ into
	\begin{align}\label{hsingdec}
		h_\sing=(h_\sing)|_{\ker \ed\pi}\oplus(h_\sing)|_{\ker^\perp \ed\pi}.
	\end{align}Here $\oplus$ means the Riemannian sum of the metrics on orthogonal subspaces.

\begin{defn}\label{defnreg}
	In the ball bundle $B_\nu(\reg^0(N\#\ssc))$ define a  metric
$$h_\reg=(h_\sing)|_{\ker \ed\pi}+\pi\du h_\sing.$$	
\end{defn}
Note that by Assumption \ref{assumpexp}, the metric $h_\reg$ is a well-defined smooth Riemannian metric. By construction we have
\begin{fact}
	$h_\reg$ restricted to tangent spaces to $ N\#\ssc$ equals $h_\sing$.
\end{fact}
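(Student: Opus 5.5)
The claim is a pointwise computation at points $p\in\reg^0(N\#\ssc)$, where $h_\reg$ is defined, using the splitting (\ref{split}). I read the first summand in Definition \ref{defnreg} as the symmetric $2$-tensor
\begin{align*}
(h_\sing)|_{\ker \ed\pi}(v,w)=h_\sing(Pv,Pw),
\end{align*}
where $P$ is the $h_\sing$-orthogonal projection onto $\ker\ed\pi$ along $\ker^\perp\ed\pi$. This is the only reading under which $h_\reg$ is a tensor on all of $\T M$. The plan is to show that, on tangent vectors to $N\#\ssc$, the first summand vanishes and the second summand reproduces $h_\sing$.

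First, I identify the two pieces of the splitting at a base point $p\in\reg^0(N\#\ssc)$. The level set $\pi\m(p)$ is $\exp^\perp$ of the normal ball at $p$. By Assumption \ref{assumpexp}, $\exp^\perp$ is a diffeomorphism on the ball bundle, and its differential along the zero section is the identity on normal vectors. Hence $\ker\ed\pi_p=\T_p\pi\m(p)=\T_p^\perp\reg(N\#\ssc)$. Taking $h_\sing$-orthogonal complements gives $\ker^\perp\ed\pi_p=\T_p\reg(N\#\ssc)$, which is the remark made right after (\ref{split}). Consequently, for $v,w\in\T_p(N\#\ssc)$ we get $Pv=Pw=0$, so the first summand contributes nothing.

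Second, I evaluate the second summand. Since $\pi=\pi^0\circ(\exp^\perp)\m$ and $\exp^\perp$ sends the zero section identically onto $\reg^0(N\#\ssc)$, the map $\pi$ restricts to the identity on $\reg^0(N\#\ssc)$. Differentiating along curves in the base gives $\ed\pi_p v=v$ for every $v\in\T_p(N\#\ssc)$. Therefore
\begin{align*}
(\pi\du h_\sing)_p(v,w)=h_\sing(\ed\pi_p v,\ed\pi_p w)=h_\sing(v,w).
\end{align*}
Adding the two summands yields $h_\reg(v,w)=h_\sing(v,w)$ on tangent vectors to $N\#\ssc$, which is the claim.

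The only step needing care is the first: justifying that $\ker\ed\pi$ at base points is exactly the normal space. This follows from $\ed(\exp^\perp)$ being the identity along the zero section, together with the fibers of $\pi$ being the exponentiated normal discs. Everything else is immediate from the definitions. As an optional further check, on normal vectors at $p$ we have $\ed\pi=0$ and $P$ is the identity, so $h_\reg$ agrees with $h_\sing$ there too; cross terms also vanish by orthogonality, so in fact $h_\reg=h_\sing$ on all of $\T_pM$ along $\reg^0(N\#\ssc)$.
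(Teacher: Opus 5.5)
Your proof is correct and follows the paper's route; the paper states this fact with only ``By construction,'' and your argument is the natural unpacking of that. You use the same key observation the paper records after (\ref{split}): along the base, $\ker\ed\pi=\T^\perp\reg(N\#\ssc)$ and $\ker^\perp\ed\pi=\T\reg(N\#\ssc)$, so the first summand of $h_\reg$ vanishes on tangent vectors while $\ed\pi$ is the identity on them, giving $\pi\du h_\sing=h_\sing$ there (your extra remark that $h_\reg=h_\sing$ on all of $\T_pM$ along the base is also correct).
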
%Furthermore, we have	\begin{fact}\label{calreg}	In $B_\nu(\reg^0(N\#\ssc))$ equipped with $h_\reg$, $ N\#\ssc$ restricted to	$B_\nu(\reg^0(N\#\ssc))$ is mod $2$ area-minimizing.	\end{fact}\begin{proof}	We have $h_\reg\ge \pi\du h_\sing.$ For any mod $2$ current $T$ that has the same boundary as $ N\#\ssc$ restricted to	$B_\nu(\reg^0(N\#\ssc))$, apply Lemma \ref{lemshort}:	\begin{align*}		\ms_{h_\reg}(T)\ge \ms_{h_\sing}(\pi(T))=\ms_{h_\reg}(\pi(T))=\ms_{h_\reg}(N\cap B_\nu(\reg^0(N\#\ssc))),\end{align*}where we have used constancy theorem Fact \ref{const} at the last step.\end{proof}	
\subsection{Making $N\#\ssc$ mod $2$ area-minimizing in $U_\e(N\#\ssc)$ by gluing metrics}
	Now we have come to the meat of the matter. We will glue the metrics we obtained so far to make $N\#\ssc$ mod $2$ area-minimizing in $U_\e(N\#\ssc).$
	
	By equation (\ref{inc}), Fact \ref{calsing} and Definition \ref{defnreg}, in the region
	\begin{align*}
		\big(  U(\sing \ssc)\setminus  U_{\frac{1}{10}}  (\sing \ssc)\big)\cap U_\e(N\#\ssc)\s 	B_\nu(\reg^0(N\#\ssc)),
	\end{align*}
	we have two Riemannian metrics $h_\reg,h_\sing$. Our goal is to glue them together and make $N\#\ssc$ is mod $2$ area-minimizing in $U_\e(N\#\ssc)$
	\subsubsection{Notation convention}
	From now on, we will restrict our attention to the region $\big(  U(\sing \ssc)\setminus  U_{\frac{1}{10}}  (\sing \ssc)\big)\cap U_\e(N\#\ssc)$. We adopt the notation:
	\begin{align*}
		U_{\operatorname{gluing}}=\big(  U(\sing \ssc)\setminus  U_{\frac{1}{10}}  (\sing \ssc)\big)\cap U_\e(N\#\ssc).
	\end{align*}
	Recall that $h_\sing$ is defined throughout $M$ and $h_\reg$ is obtained by modifying $h_\sing$ (Definition \ref{defnreg}). 
	
	Thus, from now on, we will use $h_\sing$ as our base metric.
	\subsubsection{The squeezing map $f$}\label{secsq}
	Let us first define a distance function that will serve as the scale of gluing. Recall equation (\ref{inc}).
	\begin{defn}\label{defnrh}
		For $p\in \ug$, define
		\begin{align*}
			\rh(p)=\operatorname{dist}_{(N\#\ssc,h_\sing)}(\pi(p),\sing\ssc).
		\end{align*}
	\end{defn}
Recall Fact \ref{calsing}. The Riemannian product structure of $h_\sing$ in $U(\sing\ssc)$ implies that $\rh(p)$ is a smooth map on $\ug$. Note that $\rh$ is always strictly larger than $0$ on $\ug.$
\begin{defn}\label{defnai}
	Let $\ai$ be a smooth monotonically decreasing function from $[0,1]\to[0,1]$, so that
	\begin{align*}
		\ai=\begin{cases}
			1,&\textnormal{ on }[0,\frac{1}{3}]\\
			\textnormal{strictly monotonically decreasing from }1\textnormal{ to }0,&\textnormal{ on }[\frac{1}{3},\frac{2}{3}]\\
			0,&\textnormal{ on }[\frac{2}{3},1]
		\end{cases}
	\end{align*}
\end{defn}
\begin{defn}
Define a map $f:\ug\to\ug$ as follows. Set $f(p)$ to be the unique point of distance $\ai(\rh(p))\operatorname{dist}_{h_\sing}(p,\pi(p))$ away from $\pi(p)$ on the geodesic from $p$ to $\pi(p)$
\end{defn}
Note that $f(p)$ is well-defined by (\ref{inc}).
\begin{fact}\label{fctf}
	The map $f$ is a smooth map on $\ug$ and 
	\begin{itemize}
		\item $f$ is identity on $\rh\m[0,\frac{1}{3}]$,
\item $f=\pi$ on $\rh\m[\frac{2}{3},1]$,
\item $f$ is identity when restricted to $N\#\ssc.$
\item on $\rh\m(\frac{1}{3},\frac{2}{3}),$ we have
\begin{align*}
	f\du h_\sing\ge \frac{1}{3}\pi\du h_\sing
\end{align*} if we take $\e$ in $U_\e(N\#\ssc)$ small.
	\end{itemize}
\end{fact}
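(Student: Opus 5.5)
Work in normal exponential coordinates adapted to $\pi$. By (\ref{inc}) and Assumption \ref{assumpexp}, every $p\in\ug$ can be written uniquely as $p=\exp^\perp(\pi(p),v)$ with $h_\sing(v,v)\le\nu^2$. In these coordinates the map is
\begin{align*}
f(\exp^\perp(x,v))=\exp^\perp\big(x,\ai(\rh(x))\,v\big),
\end{align*}
where $\rh$ depends only on $x=\pi(p)$. Since $\exp^\perp$ is a diffeomorphism on the ball bundle, $f(p)$ is indeed the point on the normal geodesic from $\pi(p)$ through $p$ at distance $\ai(\rh)\operatorname{dist}_{h_\sing}(p,\pi(p))$ from $\pi(p)$. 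Smoothness of $f$ then follows from three facts: $\exp^\perp$ and its inverse are smooth, $\ai$ is smooth, and $\rh\circ\pi$ is smooth on $\ug$, as already noted after Definition \ref{defnrh}.

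The first three bullets are read off directly from this formula. If $\rh\le\frac13$, then $\ai=1$ and $f(\exp^\perp(x,v))=\exp^\perp(x,v)$, so $f$ is the identity. If $\rh\ge\frac23$, then $\ai=0$ and $f(p)=\exp^\perp(x,0)=\pi(p)$, so $f=\pi$. On $N\#\ssc$ we have $v=0$, so $f(p)=p$.

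The fourth bullet is the substantive one. Write $f=\exp^\perp\circ\Phi\circ(\exp^\perp)^{-1}$, where $\Phi(x,v)=(x,\ai(\rh(x))v)$ is a fibrewise scaling of the normal bundle. Then $\pi\circ f=\pi$, which gives
\begin{align*}
\ed\pi\circ\ed f=\ed\pi.
\end{align*}
So for any tangent vector $X$, $\pi\du h_\sing(\ed f X,\ed f X)=\pi\du h_\sing(X,X)$. Next use the splitting (\ref{split}), in which $h_\sing=(h_\sing)|_{\ker\ed\pi}\oplus(h_\sing)|_{\kn}$. We need to compare $(h_\sing)|_{\kn}$ with $\pi\du h_\sing$ at points close to $N\#\ssc$. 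At points of $\reg^0(N\#\ssc)$ these two agree on $\kn$, since there $\ed\pi$ is the orthogonal projection onto the tangent space. By smoothness and compactness of the closure of the relevant region (which stays away from $\sing\ssc$ because $\rh>0$ is bounded below on $\ug$), on the $\e$-neighborhood we get
\begin{align*}
(h_\sing)|_{\kn}\ge(1-C\e)\,\pi\du h_\sing\quad\text{on }\kn,
\end{align*}
and $\pi\du h_\sing$ vanishes on $\ker\ed\pi$. Combining,
\begin{align*}
f\du h_\sing(X,X)=h_\sing(\ed fX,\ed fX)\ge(h_\sing)|_{\kn}\big((\ed fX)^{\kn},(\ed fX)^{\kn}\big)\ge(1-C\e)\,\pi\du h_\sing(X,X),
\end{align*}
where the last step uses $\pi\du h_\sing(\ed fX,\ed fX)=\pi\du h_\sing(X,X)$. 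Choosing $\e$ small enough that $1-C\e\ge\frac13$ gives $f\du h_\sing\ge\frac13\pi\du h_\sing$.

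The main obstacle is the uniform comparison between $(h_\sing)|_{\kn}$ and $\pi\du h_\sing$. It needs $\ed\pi|_{\kn}$ to be close to an isometry uniformly on $\ug$, and this holds only because $\ug$ has compact closure inside the region where the normal exponential map is a diffeomorphism, bounded away from $\sing\ssc$. Shrinking $\e$ in Lemma \ref{tube} forces $\operatorname{dist}(p,\pi(p))<\e$, and continuity gives the constant. In fact any constant below $1$ works, so the bound $\frac13$ leaves ample room.
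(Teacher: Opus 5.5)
Your proposal is correct and is essentially the paper's argument written without coordinates. The paper's Fermi-chart formula $df_{(n,l)}(j,k)=(j,\ai'(\rh)\,d\rh(j)\,l+\ai(\rh)k)$ is your identity $\ed\pi\circ\ed f=\ed\pi$, and its two-sided bound (\ref{metbd}) plays the role of your near-isometry of $\ed\pi|_{\kn}$ near $\supp N\#\ssc$ away from $\sing\ssc$. Your intrinsic version is slightly cleaner, and it shows that $\frac13$ can be replaced by any constant below $1$ once $\e$ is small.
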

\begin{proof}
The first three bullet follow directly from definition. We only need to verify the last bullet and smoothness of $f.$

For any point $q\in N\#\ssc\cap \ug$  adopt a normal coordinate chart on $N\#\ssc$ with respect to the intrinsic metric on $N\#\ssc$ and then extend to a Fermi coordinate chart adapted to $N\#\ssc$: $$U_q:(n_1,\cdots,n_d,l_1,\cdots,l_c).$$
Also set $n=(n_1,\cdots,n_d),l=(l_1,\cdots,l_c).$
	By properties of Fermi coordinate chart  \cite[Chapter 2]{AG}, we have that
	\begin{itemize}
		\item $N\#\ssc$ is the $n_1\cdots n_d$ coordinate plane,
		\item geodesics normal to $N\#\ssc$ are straight lines starting from the $n_1\cdots n_d$ plane,
		\item $\operatorname{dist}_{h_\sing}((n,d),N\#\ssc))=|l|,$
		\item $\rh$ depends only on $n$ not on $l.$
		\item we have 
		\begin{align}\label{metbd}
(1+A\e)		(|j|^2+|k|^2)\ge 	(h_\sing)_{(n,l)}((j,k),(j,k))\ge(1-A\e)(|j|^2+|k|^2),
		\end{align} with $A$ depending only on $N\#\ssc$ and absolute value means absolute value in coordinate system.
	\end{itemize}
	Thus, the map $f$ in the Fermi coordinate chart $U_q$ equals
	\begin{align}\label{defnfnl}
		f(n,l)=(n,\ai(\rh(n))l)
	\end{align} 
As $N\#\ssc\cap \ov{\ug}$ is compact, we can cover $\ug$ with finitely many such Fermi coordinate charts by (\ref{inc}). Smoothness of $f$ follows immediately.

On $\rh\m(\frac{1}{3},\frac{2}{3})$, $\ai(\rh)>0,$ so $f(n,l)$ is a one to one map and we have
\begin{align*}
	df_{(n,l)}(j,k)=\left(j,\ai'\left(\rh(n,l)\right)d\rh(j)l+\ai(\rh)k\right).
\end{align*}
Now choose $\e$ small enough so that $A\e<\frac{1}{2}$ for the finitely many Fermi coordinate chart in our covering of $\ug$. By (\ref{metbd}), we have
\begin{align*}
	&(f\du h_\sing)_{(n,l)}((j,k),(j,k))\\
	\ge& \frac{1}{2}|j|^2+\frac{1}{2}|\ai'\left(\rh(n,l)\right)d\rh(j)l+\ai(\rh)k|^2\\
	\ge&\frac{1}{2}|j|^2.
\end{align*}
On the other hand
\begin{align*}
	(\pi\du h_\sing)_{(n,l)}((j,k),(j,k))\le\frac{3}{2}|j^2|\le 3(f\du h_\sing)_{(n,l)}((j,k),(j,k)).
\end{align*}

If $df_{(n,l)}(j,k)=0$ then $j=0$ and consequently $k=0.$ Thus, $df_{(n,l)}$ is a linear isomorphism on $\rh\m(\frac{1}{3},\frac{2}{3})$.%Properness on $\rh\m(\frac{1}{3},\frac{19}{30})$ follows 
\end{proof}
Now extend the squeezing map $f$ by setting
\begin{align}f=
	\begin{cases}
	\textnormal{identity map},&\text{on }  U_{\frac{1}{10}}  (\sing \ssc),\\
		\pi,&\text{on } B_\nu(\reg^0(N\#\ssc))\setminus \ug.				
	\end{cases}
\end{align}
Then by Fact \ref{fctf}, $f$ is a well-defined smooth map when restricted to $U_\e(N\#\ssc).$ 

A special property of $f$ is as follows.
\begin{fact}\label{fctproj}
	If $T$ is a mod $2$ current supported in $U_\e(N\#\ssc)$ and $T-N\#\ssc=\pd Q$ with a $(d+1)$-dimensional mod $2$ current $Q$ supported in $U_\e(N\#\ssc)$, then $f(Q)$ is compactly supported in $U(\sing\ssc)$, i.e., \begin{itemize}
		\item $f(T)$ restricted to $U(\sing\ssc)\cp$ equals $N\#\ssc$ restricted to $U(\sing\ssc)\cp$ 
		\item  $f(T)$ restricted to  $U(\sing\ssc)$ is homologous to $N\#\ssc$ restricted to $U(\sing\ssc)$
	\end{itemize}
\end{fact}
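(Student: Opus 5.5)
The idea is to split $U_\e(N\#\ssc)$ into the region $U(\sing\ssc)$, where $f$ is close to the identity, and its complement, where $f$ collapses everything onto $\reg^0(N\#\ssc)$ via $\pi$. We then use the constancy theorem (Fact \ref{const}) on the regular part together with the chain identity $f(T)-f(N\#\ssc)=\pd f(Q)$.

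\textbf{Step 1: $f$ fixes $N\#\ssc$.} By the third bullet of Fact \ref{fctf} and the definition of the extension of $f$, the map $f$ is the identity on $\supp N\#\ssc$: it is the identity on $U_{\frac{1}{10}}(\sing\ssc)$, and $\pi$ fixes $\reg^0(N\#\ssc)$. Hence $f(N\#\ssc)=N\#\ssc$. Since pushforward commutes with boundary, $T-N\#\ssc=\pd Q$ gives
\begin{align*}
f(T)-N\#\ssc=\pd f(Q).
\end{align*}

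\textbf{Step 2: the image of $f(Q)$ outside $U(\sing\ssc)$.} On $U_\e(N\#\ssc)\setminus U(\sing\ssc)$, which lies in $\rh\m[\frac{2}{3},1]$ together with the part of $B_\nu(\reg^0(N\#\ssc))$ outside $\ug$, we have $f=\pi$. So every point of $Q$ lying outside $U(\sing\ssc)$ (or in the part of $\ug$ where $\rh\ge\frac{2}{3}$) is sent into the $d$-dimensional set $\supp N\#\ssc$. The $(d+1)$-current $f(Q)$ restricted to $f(U(\sing\ssc))\cp$ is then supported on a set of $\hm^{d+1}$-measure zero, and Fact \ref{fctconc} shows it vanishes there.

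To make this precise, I would show two things. First, $f$ maps $U(\sing\ssc)\cap U_\e$ into $U(\sing\ssc)$; this holds because $f$ moves points only along normal geodesics toward $\pi(p)$ and does not change $\rh$, and $\rh$ is essentially the radial coordinate of the product structure. Second, $f$ maps the complement into $\supp N\#\ssc$. Then $f(Q)=f(Q\llcorner U(\sing\ssc))+f(Q\llcorner U(\sing\ssc)\cp)$. The second summand is a $(d+1)$-current supported on a $d$-dimensional set and so is $0$. We also need $\supp Q\cap \ov{U(\sing\ssc)}$ compact, so that $f(Q)$ is compactly supported inside $U(\sing\ssc)$. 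A slight technical point arises at the boundary of $U(\sing\ssc)$, and I would handle it by taking closures: near $\pd U(\sing\ssc)$ we have $\rh\ge \frac{2}{3}$, hence $f=\pi$, and those points are also collapsed.

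\textbf{Step 3: the two bullets.} Restrict $f(T)-N\#\ssc=\pd f(Q)$ to the open set $U(\sing\ssc)\cp$. Since $f(Q)$ vanishes there, $f(T)=N\#\ssc$ on that set, which is the first bullet. Alternatively, $f(T)$ outside $U(\sing\ssc)$ is supported in $\reg^0(N\#\ssc)$ with no boundary there, so Fact \ref{const} gives $aN\#\ssc$, and the equation forces $a=1$. The second bullet is immediate: inside $U(\sing\ssc)$, $f(T)-N\#\ssc=\pd f(Q)$ with $f(Q)$ compactly supported there.

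The main obstacle is Step 2, specifically the careful bookkeeping of where $f$ sends points near the gluing annulus. One has to make sure no piece of $Q$ in $\rh\m(\frac{1}{3},\frac{2}{3})$ gets pushed outside $U(\sing\ssc)$, and that $f(Q)$ does not touch $\pd U(\sing\ssc)$. I would verify this in the Fermi charts (\ref{defnfnl}), where $f(n,l)=(n,\ai(\rh(n))l)$ visibly preserves the $n$-coordinate and hence $\rh$.
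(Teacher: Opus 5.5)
Your proof is correct and follows essentially the same route as the paper. Both push $T-N\#\ssc=\pd Q$ forward by $f$, which fixes $N\#\ssc$, and use that $f=\pi$ outside $U(\sing\ssc)\cap\rh\m[0,\frac{2}{3})$, so the part of $f(Q)$ coming from there is a $(d+1)$-dimensional chain on a $d$-dimensional set and vanishes by Fact \ref{fctconc}; your explicit splitting of $Q$ before pushing forward, and your Fermi-chart check that $f$ preserves $\rh$ (so the surviving part lies in a compact subset of $U(\sing\ssc)$), make precise what the paper's short proof leaves implicit.
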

\begin{proof}
	Suppose $T-N\#\ssc=\pd Q$ with $Q$ supported in $U_\e(N\#\ssc).$ Then
	\begin{align*}
		f(T)-N\#\ssc=f(T)-f(N\#\ssc)=\pd f(Q).
	\end{align*}
Recall that $f=\pi$ for $p\in U(\sing\ssc)\cp$ and for $p\in U(\sing\ssc)$ with $\rh(p)\in[\frac{2}{3},1].$ Thus $f(Q)$ restricted to $\left(U(\sing\ssc)\cap\rh\m[0,\frac{2}{3})\right)\cp$ is a $(d+1)$-dimensional mod $2$ current supported on a $d$-dimensional submanifold, which must be $0$ by Fact \ref{fctconc}. This implies that $f(Q)$ is supported on $U(\sing\ssc)\cap\rh\m[0,\frac{2}{3})$. We are done.
\end{proof}
\subsubsection{Gluing metrics}
Now with the squeezing map $f$ in hand, we are ready to define the gluing metric.
We further define
\begin{align}
h_{\operatorname{glued}}=
	\begin{cases}
	h_\sing=(h_\sing)|_{\ker \ed\pi}\oplus(h_\sing)|_{\ker^\perp \ed\pi},&\text{on }  U_{\frac{1}{10}}  (\sing \ssc),\\
\left((1-\ai(\rh))(h_\sing)|_{\ker \ed\pi}\right)+ f\du h_\sing&\textnormal{on }\ug\\h_\reg=(h_\sing)|_{\ker \ed\pi}+\pi\du h_\sing,&\text{on } B_\nu(\reg^0(N\#\ssc))\setminus \ug.				
	\end{cases}
\end{align}
We have
\begin{fact}\label{hgdc}
	The metric $\hgd$	is a smooth Riemannian metric when restricted to $U_\e(N\#\ssc)$ and we have
	\begin{align}\label{fshort}
		f\du h_\sing\le \hgd,
	\end{align}with
	\begin{align}\label{fconst}
f\du h_\sing=\hgd=h_\sing
	\end{align}when restricted to the tangent spaces to $N\#\ssc.$
\end{fact}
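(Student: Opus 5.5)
The plan is to check Fact \ref{hgdc} region by region, using the three pieces of the definition of $\hgd$ and the properties of $f$ in Fact \ref{fctf}. There are three things to verify: smoothness across the interfaces, positive definiteness, and the two relations (\ref{fshort}) and (\ref{fconst}).

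\emph{Smoothness.} I will use the Fermi coordinates $(n,l)$ from the proof of Fact \ref{fctf}, in which $f(n,l)=(n,\ai(\rh(n))l)$.
\begin{itemize}
\item On $\rh\m[0,\frac13]$ we have $\ai=1$ and $f=\id$, so the middle formula reduces to $0+h_\sing=h_\sing$.
\item On $\rh\m[\frac23,1]$ we have $\ai=0$ and $f=\pi$, so it reduces to $(h_\sing)|_{\ker\ed\pi}+\pi\du h_\sing=h_\reg$.
\end{itemize}
Since $\ai$ is smooth and constant near these interval endpoints, the middle formula agrees with the outer formulas on open neighbourhoods of the interfaces. Together with the smoothness of $f$, $\rh$, $\pi$ and of the splitting (\ref{split}), this gives smoothness of $\hgd$ on $U_\e(N\#\ssc)$. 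Here (\ref{inc}) guarantees that the three regions cover $U_\e(N\#\ssc)$ and that $\pi$ is defined wherever it is used.

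\emph{Positive definiteness.} This is the only genuine point, and it is needed only on $\rh\m(\frac13,\frac23)$; the outer regions are already known to be Riemannian (for $h_\reg$ see Assumption \ref{assumpexp}). Take a vector $(j,k)$ with $\hgd((j,k),(j,k))=0$. Both summands in the middle formula are nonnegative, so each vanishes.
\begin{itemize}
\item Fact \ref{fctf} gives $f\du h_\sing\ge\frac13\pi\du h_\sing$, and $\pi\du h_\sing$ is positive on $\kn$, which in these coordinates is essentially the $j$-direction. Hence $j=0$.
\item The term $(1-\ai)(h_\sing)|_{\ker\ed\pi}$ then controls $k$, since $\ker\ed\pi$ is the $k$-direction and $1-\ai>0$ there. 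This alone forces $k=0$.
\end{itemize}
Alternatively, $df$ is an isomorphism on this region by Fact \ref{fctf}, so $f\du h_\sing$ is itself positive definite there and $j=k=0$ follows directly.

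\emph{Inequality (\ref{fshort}).} On $U_{\frac1{10}}(\sing\ssc)$, $f=\id$, so equality holds. On $\ug$, $\hgd-f\du h_\sing=(1-\ai)(h_\sing)|_{\ker\ed\pi}\ge0$. On the remaining part $f=\pi$, and $h_\reg-\pi\du h_\sing=(h_\sing)|_{\ker\ed\pi}\ge0$.

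\emph{Equality (\ref{fconst}).} On tangent vectors to $N\#\ssc$:
\begin{itemize}
\item $f$ is the identity on $N\#\ssc$ and maps every region into its own piece, so $df$ restricted to $\T(N\#\ssc)$ is the identity and $f\du h_\sing=h_\sing$ there. This is immediate from $f(n,0)=(n,0)$ and the formula for $df$, where the $\ai'$ term carries a factor $l=0$.
\item $\T(N\#\ssc)=\kn$ along $N\#\ssc$, so $(h_\sing)|_{\ker\ed\pi}$ vanishes on these vectors.
\end{itemize}
Hence every piece of $\hgd$ restricts to $h_\sing$ on $\T(N\#\ssc)$. Inside $U_{\frac1{10}}(\sing\ssc)$ this is trivial, since there $\hgd=h_\sing$ and $f=\id$.

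The main obstacle is the positive definiteness in the transition annulus, and it is handled by the lower bound on $f\du h_\sing$ established in Fact \ref{fctf}.
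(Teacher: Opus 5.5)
Your proof is correct and follows the paper's structure. Smoothness comes from $\ai\equiv1$ (so $\hgd=h_\sing$) and $\ai\equiv0$ (so $\hgd=h_\reg$) near the two interfaces, and positive definiteness on $\rh\m(\frac13,\frac23)$ comes from the bound $f\du h_\sing\ge\frac13\pi\du h_\sing$ of Fact \ref{fctf}. Your pointwise kernel argument (or simply that $\ed f$ is invertible where $\ai>0$) is tidier than the paper's split into $\rh$ near $\frac13$ and $\rh\ge\frac13+a$, and you check (\ref{fshort}) and (\ref{fconst}) explicitly, which the paper's proof leaves implicit.
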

\begin{proof}
First, the three domains of definition are placed one by one according to distance from $\sing\ssc$. We only need to verify near the two interfaces.

Recall Definition \ref{defnai} and Definition \ref{defnrh}. Near the first interface between $U_{\frac{1}{10}}  (\sing \ssc)$ and $\ug$, we have $\hgd=h_\sing$ on $U_{\frac{1}{10}} $ and on $\ug\cap \rh\m([0,\frac{1}{3}])$ by	Fact \ref{fctf}, we have $$\hgd=0+\id\du h_\sing=h_\sing.$$ Smoothness across this first interface follows.
	
For the first interface between $\ug$ and $B_\nu(\reg^0(N\#\ssc))\setminus \ug$, on $\ug\cap\rh\m([\frac{2}{3},1])$ by	Fact \ref{fctf}, we have $$\hgd=(h_\sing)|_{\ker \ed\pi}+\pi\du h_\sing=h_\reg.$$ Smoothness across the second interface follows.

Finally, we have to verify that $\hgd$ is positive-definite. $h_\sing$ and $h_\reg$ are positive definite by construction, so we only need to consider $\hgd$ restricted to $\ug.$ On $\ug\cap\rh\m([0,\frac{1}{3}])$ we have $\hgd=h_\sing$ and $\ug\cap\rh\m([\frac{2}{3},1])$ we have $\hgd=h_\reg.$ 

On $\ug\cap\rh\m([\frac{1}{3},\frac{1}{2}]),$ when $\rh$ is close to $\frac{1}{3},$ $f$ is close to identity map so $\hgd\ge \frac{1}{2}h_\sing$ is positive definite. Thus, we are left with $\ug\cap\rh\m([\frac{1}{3}+a,\frac{1}{2}]),$ for small $a>0.$ Here by last bullet of Fact \ref{fctf}, we have
\begin{align*}
	\hgd\ge (1-\ai(a))h(h_\sing)|_{\ker \ed\pi}+\frac{1}{3}\pi\du h_\sing\ge\min\{1-\ai(a),\frac{1}{3}\}h_\reg,
\end{align*}which is positive definite.
\end{proof}The above fact immediately gives that
\begin{lem}
	$N\#\ssc$ is mod $2$ area-minimizing in $U_\e(N\#\ssc)$ with metric $\hgd.$
\end{lem}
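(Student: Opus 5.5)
Take a competitor $T$ supported in $U_\e(N\#\ssc)$ with $T-N\#\ssc=\pd Q$, where $Q$ is also supported in $U_\e(N\#\ssc)$. The goal is $\ms_{\hgd}(T)\ge\ms_{\hgd}(N\#\ssc)$, and we get it by composing two comparisons. The first comes from the shorting lemma: by (\ref{fshort}) in Fact \ref{hgdc} we have $f\du h_\sing\le\hgd$, so Lemma \ref{lemshort} applied to $f:(U_\e(N\#\ssc),\hgd)\to(M,h_\sing)$ gives
\begin{align*}
\ms_{\hgd}(T)\ge\ms_{h_\sing}(f(T)).
\end{align*}
The second comparison is that $f(T)$ is no smaller than $N\#\ssc$ in $h_\sing$. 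For this we split $f(T)$ into its parts inside and outside $U(\sing\ssc)$, using Fact \ref{fctproj}.

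Outside $U(\sing\ssc)$, the first bullet of Fact \ref{fctproj} says $f(T)$ coincides with $N\#\ssc$. Inside, $f(T)-N\#\ssc=\pd f(Q)$, and $f(Q)$ is compactly supported in $U(\sing\ssc)$; more precisely, the proof of Fact \ref{fctproj} places it in $U(\sing\ssc)\cap\rh\m[0,\frac23)$. So $f(T)$ restricted to $U(\sing\ssc)$ differs from $N\#\ssc$ restricted to $U(\sing\ssc)$ by the boundary of a compactly supported mod $2$ current. Through $\Ga$, $h_\sing$ on $U(\sing\ssc)$ is the product metric on $B_1^{d-s+c}\times S^s$, and Fact \ref{calsing} (resting on Fact \ref{calc}) says $N\#\ssc$ is area-minimizing there. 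Hence
\begin{align*}
\ms_{h_\sing}\big(f(T)\llcorner U(\sing\ssc)\big)\ge\ms_{h_\sing}\big(N\#\ssc\llcorner U(\sing\ssc)\big),
\end{align*}
and adding the exterior parts, which are equal, gives $\ms_{h_\sing}(f(T))\ge\ms_{h_\sing}(N\#\ssc)$.

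Finally we need $\ms_{h_\sing}(N\#\ssc)=\ms_{\hgd}(N\#\ssc)$. This follows from (\ref{fconst}) in Fact \ref{hgdc}: $\hgd$ and $h_\sing$ agree on the tangent spaces to $N\#\ssc$, so the two masses coincide. Chaining the three steps gives the lemma.

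The main obstacle is making the localized minimality in $U(\sing\ssc)$ rigorous. Fact \ref{calc} is stated for competitors $T'$ with $T'-C_1\times S^s=\pd Q'$ and $Q'$ compactly supported in $\R^{d-s+c}\times S^s$, whereas here $f(Q)$ is only known to lie in $U(\sing\ssc)\cong B_1^{d-s+c}\times S^s$. Two checks are needed. First, $\Ga_*f(Q)$ has compact support in the open ball times $S^s$; this should hold because of the slack $\rh<\frac23$ and because $Q$ lies in the small tube $U_\e$, so $\Ga_*f(Q)$ stays a definite distance from $\pd B_1$. Second, the slicing argument in Fact \ref{calc} goes through with $C_1$ replaced by $C\cap B_1$. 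It does: the slices of the competitor differ from $C\cap B_1$ by the boundary of a current compactly supported in $B_1$, and restricting the minimality of the cone $C$ (Assumption \ref{assumpc}) to $B_1$ handles exactly such competitors. A smaller bookkeeping point is that the restrictions of $f(T)$ to $U(\sing\ssc)$ and to its complement are well defined, since $f(T)$ agrees with $N\#\ssc$ near $\pd U(\sing\ssc)$.
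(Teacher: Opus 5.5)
Your proposal is correct and follows the paper's proof essentially line by line: the shorting Lemma \ref{lemshort} via (\ref{fshort}), the splitting of $f(T)$ along $U(\sing\ssc)$ using Fact \ref{fctproj} and Fact \ref{calsing}, and then (\ref{fconst}) to identify $\ms_{h_\sing}(N\#\ssc)$ with $\ms_{\hgd}(N\#\ssc)$. The compact-support check you flag for $f(Q)$, which follows from $\rh<\frac{2}{3}$ and the thinness of $U_\e(N\#\ssc)$, is a detail the paper leaves implicit; your second check is vacuous, since $C_1$ is by definition $C$ restricted to the unit ball.
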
	
\begin{proof}
	Let $T$ be a mod $2$ current homologous to $N\#\ssc$ in $U_\e(N\#\ssc)$. By Lemma \ref{lemshort}, we have
	\begin{align}\label{eqmst1}
		\ms_{\hgd}(T)\ge \ms_{h_\sing}(f(T)).
	\end{align}
Recall Fact \ref{fctproj} and Fact \ref{calsing}. We have
\begin{align}
	&\ms_{ h_\sing}(f(T))\\=&	\ms_{ h_\sing}(f(T)\cap U(\sing\ssc))+	\ms_{h_\sing}(f(T)\cap (U(\sing\ssc))\cp)\\\ge&	\ms_{ h_\sing}(N\#\ssc\cap U(\sing\ssc))+	\ms_{h_\sing}(N\#\ssc\cap (U(\sing\ssc))\cp)\\\ge&\ms_{h_\sing}(N\#\ssc).]\label{eqmst2}
\end{align}Combining (\ref{eqmst1}) through (\ref{eqmst2}) and using (\ref{fconst}), we deduce that
\begin{align*}
	\ms_{\hgd}(T)\ge\ms_{\hgd}(N\#\ssc).
\end{align*}
We are done.
\end{proof}
	Now extend $\hgd$ restricted to  ${U_\e(N\#\ssc)}$, to a smooth Riemannian metric defined throughout $M.$ We will still use $\hgd$ to denote this extension of $\hgd$.
	\subsubsection{Adding a bump to make $N\#\ssc$ uniquely minimizing in $U_\e(N\#\ssc)$}
By \cite[Lemma 2.3.1]{ZLns}, there is a smooth function  $\be$ on $M$ with value between $[0,\frac{1}{2}]$ and \begin{align}\label{bez}
		\be\m(0)=\supp N\#\ssc\cup \ov{U\left(\sing\left( N\#\ssc\right)\right)}.
	\end{align}
	Define the new metric
	\begin{align*}
		h_\be=(1+\be)\hgd.
	\end{align*}
	By construction we have
	\begin{align*}
		h_\be\ge \hgd.
	\end{align*}
	Now suppose $T$ is a mod $2$ area-minimizing cycle in $U_\e(N\#\ssc)$ and homologous to $N\#\ssc$, which gives
	\begin{align*}
		\ms_{h_\be}(N\#\ssc)\le	\ms_{h_\be}(T).
	\end{align*}On the other hand, we have
	\begin{align*}
		\ms_{h_\be}(T)\ge\ms_{\hgd}(T)\ge \ms_{\hgd}(N\#\ssc)=\ms_{h_\be}(N\#\ssc), 
	\end{align*}	
	which implies
	\begin{align*}
		\ms_{h_\be}(T)=\ms_{h_\be}(N\#\ssc)=	\ms_{\hgd}(T)=\ms_{\hgd}(N\#\ssc).
	\end{align*}
		 Since on the complement of $\supp\left(N\#\ssc\right)\cup \ov{U\left(\sing\left( N\#\ssc\right)\right)}$ in $M,$ we have strict inequality $h_\be>\hgd.$ We deduce that $T$ must be supported in $\supp\left(N\#\ssc\right)\cup \ov{U\left(\sing\left( N\#\ssc\right)\right)}$.
	
	Note that $T$ cannot be supported in $\ov{U\left(\sing\left( N\#\ssc\right)\right)}$, since $$H_d(U\left(\sing\left( N\#\ssc\right)\right),\Z/2\Z)=0$$ yet $[T]\not=0\in H_d(M,\Z/2\Z).$
	
	Now by Fact \ref{const}, we deduce that $T$ restricted to the non-empty closed set $ N\#\ssc\cap \ov{U\left(\sing\left( N\#\ssc\right)\right)}\cp$ equals $N\#\ssc$ restricted to $ N\#\ssc\cap \ov{U\left(\sing\left( N\#\ssc\right)\right)}\cp.$ By \cite[Lemma 2.4.1]{ZLns} and the fact that having infinite order tangent to each other is a relatively closed condition, we deduce that $T= N\#\ssc.$
	
	To sum it up, we have proved 
	\begin{fact}\label{uni}
		The mod $2$ current $N\#\ssc$ is the unique area-minimizing mod $2$ current in its homology class in $U_\e(N\#\ssc)$ with respect to the metric $h_\be.$
	\end{fact}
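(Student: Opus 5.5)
The statement is Fact \ref{uni}: in $U_\e(N\#\ssc)$ with the metric $h_\be$, the current $N\#\ssc$ is the unique mod $2$ area-minimizer in its homology class. The plan is a comparison between $h_\be$ and $\hgd$, followed by a support argument and a rigidity argument.

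First, existence and minimality. Let $T$ be any mod $2$ cycle in $U_\e(N\#\ssc)$ homologous to $N\#\ssc$ there. Since $h_\be=(1+\be)\hgd\ge\hgd$, the preceding lemma (minimality of $N\#\ssc$ for $\hgd$) gives
\[
\ms_{h_\be}(T)\ge\ms_{\hgd}(T)\ge\ms_{\hgd}(N\#\ssc).
\]
By (\ref{bez}), $\be$ vanishes on $\supp N\#\ssc$, so $\ms_{\hgd}(N\#\ssc)=\ms_{h_\be}(N\#\ssc)$. Hence $N\#\ssc$ is area-minimizing for $h_\be$.

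Next, uniqueness. Assume $T$ is also minimizing. Then every inequality above is an equality, so
\[
\int_T\big((1+\be)^{d/2}-1\big)\,\dhm^d_{\hgd}=0 .
\]
Since $\be>0$ off $\supp N\#\ssc\cup\ov{U(\sing(N\#\ssc))}$, the measure $\|T\|$ vanishes there. Hence $T$ is supported in the closed set $\supp N\#\ssc\cup\ov{U(\sing(N\#\ssc))}$.

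Now remove the neighborhood of the singular set. Outside $\ov{U(\sing(N\#\ssc))}$, the support of $T$ lies in the open $d$-manifold $\reg(N\#\ssc)\setminus\ov{U(\sing)}$, and $T$ has no boundary there. Fact \ref{const} then gives $T=a\,(N\#\ssc)$ on each connected component, with $a\in\Z/2\Z$. To fix $a$ (and to handle several components), I would use homology. If $T$ vanished outside $\ov{U(\sing)}$, it would be a cycle carried by a set deformation retracting to $S^s$, whose $H_d$ vanishes because $d>s$. Then $[T]=0$, contradicting (\ref{homns}) and $[N\#\ssc]\neq 0$. More robustly, $T-N\#\ssc$ is a cycle homologous to zero. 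Wherever its multiplicity coefficients are nonzero on pieces outside $U(\sing)$, I would glue them along $\pd U(\sing)$ and derive a contradiction with the mass equality.

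Finally, I need $T=N\#\ssc$ inside $U(\sing)$ as well. The plan here is unique continuation. Both $T$ and $N\#\ssc$ are $\hgd$-minimizing there, hence stationary. Their regular parts agree on an open set adjacent to $\pd U(\sing)$. By \cite[Lemma 2.4.1]{ZLns}, and since infinite-order tangency is a relatively closed condition, the coincidence set of the regular parts is open and closed in the connected $\reg(N\#\ssc)$. This yields $T=N\#\ssc$ up to a set of $\hm^d$-measure zero, which Fact \ref{fctconc} disposes of.

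The main obstacle is this last step. Regularity theory for mod $2$ minimizers (Federer, Simon) only provides a regular set of $T$ inside $U(\sing)$ up to codimension two. Continuation therefore has to be run along the connected regular part of $T$, and I must exclude extra sheets of $T$ inside $U(\sing)$ that do not meet the region where we already know $T=N\#\ssc$. I would rule these out by noting that such sheets would form a separate cycle in $U(\sing)$. That cycle is null-homologous there, and removing it would lower mass, contradicting minimality.
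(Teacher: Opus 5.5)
Your proposal is correct and follows the paper's own proof. Compare $h_\be=(1+\be)\hgd\ge\hgd$, with equality on $\supp N\#\ssc$, to get minimality and to force $\supp T\subset\be^{-1}(0)$; then use $H_d(U(\sing\ssc),\Z/2\Z)=0$ and the constancy theorem to get $T=N\#\ssc$ outside $\ov{U(\sing\ssc)}$, and conclude inside by strong unique continuation \cite[Lemma 2.4.1]{ZLns}. Your extra care in the last step (continuing only through regular points of $T$, and discarding leftover sheets as null-homologous cycles whose removal would lower mass) spells out details the paper only asserts, but it does not change the route.
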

	\subsubsection{Making metric large away from $ N\#\ssc$}This subsubsection is taken from the proof of \cite[Theorem 4.1]{YZj}.
	By the monotonicity formula for stationary varifolds \cite[Section 5.1]{WA}, there is a constant $A$ such that for any stationary mod $2$ current $T$ on $M$ with respect to the metric $h_\be$, we have
	\begin{align}\label{mon}
		\ms_{h_\be}(T|_{B_r(p)})\ge Ar^d,
	\end{align}
	provided $p\in \supp T$ and $r<\operatorname{InjRad} M$. Here by $B_r(p)$ we mean the ball of radius $r$ centered at $p$  in the metric $h_\be$. The symbol  $\operatorname{InjRad} M$ denotes the injectivity radius of $M$. By stationary current we mean that the mass, i.e., area, of $T$ is a critical point with respect to ambient diffeomorphism perturbations that fix $\pd T$. 
	
	By Lemma \ref{tube}, there exists $0<\e''
	<\e'<\e<\operatorname{InjRad}M$ such that \begin{align*}
		\uee\s \ue\s U_\e(N\#\ssc),
	\end{align*}
	with the distance between $\pd U_{\e'}(N\#\ssc)$ and $\pd U_\e(N\#\ssc)$ at least $\e''$, and the distance between $\pd\uee$ and $\pd  \ue$ at least $\e''.$
	
	Then let $\ga$ be a smooth function that equals to  $1$ on $\uee$ and equals to 
	\begin{align}\label{mxm}
		\max\Bigg\{1,\frac{\sqrt[d]{2\ms_{h_\be}(N\#\ssc)}}{\sqrt[d]{A} \e''}\Bigg\},
	\end{align}
	on the complement of $\ue$, and takes value between $1$ and (\ref{mxm}) on $\ue\setminus \uee.$ 
	\begin{defn}\label{defnh}
		Define
		\begin{align*}
			h=\ga^2h_\be.
		\end{align*}
	\end{defn}
	Now let $T$ be an area-minimizing mod $2$ current in $[\Si]$ with respect to metric $h$ on $M.$ There are two possibilities, either $\supp T$ is contained in $U_\e(N\#\ssc)$, or $\supp T$ contains a point in the complement of $U_\e(N\#\ssc)$. The latter cannot happen. To see this, note that $h$ is a constant multiple of $h_\be$ in the complement of $\ue,$ so $T$ restricted to $U_{\e'}(N\#\ssc)\cp$ is also a stationary mod $2$ current with respect to $h_\be$. Thus, one can apply (\ref{mon}) with $p\in U_\e(N\#\ssc)\cp\cap\supp T$ and $r=\e''$ so that $B_r(p)\s U_{\e'}(N\#\ssc)\cp$ to conclude that
	\begin{align*}
		&\ms_{h}(T)\\\ge& \ms_{h}(T|_{B_{\e''}(p)})=\max\Bigg\{1,\frac{\sqrt[d]{2\ms_{h_\be}(N\#\ssc)}}{\sqrt[d]{A} \e''}\Bigg\}^d\ms_{h_\be}(T|_{B_{\e''}(p)})\\\ge&\max\Bigg\{1,\frac{{2\ms_{h_\be}(N\#\ssc)}}{{A} (\e'')^d}\Bigg\} A(\e'')^d\\
		=&\max\{ A(\e'')^d,2\ms_{h_\be}(N\#\ssc)\}\\
		=&\max\{ A(\e'')^d,2\ms_h(N\#\ssc)\}.
	\end{align*} To sum it up, we have the proven the following:
	\begin{fact}\label{mint}
		In the smooth Riemannian metric $h$ (Definition \ref{defnh}) on $M,$ any other area-minimizing mod $2$ current $T$ in $[\Si]$ must have $\supp T\s U_\e(B\#\ssc).$
	\end{fact}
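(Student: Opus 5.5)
The statement is Fact \ref{mint}: in the metric $h$ of Definition \ref{defnh}, any mod $2$ area-minimizing $T$ in $[\Si]$ satisfies $\supp T\s U_\e(N\#\ssc)$. I would argue by contradiction: assume some $p\in\supp T\setminus U_\e(N\#\ssc)$ and derive the mass bound
\begin{align*}
\ms_h(T)\ge 2\ms_h(N\#\ssc)>\ms_h(N\#\ssc).
\end{align*}
This contradicts minimality, because $N\#\ssc$ is itself a competitor in $[\Si]$.

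\emph{Step 1.} I would record that $\ms_h(N\#\ssc)=\ms_{h_\be}(N\#\ssc)$. This holds because $\ga\equiv1$ on $\uee\supset\supp N\#\ssc$, so $h=h_\be$ along $N\#\ssc$.

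\emph{Step 2.} I would localize $T$ away from the region where $\ga$ varies. By the choice of $\e''<\e'<\e$ in Lemma \ref{tube}, the distance from $\pd U_\e(N\#\ssc)$ to $\pd\ue$ is at least $\e''$. Hence the ball $B_{\e''}(p)$ lies in the complement of $\ue$, and there $\ga$ equals the constant $\lambda$ given in (\ref{mxm}). Since $h=\lambda^2h_\be$ on this ball and constant rescaling preserves stationarity, the area-minimizing (hence stationary) $T$ restricted to $B_{\e''}(p)$ is stationary for $h_\be$. The monotonicity bound (\ref{mon}) then applies at $p$ with $r=\e''<\operatorname{InjRad}M$, giving $\ms_{h_\be}(T|_{B_{\e''}(p)})\ge A(\e'')^d$.

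\emph{Step 3.} I would compare masses under scaling. Mass of a $d$-current scales by $\lambda^d$, so
\begin{align*}
\ms_h(T)\ge\ms_h(T|_{B_{\e''}(p)})=\lambda^d\ms_{h_\be}(T|_{B_{\e''}(p)})\ge\lambda^dA(\e'')^d\ge 2\ms_{h_\be}(N\#\ssc),
\end{align*}
by the definition of $\lambda$. Combined with Step 1, this contradicts $\ms_h(T)\le\ms_h(N\#\ssc)$, which holds because $\ms_h(N\#\ssc)>0$ (the class $[\Si]$ is nonzero).

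The main obstacle is the bookkeeping in Step 2, which has two parts. First, the ball $B_{\e''}(p)$ is measured in one metric, while the separation between $\pd U_\e(N\#\ssc)$ and $\pd\ue$ is measured in another; I would handle this by noting that $h\ge h_\be$ and choosing the distances in the metric in which the ball is defined. Second, the constant $A$ in (\ref{mon}) must be uniform for $h_\be$, with radii below the injectivity radius. Both are arranged by the order in which the constants were chosen: $A$ and $h_\be$ are fixed first, and only then is $\ga$ chosen.
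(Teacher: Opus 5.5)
Your proposal is correct and is essentially the paper's argument. You assume a point $p\in\supp T$ outside $U_\e(N\#\ssc)$, apply the monotonicity bound (\ref{mon}) for $h_\be$ on the ball $B_{\e''}(p)$, which lies where $\ga$ equals the constant $\lambda$ of (\ref{mxm}), and rescale mass by $\lambda^d$ to get $\ms_h(T)\ge 2\ms_h(N\#\ssc)$, contradicting minimality. Your extra care about which metric measures the ball versus the boundary separation, and about fixing $A$ and $h_\be$ before choosing $\e''$ and $\ga$, simply makes explicit what the paper leaves implicit.
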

	\subsection{Wrapping up proof of Lemma \ref{nsmin}}
	Let $T$ be an area-minimizing mod $2$ current in $[\Si]$ on $M$ with respect to metric $h$. By Fact \ref{mint}, $T$ is supported in $U_\e(N\#\ssc).$ However, since $U_\e(N\#\ssc)$ is a subset of $M,$ we need to redetermine the relation between $[T]$ and $[N\#\ssc]$ when restricted to $U_\e(N\#\ssc).$
	
	Since we have $H_d(U_\e(N\#\ssc))=\Z/2\Z[N\#\ssc]$ by Lemma \ref{tube}, as $[T]\not=0,$ we deduce that $[T]= [N\#\ssc]$ as homology classes in $H_d(U_\e(N\#\ssc),\Z/2\Z)$.  Since $T$ is a mod $2$ area-minimizing current in $[\Si]$ with respect to $h$, we have
	\begin{align*}
\ms_{h_\be}(N\#\ssc)=\ms_h(N\#\ssc)\ge \ms_h(T)\ge \ms_{h_\be}(T)\ge\ms_{h_\be}(N\#\ssc).
	\end{align*}
In other words, $T$ is mod $2$ area-minimizing with respect to $h_\be$ in $U_\e(N\#\ssc)$. By Fact \ref{uni}, we deduce that $T=N\#\ssc$. To sum it up, we have proved that $N\#\ssc$ is the unique mod $2$ area-minimizing current in $[\Si]$ with respect to the metric $h.$

The bullet in the lemma follows directly from Lemma \ref{singssc} and Assumption  \ref{assumpns} and the fact that we have not changed the metric in a small neighborhood around $\sing\ssc.$ Set
	\begin{defn}\label{defnur}
		For $0<\eta\le\rr,$ we define  $U_\eta(\sing\ssc)$, the radius $\eta$ tubular neighborhood of $\sing\ssc,$ by
		\begin{align*}
			U_\eta(\sing\ssc)=\Ga\m(B_{\eta}^{d-s+c}\times S^s).
		\end{align*}
	\end{defn}
\subsection{Proof of Theorem \ref{thmzhang}}
Follows directly from Lemma \ref{nsmin}.
\section{Proof of Theorem \ref{thmm}}	\label{seccpt}In this section we will finish the proof of Theorem \ref{thmm}.	\subsection{Allard's regularity theorem}\label{secall}
	The following lemma does not need any assumption on $C$ beyond Assumption \ref{assumpc}. 
	\begin{lem}\label{localt}
		For any $\ee>0$, there is an open subset $\Om_h$ containing $h$ in the space of Riemannian metrics, such that any area-minimizing mod $2$ current $T$ in $[\Si]$ with respect to metric $g\in\Om_h$ satisfies 
		\begin{align}\label{cst}
			\supp T\s U_\ee(N\#\ssc),
		\end{align} and $T$ restricted to $$\left( U_\ee(\sing\ssc)\right)\cp$$ is a smooth submanifold with boundary such that  $T$ restricted to $\left( U_\ee(\sing\ssc)\right)\cp$ is the normal bundle exponential map image of a section $S_T$ in $\T^\perp N\#\ssc$, whose $C^4$ norm is at most $\ee$.
	\end{lem}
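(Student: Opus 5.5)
We argue by contradiction and compactness. If the lemma fails for some $\ee>0$, there are metrics $g_j\to h$ smoothly and mod $2$ area-minimizing currents $T_j$ in $[\Si]$ with respect to $g_j$ such that each $T_j$ violates either (\ref{cst}) or the graphical description outside $U_\ee(\sing\ssc)$. The masses $\ms_{g_j}(T_j)\le \ms_{g_j}(N\#\ssc)$ are uniformly bounded. By Federer--Fleming compactness for mod $2$ flat chains, after passing to a subsequence $T_j\to T_\infty$ in flat norm, and $T_\infty$ lies in $[\Si]$. Lower semicontinuity of mass, together with the usual comparison argument (glue any competitor to $T_j$ through a small-mass homology, whose mass tends to $0$ since $g_j\to h$), shows that $T_\infty$ is mod $2$ area-minimizing for $h$. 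The same argument gives convergence of masses, $\ms_{g_j}(T_j)\to\ms_h(T_\infty)$, so the associated varifolds converge as well. By Lemma \ref{nsmin}, $T_\infty=N\#\ssc$.

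\emph{Support control.} The first claim comes from the monotonicity argument of Fact \ref{mint}, applied uniformly in $j$. Suppose $p_j\in\supp T_j\setminus U_\ee(N\#\ssc)$. The monotonicity constant $A$ in (\ref{mon}) is uniform for metrics in a small $C^2$ neighborhood of $h$. Hence $\ms(T_j|_{B_r(p_j)})\ge A r^d$ for a fixed small $r$ with $B_r(p_j)$ disjoint from $U_{\ee/2}(N\#\ssc)$. Passing to the limit using varifold convergence, or upper semicontinuity of density under mass-convergent limits, would give $\supp N\#\ssc$ positive mass near $\lim p_j$, a point at positive distance from $\supp N\#\ssc$. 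This is a contradiction. So $T_j$ is eventually supported in every fixed neighborhood, in particular in $U_\ee(N\#\ssc)$. Note that (\ref{cst}) uses the neighborhood from Lemma \ref{tube}, which is contained in the $\ee$-distance neighborhood, so we shrink appropriately.

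\emph{Graphical control.} On the compact set $K=\supp N\#\ssc\setminus U_\ee(\sing\ssc)$, the limit $N\#\ssc$ is smooth with multiplicity one. The $T_j$ are $g_j$-stationary and converge as varifolds with mass convergence, so at each point of $K$ the excess relative to the tangent plane of $N\#\ssc$ tends to $0$ and the density ratios tend to $1$. Allard's regularity theorem, valid uniformly for metrics $C^k$-close to $h$ (see \cite{WA}), then shows that near each point of $K$ the current $T_j$ is, for large $j$, a $C^{1,\alpha}$ graph over $N\#\ssc$ with small norm. Multiplicity is one, and in the mod $2$ setting no extra sheets can occur because the density is close to $1$. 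Schauder estimates for the minimal surface system with smooth coefficients converging to those of $h$ upgrade this to $C^4$ smallness of the normal section $S_{T_j}$, via the normal exponential map of Assumption \ref{assumpexp}. A finite cover of $K$ makes the estimate global on $K$. This contradicts the assumed failure, which proves the lemma.

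The main obstacle is ensuring the hypotheses of Allard hold uniformly right up to $\partial U_\ee(\sing\ssc)$. In particular we must rule out mass concentration or multiplicity $>1$ near $K$ coming from pieces of $T_j$ that drift toward the singular set. Mass convergence to $\ms_h(N\#\ssc)$ together with multiplicity one of the limit handles this. To be safe, we would apply Allard on the slightly larger compact set $\supp N\#\ssc\setminus U_{\ee/2}(\sing\ssc)$.
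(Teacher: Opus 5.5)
Your proposal is correct and follows essentially the same route as the paper. You argue by contradiction with metrics $g_j\to h$ and use compactness together with the uniqueness of $N\#\ssc$ from Lemma \ref{nsmin} to get flat, mass-measure and Hausdorff convergence. For the Hausdorff part you use a uniform monotonicity lower bound, which is the same mechanism as the paper's appeal to upper semicontinuity of densities. You then apply Allard's regularity theorem on a finite cover of the part of $N\#\ssc$ away from $\sing\ssc$ and finish with an elliptic bootstrap to $C^4$.
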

	\begin{proof}
\cite[Lemma 7.4.1]{ZLns} has the same statement for multiplicity $1$ integral currents. The same strategy works. For the reader's convenience, we reproduce the entire argument again here.

First, we claim that 
\begin{claim}\label{ccst}
	For any $\e'>0,$ there is an open set $\Om_h$ containing $h$ in the space of Riemannian metrics such that any area-minimizing mod $2$ current $T$ in $[\Si]$ with respect to metric $g\in\Om_h$ is of distance at most $\e'$ away to $N\#\ssc$ in the following three senses with respect to  our base metric $h$:\begin{enumerate}
		\item $T$ and $N\#\ssc$ as mod $2$ currents are at most $\e'$ away in the Whitney flat distance in the space of currents,
		\item The induced mass measures of $T$ and $N\#\ssc$ are at most $\e'$ with respect to weak convergence on varifolds,
		\item The supports $\supp T$ and $ N\#\ssc$ are at most $\e'$ away in Hausdorff distance in the space of closed subsets.
	\end{enumerate}
\end{claim}

Note that the above claim already implies (\ref{cst}).

Suppose the above claim is not true. Then we can find a sequence of Riemannian metrics $h_j\to h$ in smooth topology and a sequence of area-minimizing mod $2$ currents $T_j\in[\Si]$ with respect to metric $h_j$ that violates at least one of the three $\e'$ distance conditions.

Let us first prove that $\{T_j\}$ as a sequence of mod $2$ currents must converge to $N\#\ssc$. Let $\{T_{j_k}\}$ be any subsequence of $\{T_j\}$. By the compactness theorem for mod $2$ currents \cite{BWrc}, we can find a subsequence $\{T_{j_{k_m}}\}$ of $\{T_{j_k}\}$ such that $\lim_{m\to\infty} T_{j_{k_m}}$ exists. Since each $T_j$ is area-minimizing and $N\#\ssc$ is the unique area-minimizing representative of $[\Si]$ in $h$ (Lemma \ref{nsmin}) we deduce that $\lim_{m\to\infty}T_{j_{k_m}}= N\#\ssc$ as mod $2$ currents by \cite[Theorem 34.5]{LS}. Now recall the classical fact that if a every subsequence of a sequence has a subsequence that coverges to the the same limit, then the original sequence must converge to the same limit. We deduce that $T_j$ converges to $N\#\ssc$ as mod $2$ currents.

Since $N\#\ssc$ and each $T_j$ are all area-minimizing, \cite[Theorem 34.5]{LS} also implies that $T_j\to N\#\ssc$ as their induced mass measures. Finally $\supp T_j$ converge to $ N\#\ssc$ in Hausdorff distance by upper semi-continuity of densities of stationary varifolds \cite[Corollary 17.8]{LS}. Thus we have arrived at a contradiction.

To prove that $T$ can be expressed as the normal bundle exponential map image of  $N\#\ssc$ with  $\ee$ small norm is a classical application of Allard's regularity theorem \cite[Section 8, Regularity Theorem]{WA}. We will only give a sketch. 

Roughly speaking the core of Allard's regularity theorem \cite[Section 8, Theorem 8.19]{WA} is that volume close to balls implies close to balls as submanifolds. Roughly speaking, if the area of a $d$-dimensional stationary varifold $V$ restricted to a $(d+c)$-dimensional geodesic ball centered at $p$ on $M$ is at most $\kappa$ off from the area of a standard $d$-dimensional ball in $\R^{d+c}$, then the varifold $V$ is $\iota$ close as submanifolds to a $d$-dimensional standard ball in a neighborhood of $p$ in $M$.

Precisely speaking, Allard's regularity theorem implies the following. For any $\iota>0,$ there is $R,\kappa>0,$ such that if we have a $d$-dimensional stationary varifold $V$ in $M$ with metric $h$ and the area of $V$ inside a radius $R$ geodesic ball centered at $p$ on $M$ is at most $\kappa$ off from the volume of a $d$-dimensional radius $R$ standard  ball  in Euclidean space, then in $B_{\frac{1}{2}R}(p)$ on $M$, $V$ is $\iota$ close as a the graph of a function to the exponentiated image of the tangent plane $P$ to $V$ at $p.$ %Moreover, $\iota$ depends only on $R,a,\kappa,h$ and $\e''\to 0$ provided $\frac{\kappa}{R}\to 0$.
(Allard's original theorem is stated with $C^{1,\ai}$ bounds for varifolds with bounded mean curvature in Euclidean space. However, routine elliptic PDE arguments like \cite[Theorem 5.2.15 (7)]{HF}, upgrades  $C^{1,\ai}$ estimates into $C^{k,\ai_k}$ estimate with all $k>0$.)

Now cover $\left(U_\ee(\sing\ssc)\right)\cp$ by finitely many radius $\e''$ balls such that none of the balls in the covering intersect $\sing\left( N\#\ssc\right)$. Since $N\#\ssc$ restricted to $\left(U_\ee(\sing\ssc)\right)\cp$ is a smooth submanifold, for any $\e'>0,$ there is $\e''>0$ such that $N\#\ssc$ restricted to any of the radius $\e''$ balls in the covering has area at most $\e'$ off from that of a radius $\e''$ standard $d$-dimensional ball in $\R^{d+c}$.

Now since the area-minimizing current $T$ is at most $\e'$ away from $N\#\ssc$ as induced mass measures by Claim \ref{ccst}, we obtain that $T$ restricted to each radius $\e''$ ball in the covering has area  at most $2\e'$  off from that of a $d$-dimensional radius $\e''$ standard ball in $\R^{d+c}$. Now we can apply Allard's regularity theorem to finish off the proof, by making $\e'$ small. By making $\e'$ arbitrarily small, we can take $\ee$ arbitrarily small. We are done.
\end{proof}
	\subsection{Veronese $\rpt$ cone products give persistent singular sets}
	Recall from our plan of proof that we add to our area-minimizing representative $N\#\ssc$ in Lemma \ref{nsmin} singular sets that are locally products of $C(\vrpt)$ with spheres in Fact \ref{fctcb}. We will prove the following
	\begin{defn}(IN THIS SECTION ONLY)
		For $d\ge 3,c\ge 2$, set $s=d-3,$ and take $C$ in $\R^{c+3}$ to be $C(\vrpt)$ (Definition \ref{defnvc}) in the slice $\R^5\times\{0\}^{c-2}\cong \VV\times\{0\}^{c-2}.$
	\end{defn}
By \cite[Lemma 2.9]{ZLa} and Theorem \ref{thmc}, $C$ is mod $2$ area-minimizing.	

	Our goal in this section is to achieve the following.
	\begin{lem}\label{cpst}
		The subset $\sing\sfc$ of the area-minimizing mod $2$ current $N\#\si^{d-3}(C)$ obtained in Lemma \ref{nsmin} is a persistent singular set (Definition \ref{psing}), provided  $d\ge 3,c\ge 2.$
	\end{lem}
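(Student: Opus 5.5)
Fix $\ee>0$ small and let $\Om_h$ be the open set of metrics from Lemma \ref{localt}. Take $g\in\Om_h$ and an area-minimizing $T$ in $[\Si]$ for $g$. The goal is to show $\dimh\sing T\ge d-3$. We argue by contradiction, slicing $T$ near $\sing\sfc\cong S^{d-3}$ and using that $\vrpt$ does not bound. By Lemma \ref{localt}, $T$ lies in $U_\ee(N\#\sfc)$. Away from $U_\ee(\sing\sfc)$, $T$ is a small $C^4$ normal graph over $N\#\sfc$. In particular, on the annular region $U_{\rr}(\sing\sfc)\setminus U_{\rr/2}(\sing\sfc)$, identified via $\Ga$ with $(B_\rr\setminus B_{\rr/2})\times S^{d-3}$, $T$ is a small graph over $(C\cap (B_\rr\setminus B_{\rr/2}))\times S^{d-3}$. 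Here $\ee\ll\rr$.

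Suppose $\dimh\sing T<d-3$. Then $\hm^{d-3}(\sing T)=0$. Consider the projection $P:U_\rr(\sing\sfc)\to S^{d-3}$ given by the second factor under $\Ga$. It is Lipschitz, so by Eilenberg's inequality (or the coarea estimate for Hausdorff measures) the set $\{y\in S^{d-3}: P^{-1}(y)\cap\sing T\ne\es\}$ has $\hm^{d-3}$-measure zero. Now slice $T\cap U_\rr(\sing\sfc)$ by $P$. For almost every $y$ the slice $T_y=\langle T,P,y\rangle$ is a $3$-dimensional mod $2$ current in $B_\rr^{5+c-2}\times\{y\}$. For almost every $y$ it avoids $\sing T$, and by Sard's theorem applied to $P|_{\reg T}$ we may further choose $y$ to be a regular value. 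Then $T_y$ is a smooth compact $3$-manifold with boundary, and $\pd T_y$ is the slice of $T$ over the sphere $\pd B_{\rr'}\times\{y\}$ for a suitable $\rr'\in(\rr/2,\rr)$. By the graphical description, $\pd T_y$ is a small $C^1$ perturbation of $\rr'\vrpt\times\{y\}$, hence diffeomorphic to $\rpt$. To make this precise, we slice first radially at a regular value $\rr'$ of $|x|$ restricted to $\reg T$, and then in $y$.

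This produces a smooth compact $3$-manifold whose boundary is diffeomorphic to $\rpt$. That contradicts the fact that $\rpt$ has odd Euler characteristic, and so is not the boundary of any compact $3$-manifold, since $\chi(\pd W)=2\chi(W)$ for odd-dimensional $W$. One more point needs checking: that the slice $T_y$ really has boundary exactly $\pd T_y$ as described, with no extra boundary. This holds because $\pd T=0$, so slicing commutes with boundary up to sign mod $2$. We also need $T_y$ to be a genuine embedded manifold, not merely a current; this follows from avoiding $\sing T$ and from transversality at a regular value.

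The main obstacle is this regularity-of-slice step. We must ensure simultaneously that the slicing parameter can be chosen so that the slice misses $\sing T$, is transverse on $\reg T$, and has boundary controlled by the Allard graph. The step where I would be most careful is justifying that $\hm^{d-3}(\sing T)=0$ forces almost every fiber of $P$ to miss $\sing T$. This is exactly where the dimension bound $d-3$ enters, and it yields the conclusion $\inf\dimh\sing([N\#\sfc],g)\ge d-3$ for all $g\in\Om_h$. As a result, $\sing\sfc$ is a persistent singular set in the sense of Definition \ref{psing}.
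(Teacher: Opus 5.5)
Your proposal is correct and follows essentially the same route as the paper: you slice $T$ over the $S^{d-3}$ factor near $\sing\sfc$, use Lemma \ref{localt} to identify the boundary of a generic smooth slice with a perturbed copy of $\vrpt$, choose the slice via Sard, and invoke that $\rpt$ bounds no compact $3$-manifold, with Eilenberg's inequality linking fiberwise information to $\hm^{d-3}(\sing T)$. The only difference is the order of the argument, and it is a harmless contrapositive rearrangement: the paper first shows, via openness of $\reg T$ and Sard, that every fiber meets $\sing T$ and then integrates, whereas you assume $\hm^{d-3}(\sing T)=0$ and directly obtain almost every fiber missing $\sing T$.
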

	The actual proof is straightforward. Roughly speaking, recall Lemma \ref{localt}. In $U_{\ee}(\sing\sfc),$ for $(d-3)$-dimensional Hausdorff measure almost every $p\in \{0\}^{c+3}\times S^{d-3}$, the intersection
	\begin{align*}
		N\#\sfc\cap \pi_{S^{d-3}}\m(p),
	\end{align*}
	gives a mod $2$ current $T_p$ with boundary $\rpt$ and $T_p$ is smooth near its boundary. This implies that $T_p$ must has an interior singular point, since $\rpt$ cannot bound a smooth $3$-manifold. 
	\subsection{Proving Lemma \ref{cpst}}\label{secrpt}
	Recall Lemma \ref{localt}. Take an area-minimizing mod $2$ current $T\in [\Si]$ with respect to a metric $g\in\Om_h.$  Restricted to $U_{2\ee}(\sing\sfc),$ consider the projection
	\begin{align*}
		\pi_{S^{d-3}},
	\end{align*}defined by the orthogonal projection onto the $S^{d-3}$ factor in $B_{2\ee}^{c+3}\times S^{d-3}$. Then by \cite[4.3.6,4.3.13]{HF}, for $(d-3)$ dimensional Hausdorff measure almost every $p\in S^{d-3}$, the slicing of $T|_{U_{2\ee}(\sing\sfc)}$ by $\pi_{S^{d-3}}$,
	\begin{align*}
		T_p=\ri{T,\pi_{S^{d-3}},p}
	\end{align*} is a $3$-dimensional mod $2$ current. Intuitively, the reader can just understand this as saying that the intersection of $T|_{U_{2\ee}(\sing\sfc)}$ with $\pi_{S^{d-3}}\m(p)$ is a $3$-dimensional mod $2$ current 
	\begin{align*}
		T_p,
	\end{align*} 
	for $(d-3)$-dimensional Hausdorff measure almost every $p\in S^{d-3}.$

	Suppose for some $p\in S^{d-3}$, every point $q$ with $$q\in \pi_{S^{d-3}}\m(p)\cap\supp T|_{U_{2\ee}(\sing\sfc)}$$ belongs to the regular set of $T$. Then since the regular set of $T$ is relatively open, we deduce that there is an open neighborhood $U_p$ of $p$ inside $S^{d-3}$ such that 
	\begin{align*}
	\pi_{S^{d-3}}\m(U_p)\cap\supp T|_{U_{2\ee}(\sing\sfc)}
	\end{align*}belongs to the regular set of $T.$ 	 By Sard's Theorem (\cite[Theorem 6.10]{JL}) this implies that for $(d-3)$-dimensional Hausdorff measure almost every $q\in U_p$, the current $T_q$ is a smooth submanifold with boundary. However, this is impossible.
	
To see this, by Lemma \ref{localt}, $T$ is completely smooth in $$ U_{\ee}(\sing\sfc)\cp.$$ By Sard's Theorem (\cite[Theorem 6.10]{JL}) this implies that $T_p$ is smooth in $$U_{\ee}(\sing\sfc)\cp$$ for $(d-3)$-dimensional Hausdorff measure almost every $p\in S^{d-3}.$

However, such $T_p$ necessarily has boundary $$\pd T_p$$ diffeomorphic to $\rpt$. Too see this, note that by Lemma \ref{localt} as $T$ is the normal bundle exponential map image of a section $S_T$ in  $\T^\perp N\#\ssc\cap U_{2\ee}(\sing\sfc)\setminus U_{\ee}(\sing\sfc)$ with $C^\infty$ norm bounded $\ee$. Thus $\pd T_p$ is diffeomorphic to the section $S_T$ restricted to the slice of $\pd(N\#\ssc\cap U_{2\ee}(\sing\sfc))$, which is $\rpt.$

 This implies that for $(d-3)$-dimensional Hausdorff measure almost every  $p\in S^{d-3}$, the current $T_p$ must have a singular set in its interior, as $\rpt$ cannot be the boundary of a compact $3$-manifold. For example this follows as $\rpt$ has mod $2$ Euler characteristic $1$, while surfaces that bound has mod $2$ Euler characteristic $0$ \cite[Lemma 7.1]{RGRef}.  We have arrived at a contradiction.

In other words we have proved that $\pi_{S^{d-3}}\m(p)\cap \sing T$ has  zero dimensional Hausdorff measure at least $1$ for $(d-3)$-dimensional Hausdorff measure almost every $p\in S^{d-3}$. Now integrate this estimate using Eilenberg's inequality \cite[2.10.25]{HF}, we deduce that $\sing T$ has positive $(d-3)$-dimensional Hausdorff measure. We are done.
\section{Discussions}\label{secdis}
\subsection{Comparison with the integral case in \cite{ZLns}}
The reader might wonder why the mod $2$ results in this case are much better than the integral case in \cite{ZLns}. Roughly speaking, we use two kinds of persistent singular sets in \cite{ZLns}, one is the Veronese cone over $\cpt$ times spheres and the other is almost orthogonal transverse intersections, which contributes persistent singular sets of dimensions $(d-5)$ and $(d-c)$, respectively, for $d\ge 5, c\ge 3$ or $3\le c\le d\le 4.$ In the mod $2$, the existence of area-minimizing Veronese $\rpt$ cone significantly sharpens the first kind of singularity.
\subsection{Moderations}
The notion of moderations in Section \ref{secmod} is inspired by Frank Morgan's works \cite{FMcalv,FMeu}. Roughly speaking, Frank Morgan has found that some calibration arguments can be interpreted as the sum of volume of projections onto linear subspaces, thus working even in the case of mod $2$ homology. Upgrade from sums of projections to integral of projections yields our notion of moderations. We expect this new method of proving area-minimizing properties can help solve several other longstanding questions of area-minimizing cones, e.g., the remaining isoparametric focal cones in \cite{YZjis}.`
\subsection{About gluing minimizing currents}
Yongsheng Zhang's works \cite{YZa,YZj} has devised very general ways of gluing calibrations. Zhang's work also applies in the mod $2$ case when there is a Lawlor's vanishing retractions \cite{GL}. Unfortunately, in our case, we have neither calibrations nor Lawlor's retractions, so we have to find a genuinely different procedure for gluing mod $2$ area-minimizing currents. There are two key components in our argument. The first is the squeezing map in Section \ref{secsq}. The squeezing map can be seen as a forced area-non-increasing deformation by altering the metric when there are no Lawlor's area-non-increasing retractions. The second key is the metric shorting Lemma \ref{lemshort}. It can be viewed as a vast generalization of Zhang's ideas of pulling back metrics along retractions \cite{YZa,YZj}.

The squeezing and metric shorting arguments cannot fully replace Zhang's gluing arguments for calibrations, which in some sense controls both global integral and real homology \cite{ZLhs}. For instance, the results in \cite{ZLa2} cannot be proved by using the gluing arguments in this manuscript, as we no longer have the regular neighborhoods available in more complicated singularities, while global calibrations control homology directly.
\subsection{Extensions}
Indeed, if we work in homology with other coefficients, i.e., $R=\R,\Z$ or $\Z/n\Z$ with $n\ge 3,$ our gluing argument requires the cone $aC$ to be $R$-area-minimizing for any $a\in R\setminus\{0\}.$ Thus, analogues of Theorem \ref{thmc} hold for such cones. This happens naturally when $R=\Z/3\Z$ as well, so analogues of Theorem \ref{thmm} hold for $\Z/3\Z$ area-minimizing currents with $d\ge 1,c\ge1$ by using $C=$ the triple junction and the dimension lower bound on the singular set is $(d-1)$.
	\printbibliography
	\includepdf[pages=-]{verifications.pdf}
\end{document}